\newtheorem{theorem}{Theorem}
\newtheorem{lemma}[theorem]{Lemma}
\newtheorem{corollary}[theorem]{Corollary}
\newtheorem{proposition}[theorem]{Proposition}
\newtheorem{remark}[theorem]{Remark}
\newtheorem{definition}[theorem]{Definition}
\newtheorem{theoremletter}{Theorem}
\newtheorem{lemmaletter}{Lemma}
\newenvironment{acknowledgement}{\noindent\textbf{Acknowledgments}}{}
\newenvironment{program}{\noindent{\textbf{Program.}}}{}
\newtheoremstyle{tttheorem}
{}                
{}                
{\slshape}        
{}                
{\bfseries}       
{'}               
{ }               
{}                
\theoremstyle{tttheorem}
\newcommand{\ud}{\mathrm{d}}
\newcommand*{\avint}{\mathop{\ooalign{$\int$\cr$-$}}}
\newcommand{\quotes}[1]{``#1''}
\title[Qualitative properties for solutions to  fourth order systems]{Qualitative properties for solutions to subcritical fourth order systems}  
\thanks{Research supported in part by Fulbright Commission in Brazil grant G-1-00001, CNPq grant 305726/2017-0, and the Coordena\c c\~ao de Aperfei\c coamento de Pessoal de N\'ivel Superior - Brasil (CAPES) grant 88882.440505/2019-01}
\author[J.H. Andrade]{Jo\~{a}o Henrique\ Andrade}
\author[J.M. do \'O]{Jo\~ao Marcos do \'O*}
\address[J.H. Andrade]{Department of Mathematics, 
	Federal University of Para\'{\i}ba
	\newline\indent 
	58051-900, Jo\~ao Pessoa-PB, Brazil}
\email{\href{mailto:andradejh@mat.ufpb.br}{andradejh@mat.ufpb.br}}
\address[J.M. do \'O]{Department of Mathematics,
	Federal University of Para\'{\i}ba
	\newline\indent 
	58051-900, Jo\~ao Pessoa-PB, Brazil}
\email{\href{mailto:jmbo@pq.cnpq.br}{jmbo@pq.cnpq.br}}
\thanks{* Corresponding author.}
\subjclass[2000]{35J60, 35B09, 35J30, 35B40}
\keywords{Bi-Laplacian, Gross--Pitaevskii systems, Asymptotic analysis, Local behavior, Monotonicity formula}
\begin{document}
	
	\begin{abstract}
	We prove some qualitative properties for singular solutions to a class of strongly coupled  system involving a Gross--Pitaevskii-type nonlinearity. 
	Our main theorems are vectorial fourth order counterparts of the classical results of [J. Serrin, Acta Math. (1964)], [P.-L. Lions, J. Differential Equations (1980)], [P. Aviles, Comm. Math. Phys. (1987)], and [B. Gidas and J. Spruck, Comm. Pure Appl. Math. (1981)]. On the technical level, we use the moving sphere method to classify the limit blow-up solutions to our system.
	Besides, applying asymptotic analysis techniques, we show that these solutions are indeed the local models near the isolated singularity.
	We also introduce a new fourth order nonautonomous Pohozaev functional, whose monotonicity properties yield improvement for the asymptotics results due to [R. Soranzo, Potential Anal. (1997)].
		\end{abstract}
	
	\maketitle
	
	
	\begin{center}
		\footnotesize
		\tableofcontents
	\end{center}
	
	\section{Description of the results}\label{sec:introduction}
	
	In this work, we study the classification and the local behavior for nonnegative {\it $p$-map solutions}  $\mathcal{U}=(u_1,\dots,u_p):B_R^* \rightarrow \mathbb{R}^p$ 
	to the fourth order system,
	\begin{flalign}\tag{$\mathcal{S}_{p}$}\label{eq:subcriticalsystem}
	\Delta^{2} u_{i}=|\mathcal{U}|^{s-1}u_{i} \quad {\rm in} \quad B_R^*.
	\end{flalign}
	Here  $B_R^*:=B_R^{n}(0)\setminus\{0\}\subset\mathbb{R}^n$ is the {\it punctured ball} (resp. $B_{\infty}^*:=\mathbb{R}^n\setminus\{0\}$ is the {\it punctured space}), $n\geqslant5$, $|\mathcal{U}|^2=\sum_{i=1}^pu_i^2$, and $\Delta^{2}$ is the bi-Laplacian.
	System \eqref{eq:subcriticalsystem} is strongly coupled by the {\it Gross--Pitaevskii nonlinearity}  $f^s_i(\mathcal{U})=|\mathcal{U}|^{s-1}u_i$ with associated potential $F^s(\mathcal{U})=(f^s_1(\mathcal{U}),\dots,f^s_p(\mathcal{U}))$, where $s\in(1,2^{**}-1)$ with $2^{**}=2n/(n-4)$ the ({\it upper}) {\it critical Sobolev exponent}.
	
	By a {\it classical solution} to \eqref{eq:subcriticalsystem}, we 	mean a $p$-map $\mathcal{U}$ such that each component $u_i \in C^{4,\zeta}(B_R^*)$, for some $\zeta\in(0,1)$, and solves \eqref{eq:subcriticalsystem} in the classical sense (see Remark~\ref{rmk:regularity}).
	A solution may develop an isolated singularity at the origin, that is, some components may have a non-removable singularity when $x=0$. 
	More accurately, a solution $\mathcal{U}$ to \eqref{eq:subcriticalsystem} is said to be {\it singular}, if there exists $i\in I:=\{1,\dots,p\}$ such that the origin is a {\it non-removable singularity} for $u_{i}$. Otherwise, $\mathcal{U}$ is called {\it non-singular}, if the origin is a {\it removable singularity} for all components $u_i$, that is, $u_i$ can be extended continuously to the whole domain. 
	We also say that a $p$-map solution $\mathcal{U}$ is {\it nonnegative} ({\it strongly positive}) when $u_i\geqslant0$ ($u_i>0$) and $\mathcal{U}$ is {\it superharmonic} in case $-\Delta u_i>0$ for all $i\in I$. 
	Furthermore, when either $u_i>0$ or $u_i\equiv0$ for any $i\in I$, a solution $\mathcal{U}$ is called {\it weakly positive}. 
	Notice that, by the maximum principle, superharmonic solutions are weakly positive (see Remark~\ref{rmk:positiveness}).
	
	To make the exposition more comprehensible, we split our approach into three cases, namely $s\in(1,2_{**})$, $s=2_{**}$, and $s\in(2_{**},2^{**}-1)$, where $2_{**}={n}/{(n-4)}$ is the {\it lower Sobolev exponent} (or {\it Serrin exponent}).
	More precisely, $2_{**}$ is the greatest exponent for which all nonnegative solutions to \eqref{eq:subcriticalsystem} are trivial; this is a fourth order analog of the one found by J. Serrin \cite{MR0170096} in the context of second order quasilinear problems.
	
	Our first main result provides qualitative information for both non-singular and singular solutions to \eqref{eq:subcriticalsystem}. We classify the limit blow-up solutions, that is, $R=\infty$, which are interesting by themselves, since this type of solution can be useful in some related topics, such as in the theory of phase transition, in free boundary problems, and minimal hypersurface theory (see \cite{MR2757359} and the references therein).
	
	\begin{theorem}[Classification]\label{Thm3.1:classification}
		Let $R=\infty$ and $\mathcal{U}$ be a nonnegative solution to \eqref{eq:subcriticalsystem}. Assume that\\
		\noindent{\rm (i)} the origin is a removable singularity, then $\mathcal{U}\equiv0$.\\
		\noindent{\rm (ii)} the origin is a non-removable singularity. 
		\begin{itemize}
			\item[{\rm (a)}] If $s\in(1,2_{**}]$, then $\mathcal{U}\equiv0$; 
			\item[{\rm (b)}] If $s\in(2_{**},2^{**}-1)$, then there exists $\Lambda\in\mathbb{S}^{p-1}_{+}=\{ x \in \mathbb{S}^{p-1} : x_i\geqslant 0 \}$ such that 
			\begin{equation}\label{gidassprucklimitsolution}
			\mathcal{U}(x)=\Lambda K_0(n,s) ^{\frac{1}{s-1}}|x|^{-\frac{4}{s-1}},
			\end{equation}
		\end{itemize} 
		where $K_0(n,s)$ is defined by \eqref{autonomouscoefficients}.
	\end{theorem}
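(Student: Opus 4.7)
My strategy is to exploit the key structural feature of the Gross--Pitaevskii coupling: every component $u_i$ satisfies the same scalar weight $|\mathcal{U}|^{s-1}$, so scalar biharmonic techniques lift to the system once the components are forced to move together. I split the argument according to the three sub-statements. The common tool is the moving-sphere method, supplemented by a removability argument in the Serrin range and by a monotonicity (Pohozaev) argument for the ODE classification.

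For part (i), the removable-singularity hypothesis extends $\mathcal{U}$ to an entire nonnegative classical solution on $\mathbb{R}^n$. I would first show via a Riesz representation that each $u_i$ is superharmonic with a nonnegative potential decomposition. I would then run the integral moving-sphere method on the biharmonic Kelvin transform $\mathcal{U}_\lambda(x)=(\lambda/|x|)^{n-4}\mathcal{U}(\lambda^2 x/|x|^2)$. Because the shared weight $|\mathcal{U}|^{s-1}$ transforms by the same conformal factor for every coordinate, the componentwise inequalities close simultaneously; strict subcriticality $s<(n+4)/(n-4)$ then prevents termination of the sphere-moving procedure at any finite radius, forcing $\mathcal{U}\equiv 0$.

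For part (ii)(a), the Serrin range, I would combine a Brezis--Lions capacity argument for $\Delta^2$ with part (i). Testing \eqref{eq:subcriticalsystem} against cutoff bumps near the origin and using the biharmonic fundamental solution $c_n|x|^{4-n}$ yields the a priori bound $u_i(x)\leqslant C|x|^{-(n-4)}$; plugging this back into the right-hand side shows that $|\mathcal{U}|^{s-1}u_i$ is $L^1$ near $0$ precisely when $s>2_{**}$. For $s\leqslant 2_{**}$ the singularity must therefore be removable, and part (i) finishes the argument.

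For part (ii)(b) the classification proceeds in two stages. I would first apply the vectorial moving-sphere method at both the origin and infinity to conclude that $\mathcal{U}$ is radially symmetric and decreasing about $0$. The Emden--Fowler change of variables $u_i(r)=r^{-4/(s-1)}v_i(-\log r)$ then converts \eqref{eq:subcriticalsystem} into an autonomous fourth-order ODE system, whose constant solutions satisfy $K_0(n,s)v_i=|v|^{s-1}v_i$; this gives exactly $v_i\equiv \Lambda_i K_0(n,s)^{1/(s-1)}$ with $\Lambda\in\mathbb{S}^{p-1}_+$. To exclude nonconstant trajectories I would invoke the new nonautonomous Pohozaev functional announced in the abstract, whose monotonicity along solutions pins the orbit to the stationary level set. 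The main technical obstacle throughout is precisely the vectorial moving-sphere step: already in the scalar fourth-order setting one must work with integral comparisons built from the biharmonic Green's function, since no pointwise maximum principle is available; in the vector setting one must additionally verify that a single reflection radius $\lambda$ works simultaneously for every coordinate, and this is exactly where the common weight $|\mathcal{U}|^{s-1}$ of the Gross--Pitaevskii coupling is indispensable.
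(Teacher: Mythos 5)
Your proposal for part (i) matches the paper's strategy: the paper converts \eqref{eq:subcriticalsystem} to the integral system \eqref{integralsystem}, runs an integral moving-spheres argument using the fourth-order Kelvin transform (Lemma~\ref{lm:movingspheres}), and concludes via the classification lemmas in \cite{MR2001065} that either all components are bubbles of exponent $2^{**}-1$ (impossible when $s<2^{**}-1$) or all components are constants (hence zero). Your emphasis on the common weight $|\mathcal{U}|^{s-1}$ driving the componentwise reflection inequalities to close simultaneously is exactly the structural observation the paper exploits.

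Part (ii)(a), however, contains a genuine error and a gap. If $u_i(x)\leqslant C|x|^{4-n}$, then $|\mathcal{U}|^{s-1}u_i\lesssim |x|^{-(n-4)s}$, which is locally integrable near the origin precisely when $(n-4)s<n$, i.e.\ $s<2_{**}$ --- the opposite of what you wrote; and even with the inequality corrected, local $L^1$ integrability of the right-hand side does not by itself yield removability of the singularity (the paper's removability criterion, Lemma~\ref{lm:removabilityserrin}, requires the sharper $|\mathcal{U}(x)|=o(|x|^{4-n})$, not $O(|x|^{4-n})$). The paper proves (ii)(a) by a different route: Lemma~\ref{lm:blowupclassification}(a) shows that any $\gamma(s)$-homogeneous nonnegative solution is trivial when $s\leqslant 2_{**}$ because $K_0(n,s)\leqslant 0$ forces the reduced angular equation on $\mathbb{S}^{n-1}$ to have only the zero solution; combined with the fact that both the blow-up limit $\mathcal{U}_0$ and shrink-down limit $\mathcal{U}_\infty$ are homogeneous of degree $\gamma(s)$ (Lemma~\ref{lm:limitlimitinglevels}), one gets $\mathcal{P}_{\rm sph}(0,\mathcal{U})=\mathcal{P}_{\rm sph}(\infty,\mathcal{U})=0$, hence $\mathcal{P}_{\rm sph}\equiv 0$ by monotonicity, hence $\mathcal{U}$ is itself homogeneous by Lemma~\ref{lm:homogeneity}, hence trivial.

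For part (ii)(b) your broad outline is right (radial symmetry by moving spheres, then Emden--Fowler reduction, then a Pohozaev--monotonicity argument), but two points are off. First, you invoke ``the new nonautonomous Pohozaev functional,'' but that functional \eqref{nonautonomouscyltransform}--\eqref{nonautonomouscoeficcients} is designed specifically for the borderline exponent $s=2_{**}$, where $K_0(n,2_{**})=0$; for $s\in(2_{**},2^{**}-1)$ the paper uses the standard \emph{autonomous} Pohozaev functional of Definition~\ref{def:classicalpohozaev}, whose monotonicity is Proposition~\ref{lm:monotonicityformula}. Second, ``monotonicity pins the orbit to the stationary level set'' under-specifies the actual argument. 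Monotonicity only says the orbit is asymptotic to a stationary level as $t\to\pm\infty$; to conclude the orbit is constant one must show the two limiting levels coincide. The paper does this via the blow-up/shrink-down dichotomy (Lemma~\ref{lm:limitinglevesgidasspruck} / Proposition~\ref{prop:limitlocalbehavior}): both $\mathcal{P}_{\rm sph}(0,\mathcal{U})$ and $\mathcal{P}_{\rm sph}(\infty,\mathcal{U})$ lie in $\{-l^*(n,s),0\}$; ruling out the level $0$ at $r\to 0$ via the removability argument of Lemma~\ref{lm:removablesingularitygidasspruck} forces both to equal $-l^*(n,s)$, so the functional is identically $-l^*(n,s)$, whence the orbit is constant and $\mathcal{U}(x)=\Lambda K_0(n,s)^{1/(s-1)}|x|^{-\gamma(s)}$. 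You should make this dichotomy-and-exclusion step explicit; without it, the argument leaves open heteroclinic connections between the two stationary levels.
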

	
	On our second main result, we classify the local behavior for solutions to \eqref{eq:subcriticalsystem} in the punctured ball of radius $R<\infty$.
	More precisely, when $s\in(1,2_{**})$ the norm of the solution $\mathcal{U}$ grows like the fundamental solution to the bi-Laplacian, and the origin shall be a removable singularity.
	In contrast, when $s\in(2_{**},2^{**}-1)$, we show that the blow-up limit solutions in \eqref{gidassprucklimitsolution} are the asymptotics models of \eqref{eq:subcriticalsystem} near the isolated singularity.
	In addition, since $K_0(n,s)\equiv0$ when $s=2_{**}$, a completely new asymptotics with a slow growing logarithmic term is proved, which is a novelty even for the scalar case $p=1$.
	
	\begin{theorem}[Asymptotics]\label{Thm3.2:asymptotics}
		Let $R<\infty$ and $\mathcal{U}$ be a nonnegative superharmonic singular solution to \eqref{eq:subcriticalsystem}.
		Then, it follows
		\begin{equation*}
		|\mathcal{U}(x)|=(1+\mathcal{O}(|x|))|\overline{\mathcal{U}}(x)| \quad {\rm as} \quad x\rightarrow0,
		\end{equation*}
		where $|\overline{\mathcal{U}}(r)|=\avint_{\partial B_{1}} |\mathcal{U}(r \theta)|\ud\theta$ is the spherical average of $|\mathcal{U}|$. 
		Moreover, 
		\begin{itemize}
			\item[{\rm (a)}]  if $s\in(1,2_{**})$, then 
			\begin{equation*}
			|\mathcal{U}(x)|\simeq|x|^{4-n} \quad {\rm as} \quad x\rightarrow0;
			\end{equation*}
			\item[{\rm (b)}] if $s=2_{**}$, then 
			\begin{equation*}
			|\mathcal{U}(x)|=(1+o(1))\widehat{K}_{0}(n)^{\frac{n-4}{4}}|x|^{4-n}(-\ln|x|)^{\frac{4-n}{4}} \quad {\rm as} \quad x\rightarrow0,
			\end{equation*}
			where
			\begin{equation*}
			\widehat{K}_{0}(n)=\frac{(n-4)(n-2)(n+4)}{2};
			\end{equation*}
			\item[{\rm (c)}] if $s\in(2_{**},2^{**}-1)$, then 
			\begin{equation*}
			|\mathcal{U}(x)|=(1+o(1))K_0(n,s)^{\frac{1}{s-1}}|x|^{-\frac{4}{s-1}} \quad {\rm as} \quad x\rightarrow0.
			\end{equation*}
		\end{itemize}
	\end{theorem}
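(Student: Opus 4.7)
The overall plan is a blow-up analysis in the spirit of Caffarelli--Gidas--Spruck: first upgrade the singular behaviour of $\mathcal{U}$ near the origin to asymptotic radial symmetry, then rescale and identify the rescaled limit via the classification Theorem~\ref{Thm3.1:classification}, with the new nonautonomous Pohozaev functional $\mathcal{P}(\lambda)$ playing the role of a selection principle that pins down the unique limiting profile. I would organize the argument in four steps, corresponding to the spherical Harnack statement and the three cases (a)--(c).

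\smallskip
\emph{Step 1 (asymptotic radial symmetry).} Since $\mathcal{U}$ is nonnegative and superharmonic, $-\Delta u_i\ge 0$ allows invoking the scale-invariant Harnack inequality on dyadic annuli $\{r/2\le|x|\le 2r\}$ to obtain $\sup_{|x|=r}|\mathcal{U}|\le C\inf_{|x|=r}|\mathcal{U}|$ for every small $r$. Upgrading this to the sharp oscillation bound $|\mathcal{U}(x)|/|\overline{\mathcal{U}}(|x|)|=1+\mathcal{O}(|x|)$ can then be done through the moving-sphere machinery already set up for Theorem~\ref{Thm3.1:classification}: applying it on each annular region to the normalized map $|\mathcal{U}|/|\overline{\mathcal{U}}|$ forces the oscillation on $\partial B_r$ to decay linearly in $r$.

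\smallskip
\emph{Steps 2 and 3 (cases (a) and (c), the clean regimes).} In case (a), $s<n/(n-4)$ combined with superharmonicity gives $|\mathcal{U}|^s\in L^1(B_{r_0})$, so the Riesz representation relative to the fundamental solution $G_4(x)=c_n|x|^{4-n}$ of $\Delta^2$ yields $u_i(x)=a_i G_4(x)+h_i(x)$ with $h_i$ continuous at $0$; the singularity assumption forces some $a_i>0$, and Step~1 promotes this to the two-sided bound $|\mathcal{U}(x)|\simeq|x|^{4-n}$. In case (c), I introduce the rescaling $\mathcal{U}_\lambda(x)=\lambda^{4/(s-1)}\mathcal{U}(\lambda x)$, which preserves \eqref{eq:subcriticalsystem} by scale invariance. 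Step~1 plus standard interior estimates give $C^4_{\loc}(\mathbb{R}^n\setminus\{0\})$-compactness of $\{\mathcal{U}_\lambda\}_{\lambda\downarrow 0}$; any limit $\mathcal{U}_\ast$ is a nonnegative solution on the punctured space, nontrivial thanks to the Harnack lower bound, and radial by Step~1, so Theorem~\ref{Thm3.1:classification}(ii)(b) forces $\mathcal{U}_\ast(x)=\Lambda K_0(n,s)^{1/(s-1)}|x|^{-4/(s-1)}$. The monotonicity of $\mathcal{P}(\lambda)$ guarantees that $\mathcal{P}(0^+)$ exists, hence that the selected $\Lambda$ is independent of the subsequence and the full family converges.

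\smallskip
\emph{Step 4 (case (b), the main obstacle).} Because $4/(s-1)=n-4$ and $K_0(n,2_{**})=0$, the rescaling used in Step~3 converges only to the trivial profile, so the true asymptotic rate must be strictly slower than $|x|^{-4/(s-1)}=|x|^{4-n}$ and carry a logarithmic correction. I would pass to the Emden--Fowler variables $t=-\ln|x|$ and $w(t,\theta)=|x|^{n-4}\mathcal{U}(x)$, which recasts \eqref{eq:subcriticalsystem} as a fourth order nonautonomous system on the cylinder $\mathbb{R}\times\mathbb{S}^{n-1}$ in which the critical nonlinearity produces a resonant (degenerate) linearization at the trivial solution. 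Here the nonautonomous Pohozaev functional introduced in the paper plays a doubly crucial role: its monotonicity excludes oscillations of $w(t)$ as $t\to\infty$, and its limiting value, when compared against the model radial ODE $\partial_t^4 W\sim |W|^{4/(n-4)}W$, fixes both the algebraic decay $w(t)\sim c\,t^{-(n-4)/4}$ and the sharp constant $c=\widehat{K}_0(n)^{(n-4)/4}\Lambda$. Translating back to $|x|$ and combining with Step~1 yields the claimed asymptotics. This step is the genuine difficulty, because the classical self-similar blow-up profile collapses and the correct profile must be extracted from a degenerate, non-self-similar balance; identifying the precise constant $\widehat{K}_0(n)=(n-4)(n-2)(n+4)/2$ from the leading-order ODE, and ruling out slower logarithmic iterations, is the technical heart of the proof.
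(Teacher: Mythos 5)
Your overall architecture coincides with the paper's: asymptotic radial symmetry via a Kelvin-transform/moving-spheres argument in the spirit of Caffarelli--Gidas--Spruck, the a priori bound $|\mathcal{U}|\leqslant C|x|^{-\gamma(s)}$ from a blow-up argument plus the Liouville theorem, Green representation and Harnack for case (a), rescaling plus the classification theorem and Pohozaev monotonicity for case (c), and a nonautonomous cylindrical analysis for case (b). However, there is a genuine gap at the step where you discard the trivial limit. In cases (b) and (c) the Pohozaev level at the origin a priori takes one of two values, the trivial level corresponding to $|\mathcal{U}(x)|=o(|x|^{-\gamma(s)})$ (resp. $o(|x|^{4-n}(-\ln|x|)^{(4-n)/4})$), and nothing in your Step 1 or in the Harnack inequality rules this out: Harnack only compares $\sup$ and $\inf$ on spheres and gives no absolute lower rate, so the rescaled family could perfectly well converge to zero even though $\mathcal{U}$ is singular. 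The paper closes this alternative with two removable-singularity theorems which are the real workhorses of the proof: in case (c) a regularity-lifting/bootstrap argument (Lemma~\ref{lm:removablesingularitygidasspruck}) showing that $o(|x|^{-\gamma(s)})$ forces the origin to be a removable singularity, and in case (b) a barrier construction built on the integral representation (Lemma~\ref{lm:removablesingularityaviles}). Without an argument of this type your ``nontrivial thanks to the Harnack lower bound'' does not stand; the same issue is hidden in your case (a), where the continuity of $h_i$ at the origin and the conclusion ``some $a_i>0$'' are exactly the content of the removability lemma (Lemma~\ref{lm:removabilityserrin}) rather than consequences of the Riesz decomposition alone.

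A second, more specific problem concerns the mechanism you propose for identifying the constant in case (b). For a profile $w(t)\sim c\,t^{-(n-4)/4}$ the term $\partial_t^{(4)}w$ decays like $t^{-(n-4)/4-4}$, while the nonlinearity $|w|^{4/(n-4)}w$ decays like $t^{-n/4}=t^{-(n-4)/4-1}$; hence the balance ``$\partial_t^4 W\sim|W|^{4/(n-4)}W$'' is subdominant and cannot fix $c$. The correct balance is between the nonlinearity and the lower-order coefficients of the cylindrical bi-Laplacian (the first-order term $K_{1}(n,2_{**})w^{(1)}$ in the autonomous picture, equivalently the $t^{-1}$ zeroth-order coefficient after the log-weighted change of variables), and this is precisely why the paper introduces the nonautonomous transformation $\mathcal{W}=r^{4-n}(-\ln r)^{\frac{4-n}{4}}\mathcal{U}$, for which $\widehat{K}_0(n)=\lim_{t\to\infty}t\widetilde{K}_0(n,t)$ appears as the limiting zeroth-order coefficient and the limit values of $|\mathcal{W}|$ are pinned down as $\{0,\widehat{K}_0(n)^{\frac{n-4}{4}}\}$ (Lemma~\ref{lm:signproperties} and Lemma~\ref{lm:limitinglevels}). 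Note also that the monotonicity of $\widetilde{\mathcal{P}}_{\rm cyl}$ is only asymptotic and its sign depends on the dimension ($5\leqslant n\leqslant 7$ versus $n\geqslant 8$), and that the boundedness of $\mathcal{W}$ needed to make the Pohozaev limit meaningful comes from Soranzo's upper bound (Lemma~\ref{lm:sharpestimate}); these ingredients are absent from your sketch and constitute, together with the removability lemmas, the technical heart you would still need to supply.
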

	
	Theorems~\ref{Thm3.1:classification} and \ref{Thm3.2:asymptotics} complete the asymptotic classification for system \eqref{eq:subcriticalsystem} in the sense of the celebrated works \cite{MR0170096,MR605060,MR615628,MR875297,MR982351}. 
	Some recent progress has been achieved on the critical case ($s=2^{**}-1$). 
	In this setting, System \eqref{eq:subcriticalsystem} is related to conformal geometry, being the vectorial extension of the constant $Q$-curvature equation. 
	Inspired by the works of L. A. Caffarelli, B. Gidas and J. Spruck \cite[Theorem~1.2]{MR982351}, and N. Korevaar, R. Mazzeo, F. Pacard and R. Schoen \cite[Theorem~1]{MR1666838} on the singular Yamabe equation, we present some qualitative results from \cite[Theorems~1 and 2]{arXiv:2002.12491} and \cite[Theorem~1']{arXiv:2003.03487}. 
	In this critical case, for geometrical reasons, \eqref{eq:subcriticalsystem} is often considered with a normalizing constant $c(n)>0$.
	
	\begin{theoremletter}\label{thm:andrade-do2020}
		Let $R=\infty, s=2^{**}-1$, and  $\mathcal{U}$ be a nonnegative solution to \eqref{eq:subcriticalsystem}. \\
		\noindent{\rm (i)} If the origin is a removable singularity. Then, $\mathcal{U}$ is weakly positive and radially symmetric about some $x_0\in\mathbb{R}^n$. Moreover, there exist $\Lambda \in\mathbb{S}^{p-1}_{+}=\{ x \in \mathbb{S}^{p-1} : x_i \geqslant 0 \}$ and a fourth order spherical solution $u_{x_0,\mu}$ $($see \eqref{eq:fourthbubble}$)$ such that
		\begin{equation*}
		\mathcal{U}=\Lambda u_{x_0,\mu}.
		\end{equation*}
		\noindent{\rm (ii)} 
		If the origin is a non-removable singularity. Then, $\mathcal{U}$ is strongly positive, radially symmetric about the origin and decreasing. Moreover, there exist $\Lambda^*\in\mathbb{S}^{p-1}_{+,*}=\{ x \in \mathbb{S}^{p-1} : x_i > 0 \}$ and an Emden--Fowler solution $u_{a,T}$  $($see \eqref{eq:fourthorderemdenfwoler}$)$ such that 
		\begin{equation*}
		\mathcal{U}=\Lambda^*u_{a,T}.
		\end{equation*}
		In addition, when $R<\infty$ and $\mathcal{U}$ is a strongly positive superharmonic solution to \eqref{eq:subcriticalsystem}. Then, either the origin is a removable singularity, or there exist a deformed Emden--Fowler solution $u_{a,T,0}$ and $\beta^*_1>1$ such that 
		\begin{equation*}
		|\mathcal{U}(x)|=(1+\mathcal{O}(|x|^{\beta^*_1}))u_{a,T,0}(|x|) \quad {\rm as} \quad x\rightarrow0.
		\end{equation*}
	\end{theoremletter}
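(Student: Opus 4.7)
The overall plan is to adapt the moving sphere method for the scalar fourth order critical equation (as in Wei--Xu, Lin, and Chang--Yang) to the system setting, exploiting the $O(p)$-invariance of the Gross--Pitaevskii nonlinearity to reduce to a scalar problem. For part (i), since the origin is a removable singularity, I would first extend $\mathcal{U}$ to a classical solution on all of $\mathbb{R}^n$, and after establishing that each $u_i$ is superharmonic via a decay argument at infinity, rewrite the fourth order equation as the cooperative second order system $-\Delta u_i=v_i$, $-\Delta v_i=|\mathcal{U}|^{s-1}u_i$. A Kelvin-type transform combined with the moving sphere argument applied simultaneously to the pairs $(u_i,v_i)$ (following the Chou--Li scheme, with the key inequality $|\mathcal{U}_{x,\lambda}|^{s-1}u_{i,x,\lambda}-|\mathcal{U}|^{s-1}u_i$ handled coordinatewise) yields radial symmetry of every $u_i$ about a common point $x_0$. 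To pass from symmetry to proportionality $\mathcal{U}=\Lambda u_{x_0,\mu}$, I would take ratios: if $u_j\not\equiv 0$, then $w_{ij}=u_i/u_j$ satisfies an elliptic equation whose coefficients force $w_{ij}$ to be constant, while weak positivity is immediate from the strong maximum principle applied to each superharmonic $u_i$.

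For part (ii), with a non-removable singularity, the natural centre of symmetry is the origin itself. Using the moving sphere procedure adapted to the singular setting (in the spirit of Caffarelli--Gidas--Spruck and Li--Zhang) together with the scaling invariance of the critical exponent, I would show that $\mathcal{U}$ is radially symmetric about $0$ and monotonically decreasing. The Emden--Fowler change of variables $u_i(r)=r^{-(n-4)/2}v_i(-\ln r)$ then transforms \eqref{eq:subcriticalsystem} into an autonomous fourth order Hamiltonian ODE system for $\mathcal{V}(t)=(v_1,\dots,v_p)$. Strong positivity follows by contradiction: if some $u_i\equiv 0$, the reduced system would have one less coordinate but the same singular data, which combined with the ratio argument from part (i) contradicts non-removability; that same ratio argument forces $\mathcal{V}(t)=\Lambda^* v(t)$, reducing the classification to the well-understood one-dimensional Emden--Fowler family $u_{a,T}$ parametrised by its minimum $a$ and period $T$.

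For the local statement with $R<\infty$, the scheme is a blow-up/rescaling argument: zooming in near the origin produces entire solutions on the punctured space which, by parts (i)--(ii), must be either trivial (the removable case) or of the form $\Lambda^* u_{a,T}$. To upgrade qualitative convergence to the quantitative expansion with remainder $\mathcal{O}(|x|^{\beta^*_1})$, I would write $\mathcal{U}-u_{a,T,0}$ in spherical harmonics on $\mathbb{S}^{n-1}$ and linearise around the deformed Emden--Fowler model: each mode then satisfies a fourth order ODE whose indicial roots are computed by a Frobenius-type analysis, and $\beta^*_1>1$ is precisely the smallest positive root controlling the decay rate. The main obstacle I anticipate is this final step: ruling out the slow-decaying resonant modes produced by the non-autonomous perturbation $u_{a,T}\rightsquigarrow u_{a,T,0}$. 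This is exactly where the new fourth order nonautonomous Pohozaev functional announced in the abstract should enter, its monotonicity providing the a priori energy bound needed to exclude such modes and to pin down the sharp exponent $\beta^*_1$.
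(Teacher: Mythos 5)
You should first note that this paper does not actually prove Theorem~A: it is imported verbatim from the authors' companion works (arXiv:2002.12491 and arXiv:2003.03487), so the only comparison possible here is with the strategy of those works as summarized in the introduction (integral/Kelvin moving spheres, Emden--Fowler cylindrical reduction, Pohozaev invariant, blow-up plus linearization for the refined expansion). At that level your outline is aligned with the announced architecture, but two of your steps have genuine gaps. First, the reduction to a scalar problem via the ratio $w_{ij}=u_i/u_j$ is not available in the fourth order setting: $w_{ij}$ does not satisfy an elliptic equation to which a maximum principle applies, since $\Delta^2$ does not obey a product/comparison structure the way $-\Delta$ does, and this is exactly the difficulty the paper emphasizes (''lack of strong maximum principle'') and circumvents through the Green/integral representation \eqref{integralsystem}. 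For the same reason your strong positivity argument in (ii) (''one less coordinate but the same singular data contradicts non-removability'') is not a proof: a component can vanish identically while the remaining components carry the singularity, with no contradiction; in the cited work strong positivity of singular solutions is obtained from the integral representation and a lower bound near the singularity (cf. Remark~\ref{rmk:positiveness} and the reference to Corollary~47 of arXiv:2002.12491), and proportionality $\mathcal{U}=\Lambda u_{x_0,\mu}$, $\mathcal{U}=\Lambda^* u_{a,T}$ is likewise extracted from the componentwise moving-spheres classification together with the ODE analysis on the cylinder, not from a ratio maximum principle.

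Second, your use of the nonautonomous Pohozaev functional to rule out the resonant modes and pin down $\beta_1^*$ misattributes its role: that functional is constructed for the lower critical exponent $s=2_{**}$ (the Aviles-type asymptotics improving Soranzo), and at the critical exponent $s=2^{**}-1$ the relevant (autonomous) Pohozaev functional is \emph{constant} along solutions, as the paper itself points out, so no monotonicity is available to exclude slowly decaying modes. In the critical case the invariant only selects the Delaunay parameter (distinguishing the removable case from the $u_{a,T}$ case), while the quantitative remainder $\mathcal{O}(|x|^{\beta_1^*})$ comes from the Korevaar--Mazzeo--Pacard--Schoen scheme: spectral decomposition of the linearization about the (deformed) Emden--Fowler solution, computation of the indicial roots of the resulting fourth order ODEs, and an iteration argument, which is precisely the content of arXiv:2003.03487. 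So the skeleton of your proposal is reasonable, but as written the proportionality/positivity step and the final error estimate both rest on tools that do not apply in this fourth order critical setting.
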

	
	Notice that when $p=1$, \eqref{eq:subcriticalsystem} reduces to the following nonlinear fourth order equation,
	\begin{flalign}\tag{$\mathcal{S}_{1}$}\label{eq:subcriticalequation}
	\Delta^{2} u=u^{s} \quad {\rm in} \quad B_R^*.
	\end{flalign}
  	Now we present a holistic picture of the classification and local behavior for solutions to this equation.
	Namely, the next theorem summarizes some recent contributions due to C. S. Lin \cite[Theorem~1.3]{MR1611691}, Z. Guo, J. Wei and F. Zhou \cite[Theorem~1.2]{MR3632218}, R. Frank and T. K\"{o}nig \cite[Theorem~2]{MR3869387}, R. Soranzo \cite[Theorems~3 and 5]{MR1436822}, \cite[Theorem~2]{MR3869387}, H. Yang \cite[Theorem~1.1]{MR4123335}, T. Jin and J. Xiong \cite[Theorem~1.1]{arxiv:1901.01678} and J. Ratzkin \cite[Theorem~1]{arxiv.2001.07984}
	
	\begin{theoremletter}\label{thm:lin-soranzo-yang-frank-konig-ratzkin}
		Let $u$ be a nonnegative solution to \eqref{eq:subcriticalequation}. Assume that.\\ 
		\noindent{\rm Case (I):} $($punctured space$)$ $R=\infty$, and
		\begin{enumerate}
			\item[{\rm (i)}] the origin is a removable singularity. 
			\begin{itemize}
				\item[{\rm (a)}] If $s\in(1,2^{**}-1)$, then $u\equiv0$;
				\item[{\rm (b)}] If $s=2^{**}-1$, then there exist $x_0\in\mathbb{R}^n$ and $\mu>0$ such that $u$ is radially symmetric about $x_0$ and, up to a constant, is given by 
				\begin{equation}\label{eq:fourthbubble}
				u_{x_0,\mu}(x)=\left(\frac{2\mu}{1+\mu^{2}|x-x_0|^{2}}\right)^{\frac{n-4}{2}}.
				\end{equation}
				These are called the {\it $($fourth order$)$ spherical solutions} $($or bubbles$)$.
			\end{itemize}
			\item[{\rm (ii)}]  the origin is a non-removable singularity.
			\begin{itemize}
				\item[{\rm (a)}] If $s\in(1,2_{**}]$, then $u\equiv0$;
				\item[{\rm (b)}] If $s\in(2_{**},2^{**}-1)$, then   
				\begin{equation}\label{limitavilesffourthorder}
				u(x)=K_0(n,s)^{\frac{1}{s-1}}|x|^{-\frac{4}{s-1}};
				\end{equation}
				\item[{\rm (c)}] If $s=2^{**}-1$, then $u$ is radially symmetric about the origin. Moreover, there exist $a \in (0,a_0]$ and $T\in (0,T_a]$ such that
				\begin{equation}\label{eq:fourthorderemdenfwoler}
				u_{a,T}(x)=|x|^{\frac{4-n}{2}}v_{a}(\ln|x|+T).
				\end{equation}
				Here $a_0=[n(n-4)/(n^2-4)]^{n-4/8}$, $T_a\in\mathbb{R}$ is the fundamental period of the unique $T$-periodic bounded solution $v_a$ to the following fourth order Cauchy problem, 
				\begin{equation*}
				\begin{cases}
				v^{(4)}+K^*_2v^{(2)}+K^*_0v=c(n)v^{2^{**}-1}\\
				v(0)=a,\ v^{(1)}(0)=0,\ v^{(2)}(0)=b(a),\ v^{(3)}(0)=0,
				\end{cases}
				\end{equation*}
				where $K^*_2,K^*_0$ are constants depending only on $n$ and $b(a)$ is determined by $a \in (0,a_0]$. 
				We call both $u_{a,T}$ and $v_{a,T}$ $($fourth order$)$ Emden--Fowler $($or Delaunay-type$)$ solutions and $a\in(0,a_0)$ its Fowler parameter, chosen  to satisfy $a=\min_{t>0}v_a(t)$ $($Appendix~\ref{app:computations}$)$.
			\end{itemize}    
		\end{enumerate}
		\noindent{\rm Case (II):} $($punctured ball$)$ $R<\infty$, and the origin is a non-removable singularity. 
		Suppose that $u$ is superharmonic.  
		Then, $u(x)=(1+\mathcal{O}(|x|))\overline{u}(|x|)$ as $x\rightarrow0$, where $\overline{u}(r)=\avint_{\partial B_{1}} u(r \theta)\ud\theta$ is the spherical average of $u$. Moreover, 
		\begin{itemize}
			\item[$(a)$] if $s\in(1,2_{**})$, then $u(x)\simeq|x|^{4-n}$ as $x\rightarrow0$;
			\item[$(b)$] if $s=2_{**}$, then there exists $C_0(n)>0$ and $0<r_0<R$ such that
			\begin{equation}\label{upperboundaviles}
			|u(x)|\leqslant C_0(n)|x|^{4-n}(-\ln|x|)^{\frac{4-n}{4}} \quad {\rm for} \quad 0<|x|<r_0;
			\end{equation}
			\item[$(c)$] if $s\in(2_{**},2^{**}-1)$, then
			\begin{equation}\label{avilesasymptoticsscalar}
			u(x)=(1+o(1))K_0(n,s)^{\frac{1}{s-1}}|x|^{-\frac{4}{s-1}} \quad {\rm as} \quad x\rightarrow0;
			\end{equation}
			\item[$(d)$] if $s=2^{**}-1$, then there exists $u_{a,T}$ as in \eqref{eq:fourthorderemdenfwoler} such that
			\begin{equation*}
			u(x)=(1+o(1))u_{a,T}(|x|) \quad {\rm as} \quad x\rightarrow0.
			\end{equation*}    
			Furthermore, one can find a deformed Emden--Fowler solution $u_{a,T,0}$ and $\beta_1^*>1$ such that
			\begin{equation*}
			u(x)=(1+\mathcal{O}(|x|^{\beta_1^*}))u_{a,T,0}(|x|) \quad {\rm as} \quad x\rightarrow0.
			\end{equation*}  
		\end{itemize}
	\end{theoremletter}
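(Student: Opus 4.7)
My plan is to organize the argument by case and assemble the analytical tools common to the cited works: the moving plane/sphere method adapted to the fourth order setting, Pohozaev-type integral identities, the decomposition $-\Delta u=v$ into a cooperative second order system (available under the superharmonicity hypothesis), and phase-plane analysis for the autonomous fourth order ODE obtained from the Emden--Fowler substitution $u(x)=|x|^{(4-n)/2}v(-\ln|x|)$.

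For Case~(I), the punctured space, I would first treat the removable-singularity branch: a $C^4$ nonnegative solution on $\mathbb{R}^n$ is killed in the subcritical range by a Liouville theorem (moving spheres combined with the Kelvin transform $u_\lambda(x)=(\lambda/|x|)^{n-4}u(\lambda^2 x/|x|^2)$), while at $s=2^{**}-1$ the same device, in the spirit of Lin, forces $u$ to be a bubble \eqref{eq:fourthbubble}. In the non-removable branch, I would rule out $s\in(1,2_{**}]$ by Serrin-style test-function estimates showing that $u^s$ cannot have the integrability required near a non-removable singularity; for $s\in(2_{**},2^{**}-1)$ the task is to upgrade $u$ to a radial profile (moving sphere applied with centers converging to the origin), after which the ODE analysis selects the unique positive equilibrium $K_0(n,s)^{1/(s-1)}$ and yields \eqref{limitavilesffourthorder}. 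At $s=2^{**}-1$ the Emden--Fowler substitution reduces matters to classifying bounded $T$-periodic orbits of a conservative autonomous fourth order equation, and a Hamiltonian/phase-plane argument produces the Delaunay family \eqref{eq:fourthorderemdenfwoler}.

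For Case~(II) on the punctured ball, superharmonicity is decisive, since the system $-\Delta u=v$, $-\Delta v=u^s$ is cooperative and admits a second order maximum principle; the spherical-average comparison $u(x)=(1+\mathcal{O}(|x|))\overline{u}(|x|)$ then follows from a Harnack inequality on dyadic annuli combined with standard gradient bounds. For (a) and (c) I would perform a blow-up analysis: rescalings $u_\lambda(x)=\lambda^{4/(s-1)}u(\lambda x)$ are uniformly bounded thanks to Serrin-type control, so they converge along subsequences to a limit classified by Case~(I); the limit is either trivial, which via comparison with the fundamental solution forces $u\simeq|x|^{4-n}$, or it is the profile \eqref{limitavilesffourthorder}, yielding \eqref{avilesasymptoticsscalar}. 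Case~(d) is analogous, with the Delaunay parameter $(a,T)$ selected by a Pohozaev monotonicity argument and the deformation estimate $\beta_1^*>1$ obtained by linearizing around $u_{a,T}$ and reading off the first positive indicial exponent.

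The principal obstacle is the borderline case~(b), $s=2_{**}$: here $K_0(n,s)\equiv0$, so the natural self-similar scaling of~(c) degenerates and the leading-order profile must acquire the logarithmic factor $(-\ln|x|)^{(4-n)/4}$. Following the second order template of Aviles and Soranzo, I would iterate the Serrin barrier to obtain the upper bound \eqref{upperboundaviles}; sharpening this to the matching two-sided asymptotic (which Theorem~\ref{Thm3.2:asymptotics}(b) itself establishes) requires a finer monotonicity formula, and this is precisely where the paper's new nonautonomous Pohozaev functional enters. I expect this to be the main technical hurdle for a direct proof of the borderline asymptotic.
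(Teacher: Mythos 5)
Theorem~\ref{thm:lin-soranzo-yang-frank-konig-ratzkin} is a survey statement of prior results (Lin, Soranzo, Guo--Wei--Zhou, Frank--K\"onig, Yang, Jin--Xiong, Ratzkin) that the paper does not reprove; instead it establishes the $p$-map generalization of the non-critical parts through Theorems~\ref{Thm3.1:classification} and~\ref{Thm3.2:asymptotics}. Your road map tracks that methodology fairly closely --- Kelvin transform with moving spheres, Emden--Fowler reduction, Pohozaev monotonicity, blow-up/shrink-down analysis --- and you rightly isolate $s=2_{**}$, where the self-similar scaling degenerates, as the point at which the paper's new nonautonomous Pohozaev functional (Section~\ref{sec:pohozaevinvariant}) becomes essential.

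That said, your route to the asymptotic radial symmetry in Case~(II) will not yield the stated estimate. The Harnack inequality on dyadic annuli together with the gradient bound $|\nabla u|\leqslant C u/|x|$ only controls the spherical oscillation of $u$ up to a fixed multiplicative constant, so it gives $u(x)\simeq\overline u(|x|)$ but not $u(x)=(1+\mathcal{O}(|x|))\overline u(|x|)$; obtaining the sharp $(1+\mathcal{O}(|x|))$ rate requires an asymptotic moving-spheres (or moving-planes) argument in the spirit of Caffarelli--Gidas--Spruck Theorem~6.1, which is exactly what the paper implements in Proposition~\ref{prop:asymptoticsymmetry}. Two smaller discrepancies are also worth flagging: in Case~(I)(ii)(a) your Serrin-style test-function approach is legitimate but genuinely different from the paper's rigidity argument through constancy of the Pohozaev functional and the sign of $K_0(n,s)$ in the angular ODE (Lemmas~\ref{lm:limitlimitinglevels} and~\ref{lm:blowupclassification}); and in Case~(II)(d) the Pohozaev functional is constant, not monotone, at $s=2^{**}-1$, so the Delaunay data $(a,T)$ are singled out by the value of the Pohozaev invariant rather than by any monotonicity mechanism.
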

	
	\begin{remark}
		In \eqref{upperboundaviles} $($see \cite[Theorem~5]{MR1436822}$)$ the upper bound estimate does not have an explicit constant. 
		We obtain a lower bound estimate and the \quotes{sharp constant} in the more general vectorial setting. 
		To this end, instead of using the techniques in \cite{MR605060}, we define a nonautonomous cylindrical transformation inspired by \cite[Section~3]{MR875297} $($see \eqref{nonautonomouscyltransform}$)$. 
		This new computation also allows us to compute $\widehat{K}_{0}(n)>0$ $($see  Lemma~\ref{lm:limitinglevels}$)$, and can be further adapted for a plethora of problems.
	\end{remark}
	
	\begin{remark}
		When $s\in(2_{**},2^{**}-1)$, it is well known that \eqref{limitavilesffourthorder}
		is an exact positive $($unstable$)$ singular solution to \eqref{eq:subcriticalsystem}, which provides an example of solution to \eqref{eq:subcriticalequation} that obeys the asymptotic behavior \eqref{avilesasymptoticsscalar} $($for another class of solutions, see \cite{MR3632218}$)$.
		In this case, the superharmonic condition can be dropped \cite{MR4123335}, which is indeed required to run the asymptotic analysis in the critical case $s=2^{**}-1$ \cite{arxiv:1901.01678}.
		More specifically, this hypothesis is essential to obtain the lower bound estimate near the origin.
	\end{remark}
	
	Next, we compare our preliminary results in the fourth order setting with their second order counterparts.
	In this direction, for $s\in(1,2^*-1]$ and $n\geqslant3$, where $2^{*}=2n/(n-2)$ and $2_*=n/(n-2)$ are respectively the {\it upper and lower critical Sobolev exponents}. 
	Let us consider nonnegative {\it $p$-map solutions}  $\mathcal{U}=(u_1,\dots,u_p):B_R^* \rightarrow \mathbb{R}^p$ to the following second order system analog to \eqref{eq:subcriticalsystem}, 
	\begin{equation}\label{eq:subcriticalsystemsecondorder}
	-\Delta u_{i}=|\mathcal{U}|^{s-1}u_{i} \quad {\rm in} \quad B_R^*.
	\end{equation}
	
	On this system, the results of O. Druet, E. Hebey and J. V\'etois \cite[Proposition~1.1]{MR2558186}, M. Ghergu, S. Kim  and H. Shahgholian \cite[Theorems~1.1--1.5]{MR4085120} and R. Caju, J. M. do \'O and A. Santos \cite[Theorem~1.2]{MR4002167} provided the following classification and asymptotics,
	
	\begin{theoremletter}\label{thm:druet-hebey-vetois-ghergu-kim-shahgohlian-caju-doo-santos}
		Let $\mathcal{U}$ be a nonnegative solution to \eqref{eq:subcriticalsystemsecondorder}. Assume that\\ 
		\noindent{\rm Case (I):} $($punctured space$)$ $R=\infty$, and
		\begin{enumerate}
			\item[{\rm (i)}] the origin is a removable singularity. 
			\begin{itemize}
				\item[{\rm (a)}] If $s\in(1,2^{*}-1)$, then $\mathcal{U}\equiv0$;
				\item[{\rm (b)}] If $s=2^{*}-1$, then there exist $\Lambda\in\mathbb{S}^{p-1}_{+}$, $x_0\in\mathbb{R}^n$ and $\mu>0$ such that $\mathcal{U}$ is radially symmetric about $x_0$ and, up to a constant, is given by 
				\begin{equation*}
				\mathcal{U}(x)=\Lambda\left(\frac{2\mu}{1+\mu^{2}|x-x_0|^{2}}\right)^{\frac{n-2}{2}}. 
				\end{equation*}
			\end{itemize}
			\item[{\rm (ii)}] the origin is a non-removable singularity. 
			\begin{itemize}
				\item[{\rm (a)}] If $s\in(1,2_{*}]$, then $\mathcal{U}\equiv0$;
				\item[{\rm (b)}] If $s\in(2_{*},2^{*}-1)$, then there exists $\Lambda^*\in\mathbb{S}^{p-1}_{+,*}$ such that 
				\begin{equation*}
				\mathcal{U}(x)=\Lambda^*\left[\frac{2(n-2)(s-2_{*})}{(s-1)^2}\right]^{\frac{1}{s-1}}|x|^{-\frac{2}{s-1}};
				\end{equation*}
				\item[{\rm (c)}] If $s=2^{*}-1$ and $\mathcal{U}$ is radially symmetric about the origin. Moreover, there exist  $\Lambda^*\in\mathbb{S}^{p-1}_{+,*}$, 
				$a \in (0,[(n-2)/n]^{(n-2)/4}]$ and $T\in (0,T_a]$ such that
				\begin{equation*}
				\mathcal{U}(x)=\Lambda^*|x|^{\frac{2-n}{2}}v_{a}(-\ln|x|+T).
				\end{equation*}
				Here $v_{a,T}$ is the unique $T$-periodic bounded solution to the following second order problem 
				\begin{equation}\label{eq:secondorderode}
				\begin{cases}
				v^{(2)}-\frac{(n-2)^2}{4}v+\frac{n(n-2)}{4}v^{2^{*}-1}=0\\
				v(0)=a,\ v^{(1)}(0)=0,
				\end{cases}
				\end{equation}
				where $T_a\in\mathbb{R}$ is the fundamental period of $v_a$; both $u_{a,T}$ and $v_{a,T}$ are called the $($second order$)$ Emden--Fowler $($or Delaunay type$)$ solutions.
			\end{itemize}    
		\end{enumerate}
		\noindent{\rm Case (II):} $($punctured ball$)$ $R<\infty$, and the origin is a non-removable singularity, then
		\begin{equation*}
		|\mathcal{U}(x)|=(1+\mathcal{O}(|x|))|\overline{\mathcal{U}}(x)| \quad {\rm as} \quad x\rightarrow0,
		\end{equation*}
		where $|\overline{\mathcal{U}}(r)|=\avint_{\partial B_{1}} |\mathcal{U}(r \theta)|\ud\theta$ is the spherical average of $|\mathcal{U}|$. Moreover, 
		\begin{itemize}
			\item[{\rm (a)}] if $s\in(1,2_{*}]$, then $|\mathcal{U}(x)|\simeq |x|^{2-n}$ as $x\rightarrow0$;
			\item[{\rm (b)}] if $s=2_{*}$, then 
			\begin{equation*}
			|\mathcal{U}(x)|=(1+o(1))\left(\frac{n-2}{\sqrt{2}}\right)^{n-2}|x|^{2-n}(-\ln|x|)^{\frac{2-n}{2}} \quad {\rm as} \quad x\rightarrow0;
			\end{equation*}
			\item[{\rm (c)}] if $s\in(2_{*},2^{*}-1)$, then
			\begin{equation*}
			|\mathcal{U}(x)|=(1+o(1))\left[\frac{2(n-2)(s-2_{*})}{(s-1)^2}\right]^{\frac{1}{s-1}}|x|^{-\frac{2}{s-1}} \quad {\rm as} \quad x\rightarrow0;
			\end{equation*}
			\item[{\rm (d)}] if $s=2^{*}-1$, then there exists a second order Emden--Fowler $u_{a,T}$ as in \eqref{eq:secondorderemdenfowler} such that
			\begin{equation*}
			|\mathcal{U}(x)|=(1+o(1))u_{a,T}(|x|) \quad {\rm as} \quad x\rightarrow0.
			\end{equation*}  
			Furthermore, one can find a deformed Emden--Fowler solution $u_{a,T,0}$ and $\beta_1^*>1$ such that
			\begin{equation*}
			|\mathcal{U}(x)|=(1+\mathcal{O}(|x|^{\beta_1^*}))u_{a,T,0}(|x|) \quad {\rm as} \quad x\rightarrow0.
			\end{equation*}  
		\end{itemize}
	\end{theoremletter}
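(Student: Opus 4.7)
The plan is to split into the punctured space case (Case I, $R=\infty$) and the punctured ball case (Case II, $R<\infty$), and within each case to reduce the vectorial statement to its scalar counterpart by exhibiting a rigidity: every nonzero component is proportional to a common scalar profile. Since the coupling $|\mathcal{U}|^{s-1}u_i$ is cooperative and invariant under the same conformal/scaling symmetries that govern the scalar equation, one expects $\mathcal{U}=\Lambda v$ with $\Lambda\in\mathbb{S}^{p-1}_+$ and $v$ a nonnegative solution of $-\Delta v=v^s$, and Theorem C's second order analog (due to Caffarelli--Gidas--Spruck, Gidas--Spruck, Lions, Aviles, Korevaar--Mazzeo--Pacard--Schoen) then supplies the profile and its Fowler parameters.

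For Case (I)(i), the plan for $s\in(1,2^*-1)$ is to apply the moving-plane method to each component $u_i$, using the subcritical decay at infinity of $|\mathcal{U}|^{s-1}$ to derive a Liouville-type bound that forces $\mathcal{U}\equiv 0$. For the critical case $s=2^*-1$, the plan is to run the moving-sphere method simultaneously on every $u_i$: because all components see the same conformal weight $|\mathcal{U}|^{s-1}$, the sphere of invariance is common to all indices, producing a shared center $x_0$ and scale $\mu$, hence $\mathcal{U}=\Lambda u_{x_0,\mu}$ after normalization $|\Lambda|=1$. For Case (I)(ii)(a), $s\in(1,2_*]$, Serrin-type capacity estimates combined with the integrability of $|\mathcal{U}|^{s-1}$ near the origin show that any nonzero component must have a removable singularity, reducing to (i)(a). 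For Case (I)(ii)(b), $s\in(2_*,2^*-1)$, the scale invariance $\mathcal{U}_\lambda(x)=\lambda^{2/(s-1)}\mathcal{U}(\lambda x)$ together with the moving-sphere method centered at $0$ forces $\mathcal{U}$ to be radial and $-2/(s-1)$-homogeneous, so matching coefficients in the radial system yields $\Lambda^*\in\mathbb{S}^{p-1}_{+,*}$ and the Gidas--Spruck constant. For Case (I)(ii)(c), the Emden--Fowler substitution $\mathcal{U}(x)=|x|^{(2-n)/2}\mathcal{V}(-\ln|x|)$ reduces the radial system to the autonomous ODE system obtained from \eqref{eq:secondorderode} acting on $\mathbb{R}^p$; a conserved Hamiltonian shows that bounded orbits are periodic Delaunay-type solutions, and the cooperative structure forces $\mathcal{V}=\Lambda^* v_{a,T}$.

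For Case (II), the plan is first to establish the spherical-average comparison $|\mathcal{U}(x)|=(1+\mathcal{O}(|x|))|\overline{\mathcal{U}}(x)|$ via a Harnack-type gradient estimate on $\ln|\mathcal{U}|$, using superharmonicity of each component. Then subcases (a), (c), (d) proceed by a standard blow-up argument: rescale $\mathcal{U}$ at its intrinsic rate, extract by compactness a nontrivial limit, identify this limit as the corresponding entire solution from Case (I), and conclude from uniqueness of the asymptotic profile. The borderline subcase (b), $s=2_*$, is the most delicate: the natural power-law rescaling collapses, so one introduces a nonautonomous cylindrical change of variables that absorbs the logarithmic correction and yields the explicit constant $((n-2)/\sqrt{2})^{n-2}$ via a limiting Pohozaev-type identity, following Aviles.

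The main obstacle throughout is extracting the vectorial rigidity $\mathcal{U}=\Lambda v$ from the scalar machinery: one cannot a priori assume that all components have the same symmetry axis or the same scaling rate, and some components may even vanish identically. The resolution is always to run the symmetry argument (moving sphere in Case I, blow-up in Case II) simultaneously on all components, exploiting the fact that the nonlinearity $f^s_i(\mathcal{U})=|\mathcal{U}|^{s-1}u_i$ shares the single conformal factor $|\mathcal{U}|^{s-1}$; this common factor pins down a common invariance and forces proportionality. Careful bookkeeping of the zero-component set is then needed to localize $\Lambda$ in $\mathbb{S}^{p-1}_+$ (removable case or subcritical singular case) or in the open stratum $\mathbb{S}^{p-1}_{+,*}$ (Gidas--Spruck and Delaunay singular cases).
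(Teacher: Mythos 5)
This statement is Theorem~\ref{thm:druet-hebey-vetois-ghergu-kim-shahgohlian-caju-doo-santos}, which the paper does \emph{not} prove: it is a survey compilation of second-order results quoted from Druet--Hebey--V\'etois \cite{MR2558186}, Ghergu--Kim--Shahgholian \cite{MR4085120}, Caju--do \'O--Santos \cite{MR4002167}, together with the scalar classics (Serrin, Lions, Aviles, Gidas--Spruck, Caffarelli--Gidas--Spruck, Korevaar--Mazzeo--Pacard--Schoen). There is therefore no in-paper proof to compare your sketch against. Your outline is broadly aligned with what those references actually do, and it correctly flags that the vectorial rigidity $\mathcal{U}=\Lambda v$ is the new difficulty relative to the scalar case.

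That said, two steps in your sketch are genuine gaps if one were to flesh this out. First, in Case (I)(ii)(b) you assert that the moving-sphere method centered at the origin ``forces $\mathcal{U}$ to be radial and $-2/(s-1)$-homogeneous.'' Moving spheres gives radial symmetry, but it does not give homogeneity; homogeneity is a separate and harder step. In the cited works (and in this paper's fourth-order analogue, Section~\ref{sec:blowlimitcase}), homogeneity of the blow-up/shrink-down limits comes from the monotone Pohozaev functional together with its scale invariance --- monotone plus bounded on both ends forces the Pohozaev quantity to be constant, and constancy is then equivalent to homogeneity (cf.\ Lemma~\ref{lm:homogeneity}). Second, your mechanism for extracting $\mathcal{U}=\Lambda v$ (``the common conformal factor pins down a common invariance'') is not a proof. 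In the singular cases one must argue separately that the component ratios $u_i/u_j$ are constant, e.g.\ by a maximum-principle or Harnack-quotient argument applied to $u_i-cu_j$ using the cooperative structure, or by passing to the cylinder and invoking uniqueness for the reduced ODE system; without this, moving spheres applied component-wise only gives a common center for the nontrivial components, not proportionality. These are the two points you would need to supply to turn your plan into a proof, and they are exactly the ingredients the cited references develop.
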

	
All this analysis is motivated by some standard asymptotic classification results due to J. Serrin \cite[Theorem~11]{MR0170096}, P.-L. Lions \cite[Theorem~2]{MR605060}, P. Aviles \cite[Theorem~A]{MR875297}, B. Gidas and J. Spruck \cite[Theorems~1.1 and 1.2]{MR615628}, and L. A. Caffarelli et al. \cite[Theorems~1.1--1.3]{MR982351} with an improvement given by N. Korevaar et al. \cite[Theorem~1]{MR1666838},  which can be summarized as 

\begin{theoremletter}\label{thm:serrin-lions-aviles-caffarelli-gidas-spruck}
	Let $u$ be a nonnegative solution to \eqref{eq:subcriticalsystemsecondorder} with $p=1$. Assume that\\ 
	\noindent{\rm Case (I):} $($punctured space$)$ $R=\infty$, and
	\begin{enumerate}
		\item[{\rm (i)}] the origin is a removable singularity. 
		\begin{itemize}
			\item[{\rm (a)}] If $s\in(1,2^{*}-1)$, then $u\equiv0$;
			\item[{\rm (b)}] If $s=2^{*}-1$, then there exist $x_0\in\mathbb{R}^n$ and $\mu>0$ such that $u$ is radially symmetric about $x_0$ and, up to a constant, is given by 
			\begin{equation*}
			u_{x_0,\mu}(x)=\left(\frac{2\mu}{1+\mu^{2}|x-x_0|^{2}}\right)^{\frac{n-2}{2}};
			\end{equation*}
			those solutions are called the $($second order$)$ spherical solutions.
		\end{itemize}
		\item[{\rm (ii)}] the origin is a non-removable singularity. 
		\begin{itemize}
			\item[{\rm (a)}] If $s\in(1,2_{*}]$, then $u\equiv0$;
			\item[{\rm (b)}] If $s\in(2_{*},2^{*}-1)$, then
			\begin{equation*}
			u(x)=\left[\frac{2(n-2)(s-2_{*})}{(s-1)^2}\right]^{\frac{1}{s-1}}|x|^{-\frac{2}{s-1}};
			\end{equation*}
			\item[{\rm (c)}] If $s=2^{*}-1$ and $u$ is radially symmetric about the origin. Moreover, there exist $a \in (0,[(n-2)/n]^{(n-2)/4}]$ and $T\in (0,T_a]$ such that
			\begin{equation}\label{eq:secondorderemdenfowler}
			u_{a,T}(x)=|x|^{\frac{2-n}{2}}v_{a}(\ln|x|+T),
			\end{equation}
			where $v_a$ is a solution to \eqref{eq:secondorderode}. We call both $u_{a,T}$ and $v_{a,T}$ $($second order$)$ Emden--Fowler $($or Delaunay-type$)$ solutions
		\end{itemize}    
	\end{enumerate}
	\noindent{\rm Case (II):} $($punctured ball$)$ $R<\infty$, and the origin is a non-removable singularity, it follows that $u(x)=(1+\mathcal{O}(|x|))\overline{u}(|x|)$ as $x\rightarrow0$,
	where $\overline{u}(r)=\avint_{\partial B_{1}} u(r \theta)\ud\theta$ is the spherical average of $u$. Moreover, 
	\begin{itemize}
		\item[{\rm (a)}] $(${\it Serrin--Lions case}$)$ if $s\in(1,2_{*}-1]$, then $u(x)\simeq |x|^{2-n}$ as $x\rightarrow0$;
		\item[{\rm (b)}] $(${\it Aviles case}$)$ if $s=2_{*}-1$, then
		\begin{equation}\label{avilesasymptotics}
		u(x)=(1+o(1))\left(\frac{n-2}{\sqrt{2}}\right)^{n-2}|x|^{2-n}(-\ln|x|)^{\frac{2-n}{2}} \quad {\rm as} \quad x\rightarrow0;
		\end{equation}
		\item[{\rm (c)}] $(${\it Gidas--Spruck case}$)$ if $s\in(2_{*},2^{*}-1)$, then
		\begin{equation*}
		u(x)=(1+o(1))\left[\frac{2(n-2)(s-2_{*})}{(s-1)^2}\right]^{\frac{1}{s-1}}|x|^{-\frac{2}{s-1}} \quad {\rm as} \quad x\rightarrow0;
		\end{equation*}
		\item[{\rm (d)}] $(${\it Caffarelli--Gidas--Spruck case}$)$ if $s=2^{*}-1$, then there exists a second order Emden--Fowler solution $u_{a,T}$ as in \eqref{eq:secondorderemdenfowler} such that
		\begin{equation*}
		u(x)=(1+o(1))u_{a,T}(|x|) \quad {\rm as} \quad x\rightarrow0;
		\end{equation*}
		\item[{\rm (e)}] $(${\it Korevaar--Mazzeo--Pacard--Schoen case}$)$ Furthermore, one can find a deformed Emden--Fowler solution $u_{a,T,0}$ and $\beta_1^*>1$ such that
		\begin{equation}\label{korevaaretal}
		u(x)=(1+\mathcal{O}(|x|^{\beta_1^*}))u_{a,T,0}(|x|) \quad {\rm as} \quad x\rightarrow0.
		\end{equation}    
	\end{itemize}
\end{theoremletter}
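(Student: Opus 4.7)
I would assemble the argument from three ingredients that underlie every part of the statement: (i) the moving planes/spheres method; (ii) Pohozaev-type integral identities, both global and on annuli; and (iii) an Emden--Fowler cylindrical change of variable. The two macro-cases are linked by the Kelvin inversion $\widetilde{u}(x)=|x|^{2-n}u(x/|x|^{2})$, which for $s=2^{*}-1$ interchanges the origin with infinity and preserves the form of the equation, and which more generally lets one transfer information between the removable and non-removable settings.

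For Case~I(i), $u$ extends to a nonnegative $C^{2}$ solution on $\mathbb{R}^{n}$. In the subcritical range I would apply the Gidas--Spruck nonexistence argument, multiplying \eqref{eq:subcriticalsystemsecondorder} (with $p=1$) by $u^{q}\varphi^{2}$ for a smooth cut-off $\varphi$ and a sharply chosen exponent $q$, and absorbing to force $u\equiv0$; in the critical case, moving planes applied about every hyperplane direction yields radial symmetry about some $x_{0}$, and direct integration of the reduced ODE gives the bubbles $u_{x_{0},\mu}$. For Case~I(ii) with $s\in(1,2_{*}]$, testing against a cut-off of the fundamental solution $|x|^{2-n}$ and invoking superharmonicity produces the Serrin--Lions obstruction: the nonlinearity fails to be locally integrable against $|x|^{2-n}$ unless $s>2_{*}$, forcing $u\equiv0$. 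For $s\in(2_{*},2^{*}-1)$, I would run the moving spheres method on $\mathbb{R}^{n}\setminus\{0\}$ anchored by the prescribed blow-up at the origin; the scale invariance $u\mapsto\lambda^{2/(s-1)}u(\lambda\,\cdot)$ forces $u$ to be radial and monotone, and the reduced ODE admits only the scale-invariant profile $K_{0}(n,s)^{1/(s-1)}|x|^{-2/(s-1)}$ as positive entire solution. The critical case (ii)(c) is analogous: moving spheres gives radial symmetry about the origin, and the Emden--Fowler transformation converts the PDE into an autonomous Hamiltonian ODE whose phase-plane analysis classifies all bounded positive trajectories as the Delaunay family $u_{a,T}$.

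For Case~II, I would first derive the a~priori upper bound $u(x)\lesssim|x|^{-2/(s-1)}$ (with the logarithmic correction at $s=2_{*}$ and with a Delaunay envelope at criticality) by a Gidas--Spruck doubling/blow-up argument that contradicts the Liouville classification of Case~I. Superharmonicity together with Harnack's inequality on dyadic annuli produces the matching lower bound and the spherical-average comparison $u(x)=(1+\mathcal{O}(|x|))\overline{u}(|x|)$; the $\mathcal{O}(|x|)$ rate is quantified by running moving planes in small balls $B_{2r}(x_{0})\setminus\{0\}$ with $|x_{0}|=r$ and controlling the failure of reflection by $\mathcal{O}(r)$. The profiles in (c) and (d) follow from the rescaling $u_{\lambda}(x)=\lambda^{2/(s-1)}u(\lambda x)$ and Arzel\`a--Ascoli: the blow-down limit is a globally defined singular solution classified in Case~I, whose uniqueness upgrades local convergence into the stated asymptotics. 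Part~(a) in the Serrin--Lions range is handled via a B\^{o}cher-type decomposition against the fundamental solution, while part~(b) uses Aviles's cylindrical change of variable, which turns the equation into a slowly perturbed autonomous ODE whose unique periodic trajectory identifies the logarithmic constant.

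The main obstacle is the Korevaar--Mazzeo--Pacard--Schoen refined rate (e). Once $u$ is known to converge on the asymptotic cylinder to a Delaunay solution $u_{a,T}$, upgrading this into the polynomial rate $\mathcal{O}(|x|^{\beta_{1}^{*}})$ requires linearizing about $u_{a,T}$ and analyzing the indicial exponents of the resulting Jacobi operator on the cylinder. Its kernel contains the translation and Delaunay-parameter modes, so $\beta_{1}^{*}$ must be defined as the smallest indicial root strictly above this kernel; one then constructs the deformed profile $u_{a,T,0}$ by simultaneously modulating the Delaunay parameters along the orbit, and a barrier argument in the indicial-orthogonal complement produces the $\mathcal{O}(|x|^{\beta_{1}^{*}})$ control. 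This modulation-plus-barrier scheme is the technically most delicate ingredient, because the Jacobi operator has both bounded and indicial-zero modes reflecting the geometric degrees of freedom of the Delaunay family.
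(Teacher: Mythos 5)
This is Theorem~\ref{thm:serrin-lions-aviles-caffarelli-gidas-spruck}, which the paper states \emph{without proof} as a compendium of classical second-order results attributed to Serrin, Lions, Aviles, Gidas--Spruck, Caffarelli--Gidas--Spruck, and Korevaar--Mazzeo--Pacard--Schoen; it serves only as motivating background for the paper's own fourth-order vectorial results (Theorems~\ref{Thm3.1:classification} and \ref{Thm3.2:asymptotics}). There is therefore no in-paper proof to compare against. Your sketch is nonetheless broadly faithful to the cited literature and runs parallel to the strategy the paper develops for the fourth-order case: asymptotic moving spheres for symmetry, the Emden--Fowler cylindrical reduction, the Pohozaev functional and its monotonicity, tangent-cone blow-up/shrink-down analysis, and linearization about the Delaunay cylinder for the refined $\mathcal{O}(|x|^{\beta_1^*})$ rate.

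Two mechanisms in your outline are misstated, though neither defeats the overall plan. For Case~I(ii)(a), the triviality at $s\le 2_{*}$ in the punctured space is not an integrability obstruction against $|x|^{2-n}$: with $u\lesssim|x|^{2-n}$ one in fact has $u^{s}\in L^{1}_{\rm loc}$ precisely when $s<2_{*}$, so the heuristic is inverted. The actual mechanism (cf.\ Lemma~\ref{lm:blowupclassification} and Proposition~\ref{prop:limitlocalbehavior} for the paper's fourth-order analog) is that the Pohozaev monotonicity forces blow-up and shrink-down limits to be homogeneous of degree $-2/(s-1)$, and the induced angular equation on $\mathbb{S}^{n-1}$ admits no nontrivial nonnegative solution when its zeroth-order coefficient is nonpositive, i.e.\ when $s\le 2_{*}$. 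Similarly, for Case~I(ii)(b), moving spheres give radial symmetry but do not by themselves pin down the profile; exact homogeneity is obtained from constancy of the Pohozaev functional (which is equivalent to homogeneity of degree $-2/(s-1)$ by the analog of Lemma~\ref{lm:homogeneity}), after which the constant $\bigl[2(n-2)(s-2_{*})/(s-1)^{2}\bigr]^{1/(s-1)}$ drops out of the angular equation. Your description mislocates where the proof pressure actually lies, but with these corrections the outline matches the standard arguments.
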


\begin{remark}
	To analyze  the lower critical second order case $s=2_*$, in \cite{MR875297} it was introduced a new type of cylindrical coordinates $($see also  Appendix~\ref{app:computations}$)$, which leads to the following nonautonomous PDE on the cylinder $\mathcal{C}_T:=(-\ln R,\infty)\times \mathbb{S}^{n-1}$,
	\begin{equation}\label{eq:nonautonomoussecondorder}
	w^{(2)}+(n-2)\left(1-\frac{1}{t}\right)w^{(1)}-\frac{n-2}{2t}\left(n-2-\frac{n}{2t}\right)w+\Delta_{\theta}w+\frac{1}{t}w^{2_{*}}=0,
	\end{equation}
	where $w(t)=|x|^{2-n}(-\ln|x|)^{\frac{2-n}{2}}u(|x|)$ and $t=-\ln|x|$.
\end{remark}

In conclusion, we can summarize Theorems~\ref{Thm3.1:classification}, \ref{Thm3.2:asymptotics} and \ref{thm:andrade-do2020} in one research program, which is contained in the thesis \cite{andradethesis}.
\vspace{0.2cm}

\begin{program}\label{Thm:completeclassificationfourthorder}
	{\it 
	Let $\mathcal{U}$ be a nonnegative solution to \eqref{eq:subcriticalsystem}. Assume that\\ 
	\noindent{\rm Case (I):} $($punctured space$)$ $R=\infty$, and
	\begin{enumerate}
		\item[{\rm (i)}] the origin is a removable singularity. 
		\begin{itemize}
			\item[{\rm (a)}] If If $s\in(1,2^{**}-1)$, then $\mathcal{U}\equiv0$;
			\item[{\rm (b)}] If $s=2^{**}-1$, then there exist $\Lambda\in\mathbb{S}^{p-1}_{+}$, $x_0\in\mathbb{R}^n$ and $\mu>0$ such that $u$ is radially symmetric about $x_0$ and, up to a constant, is given by $\mathcal{U}(x)=\Lambda u_{x_0,\mu}(x)$, where $u_{x_0,\mu}$ is the $($fourth order$)$ spherical solution defined by \eqref{eq:fourthbubble}.
		\end{itemize}
		\item[{\rm (ii)}]  the origin is a non-removable singularity. 
		\begin{itemize}
			\item[{\rm (a)}] If $s\in(1,2_{**}]$, then $\mathcal{U}\equiv0$;
			\item[{\rm (b)}] If $s\in(2_{**},2^{**}-1)$, then there exists $\Lambda^*\in\mathbb{S}^{p-1}_{+,*}$ such that
			\begin{equation*}
			\mathcal{U}(x)=\Lambda^* K_0(n,s)^{\frac{1}{s-1}}|x|^{-\frac{4}{s-1}};
			\end{equation*}
			\item[{\rm (c)}] If $s=2^{**}-1$, then $u$ is radially symmetric about the origin. Moreover, there exist $\Lambda^*\in\mathbb{S}^{p-1}_{+,*}$, $a \in (0,a_0]$ and $T\in (0,T_a]$ such that $\mathcal{U}(x)=\Lambda^*u_{a,T}(x)$, where $u_{a,T}$ is the $($fourth order$)$ Emden--Fowler solution defined by \eqref{eq:fourthorderemdenfwoler}.
		\end{itemize}    
	\end{enumerate}
	\noindent{\rm Case (II):} $($punctured ball$)$ $R<\infty$, and the origin is a non-removable singularity. Suppose that $\mathcal{U}$ is superharmonic. Then, 
	\begin{equation*}
	|\mathcal{U}(x)|=(1+\mathcal{O}(|x|))|\overline{\mathcal{U}}(x)| \quad {\rm as} \quad x\rightarrow0,
	\end{equation*}
	where $|\overline{\mathcal{U}}(r)|=\avint_{\partial B_{1}} |\mathcal{U}(r \theta)|\ud\theta$ is the spherical average of $|\mathcal{U}|$. 
	Moreover, 
	\begin{itemize}
		\item[{\rm (a)}] if $s\in(1,2_{**})$, then 
		\begin{equation*}
		|\mathcal{U}(x)|\simeq |x|^{4-n} \quad {\rm as} \quad x\rightarrow0;
		\end{equation*}
		\item[{\rm (b)}] if $s=2_{**}$, then 
		\begin{equation*}
		|\mathcal{U}(x)|=(1+o(1))\left[\frac{(n-4)(n-2)(n+4)}{2}\right]^{\frac{n-4}{4}}|x|^{4-n}(-\ln|x|)^{\frac{4-n}{4}} \quad {\rm as} \quad x\rightarrow0;
		\end{equation*}
		\item[{\rm (c)}] if $s\in(2_{**},2^{**}-1)$, then
		\begin{equation*}
		|\mathcal{U}(x)|=(1+o(1))K_0(n,s)^{\frac{1}{s-1}}|x|^{-\frac{4}{s-1}} \quad {\rm as} \quad x\rightarrow0;
		\end{equation*}
		\item[{\rm (d)}] if $s=2^{**}-1$, then there exists $u_{a,T}$ as in \eqref{eq:fourthorderemdenfwoler} such that
		\begin{equation*}
		|\mathcal{U}(x)|=(1+o(1))u_{a,T}(|x|) \quad {\rm as} \quad x\rightarrow0.
		\end{equation*}    
		Furthermore, one can find a deformed Emden--Fowler solution $u_{a,T,0}$ and $\beta_1^*>1$ such that
		\begin{equation*}
		|\mathcal{U}(x)|=(1+\mathcal{O}(|x|^{\beta_1^*}))u_{a,T,0}(|x|) \quad {\rm as} \quad x\rightarrow0.
		\end{equation*}
	\end{itemize}
	}
\end{program}

	The main difference between the asymptotic analysis for the critical and subcritical regimes occurs because of the change on the monotonicity properties of the Pohozaev functional, which in this case works as a Lyapunov function, classifying the type of stability for solutions to \eqref{eq:subcriticalsystem} around a blow-up (shrink-down) limit solution. 
	This method is inspired by Fleming's tangent cone analysis for minimal hypersurfaces \cite{MR157263,MR3190428}).
	In the critical case, since the Pohozaev functional becomes constant, limit solutions are stable,  whereas, in the subcritical case, they are asymptotically stable. 
	This discrepancy is caused by the sign-changing behavior of the bi-Laplacian coefficients in cylindrical coordinates, which are suitable for this problem (see Remark~\ref{rmk:signoncoefficients}).  
	
	On the supercritical case $s\in(2^{**}-1,\infty)$, there are also several similar classification and asymptotics results in the scalar case \cite{MR2209261,MR2679617,MR2677898,MR1134481}.
	Recently, some parts of Theorem~\ref{thm:druet-hebey-vetois-ghergu-kim-shahgohlian-caju-doo-santos} were extended to the case of coupled systems with nonlinearities at the boundary (see \cite{li-bao,MR2288595,MR2745200}). We speculate that a result in this direction shall be true for system \eqref{eq:subcriticalsystem}.
	We also quote results like Theorem \ref{Thm:completeclassificationfourthorder} for higher order systems \cite{MR1432813,MR2534120,MR1679783}, for integral elliptic systems \cite{MR2200258,MR2131045,MR3918618,MR3562307}, and for fully nonlinear problems \cite{MR2214582,MR2247857,MR2737708}.
	One can also find a more geometric motivation to study \eqref{eq:subcriticalequation}. In the critical case ($s=2^{**}-1$), \eqref{eq:subcriticalequation} is equivalent to the constant $Q$-curvature equation (for more details, see \cite{MR3618119,MR3518237,MR1710786}). 
	In contrast, $s\in(1,2^{**}-1)$, \eqref{eq:subcriticalequation} is the constant $Q$-curvature problem on an underlying manifold that can be factored as $\mathbb{S}^1\times \mathbb{S}^{n-1}$ \cite{arXiv:2002.05939}.
	
	Besides their applications in conformal geometry, strongly coupled fourth order systems also appear in several parts of mathematical physics. 
	For instance, in hydrodynamics, for modeling the behavior of deep-water and Rogue waves in the ocean \cite{dysthe,lo-mei}, and in the Hartree--Fock theory for Bose--Einstein double condensates \cite{MR2040621,PhysRevLett.78.3594}. 
	Additionally, for $p=1$ the second order system \eqref{eq:subcriticalsystemsecondorder} becomes the  Lane--Emden--Fowler equation \cite{lane,emden,fowler}, which models the density of mass distribution for polytropic spherical stars in hydrostatic equilibrium \cite{MR0092663}.
	
	The strategy to prove Theorem~\ref{Thm3.1:classification} (i) is to use the Kelvin transform and a blow-up argument, based on moving spheres technique.
	For (ii), we use the scaling invariance of Pohozaev functional to study the behavior of limit solutions to \eqref{eq:subcriticalsystem} for blow-ups and shrink-downs limits. 
	The proof of Theorem~\ref{Thm3.2:asymptotics} is divided into two parts. We prove the asymptotic symmetry of singular solutions to \eqref{eq:subcriticalsystem} in the punctured ball. 
	Then, we use some ODE analysis and the monotonicity properties of the Pohozaev functional to study the asymptotic behavior for solutions on the cylinder.
	
	On the technical level, the study of system \eqref{eq:subcriticalsystem} has several difficulties.
	For instance, the fourth order operator implies a lack of strong maximum principle for solutions to \eqref{eq:subcriticalsystem}. 
	To deal with this, we use Green identity to convert \eqref{eq:subcriticalsystem} into an integral equation system, for which a type of maximum principle is available (see \cite{arxiv:1901.01678,MR2055032,MR4013228}).
	To deal with the nonlinear effects imposed by the coupling term on the right-hand side of \eqref{eq:subcriticalsystem}, we use some arguments from \cite{MR4085120,MR2558186,MR2603801}.
	
	Here is our plan for the rest of the paper. 
	In Section~\ref{sec:preliminaries}, we introduce some basic notation, an integral representation for \eqref{eq:subcriticalsystem}, the Kelvin transform, and both the autonomous and nonautonomous cylindrical transformations. 
	In Section~\ref{sec:pohozaevinvariant}, we define the associated Pohozaev functionals, and we prove their (asymptotic) monotonicity properties.
	In section~\ref{sec:blowlimitcase}, we use sliding techniques and a blow-up method to prove Theorem~\ref{Thm3.1:classification}.
	In section~\ref{sec:proof2}, we use the monotonicity formulas and some asymptotic analysis to prove Theorem~\ref{Thm3.2:asymptotics}.
	
	\section{Preliminaries}\label{sec:preliminaries}
	This section aims to introduce some necessary background definitions and results for developing the sliding methods and the asymptotic analysis that will be later used in this manuscript. 
	
	\subsection{Basic notations}
	Throughout this text, we adopt some notations, and that will be described as follows.
	Let $x_0\in\mathbb{R}\cup\{\pm\infty\}$, $u,\widetilde{u},f\in C_c^{\infty}(B_R^*)$ (resp. $v,\widetilde{v}\in C_c^{\infty}(\mathcal{C}_T)$) be positive functions and $\mathcal{U},\widetilde{\mathcal{U}}\in C_c^{\infty}(B_R^*,\mathbb{R}^p)$ ($\mathcal{V},\widetilde{\mathcal{V}}\in C_c^{\infty}(\mathcal{C}_T,\mathbb{R}^p)$) nonnegative $p$-maps, we denote
	\begin{itemize}
		\item $u=\mathcal{O}(f)$ as $x\rightarrow x_0$, if $\limsup_{x\rightarrow x_0}(u/f)(x)<\infty$;
		\item $u=o(f)$ as $x\rightarrow x_0$, if $\lim_{x\rightarrow x_0}(u/f)(x)=0$;
		\item $u\simeq\widetilde{u}$ as $x\rightarrow x_0$, if $u=\mathcal{O}(\widetilde{u})$ and $\widetilde{u}=\mathcal{O}(u)$ for $x_0\in\mathbb{R}\cup\{\pm\infty\}$;
		\item $\partial_{j}u={\partial u}/{\partial{x_j}}$ for $j\in\mathbb{N}$;
		\item $\partial_{r}^{(j)}u={\partial^{{j}} u}/{\partial r^j}$ for $j\in\mathbb{N}$ are the higher order radial derivatives;
		\item $\partial_{t}^{(j)}v={\partial^{j} v}/{\partial t^j}$ for $j\in\mathbb{N}$;
		\item $v^{(j)}={\ud^{j} v}/{\ud t^{j}}$ for $j\in\mathbb{N}$;
		\item $D^{(j)}\mathcal{U}=(D^{(j)}u_1,\dots,D^{(j)}u_p)$;
		\item $\mathcal{V}^{(j)}=(v_1^{(j)},\dots,v_p^{(j)})$;
		\item $\nabla_{\theta}$ (or $\nabla_{\sigma}$) is the tangential gradient;
		\item $\partial_{\nu}u={\partial u}/{\partial\nu}$ is the normal derivative;
		\item $\Delta_{\theta}=\Delta_{\mathbb{S}^{n-1}}$ ($\Delta_{\sigma}=\Delta_{\mathbb{S}^{n-1}}$) is the Laplace--Beltrami on the sphere $\mathbb{S}^{n-1}=\partial B_1^*$;
		\item $\langle\mathcal{U},\widetilde{\mathcal{U}}\rangle=\sum_{i=1}^pu_i\widetilde{u}_i$ is the inner product of $p$-maps.
		\item $\langle D^{(j)}\mathcal{U},D^{(j)}\widetilde{\mathcal{U}}\rangle=\sum_{i=1}^pD^{(j)}u_iD^{(j)}\widetilde{u}_i$ for $j\in\mathbb{N}$ is the inner product of higher order derivatives of $p$-maps.
	\end{itemize}
	
	Here and subsequently, we always deal with nonnegative solutions $\mathcal{U}$ to \eqref{eq:subcriticalsystem}, that is, $u_i \geqslant 0$ for all $i\in I$, where we recall the notation $I=\{1,\dots,p\}$. Let us split the index set $I$ into $I_0=\{i : u_i\equiv0\}$ and $I_{+}=\{i : u_i>0\}$. 
	Then, we present a standard definition on the positiveness of solution to elliptic systems.
	More precisely, let us divide solutions to \eqref{eq:subcriticalsystem} into two types:
	\begin{definition}
		Let $\mathcal{U}$ be a nonnegative solution to \eqref{eq:subcriticalsystem}. We call $\mathcal{U}$ {\it strongly positive} if $I_+=I$. 
		On the other hand, when $I_0\neq\emptyset$, we say that $\mathcal{U}$ is {\it weakly positive}. 
	\end{definition}
	
	\begin{remark}\label{rmk:positiveness} 
		Nonnegative solutions are always weakly positive, provided that the maximum principle holds.
		To this end, we either prove or assume that each component is superharmonic. 
		In the singular case, it is even possible to show that solutions are strongly positive $($see, for instance, \cite[Corollary~47]{arXiv:2002.12491}$)$, which we are not assuming here a priori.
	\end{remark}
	
	\begin{definition}
		Let $\Omega=B_R^*$ be the punctured ball $($space$)$ with $R<\infty$ $($$R=\infty$$)$, and $\mathcal{U}$ be a non-singular $($singular$)$ solution to \eqref{eq:subcriticalsystem}. We say that $\mathcal{U}$ is a {\it weak solution} to \eqref{eq:subcriticalsystem}, if it belongs to $\mathcal{D}^{2,2}(\Omega,\mathbb{R}^p)$ and solves \eqref{eq:subcriticalsystem} in the weak sense, {\it i.e.}, for all nonnegative $\Phi\in C^{\infty}_c(\Omega,\mathbb{R}^p)$, one has
		\begin{equation*}
		\displaystyle\int_{\mathbb{R}^n}\langle\Delta \mathcal{U},\Delta \Phi\rangle \ud x=\displaystyle\int_{\mathbb{R}^n}\langle|\mathcal{U}|^{s-1}\mathcal{U},\Phi\rangle \ud x.
		\end{equation*}
		Here $\mathcal{D}^{2,2}(\Omega,\mathbb{R}^p)$ is the classical Beppo--Levi space, completion of the space of compactly supported smooth $p$-maps, denoted by $C^{\infty}_{c}(\Omega,\mathbb{R}^p)$ under the Dirichlet norm $\|\mathcal{U}\|_{\mathcal{D}^{2,2}(\Omega,\mathbb{R}^p)}^2=\sum_{i=1}^p\|\Delta u_i\|_{L^{2}(\Omega)}^2$.
	\end{definition}
	
	\begin{remark}\label{rmk:regularity}
		Assuming that the component solutions are smooth away from the origin does not impose any restrictions, since we are dealing with the subcritical regime. Indeed, by classical elliptic regularity theory and bootstrap methods, one can prove that any weak non-singular $($singular$)$ solution to \eqref{eq:subcriticalsystem} is also a classical non-singular $($singular$)$ solution.
		Besides, observe that for the case of unbounded domains $(R=\infty)$, some decay estimates are necessary to prove that solutions have finite $L^p$-norm.
	\end{remark}
	
	\subsection{Integral representation formulas}
	Now we use a Green identity to transform the fourth order differential system  \eqref{eq:subcriticalsystem} into an integral system.
	In this way, we can avoid using the classical form of the maximum principle, and a sliding method is available \cite{arxiv:1901.01678,MR3558255, MR2055032}, which will be used to classify solutions. 	
	Besides, in this setting is also possible to prove regularity through a barrier construction.
	
	For $n\geqslant3$, the following expression for the Green function of the Laplacian in the unit ball is well-known.
	\begin{equation*}
		G_{1}(x, y)=\frac{1}{(n-2) \omega_{n-1}}\left(|x-y|^{2-n}-\left|\frac{x}{|x|}- x|y|\right|^{2-n}\right),
	\end{equation*}
	where $\omega_{n-1}$ is the surface area of the Euclidean unit sphere. 
	In addition, for any $u \in C^{2}\left(B_{1}\right) \cap C\left(\bar{B}_{1}\right)$, the next decomposition holds,
	\begin{equation*}
		u(x)=-\int_{B_{1}} G_{1}(x, y)\Delta u(y) \mathrm{d} y+\int_{\partial B_{1}} H_{1}(x, y) u(y) \mathrm{d} \sigma_{y},
	\end{equation*}
	where
	\begin{equation*}
		H_{1}(x, y)=-{\partial_{v_{y}}} G_{1}(x, y)=\frac{1-|x|^{2}}{\omega_{n-1}|x-y|^{n}} \quad \mbox{for} \quad x \in B_{1} \quad \mbox{and} \quad y \in \partial B_{1},
	\end{equation*}
	with ${v_{y}}$ the outward normal vector at $y$.
	
	Similarly, in the fourth order case with $n\geqslant5$, for any $u\in C^{4}\left(B_{1}\right)\cap C^{2}\left(\bar{B}_{1}\right)$, it follows
	\begin{equation*}
		u(x)=\int_{B_{1}} G_{2}(x, y)\Delta^{2} u(y) \ud y+\int_{\partial B_{1}} H_{1}(x, y)u(y) \ud \sigma_{y}-\int_{\partial B_{1}} H_{2}(x, y)\Delta u(y) \ud \sigma_{y},
	\end{equation*}
	where
	\begin{equation*}
		G_{2}(x, y)=\int_{B_{1} \times B_{1}} G_{1}\left(x, y_{1}\right) G_{1}\left(y_{1}, y\right) \ud y_{1}
	\end{equation*}
	and
	\begin{equation*}
		H_{2}(x, y)=\int_{B_{1} \times B_{1}} G_{1}\left(x, y_{1}\right) H_{1}\left(y_1, y\right) \mathrm{d} y_{1}.
	\end{equation*}
	By a direct computation, we have
	\begin{equation}\label{greenfunction}
		G_{2}(x, y)=C(n, 2)|x-y|^{4-n}-A(x, y),
	\end{equation}
	where $C(n, 2)=\frac{\Gamma(n-4)}{2^{4} \pi^{n / 2} \Gamma(2)}$, $A:B_1\times B_1\rightarrow\mathbb{R}$ is a smooth map and $H_{i}(x, y) \geqslant 0$ for $i=1,2$.
	
	In the next lemma, we provide 
	basic integrability for singular solutions to \eqref{eq:subcriticalsystem} when $R<\infty$ and $s>1$, which yields a weaker formulation for \eqref{eq:subcriticalsystem}. Here, the proof is similar to the one in \cite[Lemma~3.1]{MR4123335}. Nevertheless, we included the proof for the sake of completeness.
	
	\begin{lemma}\label{lm:integrability}
		Let $R=1$, $s\in(1,\infty)$, and $\mathcal{U}$ be a nonnegative superharmonic singular solution to \eqref{eq:subcriticalsystem}.Then, $\mathcal{U} \in L^{s}\left(B_{1},\mathbb{R}^{p}\right)$. 
		In particular, if $s\in(2_{**},\infty)$, then $\mathcal{U}$ is a
		distribution solution to \eqref{eq:subcriticalsystem}, that is, for all nonnegative $\Phi\in C^{\infty}_c(B_1,\mathbb{R}^{p})$, one has
		\begin{equation}\label{distributional}
		\int_{B_{1}} \langle\mathcal{U},\Delta^2 {\Phi}\rangle \ud x=\int_{B_{1}}|\mathcal{U}|^{s-1}\langle\mathcal{U}, {\Phi}\rangle  \ud x \quad.
		\end{equation}
	\end{lemma}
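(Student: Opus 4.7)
The plan is to iterate the Brezis--Lions removable-singularity lemma twice, once for each component $u_i$ and once for $-\Delta u_i$, exploiting the superharmonic hypothesis.

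First I would invoke the following well-known fact: if $u \geqslant 0$ is superharmonic in $B_1^*$ in the classical sense, then $u \in L^1_{\rm loc}(B_1)$ and there exists $a \geqslant 0$ such that
\begin{equation*}
-\Delta u = \mu + a\,\delta_0 \quad \text{in } \mathcal{D}'(B_1),
\end{equation*}
where $\mu \geqslant 0$ is the distributional Laplacian of $u$ restricted to $B_1^*$. Applying this to each $u_i$ produces $u_i \in L^1_{\rm loc}(B_1)$ and constants $a_i \geqslant 0$ with $-\Delta u_i = w_i + a_i\delta_0$, where $w_i := -\Delta u_i \geqslant 0$ in $B_1^*$.

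Second, since $-\Delta w_i = \Delta^2 u_i = |\mathcal{U}|^{s-1} u_i \geqslant 0$ in $B_1^*$, each $w_i$ is itself a nonnegative superharmonic function on the punctured ball. Applying Brezis--Lions once more yields $w_i \in L^1_{\rm loc}(B_1)$ together with constants $b_i \geqslant 0$ such that
\begin{equation*}
-\Delta w_i = |\mathcal{U}|^{s-1} u_i + b_i\,\delta_0 \quad \text{in } \mathcal{D}'(B_1).
\end{equation*}
In particular $|\mathcal{U}|^{s-1} u_i \in L^1(B_1)$ for every $i \in I$. Summing over $i$ and using the elementary inequality $|\mathcal{U}| \leqslant \sum_i u_i$, I get
\begin{equation*}
|\mathcal{U}|^{s} \;\leqslant\; |\mathcal{U}|^{s-1}\sum_{i=1}^{p} u_i \;=\; \sum_{i=1}^{p} |\mathcal{U}|^{s-1} u_i \;\in\; L^{1}(B_1),
\end{equation*}
which is exactly $\mathcal{U} \in L^{s}(B_1,\mathbb{R}^p)$.

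For the distributional identity \eqref{distributional} under the sharper hypothesis $s \in (2_{**},\infty)$, the two Brezis--Lions identities combine to give $\Delta^2 u_i = |\mathcal{U}|^{s-1} u_i + b_i\delta_0 - a_i\Delta\delta_0$ in $\mathcal{D}'(B_1)$, so everything reduces to showing $a_i = b_i = 0$. I would split $u_i = \Phi_i + R_i$, where $\Phi_i$ is the convolution of the bi-Laplacian fundamental solution with $b_i\delta_0 - a_i\Delta\delta_0$, while $R_i$ solves $\Delta^2 R_i = |\mathcal{U}|^{s-1} u_i \in L^{1}(B_1)$ and is therefore controlled near the origin by standard elliptic potential estimates using the Green representation in \eqref{greenfunction}. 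If $b_i > 0$ then $u_i(x) \gtrsim b_i |x|^{4-n}$ near the origin, so $u_i^{s} \gtrsim b_i^{s} |x|^{(4-n)s}$, which fails to be in $L^{1}_{\rm loc}$ precisely when $s \geqslant 2_{**} = n/(n-4)$; the case $a_i > 0$ is ruled out identically but with the stronger singularity $|x|^{2-n}$. Hence $a_i = b_i = 0$ and the distributional formulation follows by duality against any $\Phi \in C_c^\infty(B_1,\mathbb{R}^p)$.

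The main obstacle is the last step: rigorously extracting the Dirac contributions from the representation of $u_i$ so that the comparison with the bi-Laplacian fundamental solution is not polluted by the regular part $R_i$. The trick is to absorb $R_i$ into $L^{\infty}_{\rm loc}(B_1 \setminus \{0\})$-type estimates arising from $|\mathcal{U}|^{s-1}u_i \in L^1$, so that the failure of $L^s$-integrability, whenever $a_i$ or $b_i$ is positive, is forced entirely by the singular potential term.
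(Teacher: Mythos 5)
Your approach is genuinely different from the paper's and, in my view, both correct and somewhat sharper. The paper multiplies the equation by a power of a cut-off function $\xi_\varepsilon=\eta_\varepsilon^{q}$ with $q=s\gamma(s)$ and runs a H\"older estimate; the crucial step produces the factor $\varepsilon^{-4}\varepsilon^{n(1-1/s)}$, which is bounded as $\varepsilon\to 0$ only when $-4+n(1-1/s)\geqslant 0$, i.e.\ $s\geqslant 2_{**}$. Your B\^ocher--Brezis--Lions argument sidesteps this entirely: applying the removable-singularity lemma once to $u_i\geqslant 0$ (superharmonic in $B_1^*$) and once to $w_i:=-\Delta u_i\geqslant 0$ (which is superharmonic in $B_1^*$ because $-\Delta w_i=|\mathcal{U}|^{s-1}u_i\geqslant 0$) forces the distributional Laplacian of $w_i$ to be a nonnegative Radon measure on $B_1$, whose absolutely continuous part $|\mathcal{U}|^{s-1}u_i\,dx$ must then have locally finite mass; summing over $i$ and using $|\mathcal{U}|\leqslant\sum_i u_i$ gives $|\mathcal{U}|^s\in L^1_{\rm loc}(B_1)$ for \emph{every} $s>1$, with no exponent restriction. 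That is cleaner than the paper's cut-off computation and actually covers the full range stated in the lemma.

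For the second assertion your plan is sound, but the one step you should flesh out is the comparison giving $u_i(x)\gtrsim b_i|x|^{4-n}$ when $b_i>0$ (and $u_i(x)\gtrsim a_i|x|^{2-n}$ when $a_i>0$). The clean way to do it is via the Riesz decomposition twice: after B\^ocher, $w_i$ is superharmonic on $B_{1/2}$ with $w_i\geqslant 0$, so $w_i\geqslant G_{B_{1/2}}[-\Delta w_i]\geqslant b_i\,G_{B_{1/2}}(\cdot,0)\simeq b_i|x|^{2-n}$ near the origin (the greatest harmonic minorant is $\geqslant 0$ since $w_i\geqslant 0$); then the same Riesz decomposition applied to $u_i$ gives $u_i\geqslant G_{B_{1/2}}[w_i]\gtrsim b_i|x|^{4-n}$. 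This makes the $R_i$ ``pollution'' you flagged a non-issue: you never split $u_i$ into a singular and a regular part at all, you simply lower-bound the Green potentials. With that in place, $u_i^s\gtrsim b_i^s|x|^{-(n-4)s}\notin L^1_{\rm loc}$ whenever $s\geqslant 2_{**}$, contradicting the first part, so $b_i=0$; the case $a_i>0$ is even more singular and is excluded identically. By contrast, the paper proves the distributional identity by the same cut-off scheme together with dominated convergence, which is more direct but again implicitly leans on the exponent being strictly above $2_{**}$ to kill the error terms. The trade-off: your route requires the B\^ocher theorem and a little potential theory, while the paper stays entirely within integration by parts and H\"older, at the cost of not cleanly covering the subcritical range $s<2_{**}$ in the first claim.
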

	
	\begin{proof}
		For any $0<\varepsilon \ll 1$, let us consider $\eta_{\varepsilon}\in C^{\infty}\left(\mathbb{R}^{n}\right)$ with $0\leqslant\eta_{\varepsilon}\leqslant1$ satisfying
		\begin{equation}\label{cutoff}
		\eta_{\varepsilon}(x)=
		\begin{cases}
		0, & \mbox{if} \ |x| \leqslant \varepsilon\\
		1, & \mbox{if} \ |x| \geqslant 2 \varepsilon,
		\end{cases}
		\end{equation}
		and $|D^{(j)} \eta_{\varepsilon}(x)| \leqslant C \varepsilon^{-j}$ for $j\geqslant1$.
		Define $\xi_{\varepsilon}=\left(\eta_{\varepsilon}\right)^{q}$, where  $q=s\gamma(s)$. Multiplying \eqref{eq:subcriticalsystem} by $\xi_{\varepsilon}$, and integrating by parts in $B_{r}$ with $r\in(1/2,1)$, we obtain
		\begin{equation*}
		\int_{B_{r}} |\mathcal{U}|^{s-1}u_i \xi_{\varepsilon}\ud x =\int_{\partial B_{r}} {\partial_\nu} \Delta u_i\ud\sigma_r+\int_{B_{r}} u_i \Delta^{2} \xi_{\varepsilon}\ud x \quad \mbox{for all} \quad i\in I.
		\end{equation*}
		On the other hand, there exists $C>0$ such that
		\begin{equation*}
		\left|\Delta^{2} \xi_{\varepsilon}\right| \leqslant C \varepsilon^{-4}\left(\eta_{\varepsilon}\right)^{q-4} \chi_{\{\varepsilon \leqslant|x| \leqslant 2 \varepsilon\}}=C \varepsilon^{-4}\left(\xi_{\varepsilon}\right)^{1/s} \chi_{\{\varepsilon \leqslant|x| \leqslant 2 \varepsilon\}},
		\end{equation*}
		which, by H\"{o}lder's inequality, gives us
		\begin{align*}
		\left|\int_{B_{r}} u_i \Delta^{2} \xi_{\varepsilon}\ud x\right|
		&\leqslant C \varepsilon^{-4} \int_{\{\varepsilon \leqslant|x| \leqslant 2 \varepsilon\}} u_i\xi_{\varepsilon}^{1/s}\ud x \\
		& \leqslant C \varepsilon^{-4} \varepsilon^{n(1-1/s)}\left(\int_{\{\varepsilon \leqslant|x| \leqslant 2 \varepsilon\}} |\mathcal{U}|^{s-1}u_i \xi_{\varepsilon}\ud x\right)^{1/s} \\
		& \leqslant C\left(\int_{\{\varepsilon \leqslant|x| \leqslant 2 \varepsilon\}} |\mathcal{U}|^{s-1}u_i \xi_{\varepsilon}\ud x\right)^{1/s}.
		\end{align*}
		Thus, it follows
		\begin{equation*}
		\int_{B_{r}} |\mathcal{U}|^{s-1}u_i \xi_{\varepsilon}\ud x \leqslant \int_{\partial B_{r}} {\partial_\nu} \Delta u_i\ud\sigma_r+C\left(\int_{\{\varepsilon \leqslant|x| \leqslant 2 \varepsilon\}} |\mathcal{U}|^{s-1}u_i \xi_{\varepsilon}\ud x\right)^{1/s},
		\end{equation*}
		which provides a constant $C>0$ (independent of $\varepsilon$) such that
		\begin{equation*}
		\int_{B_{r}}|\mathcal{U}|^{s-1}u_i \xi_{\varepsilon}\ud x \leqslant C.
		\end{equation*}
		Now letting $\varepsilon \rightarrow 0$, since $u_i\leqslant|\mathcal{U}|$, we conclude that $u_i \in L^{s}\left(B_{r}\right)$ for all $i\in I$ and the integrability follows.
		
		We are left to show that $\mathcal{U}$ is a distribution solution to \eqref{eq:subcriticalsystem}, that is, we need to establish \eqref{distributional}. For any nonnegative $\Phi\in C^{\infty}_c(B_1,\mathbb{R}^{p})$, we multiply \eqref{eq:subcriticalsystem} by $\widetilde{\Phi}=\eta_{\varepsilon} \Phi$, where $\eta_{\varepsilon}$ is given by \eqref{cutoff}. Then, using that $|\mathcal{U}|\in L^{s}\left(B_{r}\right)$ and integrating by parts twice, we get
		\begin{equation}\label{yang1}
		\int_{B_{1}} \langle \mathcal{U},\Delta^{2}\left(\eta_{\varepsilon}\Phi\right)\rangle\ud x=\int_{B_{1}} \langle|\mathcal{U}|^{s-1}\mathcal{U},\eta_{\varepsilon}\Phi\rangle\ud x.
		\end{equation}
		By a direct computation, we find that $\Delta^{2}\left(\eta_{\varepsilon} \phi_i\right)=\eta_{\varepsilon} \Delta^{2}\phi_i+\varsigma^{\varepsilon}_i$, where
		\begin{equation*}
		\varsigma^{\varepsilon}_i=4 \langle\nabla \eta_{\varepsilon},\nabla \Delta\phi_i\rangle+2\Delta \eta_{\varepsilon} \Delta\phi_i+4 \Delta\eta_{\varepsilon}\Delta\phi_i+4\langle\nabla \Delta \eta_{\varepsilon},\nabla \phi_i\rangle+\phi_i\Delta^{2} \eta_{\varepsilon}.
		\end{equation*}
		Furthermore, using H\"{o}lder's inequality again, we find
		\begin{align*}
		\left|\int_{B_{1}} \langle \mathcal{U},\Psi_{\varepsilon}\rangle\ud x\right| \leqslant C\left(\int_{\{\varepsilon\leqslant|x| \leqslant 2 \varepsilon\}} |\mathcal{U}|^{s-1}u_i\ud x\right)^{1/s} \leqslant C\left(\int_{\{\varepsilon\leqslant|x| \leqslant 2 \varepsilon\}} |\mathcal{U}|^{s}\ud x\right)^{1/s} \rightarrow 0 \quad \mbox{as} \quad \varepsilon \rightarrow 0,
		\end{align*}
		where $\Psi_{\varepsilon}=(\varsigma^{\varepsilon}_1,\dots,\varsigma^{\varepsilon}_p)\in C_c^{\infty}(B_1,\mathbb{R}^p)$.
		Finally, letting $\varepsilon \rightarrow 0$ in \eqref{yang1}, and applying the dominated convergence theorem the proof follows.
	\end{proof}
		
	In the following lemma, we employ some ideas due to L. Caffarelli et al. \cite{MR982351} and L. Sun and J. Xiong \cite{MR3558255}. 
	
	\begin{lemma}\label{integrability}
		Let $R=1$, $s\in(2_{**},\infty)$, and  $\mathcal{U}\in C^{4}(\bar{B}_1^*,\mathbb{R}^p)\cap L^{1}(B_{1},\mathbb{R}^p)$ be a nonnegative singular
		solution to 
		\begin{equation}\label{subcriticalsystem}
		\Delta^2u_i=f^s_i(\mathcal{U}) \quad {\rm in} \quad B^*_{1},
		\end{equation}
		where $f^s_i(\mathcal{U}):=|\mathcal{U}|^{s-1}u_i$. Then, $|x|^{-q} u_i^{s}\in L^{1}\left(B_{1}\right)$ for any $q<n-{4s}/{(s-1)}$. Moreover,
		\begin{equation*}
		u_i(x)=\int_{B_{1}} G_{2}(x, y)\Delta^{2} u_i(y) \ud y+\int_{\partial B_{1}} H_{1}(x, y)u_i(y) \ud \sigma_{y}-\int_{\partial B_{1}} H_{2}(x, y)\Delta u_i(y) \ud \sigma_{y}.
		\end{equation*}
	\end{lemma}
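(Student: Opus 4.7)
Following the methodology of \cite{MR982351,MR3558255}, the plan has two stages: first, a weighted cutoff test function produces the bound $|x|^{-q} u_i^s \in L^1(B_1)$ via H\"older and absorption; second, comparison of $u_i$ with the proposed Green representation, together with Navier--Dirichlet biharmonic uniqueness, yields the integral formula.

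\textbf{Step 1 (weighted integrability).} I mimic the cutoff argument of Lemma \ref{lm:integrability} with an extra radial weight. For $\varepsilon \in (0,1/4)$, take
\[
\phi_{\varepsilon,q}(x) = \eta_\varepsilon(x)^Q\, |x|^{-q}\, \rho(x),
\]
where $\eta_\varepsilon$ is the cutoff of \eqref{cutoff}, $\rho \in C_c^\infty(B_1)$ is a radial cutoff equal to $1$ on $B_{1/2}$, and $Q \geq 4s/(s-1)$. Since $\phi_{\varepsilon,q}$ is compactly supported in $B_1^*$, multiplying \eqref{subcriticalsystem} by $\phi_{\varepsilon,q}$ and integrating by parts twice produces $\int f_i^s(\mathcal{U})\, \phi_{\varepsilon,q}\, \ud x = \int u_i\, \Delta^2 \phi_{\varepsilon,q}\, \ud x$. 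On the annulus $A_\varepsilon = \{\varepsilon \leq |x| \leq 2\varepsilon\}$, a careful Leibniz expansion and the choice of $Q$ yield $|\Delta^2 \phi_{\varepsilon,q}| \lesssim \varepsilon^{-4-q(1-1/s)}\, \phi_{\varepsilon,q}^{1/s}$, while the pieces where derivatives land only on $\rho$ are supported in $B_1\setminus B_{1/2}$ and controlled by $u_i \in L^1$. H\"older's inequality with exponents $s$ and $s/(s-1)$ bounds the annular contribution by $C\, \varepsilon^{(n-q)(1-1/s)-4}\, (\int_{A_\varepsilon} u_i^s\, |x|^{-q}\, \ud x)^{1/s}$; the $\varepsilon$-exponent is positive precisely when $q < n - 4s/(s-1)$, which is the arithmetic source of the threshold. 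Summing over $i$, using $\sum_i u_i^s \simeq \sum_i f_i^s(\mathcal{U})$, absorbing, and iterating as $\varepsilon \to 0$ (with the a priori finiteness of $M_\varepsilon := \int_{B_1 \setminus B_\varepsilon} \sum_i u_i^s\, |x|^{-q}\, \ud x$ for each fixed $\varepsilon > 0$, guaranteed by $\mathcal{U} \in C^4(\bar{B}_1^*)$) yields $|x|^{-q} u_i^s \in L^1(B_1)$ for every $q < n - 4s/(s-1)$.

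\textbf{Step 2 (integral representation).} With $q = 0$ in Step 1 (permissible since $s > 2_{**}$ makes $n - 4s/(s-1) > 0$), one has $f_i^s(\mathcal{U}) \in L^1(B_1)$, so
\[
w_i(x) := \int_{B_1} G_2(x,y)\, f_i^s(\mathcal{U}(y))\, \ud y + \int_{\partial B_1} H_1(x,y)\, u_i(y)\, \ud\sigma_y - \int_{\partial B_1} H_2(x,y)\, \Delta u_i(y)\, \ud\sigma_y
\]
is well defined and lies in $L^1(B_1)$ via $G_2(x,y) \lesssim |x-y|^{4-n}$. Standard potential theory yields $\Delta^2 w_i = f_i^s(\mathcal{U})$ distributionally on $B_1$, with Navier data $w_i = u_i$ and $\Delta w_i = \Delta u_i$ on $\partial B_1$. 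Once $u_i$ is shown to be distributional on $B_1$ (see next paragraph), the difference $v_i := u_i - w_i \in L^1(B_1)$ is distributionally biharmonic in all of $B_1$ with vanishing Navier data on $\partial B_1$; interior elliptic regularity promotes $v_i$ to a classical biharmonic function, and Navier--Dirichlet uniqueness forces $v_i \equiv 0$, proving the representation.

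\textbf{Main obstacle.} The critical step is upgrading $u_i$ from a classical solution on $B_1^*$ to a distribution solution on all of $B_1$, i.e., verifying that no concentrated defect proportional to $\delta_0$ or $\Delta \delta_0$ is produced by integration by parts across the origin. I plan to carry this out by testing $u_i$ against $\eta_\varepsilon \Phi$ for arbitrary $\Phi \in C_c^\infty(B_1, \mathbb{R}^p)$, expanding $\Delta^2(\eta_\varepsilon \Phi)$ by Leibniz into pieces of orders $0,1,2,3,4$ in derivatives of $\eta_\varepsilon$ (each supported in $A_\varepsilon$), and killing them via H\"older: the zeroth-order term uses $u_i \in L^1$, while the intermediate-order terms require the weighted bound from Step 1 for $q$ close to the threshold $n - 4s/(s-1)$. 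The hypothesis $s > 2_{**}$ (which makes this threshold positive) is precisely what gives enough room to simultaneously kill the two kinds of singular defects the fourth order operator can exhibit.
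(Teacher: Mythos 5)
Your Step 1 contains the genuine gap. Testing \eqref{subcriticalsystem} against $\phi_{\varepsilon,q}=\eta_\varepsilon^{Q}|x|^{-q}\rho$ produces, on the region $2\varepsilon\leqslant|x|\leqslant 1/2$ where $\eta_\varepsilon\equiv\rho\equiv 1$, the interior contribution $\int u_i\,\Delta^{2}\bigl(|x|^{-q}\bigr)\ud x=q(q+2)(n-2-q)(n-4-q)\int u_i|x|^{-q-4}\ud x$, which is nonzero (indeed positive) for every $q$ in the relevant range $0<q<n-4s/(s-1)<n-4$. You only discuss the annular terms (where derivatives hit $\eta_\varepsilon$) and the outer terms (where they hit $\rho$); this interior term is controlled neither by $u_i\in L^{1}(B_1)$ nor by any sign or absorption argument, and by H\"older its finiteness would itself require exactly the weighted bound $\int u_i^{s}|x|^{-q}\ud x<\infty$ you are trying to prove, so the argument as written is circular: as $\varepsilon\to 0$ the right-hand side of your identity is uniformly bounded only if this term is, and nothing in the hypotheses gives that. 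Your exponent count on the annulus does correctly identify the threshold $q<n-4s/(s-1)$, but it cannot by itself deliver the conclusion for any $q>0$. The paper avoids this by using the weight $|x|^{4-q}$ instead of $|x|^{-q}$, so that the interior term is $\Delta^{2}|x|^{4-q}\simeq|x|^{-q}$ paired with $u_i$, and then bootstrapping: starting from $u_i\in L^{1}$ ($q_0=0$) one gets $\int u_i^{s}|x|^{4-q_k}\ud x<\infty$, H\"older upgrades this to $\int u_i|x|^{-q_{k+1}}\ud x<\infty$ with $q_{k}=[n(s-1)-4]\sum_{j=1}^{k}s^{-j}$, and $q_k\uparrow n-4/(s-1)$ yields the full stated range. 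Some iteration of this type (or an equivalent device) is needed; a single-shot test function cannot reach the threshold.

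Your Step 2 is essentially viable and largely parallels the paper: the cutoff argument with $\eta_\varepsilon\Phi$, H\"older, $|D^{(j)}\eta_\varepsilon|\leqslant C\varepsilon^{-j}$ and the fact that $s>2_{**}$ gives $n(1-1/s)>4$ is exactly how the paper (in Lemma~\ref{lm:integrability}) upgrades $\mathcal{U}$ to a distribution solution across the origin, and your endgame (the difference is biharmonic with vanishing Navier data, then the maximum principle applied twice) is the paper's as well. The paper takes a slightly different route to remove the singularity of $h_i=u_i-\widehat u_i$: a weak $L^{2^{**}/2}$ estimate near the origin, coming from the weak-type $(1,2^{**}/2)$ bound for the Riesz potential $|x|^{4-n}$, combined with a B\^ocher-type theorem for biharmonic functions; your Weyl-lemma route is acceptable, provided you justify that the Green and Poisson integrals defining $w_i$ attain the claimed boundary values so that the Navier uniqueness argument applies. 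But this part only stands once Step 1 is repaired.
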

	
	\begin{proof}
		First, by Lemma~\ref{lm:integrability}, we have that $|\mathcal{U}|\in L^{s}\left(B_{1},\mathbb{R}^{p}\right)$ and $\mathcal{U}$ is a distribuitional solution in $B_1$.
		Let $\eta\in C^{\infty}(\mathbb{R})$ such that $\eta(t)=0$ for $t\leqslant 1$, $\eta(t)=1$ for $t \geqslant 2$ and $0 \leqslant\eta\leqslant 1$ for $1\leqslant t\leqslant 2$. For small $\varepsilon>0$, plugging $\eta\left(\varepsilon^{-1}|x|\right)|x|^{4-q}$ into \eqref{subcriticalsystem}, and using integration by parts, it holds
		\begin{equation}\label{integrality1}
		\int_{B_{1}} f^s_i(\mathcal{U})\eta\left(\varepsilon^{-1}|x|\right)|x|^{4-q}\ud x=\int_{B_{1}} u_i\Delta^{2}\left(\eta\left(\varepsilon^{-1}|x|\right)|x|^{4-q}\right)\ud x+\int_{\partial B_{1}} G(u_i) \ud \sigma_y,
		\end{equation}
		where $G(u_i)$ involves $u_i$ and its derivatives up to third order. Taking $q=q_{0}=0$ in \eqref{integrality1}, we have that  $\int_{B_{1}} u_i^{s}|x|^{4{\left(-q_{1}-4/s\right)}}\ud x<\infty$ as $\varepsilon \rightarrow 0$, since by hypothesis $u_i \in L^{1}\left(B_{1}\right)$ for all $i\in I$. Moreover, by H\"{o}lder inequality, if $0<q_{1}<[n(s-1)-4]/s$, we have
		\begin{equation*}
		\int_{B_{1}} u_i|x|^{-q_{1}}\ud x
		=\int_{B_{1}} u_i|x|^{4/s}|x|^{\left(-q_{1}-4/s\right)} \ud x
		\leqslant\left(\int_{B_{1}} u_i^{s}|x|^{4}\ud x\right)^{1/s}\left(\int_{B_{1}}|x|^{-\frac{4+s q_{1}}{s-1}}\ud x\right)^{(s-1)/s}<\infty.
		\end{equation*}
		Next, using that $q=q_{1}$ in \eqref{integrality1} and taking the limit $\varepsilon\rightarrow0$, we obtain $\int_{B_{1}} u^{s}|x|^{4-q_{1}}\ud x<\infty$. By the same argument, we find  $\int_{B_{1}} u|x|^{-q_{2}}\ud x<\infty$, if $q_{2}<[n(s-1)-4](s^{-1}+s^{-2})$.
		Iterating this procedure, we get
		\begin{equation*}
		\int_{B_{1}} u_i|x|^{-q_{k}}\ud x<\infty \quad \mbox{and} \quad \int_{B_{1}} u_i^{s}|x|^{4-q_{k}}\ud x<\infty,
		\end{equation*}
		if $0<q_{k}:=[n(s-1)-4]\left(\sum_{i=1}^{k}s^{-i}\right)$ for $k\in\mathbb{N}$, which proves the first statement.
		
		Next, let us consider
		\begin{equation*}
		\widehat{u}_i(x)=\int_{B_{1}} G_{2}(x, y)\Delta^{2} u_i(y) \ud y+\int_{\partial B_{1}} H_{1}(x, y)u_i(y) \ud \sigma_{y}-\int_{\partial B_{1}} H_{2}(x, y)\Delta u_i(y) \ud \sigma_{y} \quad \mbox{for} \quad i\in I.
		\end{equation*}
		and define $h_i=u_i-\widehat{u}_i$. Thus, $\Delta^{2} h_i=0$  in $B^*_{1}$ since $u_i^{s} \in L^{1}\left(B_{1}\right)$ and $\widehat{u}_i\in L_{{\rm weak}}^{2^{**}/2}\left(B_{1}\right)\cap L^{1}\left(B_{1}\right)$. Furthermore, since the Riesz potential $|x|^{4-n}$ is of weak type $\left(1,2^{**}/2\right)$ (see \cite[Chapter~9]{MR1814364}), for every $0<\varepsilon\ll1$, we can choose $\rho>0$
		such that $\int_{B_{2\rho}} f_i(\mathcal{U}(x))\ud x<\varepsilon$, which implies that for all $M\gg1$,
		\begin{equation*}
		\mathcal{L}^n\left(\{x \in B_{\rho} : |\widehat{u}_i(x)|>M\}\right) \leqslant\mathcal{L}^n\left(\left\{x \in B_{\rho} : \int_{B_{2\rho}} G_{2}(x, y) f_i(\mathcal{U}(y))\ud y>\mu / 2\right\}\right)\leqslant\varepsilon M^{-2^{**}/2}.
		\end{equation*}
		Hence, $h_i\in L_{{\rm weak}}^{2^{**}/2}\left(B_{1}\right)\cap L^{1}\left(B_{1}\right)$ for all $i\in I$ and for every $0<\varepsilon\ll1$, there exists $\rho>0$ such that for all
		sufficiently large $\mu\gg1$, it holds
		\begin{align*}
		\mathcal{L}^n\left(\{x \in B_{\rho}:|h_i(x)|>M\}\right)&\leqslant\mathcal{L}^n\left(\{x \in B_{\rho}:|u_i(x)|>M/2|\}\right)+\mathcal{L}^n\left(\{x \in B_{\rho}:|\widehat{u}_i(x)|>M/2\}\right)&\\
		&\leqslant\varepsilon M
		^{-2^{**}/2}.&
		\end{align*}
		Using the B\^ocher theorem for biharmonic functions from \cite{MR1820695}, we obtain that $\Delta^2 h_i=0$ in $B_{1}$. Finally, since $h_i=\Delta h_i=0$ on $\partial B_{1}$, by the maximum principle, we have that $h_i \equiv 0$ and thus $u_i=\widehat{u}_i$, which finishes the proof of the second part.
	\end{proof}
	
	\begin{remark}
		If $\mathcal{U} \in C^{2}\left(B^*_{1},\mathbb{R}^p\right)$ is a nonnegative superharmonic $p$-map, then $\mathcal{U}\in L^{1}\left(B_{1/2},\mathbb{R}^p\right)$. Indeed, for $0<r<1$ and $i\in I$, we have
		\begin{equation*}
		\left(r^{n-1} \overline{u}_i^{(1)}\right)^{(1)}\leqslant0,
		\end{equation*}
		where $\overline{u}_i(r)=\avint_{\partial B_{1}} u_i(r \theta)\ud\theta$, which by a direct integration implies that $\overline{u}_i(r)\leqslant C\left(r^{2-n}+1\right)$.
	\end{remark}
	
	In the next result, we use the Green identity to convert \eqref{eq:subcriticalsystem} into an integral system.
	
	\begin{proposition}\label{lm:integralrepresentation}
		Let $R<\infty$ and $\mathcal{U}$ be a nonnegative superharmonic solution to \eqref{eq:subcriticalsystem}. Then, there exists $r_0>0$ such that 
		\begin{equation}\label{integralsystem}
		u_i(x)=\int_{B_{r_0}}|x-y|^{4-n}f^s_i(\mathcal{U}) \ud y+\psi_i(x),
		\end{equation}
		where $\psi_i>0$ satisfies $\Delta^{2} \psi_i=0$ in $B_{r_0}$ for all $i\in I$. Moreover, one can find a constant $C(\widetilde{r})>0$ such that $\|\nabla \ln \psi_i\|_{C^{0}\left(B_{\widetilde{r}}\right)} \leqslant C\left(\widetilde{r}\right)$ for all $i\in I$ and $0<\widetilde{r}<r_0$. 
	\end{proposition}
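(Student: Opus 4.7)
The plan is to combine the integral representation from Lemma \ref{integrability} with the explicit splitting \eqref{greenfunction} of the biharmonic Green function. First I would fix $r_0\in(0,R)$ sufficiently small and pass to the rescaled $p$-map $\widetilde{\mathcal{U}}(x)=r_0^{4/(s-1)}\mathcal{U}(r_0 x)$, which is again a nonnegative superharmonic (classical) solution to \eqref{eq:subcriticalsystem} on $B_1^*$. The remark following Lemma \ref{integrability} shows that $\widetilde{\mathcal{U}}\in L^1(B_1,\mathbb{R}^p)$ via the superharmonic mean-value inequality, so Lemma \ref{integrability} applies and, after rescaling back, yields the representation
\begin{equation*}
u_i(x)=\int_{B_{r_0}}G_2^{(r_0)}(x,y)f_i^s(\mathcal{U}(y))\,\ud y+\int_{\partial B_{r_0}}H_1^{(r_0)}(x,y)u_i(y)\,\ud\sigma_y-\int_{\partial B_{r_0}}H_2^{(r_0)}(x,y)\Delta u_i(y)\,\ud\sigma_y,
\end{equation*}
where $G_2^{(r_0)}$, $H_j^{(r_0)}$ are the rescaled Green/Poisson kernels on $B_{r_0}$.

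Next, invoking \eqref{greenfunction} in the form $G_2^{(r_0)}(x,y)=c_{n}|x-y|^{4-n}-A_{r_0}(x,y)$ with $A_{r_0}$ smooth on $B_{r_0}\times B_{r_0}$, I would (after absorbing the normalizing constant $c_n=C(n,2)$ into the convention used by the paper for the Riesz-type kernel) isolate the singular Riesz contribution and set
\begin{equation*}
\psi_i(x):=-\int_{B_{r_0}}A_{r_0}(x,y)f_i^s(\mathcal{U}(y))\,\ud y+\int_{\partial B_{r_0}}H_1^{(r_0)}(x,y)u_i(y)\,\ud\sigma_y-\int_{\partial B_{r_0}}H_2^{(r_0)}(x,y)\Delta u_i(y)\,\ud\sigma_y,
\end{equation*}
which gives \eqref{integralsystem} by construction. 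Each of the three pieces of $\psi_i$ is biharmonic in $x\in B_{r_0}$: the volume term since $A_{r_0}(\cdot,y)$ is biharmonic away from $y$ and the singularities of $G_2^{(r_0)}$ and $c_n|\cdot-y|^{4-n}$ cancel, and the boundary terms since $H_1^{(r_0)}$ and $H_2^{(r_0)}$ are Poisson-type kernels of the Navier biharmonic problem. Hence $\Delta^2\psi_i=0$ in $B_{r_0}$.

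For the strict positivity of $\psi_i$, I would use the superharmonic hypothesis to pass the sign to the boundary: for a.e. $r_0$ small, $u_i>0$ and $-\Delta u_i\geqslant 0$ on $\partial B_{r_0}$ by the strong maximum principle applied to $-\Delta u_i$, while $H_j^{(r_0)}>0$ in the interior, so the two boundary integrals yield a strictly positive biharmonic function bounded below uniformly on compact subsets. The volume integral $-\int A_{r_0}f_i^s(\mathcal{U})\,\ud y$ is controlled in absolute value by $\|A_{r_0}\|_{C^0}\|f_i^s(\mathcal{U})\|_{L^1(B_{r_0})}$, which tends to $0$ as $r_0\to 0$ thanks to the integrability furnished by Lemma \ref{lm:integrability}. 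Choosing $r_0$ sufficiently small makes the positive boundary contribution dominate uniformly, and $\psi_i>0$ in $B_{r_0}$ follows. I anticipate this positivity step to be the main obstacle, because the natural splitting is sign-indefinite and the biharmonic operator lacks the clean strong maximum principle of the Laplacian; a careful quantitative comparison between the Poisson-type boundary integrals and the smooth volume remainder is required, and one cannot rely on a generic maximum-principle shortcut.

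Finally, for the $C^0$ bound on $\nabla\ln\psi_i$: since $\psi_i$ is biharmonic and strictly positive in $B_{r_0}$, standard interior Schauder estimates yield $\|\psi_i\|_{C^{1}(B_{\widetilde{r}})}\leqslant C(\widetilde{r})$, and the Harnack-type inequality for positive biharmonic functions (applied on $B_{\widetilde{r}}\Subset B_{r_0}$) gives $\inf_{B_{\widetilde{r}}}\psi_i\geqslant c(\widetilde{r})>0$. Dividing the gradient bound by this infimum produces $\|\nabla\ln\psi_i\|_{C^0(B_{\widetilde{r}})}\leqslant C(\widetilde{r})$, completing the proof.
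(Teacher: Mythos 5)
Your overall strategy mirrors the paper's: use the Green representation from Lemma~\ref{integrability}, split the biharmonic Green function $G_2$ into a Riesz part and a smooth remainder, push the remainder plus the boundary terms into $\psi_i$, use the superharmonicity of $u_i$ together with kernel positivity to obtain a positive lower bound for $\psi_i$, and finally divide $|\nabla\psi_i|$ by that bound. The crucial difference --- and the one that breaks your proof --- is \emph{which} ball carries the Green representation. You rescale so the representation lives on $B_{r_0}$ itself, and thereby the smooth remainder $A_{r_0}$ and the kernels $H_j^{(r_0)}$ become those attached to the shrinking ball. By scaling, $G_2^{(r_0)}(x,y)=r_0^{4-n}G_2^{(1)}(x/r_0,y/r_0)$, hence $A_{r_0}(x,y)=r_0^{4-n}A(x/r_0,y/r_0)$ and $\|A_{r_0}\|_{C^0(B_{r_0}\times B_{r_0})}=r_0^{4-n}\|A\|_{C^0(B_1\times B_1)}\to\infty$ as $r_0\to 0$ (recall $n>4$). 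Your claim that $\|A_{r_0}\|_{C^0}\|f^s_i(\mathcal{U})\|_{L^1(B_{r_0})}\to 0$ is therefore false as stated; at the natural rate $|\mathcal{U}(x)|\sim|x|^{-\gamma(s)}$ from Lemma~\ref{lm:universalestimates} this product scales like $r_0^{4-s\gamma(s)}$, while the boundary contribution scales like $\min_{\partial B_{r_0}}u_i\sim r_0^{-\gamma(s)}$, and since $4-(s-1)\gamma(s)=0$ the two exponents coincide. The $A_{r_0}$-integral and the boundary integral you want to compare are of the \emph{same order}, so positivity of $\psi_i$ cannot be read off from any crude bound in your decomposition.

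The paper's route avoids this entirely by \emph{not} rescaling the Green representation: it keeps the representation on the fixed ball $B_1$, so that $A$, $H_1$, $H_2$ and the boundary data $u_i|_{\partial B_1}$, $\Delta u_i|_{\partial B_1}$ are independent of $r_0$, and only splits the solid integral as $\int_{B_1}G_2 f^s_i = C(n,2)\int_{B_{r_0}}|x-y|^{4-n}f^s_i-\int_{B_{r_0}}A f^s_i+\int_{B_1\setminus B_{r_0}}G_2 f^s_i$. The boundary term $\int_{\partial B_1}H_1(x,y)u_i(y)\,\ud\sigma_y\geqslant c_{1i}:=\min_{\partial B_1}u_i=\inf_{B_1}u_i>0$ is then a \emph{fixed} positive constant, using $\int_{\partial B_1}H_1(x,\cdot)\,\ud\sigma=1$ and the second-order minimum principle for the superharmonic $u_i$; the pieces $\int_{B_1\setminus B_{r_0}}G_2 f^s_i$ and $-\int_{\partial B_1}H_2\Delta u_i$ are nonnegative by Boggio positivity and superharmonicity; and the only potentially negative piece $-\int_{B_{r_0}}A f^s_i$ is bounded in modulus by the now-fixed constant $\|A\|_{C^0(B_{1/4}\times B_1)}$ times $\|f^s_i\|_{L^1(B_{r_0})}$, which genuinely tends to $0$ by absolute continuity. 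Choosing $r_0$ so this is $\leqslant c_{1i}/2$ gives $\psi_i\geqslant c_{1i}/2$ on $B_{r_0}$, after which the $\nabla\ln\psi_i$ bound goes through as you describe. In particular, your concern that the biharmonic operator lacks a strong maximum principle is moot: positivity here rests on kernel positivity and the \emph{second-order} maximum principle for $u_i$, not on any fourth-order comparison principle.
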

	
	\begin{proof}
		Using that $-\Delta u_i>0$ in $B^*_{1}$ and $u_i>0$ in $\bar{B}_{1}$, it follows from the maximum principle that $c_{1i}:=\inf _{B_{1}} u_i=\min _{\partial B_{1}} u_i>0 $. In addition, by Lemma~\ref{integrability}, we get that $f^s_i(\mathcal{U}) \in L^{1}\left(B_{1}\right)$, which implies that there exists $r_0<1/4$ satisfying
		\begin{equation*}
		\int_{B_{\tau}}|A(x, y)|f^s_i(\mathcal{U})\ud y\leqslant\frac{c_{1i}}{2} \quad \mbox{for} \quad x\in B_{r_0},
		\end{equation*}
		where $A(x, y)$ is given by \eqref{greenfunction}. Hence, for $x \in B_{r_0}$, we get
		\begin{align*} 
		\psi_i(x)=-& \int_{B_{r}} A(x, y)f^s_i(\mathcal{U})\ud y+\int_{B_{1}\setminus B_{r}} G_{2}(x, y) f^s_i(\mathcal{U}) \mathrm{d} y&\\
		&+\int_{\partial B_{1}} H_{1}(x, y)u_i(y)\ud\sigma_{y}-\int_{\partial B_{1}} H_{2}(x, y)\Delta u_i(y)\ud\sigma_{y}&\\ 
		&\geqslant-\frac{c_{1i}}{2}+\int_{\partial B_{1}} H_{1}(x, y) u_i(y) \ud\sigma_{y}&\\
		&\geqslant-\frac{c_{1i}}{2}+\inf _{B_{1}} u_i=\frac{c_{1i}}{2}.
		\end{align*}
		By hypothesis, $\psi_i$ is biharmonic and $\psi_i\in C^{\infty}(B_{r_0})$ for all $i\in I$, which provides that $|\nabla \psi_i|\leqslant C_i(\widetilde{r})$ in $B_{\widetilde{r}}$ for all $\widetilde{r}<\rho$ and $i\in I$, where $C_i\left(\widetilde{r}\right)>0$ depends only on $n,\rho,\widetilde{r}$ and in the $L^{1}$ norm of $f^s_i(\mathcal{U})$. Consequently,
		\begin{equation*}
		\|\nabla \ln \psi_i\|_{C^{0}\left(B_{\widetilde{r}}\right)} \leqslant 2\frac{C\left(\widetilde{r}\right)}{c_{1i}} \quad \mbox{for} \quad i\in I,
		\end{equation*}
		which finishes the proof.
	\end{proof}
	
	\subsection{Kelvin transform}\label{sec:kelvintransform}
	Later we will employ the moving spheres technique, which is based on the {\it fourth order Kelvin transform} for a $p$-map. 
	To define the Kelvin transform, we need to establish the concept of {\it inversion about a sphere} $\partial B_{\mu}(x_0)$, which is a map $\mathcal{I}_{x_0,\mu}:\mathbb{R}^n\rightarrow\mathbb{R}^n\setminus\{x_0\}$ given by $\mathcal{I}_{x_0,\mu}(x)=x_0+K_{x_0,\mu}(x)^2(x-x_0)$, where $K_{x_0,\mu}(x)=\mu/|x-x_0|$ (for more details, see \cite[Section~2.7]{arXiv:2002.12491}). 
	For $\Omega=B_R$ (or $\Omega=\mathbb{R}^n$) or $\Omega=B_R^*$ (or $\Omega=\mathbb{R}^n\setminus\{0\}$), we set $\Omega_{x_0,\mu}:=\mathcal{I}_{x_0,\mu}(\Omega)$ as the image of $\Omega$ under the inversion map. 
	Notice that $\Omega_{x_0,\mu}=\Omega\setminus B_{\mu}(x_0)$.
	The following definition is a generalization of the Kelvin transform for fourth order operators applied to $p$-maps.
	
	\begin{definition}
		For any $\mathcal{U}\in C^4(\Omega,\mathbb{R}^p)$, let us consider the fourth order Kelvin transform about the sphere with center at $x_0\in\mathbb{R}^n$ and radius $\mu>0$ defined by 
		\begin{equation*}
		\mathcal{U}_{x_0,\mu}(x)=K_{x_0,\mu}(x)^{n-4}\mathcal{U}\left(\mathcal{I}_{x_0,\mu}(x)\right).
		\end{equation*}
	\end{definition}
	
	\begin{proposition}\label{prop:conformalinvariance}
		If $\mathcal{U}$ be a solution to \eqref{eq:subcriticalsystem}, then  $\mathcal{U}_{x_0,\mu}$ is a solution to 
		\begin{equation*}
		\Delta^{2}(u_{i})_{x_0,\mu}=K_{x_0,\mu}(x)^{(n-4)s-(n+4)}|\mathcal{U}_{x_0,\mu}|^{s-1}(u_{i})_{x_0,\mu} \quad {\rm in} \quad \Omega\setminus\{x_0\}\quad {\rm for} \quad i\in I,
		\end{equation*}
		where $\mathcal{U}_{x_0,\mu}=((u_1)_{x_0,\mu},\dots,(u_p)_{x_0,\mu})$.
	\end{proposition}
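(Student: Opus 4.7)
The plan is to apply componentwise the classical conformal covariance of the bi-Laplacian under Kelvin inversion, combine it with the original system, and rescale the Gross--Pitaevskii nonlinearity using the fact that inversion multiplies every component of $\mathcal{U}$ by the same positive scalar factor $K_{x_0,\mu}^{\,n-4}$.

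The decisive scalar ingredient is the fourth order Kelvin covariance identity
\[
\Delta^{2}\bigl[K_{x_0,\mu}^{\,n-4}\,(u\circ\mathcal{I}_{x_0,\mu})\bigr](x) \;=\; K_{x_0,\mu}(x)^{\,n+4}\,(\Delta^{2}u)(\mathcal{I}_{x_0,\mu}(x)),
\]
valid for every $u\in C^{4}$ and every $x$ in the appropriate domain. This is the fourth-order analogue of the familiar second-order formula $\Delta[K^{\,n-2}(v\circ\mathcal{I})]=K^{\,n+2}(\Delta v)\circ\mathcal{I}$ and admits two clean derivations: a direct chain-rule computation based on $\mathcal{I}_{x_0,\mu}(x)-x_0=K_{x_0,\mu}(x)^{2}(x-x_0)$ together with the explicit Jacobian of the inversion; or the observation that $\Delta^{2}$ is the flat Paneitz operator, conformally covariant under the metric rescaling induced by inversion. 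A short test on the homogeneous profiles $u(y)=|y-x_0|^{\alpha}$, on which both sides reduce to the same polynomial in $\alpha$ via $\Delta|z|^{\alpha}=\alpha(\alpha+n-2)|z|^{\alpha-2}$ applied twice, efficiently verifies the exponent.

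Once the scalar identity is in hand, I apply it componentwise for each $i\in I$. Writing $y=\mathcal{I}_{x_0,\mu}(x)$ and using that $\mathcal{U}$ solves \eqref{eq:subcriticalsystem},
\[
\Delta^{2}(u_i)_{x_0,\mu}(x) \;=\; K_{x_0,\mu}(x)^{\,n+4}\,(\Delta^{2}u_i)(y) \;=\; K_{x_0,\mu}(x)^{\,n+4}\,|\mathcal{U}(y)|^{s-1}\,u_i(y).
\]
Because every component is rescaled by the same positive factor, $|\mathcal{U}_{x_0,\mu}(x)|=K_{x_0,\mu}(x)^{\,n-4}|\mathcal{U}(y)|$; inverting this together with the componentwise relation $u_i(y)=K_{x_0,\mu}(x)^{-(n-4)}(u_i)_{x_0,\mu}(x)$ and collecting the exponents yields the stated transformed equation.

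The only genuine obstacle is the scalar covariance identity in Step~1; the remaining steps are bookkeeping, relying solely on the positivity of $K_{x_0,\mu}$ and the componentwise homogeneity of the Gross--Pitaevskii coupling $|\mathcal{U}|^{s-1}u_i$. No integrability, regularity beyond $C^{4}$, or maximum-principle input is required, and the domain restriction $x\in\Omega\setminus\{x_0\}$ is merely what ensures that $\mathcal{I}_{x_0,\mu}(x)$ remains in the domain of $\mathcal{U}$.
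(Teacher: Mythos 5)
Your approach coincides with the paper's: the paper cites Xu's scalar covariance identity $\Delta^{2}\bigl[K_{x_0,\mu}^{\,n-4}\,(u\circ\mathcal{I}_{x_0,\mu})\bigr]=K_{x_0,\mu}^{\,n+4}\,(\Delta^{2}u)\circ\mathcal{I}_{x_0,\mu}$ (Lemma~3.6 of \cite{MR1769247}) and applies it componentwise, which is exactly what you do, with the useful addition of the radial-profile sanity check. No new input is required beyond that identity plus the fact that every component is rescaled by the same factor $K^{n-4}$, so $|\mathcal{U}_{x_0,\mu}|=K^{n-4}\,|\mathcal{U}\circ\mathcal{I}|$.

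One point to flag, however: when you actually ``collect the exponents'' in your last step, you obtain
\begin{equation*}
\Delta^{2}(u_i)_{x_0,\mu}
=K_{x_0,\mu}^{\,n+4}\cdot K_{x_0,\mu}^{-(n-4)(s-1)}\cdot K_{x_0,\mu}^{-(n-4)}\,
|\mathcal{U}_{x_0,\mu}|^{s-1}(u_i)_{x_0,\mu}
=K_{x_0,\mu}^{\,(n+4)-(n-4)s}\,|\mathcal{U}_{x_0,\mu}|^{s-1}(u_i)_{x_0,\mu},
\end{equation*}
whose exponent is $(n+4)-(n-4)s$, not $(n-4)s-(n+4)$ as in the statement; the two differ by a sign. (A quick check at $s=1$ gives exponent $8=(n+4)-(n-4)$ from the scalar identity, confirming your computed sign.) The two expressions coincide only in the critical case $s=2^{**}-1$, where both vanish, which is why the discrepancy does not affect the downstream use of the proposition. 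Still, you should not assert that the bookkeeping ``yields the stated transformed equation'' when the sign you derive disagrees with the statement -- either note the discrepancy or write out the exponent so the reader can see it; as written, the paper's stated exponent appears to carry a sign slip.
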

	
	\begin{proof}
		It directly follows by using \cite[Lemma~3.6]{MR1769247} on each component solution $u_i$.
	\end{proof}
	
	\subsection{Cylindrical transformation}\label{subsec:cylindrcialtransform} 
	Here the goal is to convert the subcritical System \eqref{eq:subcriticalsystem} into a PDE on a cylinder. 
	Then, the local behavior near the origin reduces the asymptotic global behavior for tempered solutions to a fourth order PDE defined on a cylinder. 
	We have included the computations for both second and fourth cases in Appendix \ref{app:computations}.
	
	\subsubsection{Autonomous case}
	Initially, let us introduce the so-called {\it cylindrical transformation} (see also \cite{arXiv:2002.12491}). First, we consider the cylinder $\mathcal{C}_R:=(0,R)\times\mathbb{S}^{n-1}$ and $\Delta^2_{\rm sph}$ the bi-Laplacian written in spherical (polar) coordinates,
	\begin{align*}
	\Delta^2_{\rm sph}&=\partial_r^{(4)}+ \frac{2(n-1)}{r}\partial_r^{(3)}+\frac{(n-1)(n-3)}{r^2}\partial_r^{(2)}-\frac{(n-1)(n-3)}{r^3}\partial_r&\\\nonumber
	&+\frac{1}{r^4}\Delta_{\sigma}^2+\frac{2}{r^2}\partial^{(2)}_r\Delta_{\sigma}+\frac{2(n-3)}{r^3}\partial_r\Delta_{\sigma}-\frac{2(n-4)}{r^4}\Delta_{\sigma},&
	\end{align*}
	where $\Delta_{\sigma}$ denotes the Laplace--Beltrami operator in $\mathbb{S}^{n-1}$.
	Then, in polar coordinates, we can rewrite \eqref{eq:subcriticalsystem} as the nonlinear system,
	\begin{equation*}
	\Delta^2_{\rm sph}u_i=|\mathcal{U}|^{s-1}u_i \quad {\rm in} \quad \mathcal{C}_R.
	\end{equation*}
	In addition, we apply the Emden--Fowler change of variables (or logarithm coordinates) given by
	\begin{equation*}
		\mathcal{V}(t,\theta)=r^{\gamma(s)}\mathcal{U}(r,\sigma), \quad \mbox{where} \quad r=|x|, \quad t=-\ln r, \quad \sigma=\theta=x/|x|, \quad \mbox{and} \quad \gamma(s)=4/{(s-1)},
	\end{equation*}  
	which sends the problem to a cylinder $\mathcal{C}_T:=(-\ln R,\infty)\times \mathbb{S}^{n-1}$.  
	Using this coordinate system and performing a lengthy computation, we arrive at the following fourth order nonlinear PDE system on the cylinder,
	\begin{equation}\label{eq:subcriticalcylindricalsystem}
	\Delta^2_{\rm cyl}v_i=|\mathcal{V}|^{s-1}v_i \quad {\rm on} \quad {\mathcal{C}}_T.
	\end{equation}
	Here $\Delta^2_{\rm cyl}$ is the bi-Laplacian written in cylindrical coordinates given by
	\begin{align*}
	\Delta^2_{\rm cyl}&=\partial_t^{(4)}+K_{3}(n,s)\partial_t^{(3)}+K_{2}(n,s)\partial_t^{(2)}+K_{1}(n,s)\partial_t+K_{0}(n,s)&\\\nonumber
	&+\Delta_{\theta}^{2}+2\partial^{(2)}_t\Delta_{\theta}+J_{1}(n,s)\partial_t\Delta_{\theta}+J_{0}(n,s)\Delta_{\theta},&
	\end{align*}
	where $K_{j}(n,s),J_{j}(n,s)$ are constants depending only on $n,s$ for $j=0,1,2,3,4$, which are defined by
	\begin{align}\label{autonomouscoefficients}
	&\nonumber K_{0}(n,s)={8}(s-1)^{-4}\left[(n-2)(n-4)(s-1)^{3}+2\left(n^{2}-10 n+20\right)(s-1)^{2}
	-16(n-4)(s-1)+32\right],&\\\nonumber &K_{1}(n,s)=-{2}{(s-1)^{-3}}\left[(n-2)(n-4)(s-1)^{3}+4\left(n^{2}-10 n+20\right)(s-1)^{2}-48(n-4)(s-1)+128\right],&\\\nonumber
	&K_{2}(n,s)={1}{(s-1)^{-2}}\left[\left(n^{2}-10 n+20\right)(s-1)^{2}-24(n-4)(s-1)+96\right],&\\
	&K_{3}(n,s)={2}{(s-1)}^{-1}[(n-4)(s-1)-8],&\\\nonumber
	&J_{0}(n,s)=-{2}{(s-1)^{-2}}\left[(n-4)(s-1)^{2}+4(n-4)(s-1)-16\right],&\\\nonumber
	&J_{1}(n,s)={2}{(s-1)^{-1}}\left[(n-4)(s-1)+16\right].&
	\end{align}
	
	\begin{remark}\label{rmk:signoncoefficients}
		By direct computations, notice that when $s=2^{**}-1$, it follows 
		\begin{equation*}
		K^*_1\equiv K^*_3\equiv J^*_1\equiv0, \quad 
		K^*_0=\frac{n^2(n-4)^2}{16}, \quad K^*_2=-\frac{n^2-4n+8}{2}, \quad {\rm and} \quad J^*_0=-\frac{n(n-4)}{2},
		\end{equation*}
		whereas, when $s=2_{**}$, it holds
		\begin{align*}
		&K_{0,*}\equiv0, \ K_{1,*}=2(n-4)(n+2), \ K_{2,*}=n^2-10n+20, \ K_{3,*}=J_{1,*}=2(n-4), \ {\rm and} \ J_{0,*}=-2(n-4).&
		\end{align*}
		Moreover, by \cite[Lemma~3.3]{MR4123335}, for $s\in(2_{**},2^{**}-1)$, one has
		\begin{equation*}
		K_{0}(n,s)>0, \quad 
		K_{1}(n,s)>0, \quad {\rm and} \quad
		K_{3}(n,s)<0, \quad 
		J_0(n,s)<0,
		\end{equation*}
		and, $K_{2}(n,s)$ has no defined sign. This change of sign behavior explains why one needs to use distinct methods when the power parameter $s\in(1,\infty)$ changes.
	\end{remark}
	
	\begin{definition}
		Let us consider the autonomous cylindrical transformation as follows
		\begin{equation}\label{cyltransform}
		\mathfrak{F}:C_c^{\infty}(B_R^*,\mathbb{R}^p)\rightarrow C_c^{\infty}(\mathcal{C}_{T},\mathbb{R}^p) \quad \mbox{given by} \quad
		\mathfrak{F}(\mathcal{U})=r^{\gamma(s)}\mathcal{U}(r,\sigma).
		\end{equation}
	\end{definition}
	
	Notice that in the limit case $R=\infty$, that is, $\mathcal{C}_\infty=\mathbb{R}\times\mathbb{S}^{n-1}$, blow-up solutions are rotationally invariant. 
	Therefore, \eqref{eq:subcriticalcylindricalsystem} becomes the following ODE system with constant coefficients,
	\begin{equation}\label{eq:subcriticalODEsystem}
	v_i^{(4)}+K_{3}(n,s)v_i^{(3)}+K_{2}(n,s)v_i^{(2)}+K_{1}(n,s)v_i^{(1)}+K_{0}(n,s)v_i=|\mathcal{V}|^{s-1}v_i \quad {\rm in} \quad \mathbb{R},
	\end{equation}
	which can be furnished with the suitable initial data at $t=0$ to become a well-posed Cauchy problem.
	
	\begin{remark}
		The transformation $\mathfrak{F}$ is a continuous bijection with respect to the Sobolev norms $\|\cdot\|_{\mathcal{D}^{2,2}(B_R^*,\mathbb{R}^p)}$ and $\|\cdot\|_{H^{2}(\mathcal{C}_T,\mathbb{R}^p)}$, respectively. Furthermore, this transformation sends singular solutions to \eqref{eq:subcriticalsystem} into solutions to \eqref{eq:subcriticalcylindricalsystem} and by density, we get $\mathfrak{F}:\mathcal{D}^{2,2}(B_R^*,\mathbb{R}^p)\rightarrow H^{2}(\mathcal{C}_T,\mathbb{R}^p)$.
		Moreover, notice that with minor changes, all these computations could have been carried out if the opposite sign convention on the Emden--Fowler change of variables, that is, taking $t=\ln |x|$ also leads to the same asymptotics.
	\end{remark}
	
	\begin{remark}
		In more geometrical terms, this change of variables corresponds to the conformal diffeomorphism between the cylinder and the punctured space, $\varphi:(\mathcal{C}_{\infty},g_{{\rm cyl}})\rightarrow(\mathbb{R}^n\setminus\{0\},\delta_0)$ defined by $\varphi(t,\sigma)=e^{-t}\sigma$. Here $g_{{\rm cyl}}=\ud t^2+\ud\sigma^2$ stands for the cylindrical metric and ${\ud\theta}=e^{-2t}(\ud t^2+\ud\sigma^2)$ for its volume element obtained via the pullback $\varphi^{*}\delta_0$, where $\delta_0$ is the standard flat metric.
	\end{remark}
	
	\subsubsection{Nonautonomous case}
	In the lower critical case, because of the vanishing of the coefficient $K_0(n,2_{**})$, we already know that the situation changes dramatically. In this fashion, we define the so-called {\it nonautonomous Emden--Fowler change of variables} \cite{MR875297,MR4085120,MR633393,MR727703} given by
	\begin{equation}\label{newcylindrical}
	\mathcal{W}(t,\theta)=r^{4-n}(-\ln r)^{\frac{4-n}{4}}\mathcal{U}(r,\sigma), \quad {\rm where} \quad t=-\ln r \quad {\rm and} \quad \sigma=\theta=x|x|^{-1}.
	\end{equation}
	This again sends in the punctured ball \eqref{eq:subcriticalsystem} with $s=2_{**}$ to the cylinder $\mathcal{C}_T$.  
	Using this coordinate system and performing a lengthy computation (see Appendix \ref{app:computations}), we arrive at the following fourth order nonlinear PDE system on the cylinder,
	\begin{equation}\label{eq:subcriticalcylindricalsystemlower}
	\widetilde{\Delta}^2_{\rm cyl}w_i=t^{-1}|\mathcal{W}|^{2_{**}-1}w_i \quad {\rm on} \quad {\mathcal{C}}_T \quad \mbox{for} \quad i\in I.
	\end{equation}
	Here $\widetilde{\Delta}^2_{\rm cyl}$ is the bi-Laplacian written in nonautonomous cylindrical coordinates given by
	\begin{align*}
	\widetilde{\Delta}^2_{\rm cyl}&=\partial_t^{(4)}+\widetilde{K}_{3}(n,t)\partial_t^{(3)}+\widetilde{K}_{2}(n,t)\partial_t^{(2)}+\widetilde{K}_{1}(n,t)\partial_t+\widetilde{K}_{0}(n,t)&\\\nonumber
	&+\Delta_{\theta}^{2}+2\partial^{(2)}_t\Delta_{\theta}+\widetilde{J}_{1}(n,t)\partial_t\Delta_{\theta}+\widetilde{J}_{0}(n,t)\Delta_{\theta},&
	\end{align*}
	where $\widetilde{K}_{j}(n,t),\widetilde{J}_{j}(n,t):\mathbb{R}^*\rightarrow\mathbb{R}$ are real functions of $t$ for $j=0,1,2,3$, which are defined by
	\begin{align}\label{nonautonomouscoeficcients}
	&\nonumber\widetilde{K}_{0}(n,t)=\frac{(n-4)n(n+4)(n+8)}{256t^4}-\frac{(n-4)^2n(n+4)}{32t^3}+\frac{(n-4)n(n^2-10n+20)}{16t^2}+\frac{(n-4)(n-2)(n+4)}{t},&\\\nonumber &\widetilde{K}_{1}(n,t)=\frac{(n-4)n(n+4)}{16t^3}+\frac{3n(n-4)}{8t^2}+\frac{(n-4)(n^2-10n+20)}{2t}-2(n-4)(n-2),&\\\nonumber
	&\widetilde{K}_{2}(n,t)=\frac{3n(n-4)}{8t^2}-\frac{3(n-4)^2}{2t}+n^2-10n+20,&\\
	&\widetilde{K}_{3}(n,t)=\frac{n-4}{t}+2(n-4),&\\\nonumber
	&\widetilde{J}_{0}(n,t)=\frac{n(n-4)}{8t^2}-\frac{(n-4)^2}{2t}-2(n-4),&\\\nonumber
	&\widetilde{J}_{1}(n,t)=-\frac{n-4}{t}+2(n-4).&
	\end{align}
	
	Again in the limit case $R=\infty$, by rotational invariance, \eqref{eq:subcriticalcylindricalsystemlower} becomes the following nonautonomous ODE system,
	\begin{equation}\label{nonautonomousodesystem}
	w_i^{(4)}+\widetilde{K}_{3}(n,t)w_i^{(3)}+\widetilde{K}_2(n,t)w_i^{(2)}+\widetilde{K}_{1}(n,t)w_i^{(1)}+\widetilde{K}_{0}(n,t)w_i=t^{-1}|\mathcal{W}|^{2_{**}-1}w_i\quad \mbox{in} \quad \mathbb{R}.
	\end{equation}
	Analogously to the standard autonomous case, we also consider a transformation that sends singular solution to \eqref{eq:subcriticalsystem} into solutions to a nonautonomous ODE on the cylinder \eqref{eq:subcriticalcylindricalsystemlower}.
	\begin{definition}
		Let us consider the non-autonomous cylindrical transformation as follows
		\begin{equation}\label{nonautonomouscyltransform}
		\tilde{\mathfrak{F}}:C_c^{\infty}(B_R^*,\mathbb{R}^p)\rightarrow C_c^{\infty}(\mathcal{C}_T,\mathbb{R}^p) \quad \mbox{given by} \quad
		\tilde{\mathfrak{F}}(\mathcal{U})=r^{4-n}(-\ln r)^{\frac{4-n}{4}}\mathcal{U}(r,\sigma),
		\end{equation}
		where $r=|x|$, $t=-\ln r$ and $\theta=x|x|^{-1}$.
	\end{definition}
	
	\begin{remark}	
		The nonautonomous cylindrical transformation \eqref{nonautonomouscyltransform} is essential to convert our PDE system into a far simpler nonautonomous ODE.
		Besides, it could be extended to study general PDE systems for which a nonlinear change of variables (likewise the Emden--Fowler change of variables) is not available yet.
	\end{remark}

	\begin{remark}
		Our choice for the symbol $\Delta^2_{\rm cyl}=\Delta^2_{\rm sph}\circ\mathfrak{F}^{-1}$ $($resp. $\widetilde{\Delta}^2_{\rm cyl}=\Delta^2_{\rm sph}\circ\widetilde{\mathfrak{F}}^{-1}$$)$ is an abuse of notation, since the cylindrical background metric is not flat, we should have $P_{\rm cyl}=\Delta_{\rm sph}^2\circ\mathfrak{F}^{-1}$ $($resp. $\widetilde{P}_{\rm cyl}=\Delta_{\rm sph}^2\circ\widetilde{\mathfrak{F}}^{-1}$$)$, where $P_{\rm cyl}$ $($resp. $\widetilde{P}_{\rm cyl}$$)$ stands for the Paneitz--Branson operator of this metric in the new logarithmic cylindrical coordinate system.
	\end{remark}
	
	\section{Pohozaev functionals}\label{sec:pohozaevinvariant}
	Next, we define two central characters in our studies, the so-called {\it autonomous and nonautonomous Pohozaev functionals} \footnote{In physics, it is also referred to as {\it balanced-energy-type functional appearing in the conservation of energy for physical systems, also known as virial theorems}}. 
	This is
	a type homological functional associated to \eqref{eq:subcriticalsystem}, which is also useful in many other contexts \cite{MR0192184,MR1666838}.
	The heuristics is that these Pohozaev functional classify whether or not the blow-up limit solutions in cylindrical coordinates are (asymptotically) stable equilibrium points of the associated fourth order ODEs \eqref{eq:subcriticalODEsystem} and \eqref{nonautonomousodesystem}. 
	More precisely, from the dynamical systems point of view, it works as a Lyapunov functional.
	In contrast with the critical case where this functional is constant, in the  subcritical setting, we show that the Pohozaev functionals satisfies a monotonicity property.
	Later, we introduce a family of scalings together with its blow-up (shrink-down) limit, which combined with blow-up analysis will be the crucial ingredient to perform the asymptotic methods in Section~\ref{sec:proof2}.
	Based on the tangent cone analysis due to Fleming \cite{MR157263,MR3190428},
	we also show that the vanishing of the derivative of the  Pohozaev invariant is related to the homogeneity of solutions.
	Let us remark that to prove the Pohozaev invariant to be well-defined, one needs to use some upper estimates and asymptotic symmetry that will be proved independently in Section~\ref{sec:proof2}.
	
	\subsection{Autonomous case}
	Initially, let us define {\it the Hamiltonian energy} associated to the PDE system on the cylinder given by \eqref{eq:subcriticalcylindricalsystem} (see \cite{MR3869387,MR4094467,MR4123335,arXiv:1804.00817}). 
	
	\begin{definition} 
		For any $\mathcal{V}$ nonnegative solution to \eqref{eq:subcriticalcylindricalsystem}, let us consider its {\it Hamiltonian energy} given by
		\begin{align}\label{vectenergy}
		\mathcal{H}(t,\theta,\mathcal{V}):=\mathcal{H}_{{\rm rad},1}(t,\theta,\mathcal{V})+\mathcal{H}_{{\rm rad},2}(t,\theta,\mathcal{V})+\mathcal{H}_{\rm ang}(t,\theta,\mathcal{V}).
		\end{align}
		Here 
		\begin{align*}
		&\mathcal{H}_{{\rm rad},1}(t,\theta,\mathcal{V})=-\left(\langle \mathcal{V}^{(3)}(t,\theta),\mathcal{V}^{(1)}(t,\theta)\rangle+K_{3}(n,s)\langle\mathcal{V}^{(2)}(t,\theta),\mathcal{V}^{(1)}(t,\theta)\rangle\right),&\\\nonumber
		&\mathcal{H}_{{\rm rad},2}(t,\theta,\mathcal{V})=\frac{1}{2}\left(|\mathcal{V}^{(2)}(t,\theta)|^{2}-{K_{2}(n,s)}|\mathcal{V}^{(1)}(t,\theta)|^{2}
		-{K_{0}(n,s)}|\mathcal{V}(t,\theta)|^2\right)+\frac{1}{s+1}|\mathcal{V}(t,\theta)|^{s+1}, \ {\rm and}&\\\nonumber
		&\mathcal{H}_{\rm ang}(t,\theta,\mathcal{V})=-\frac{1}{2}\left(|\Delta_{\theta}\mathcal{V}(t,\theta)|^2-{J}_{1}(n,s)|\nabla_{\theta}\mathcal{V}(t,\theta)|^2-|\partial_t\nabla_{\theta}\mathcal{V}(t,\theta)|^2\right),&
		\end{align*}
		where $K_{j}(n,s)$ for $j=0,1,2,3$ and $J_{0}(n,s)$ are defined by \eqref{autonomouscoefficients}.
	\end{definition}

	\begin{remark}
		In the limit case $R=\infty$, by radial symmetry, we have 
		the vanishing of the angular term in \eqref{vectenergy}, that is, $\mathcal{H}_{\rm ang}(t,\theta,\mathcal{V})\equiv0$ for all $t\in\mathbb{R}$ and $\mathcal{V}$ solution to \eqref{eq:subcriticalcylindricalsystem}.
		When $R<0$, this does not happen, which makes the ODE analysis more delicate. Especially, in the critical case $s=2^{**}-1$.
	\end{remark}
	
	For each fixed $t\in(0,\infty)$, we can integrate \eqref{vectenergy} over the cylindrical slice $\mathbb{S}_t^{n-1}=\{t\}\times\mathbb{S}^{n-1}$ to define a functional acting on $C^{\infty}(\mathcal{C}_T,\mathbb{R}^p)$ that will play a central role in our analysis.
	
	\begin{definition}\label{def:classicalpohozaev}
		For any $\mathcal{V}$ nonnegative solution to \eqref{eq:subcriticalcylindricalsystem}, let us define its {\it cylindrical Pohozaev functional} by 
		\begin{align*}
		\mathcal{P}_{\rm cyl}(t,\mathcal{V})&=\displaystyle\int_{\mathbb{S}_t^{n-1}}\mathcal{H}(t,\theta,\mathcal{V})\ud\theta.&
		\end{align*}
		Here $\mathbb{S}_t^{n-1}=\{t\}\times\mathbb{S}^{n-1}$ is the cylindrical ball with volume element given by $\ud\theta=e^{-2t}\ud\sigma$, where $\ud\sigma_r$ is the volume element of the ball of radius $r>0$ in $\mathbb{R}^n$. 
	\end{definition}
	
	Using the inverse of the cylindrical transformation, one can also construct the {\it spherical Pohozaev functional} given by $\mathcal{P}_{\rm sph}(r,\mathcal{U}):=\left(\mathcal{P}_{\rm cyl}\circ\mathfrak{F}^{-1}\right)\left(t,\mathcal{V}\right)$. 
	Also, one has the following Pohozaev-type identity.
	
	\begin{lemma}\label{lm:pohozaevidentity}
		Let $\mathcal{U},\widetilde{\mathcal{U}}\in C^{4}(B_1^*,\mathbb{R}^p)$ and $0<r_1\leqslant r_2<1$. 
		Then, it follows 
		\begin{align*}
		&\sum_{i=1}^p\int_{B_{r_2}\setminus B_{r_1}}\left[\Delta^2u_i\langle x,\nabla \widetilde{u}_i\rangle+\Delta^2\widetilde{u}_i\langle x,\nabla u_i\rangle-\gamma(s)\left(\widetilde{u}_i\Delta^2u_i+u_i\Delta^2\widetilde{u}_i\right)\right]\ud x&\\\nonumber
		&=\sum_{i=1}^p\left[\int_{\partial B_{r_2}}q(u_i,\widetilde{u}_i)\ud\sigma_{r_2}-\int_{\partial B_{r_1}}q(u_i,\widetilde{u}_i)\ud\sigma_{r_1}\right].&
		\end{align*}
		Here $\gamma(s)=4(s-1)^{-1}$ is the Fowler exponent, and
		\begin{equation}\label{pohozaeverrorfunction}
		q(u_i,\widetilde{u}_i)=\sum_{j=1}^{3}\bar{l}_j(x,D^{(j)}u_i,D^{(4-j)}\widetilde{u}_i)+\sum_{j=0}^{3}\tilde{l}_j(D^{(j)}u_i,D^{(3-j)}\widetilde{u}_i),
		\end{equation}
		where both $\bar{l}_j,\tilde{l}_j$ are linear in each component for $j=0,1,2,3$. 
		Moreover,
		\begin{equation*}
		\bar{l}_3(D^{(3)}u_i,\widetilde{u}_i)=\gamma(s)\int_{\partial B_{r_2}}\widetilde{u}_i\partial_{\nu}\Delta u_i\ud\sigma_{r_2}.
		\end{equation*}
		where $\nu$ is the outer normal vector to $\partial B_{r_2}$.
	\end{lemma}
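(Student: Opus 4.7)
My plan is to prove the identity componentwise and sum over $i \in I$, so fix $i$ and work with the pair $(u_i,\widetilde u_i)$ throughout. The strategy is to display the integrand on the left-hand side as the divergence of a bilinear vector field in $(u_i,\widetilde u_i)$ and their derivatives up to order three, so that the divergence theorem on the annulus $B_{r_2}\setminus B_{r_1}$ converts the bulk integral into the stated difference of boundary integrals, with $q$ read off as the normal component of that vector field.

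The main tool is Green's second identity
\[\int_\Omega \phi\,\Delta^2\psi\,\ud x = \int_\Omega \Delta\phi\,\Delta\psi\,\ud x + \int_{\partial\Omega}\bigl[\phi\,\partial_\nu\Delta\psi - \partial_\nu\phi\,\Delta\psi\bigr]\,\ud\sigma,\]
applied first with $\phi = \langle x,\nabla\widetilde u_i\rangle$ and $\psi = u_i$, exploiting the pointwise identity $\Delta\langle x,\nabla\widetilde u_i\rangle = 2\Delta\widetilde u_i + \langle x,\nabla\Delta\widetilde u_i\rangle$, and then symmetrized by swapping $u_i \leftrightarrow \widetilde u_i$ and adding. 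In the sum, the mixed contribution $\Delta u_i\langle x,\nabla\Delta\widetilde u_i\rangle + \Delta\widetilde u_i\langle x,\nabla\Delta u_i\rangle$ telescopes to $\langle x,\nabla(\Delta u_i\Delta\widetilde u_i)\rangle$, and via the standard identity $\int\langle x,\nabla h\rangle\,\ud x = -n\int h\,\ud x + \int_\partial\langle x,\nu\rangle h\,\ud\sigma$ becomes an explicit boundary flux plus a bulk multiple of $\int \Delta u_i\Delta\widetilde u_i\,\ud x$. For the $\gamma(s)$-weighted piece, one more application of Green's identity yields $\int[\widetilde u_i\Delta^2 u_i + u_i\Delta^2\widetilde u_i]\,\ud x = 2\int\Delta u_i\Delta\widetilde u_i\,\ud x + \mathrm{boundary}$, and the boundary density $\gamma(s)\widetilde u_i\,\partial_\nu\Delta u_i$ emerging at this step is precisely the stated $\bar{l}_3$ contribution.

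The residual bulk multiple of $\int \Delta u_i\Delta\widetilde u_i$ then has to be rewritten as a divergence via iterated use of Green's identity, so that the entire integrand is displayed in pure divergence form; this is the main technical point of the argument, and it relies crucially on the value $\gamma(s) = 4/(s-1)$ so that the accumulated coefficients in front of the surviving $\int \Delta u_i \Delta \widetilde u_i$ pieces organize in a way that collapses into boundary data. Applying the divergence theorem on $B_{r_2}\setminus B_{r_1}$ (with outer normal on $\partial B_{r_2}$ and inner normal on $\partial B_{r_1}$) collects the boundary densities into $q(u_i,\widetilde u_i)$, which by construction splits into the pieces $\bar{l}_j$ (bearing the coordinate weight $x$ inherited from $\langle x,\nabla\cdot\rangle$) and $\tilde{l}_j$ (arising directly from Green's identity), each bilinear in the arguments and their derivatives. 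Summing over $i\in I$ concludes the proof. The principal obstacle is the careful bookkeeping of the many boundary fluxes generated at each integration by parts, together with the verification that the residual bulk remainder does reduce to a divergence so that the full integrand becomes a pure boundary term.
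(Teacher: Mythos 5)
Your Green-identity bookkeeping is sound up to the step where you assert that the residual bulk multiple of $\int \Delta u_i \Delta \widetilde u_i$ collapses into a divergence; that assertion is the crux, and it is false. Carrying your own computation through: the two terms $\Delta^2 u_i \langle x, \nabla\widetilde u_i\rangle + \Delta^2\widetilde u_i\langle x, \nabla u_i\rangle$ contribute, after Green's identity, the pointwise identity $\Delta\langle x,\nabla\cdot\rangle = 2\Delta\cdot + \langle x,\nabla\Delta\cdot\rangle$, symmetrization, and the identity $\int_\Omega\langle x,\nabla h\rangle\,\ud x = -n\int_\Omega h\,\ud x + \int_{\partial\Omega}\langle x,\nu\rangle h\,\ud\sigma$ with $h=\Delta u_i\Delta\widetilde u_i$, a bulk term $(4-n)\int_{B_{r_2}\setminus B_{r_1}}\Delta u_i\Delta\widetilde u_i\,\ud x$ plus boundary fluxes; the piece $-\gamma(s)\bigl(\widetilde u_i\Delta^2 u_i + u_i\Delta^2\widetilde u_i\bigr)$ contributes $-2\gamma(s)\int_{B_{r_2}\setminus B_{r_1}}\Delta u_i\Delta\widetilde u_i\,\ud x$ plus boundary fluxes. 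The surviving bulk term is therefore $\bigl(4-n-2\gamma(s)\bigr)\int_{B_{r_2}\setminus B_{r_1}}\Delta u_i\Delta\widetilde u_i\,\ud x$, and this cannot be converted into a boundary integral by any further application of Green's identity: take $u_i=\widetilde u_i$ a bump function compactly supported in the open annulus, so that all boundary data on $\partial B_{r_1}\cup\partial B_{r_2}$ vanish but $\int|\Delta u_i|^2\,\ud x>0$. Thus $\int\Delta u_i\Delta\widetilde u_i$ is a genuine bulk functional, not a boundary one.

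Consequently the decisive cancellation you invoke does not occur: the coefficient $4-n-2\gamma(s)$ vanishes only if $\gamma(s)=(4-n)/2<0$, which is never $4/(s-1)$ for $s>1$; even with the opposite sign convention, the cancellation condition is $\gamma=(n-4)/2=\gamma(2^{**}-1)$, the critical value, not the subcritical $\gamma(s)$ appearing in the lemma. So the sentence ``this is the main technical point of the argument, and it relies crucially on the value $\gamma(s)=4/(s-1)$'' is exactly the gap: you asserted the collapse to a divergence without verifying it, and it fails. A correct treatment must either keep the residual bulk contribution explicitly (this is what the Pohozaev functional is designed to track) or use the critical multiplier $(n-4)/2$ as in the references the paper cites in lieu of giving a proof; the paper itself does not prove the lemma but delegates to \cite{arXiv:1503.06412} and \cite{arxiv:1901.01678}.
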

	
	\begin{proof}
		See the proof in \cite[Proposition~3.3]{arXiv:1503.06412} and \cite[Propositon~A.1]{arxiv:1901.01678}.
	\end{proof}
	
	\begin{remark}
		Using the last lemma, we present a explicit formula for the spherical Pohozaev functional
		\begin{equation}\label{pohozaevsphericalautonomous}
		\mathcal{P}_{\rm sph}(r,\mathcal{U})=\sum_{i=1}^p\int_{\partial B_r}\left[q(u_i,u_i)-\frac{1}{s+1}r|\mathcal{U}|^{s+1}\right]\ud\sigma_r,
		\end{equation}
		where $q$ is defined by \eqref{pohozaeverrorfunction}.
	\end{remark}
	
	\begin{remark}\label{relatinpohozevfinctionals}
		For easy reference, let us summarize the following properties of the Pohozaev functionals:\\
		\noindent{\rm (i)} $\mathcal{P}_{\rm sph}(r,\mathcal{U})=\omega_{n-1}\mathcal{P}_{\rm cyl}(t,\mathcal{V})$, where $\omega_{n-1}$ is the $(n-1)$-dimensional surface measure of the unit sphere;\\ 
		\noindent{\rm (ii)} $\mathcal{P}_{\rm sph}(r,u)=(\mathcal{P}_{\rm cyl}\circ\mathfrak{F}^{-1})(t,v)$, where
		\begin{equation*}
		\mathcal{P}_{\rm cyl}(t,v)=\displaystyle\int_{\mathbb{S}^{n-1}_t}\mathcal{H}(t,\theta,v)\ud\theta.
		\end{equation*}
	\end{remark}
	
	Subsequently, we deduce a monotonicity formula for the cylindrical Pohozaev functional $\mathcal{P}_{\rm cyl}(t,\mathcal{V})$, which will be essential to show that the limit Pohozaev invariant is well-defined as $t\rightarrow\infty$ (or as  $r\rightarrow0$, in spherical coordinates).
	Such monotonicity property is inspired by \cite[Lemma~3.4]{MR4123335} (see also \cite{MR3190428,arXiv:1804.00817,MR4085120}).
	
	\begin{proposition}\label{lm:monotonicityformula}
		Let $s\in(2_{**},2^{**}-1)$, $-\infty<-\ln R<t_1\leqslant t_2$, and $\mathcal{V}$ be a nonnegative solution to \eqref{eq:subcriticalcylindricalsystem}. 
		Then, $\mathcal{P}_{\rm cyl}(t_2,\mathcal{V})-\mathcal{P}_{\rm cyl}(t_1,\mathcal{V})\geqslant0$. 
		More precisely, we have the monotonicity formula,
		\begin{equation}\label{eq:monotonicityformula}\partial_t\mathcal{P}_{\rm cyl}(t,\mathcal{V})= \int_{\mathbb{S}_t^{n-1}}\left[K_{1}(n,s)|\mathcal{V}^{(1)}|^{2}-K_{3}(n,s)\left(|\mathcal{V}^{(2)}|^{2}+|\partial_t\nabla_{\theta}\mathcal{V}|^2\right)\right]\ud\theta.
		\end{equation}
		In particular, ${\mathcal{P}}_{\rm cyl}(t,\mathcal{V})$ is nondecreasing, and so $\mathcal{P}_{\rm cyl}(\infty,\mathcal{V}):=lim_{t\rightarrow\infty}{\mathcal{P}}_{\rm cyl}(t,\mathcal{V})$ exists.
	\end{proposition}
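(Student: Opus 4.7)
The plan is to verify \eqref{eq:monotonicityformula} by a direct differentiation-plus-integration-by-parts calculation, then derive monotonicity from the sign information in Remark~\ref{rmk:signoncoefficients}, and finally obtain existence of the limit via a standard a priori upper bound.

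First, I would differentiate $\mathcal{P}_{\rm cyl}(t,\mathcal{V})$ under the integral sign. Applying the Leibniz rule to $\mathcal{H}_{{\rm rad},1}$ produces the leading term $-\langle \mathcal{V}^{(4)},\mathcal{V}^{(1)}\rangle$, which I eliminate using the cylindrical PDE \eqref{eq:subcriticalcylindricalsystem}. After substitution, the radial lower-order pairings $K_j\langle \mathcal{V}^{(j)},\mathcal{V}^{(1)}\rangle$ for $j=0,2,3$, the Leibniz cross term $\langle \mathcal{V}^{(3)},\mathcal{V}^{(2)}\rangle$, and the nonlinear pairing $|\mathcal{V}|^{s-1}\langle \mathcal{V},\mathcal{V}^{(1)}\rangle$ cancel exactly with the corresponding terms from $\partial_t\mathcal{H}_{{\rm rad},1}+\partial_t\mathcal{H}_{{\rm rad},2}$, leaving precisely $K_1|\mathcal{V}^{(1)}|^2 - K_3|\mathcal{V}^{(2)}|^2$ as the radial output.

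Second, I would integrate the remaining angular pieces over $\mathbb{S}^{n-1}$ and perform integration by parts on the closed sphere. The biharmonic pairing $\langle \Delta_\theta^2\mathcal{V},\mathcal{V}^{(1)}\rangle$ coming from the substitution cancels $-\langle \Delta_\theta\mathcal{V},\Delta_\theta\mathcal{V}^{(1)}\rangle$ from $\partial_t\mathcal{H}_{\rm ang}$; the remaining $\nabla_\theta$-type cross terms, generated by the mixed angular-radial coefficients $J_0,J_1$ of $\Delta^2_{\rm cyl}$ and by the derivative of the kinetic angular term $|\partial_t\nabla_\theta\mathcal{V}|^2$ in $\mathcal{H}_{\rm ang}$, collapse via the explicit algebraic relations among $K_3, J_0, J_1$ implied by \eqref{autonomouscoefficients}. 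The collapse produces the single angular quadratic term $-K_3|\partial_t\nabla_\theta\mathcal{V}|^2$, completing the derivation of \eqref{eq:monotonicityformula}.

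Third, Remark~\ref{rmk:signoncoefficients} gives $K_1(n,s)>0$ and $K_3(n,s)<0$ throughout the range $s\in(2_{**},2^{**}-1)$, so the integrand in \eqref{eq:monotonicityformula} is nonnegative; integrating from $t_1$ to $t_2$ yields $\mathcal{P}_{\rm cyl}(t_2,\mathcal{V})-\mathcal{P}_{\rm cyl}(t_1,\mathcal{V})\geqslant 0$. To conclude that $\lim_{t\to\infty}\mathcal{P}_{\rm cyl}(t,\mathcal{V})$ exists, I would combine the a priori upper bound $|\mathcal{U}(x)|\leqslant C|x|^{-\gamma(s)}$ (standard in the subcritical regime via Lemma~\ref{lm:integrability} and Proposition~\ref{lm:integralrepresentation}) with interior elliptic regularity to control $\mathcal{V}$ and its derivatives uniformly on $\mathcal{C}_T$. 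This bounds $\mathcal{P}_{\rm cyl}(t,\mathcal{V})$ from above, so monotone convergence delivers the limit. The main technical obstacle is the angular bookkeeping: the radial cancellations mirror the cleaner ODE case, but the sphere integration by parts produces several $\langle \nabla_\theta\mathcal{V}^{(i)},\nabla_\theta\mathcal{V}^{(j)}\rangle$-type pairings that collapse to the clean $-K_3|\partial_t\nabla_\theta\mathcal{V}|^2$ only through the specific algebraic identities among $K_3, J_0, J_1$ encoded in the cylindrical expression of the bi-Laplacian.
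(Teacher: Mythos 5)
Your proposal follows essentially the same route as the paper: the identity \eqref{eq:monotonicityformula} is obtained by differentiating $\mathcal{P}_{\rm cyl}$ under the integral sign, substituting the cylindrical system \eqref{eq:subcriticalcylindricalsystem} to eliminate $\mathcal{V}^{(4)}$ (i.e.\ pairing the equation with $\mathcal{V}^{(1)}$), and integrating by parts on the slices $\mathbb{S}^{n-1}_t$; monotonicity then comes from $K_1(n,s)>0$ and $K_3(n,s)<0$ (Remark~\ref{rmk:signoncoefficients}), and the limit exists because the functional is nondecreasing and bounded above. The only inaccuracy is your source for the boundedness: the pointwise bound $|\mathcal{U}(x)|\leqslant C|x|^{-\gamma(s)}$ does not follow from Lemma~\ref{lm:integrability} and Proposition~\ref{lm:integralrepresentation} alone; in the paper it is Lemma~\ref{lm:universalestimates} (proved by a rescaling/blow-up argument resting on the Liouville-type Proposition~\ref{prop:nonexistencenonsingular}), together with the derivative estimates of Corollary~\ref{cor:universalestimates}, that controls $\mathcal{P}_{\rm cyl}(t,\mathcal{V})$ from above --- this is exactly the estimate the paper invokes, established independently of the present proposition, so no circularity arises.
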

	
	\begin{proof}
		Initially, observe that  \eqref{eq:monotonicityformula} follows by a direct computation, which consists in taking the inner product of \eqref{eq:subcriticalODEsystem} with $\mathcal{V}^{(1)}$, and integrating by parts on $\mathbb{S}_t^{n-1}$. Therefore, using Remark~\ref{rmk:signoncoefficients}, we find $\partial_t\mathcal{P}_{\rm cyl}(t,\mathcal{V})\geqslant0$, which proves that the cylindrical Pohozaev functional is nondecreasing. 
		Finally, since by Lemma~\ref{lm:universalestimates}, the Pohozaev functional is bounded above, the full existence of limit follows.
	\end{proof}
	
	\begin{corollary}
		Let $s\in(2_{**},2^{**}-1)$, $0<r_1\leqslant r_2<R<\infty$, and $\mathcal{U}$ be a nonnegative solution to \eqref{eq:subcriticalsystem}. Then, $\mathcal{P}_{\rm sph}(r_2,\mathcal{U})-\mathcal{P}_{\rm sph}(r_1,\mathcal{U})\leqslant0$. 
	\end{corollary}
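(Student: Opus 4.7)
The corollary should follow almost immediately from the preceding proposition once one pushes the monotonicity statement through the Emden--Fowler change of variables. My plan is to translate $\mathcal{P}_{\rm sph}$ to $\mathcal{P}_{\rm cyl}$ via the map $\mathfrak{F}$ of \eqref{cyltransform}, apply Proposition~\ref{lm:monotonicityformula}, and keep careful track of how the ordering $r_1 \leqslant r_2$ is reversed on the cylinder.

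Concretely, set $t_i = -\ln r_i$ for $i=1,2$ and $\mathcal{V} = \mathfrak{F}(\mathcal{U})$. Since $r \mapsto -\ln r$ is strictly decreasing on $(0,R)$ and $0 < r_1 \leqslant r_2 < R$, we obtain $-\ln R < t_2 \leqslant t_1$, so the ordering is swapped when we pass to cylindrical coordinates. By Remark~\ref{relatinpohozevfinctionals}(i), the two Pohozaev functionals are related by
\begin{equation*}
\mathcal{P}_{\rm sph}(r_i, \mathcal{U}) = \omega_{n-1}\,\mathcal{P}_{\rm cyl}(t_i, \mathcal{V}), \qquad i = 1, 2,
\end{equation*}
and $\mathcal{V}$ solves \eqref{eq:subcriticalcylindricalsystem} on the cylindrical strip $(-\ln R, \infty) \times \mathbb{S}^{n-1}$, which is precisely the setting where Proposition~\ref{lm:monotonicityformula} applies.

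Next, I would invoke Proposition~\ref{lm:monotonicityformula} with the pair $t_2 \leqslant t_1$ (both lying in the admissible range $(-\ln R, \infty)$) to conclude $\mathcal{P}_{\rm cyl}(t_2, \mathcal{V}) \leqslant \mathcal{P}_{\rm cyl}(t_1, \mathcal{V})$. Subtracting and multiplying by the positive constant $\omega_{n-1}$ gives
\begin{equation*}
\mathcal{P}_{\rm sph}(r_2, \mathcal{U}) - \mathcal{P}_{\rm sph}(r_1, \mathcal{U}) = \omega_{n-1}\bigl[\mathcal{P}_{\rm cyl}(t_2, \mathcal{V}) - \mathcal{P}_{\rm cyl}(t_1, \mathcal{V})\bigr] \leqslant 0,
\end{equation*}
which is exactly the claim.

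There is no serious obstacle here; the entire content of the corollary is that differentiating the spherical Pohozaev functional in $r$ reverses the sign coming from $\partial_t$ in \eqref{eq:monotonicityformula} because of the minus sign in $t = -\ln r$. The only mild point of care is making sure the subcriticality condition $s \in (2_{**}, 2^{**} - 1)$ is precisely what guarantees the sign configuration $K_1(n,s) > 0$ and $K_3(n,s) < 0$ of Remark~\ref{rmk:signoncoefficients} that drives the monotonicity in the proposition; with that in hand, the corollary is a one-line restatement in spherical coordinates.
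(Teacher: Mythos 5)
Your proof is correct and essentially the same as the paper's: both reduce the corollary to Proposition~\ref{lm:monotonicityformula} through the orientation-reversing substitution $t=-\ln r$, the paper phrasing this at the level of derivatives via $\partial_r\mathcal{P}_{\rm sph}=-e^{-t}\partial_t\mathcal{P}_{\rm cyl}$, while you use the value identity of Remark~\ref{relatinpohozevfinctionals}, which is the same relation before differentiation. No gaps.
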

	
	\begin{proof}
		Indeed, a simple computation shows the following relation between the derivative of the two Pohozaev functionals,
		\begin{equation*}
		\partial_r\mathcal{P}_{\rm sph}(t,\mathcal{U})=-e^{-t}\partial_t\mathcal{P}_{\rm cyl}(t,\mathcal{V}),
		\end{equation*}
		which combined with Proposition~\ref{lm:monotonicityformula} implies that the spherical Pohozaev functional is nonincreasing.
	\end{proof}
	
	\begin{remark}\label{rmk:limitrelation}
		Using the inverse of cylindrical transform, that is,  $\mathcal{P}_{\rm sph}=\mathcal{P}_{\rm cyl}\circ\mathfrak{F}^{-1}$, it follows that $\mathcal{P}_{\rm sph}(0,\mathcal{U})=\lim_{r\rightarrow0}\mathcal{P}_{\rm sph}(r,\mathcal{U})=lim_{t\rightarrow\infty}{\mathcal{P}}_{\rm cyl}(t,\mathcal{V}).$
		The last equality implies that the Pohozaev invariant is well-defined in the punctured ball, when $R\rightarrow0$.
	\end{remark}
	
	Now for any $\mathcal{U}$ a solution to \eqref{eq:subcriticalsystem}, for any $\lambda>0$, we define the following $\lambda$-rescaling given by
	\begin{equation*}
	\widehat{\mathcal{U}}_{\lambda}(x):=\lambda^{\gamma(s)}\mathcal{U}(\lambda x).
	\end{equation*}
	Notice that the $\lambda$-rescaled solution is still a nonnegative solution to \eqref{eq:subcriticalsystem} with $R=\lambda^{-1}$.
	Moreover, we get the following scaling invariance
	\begin{equation}\label{scaleinvariance}
	\mathcal{P}_{\rm sph}(r,\widehat{\mathcal{U}}_{\lambda})=\mathcal{P}_{\rm sph}(\lambda r,\mathcal{U}).
	\end{equation}
	This follows by directly using the inverse cylindrical transform as in Remark~\ref{rmk:limitrelation}.
	
	Next, we prove that the invariance of the Pohozaev invariant is equivalent to the homogeneity of the blow-up limit solutions to \eqref{eq:subcriticalsystem}.
	A similar result can also be found in \cite{MR3190428}, where a different type of functional is considered.
	
	\begin{lemma}\label{lm:homogeneity}
		Let $0<r_1\leqslant r_2<R<\infty$, $s\in(2_{**},2^{**}-1)$,  and $\mathcal{U}$ be a nonnegative solution to \eqref{eq:subcriticalsystem}. 
		Then, $\mathcal{P}_{\rm sph}(r,\mathcal{U})$ is constant, if, and only if, for any $r\in(r_{1},r_{2})$, $\mathcal{U}$ is homogeneous of degree $\gamma(s)>0$ in $B_{r_{2}}\setminus\bar{B}_{r_{1}}$, that is,
		\begin{equation*}
		\mathcal{U}(x)=|x|^{-\gamma(s)} \mathcal{U}\left(\frac{x}{|x|}\right) \quad \mbox{in} \quad B_{r_{2}}\setminus\bar{B}_{r_{1}}.
		\end{equation*}
	\end{lemma}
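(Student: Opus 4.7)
The strategy is to translate the problem via the cylindrical transformation $\mathfrak{F}$ and then read off the conclusion from the monotonicity formula \eqref{eq:monotonicityformula}. Throughout the plan I let $\mathcal{V}=\mathfrak{F}(\mathcal{U})$ and $t=-\ln r$, so the annulus $B_{r_2}\setminus\bar B_{r_1}$ corresponds to the cylindrical slab $(-\ln r_2,-\ln r_1)\times\mathbb{S}^{n-1}$, and I recall from Remark~\ref{relatinpohozevfinctionals} that $\mathcal{P}_{\rm sph}(r,\mathcal{U})=\omega_{n-1}\mathcal{P}_{\rm cyl}(t,\mathcal{V})$ together with the chain-rule identity $\partial_r\mathcal{P}_{\rm sph}(r,\mathcal{U})=-e^{-t}\partial_t\mathcal{P}_{\rm cyl}(t,\mathcal{V})$ already established after Proposition~\ref{lm:monotonicityformula}.

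For the forward implication, assume $\mathcal{P}_{\rm sph}(\,\cdot\,,\mathcal{U})$ is constant on $(r_1,r_2)$. Then $\mathcal{P}_{\rm cyl}(t,\mathcal{V})$ is constant on the corresponding $t$-interval, hence $\partial_t\mathcal{P}_{\rm cyl}\equiv0$ there. Plugging into \eqref{eq:monotonicityformula} gives
\begin{equation*}
\int_{\mathbb{S}_t^{n-1}}\Bigl[K_{1}(n,s)|\mathcal{V}^{(1)}|^{2}-K_{3}(n,s)\bigl(|\mathcal{V}^{(2)}|^{2}+|\partial_t\nabla_{\theta}\mathcal{V}|^2\bigr)\Bigr]\ud\theta=0.
\end{equation*}
By Remark~\ref{rmk:signoncoefficients}, for $s\in(2_{**},2^{**}-1)$ one has $K_1(n,s)>0$ and $-K_3(n,s)>0$, so the integrand is a sum of nonnegative squares weighted by strictly positive constants. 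Pointwise vanishing therefore forces $\mathcal{V}^{(1)}(t,\theta)\equiv0$ on the slab (the other two vanishings come for free, which is consistent). Consequently $\mathcal{V}(t,\theta)=W(\theta)$ is independent of $t$ on the slab, and inverting the cylindrical transformation $\mathcal{V}(t,\theta)=r^{\gamma(s)}\mathcal{U}(r,\sigma)$ yields $\mathcal{U}(r,\sigma)=r^{-\gamma(s)}W(\sigma)$, which is exactly homogeneity of degree $-\gamma(s)$ on $B_{r_2}\setminus\bar B_{r_1}$; the function $W(\sigma)$ coincides with $\mathcal{U}(x/|x|)$ whenever the unit sphere lies inside the annulus, and otherwise is identified with the trace of $\mathcal{U}$ on any slice $\{r\}\times\mathbb{S}^{n-1}$ rescaled by $r^{\gamma(s)}$.

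The reverse implication is a direct substitution. If $\mathcal{U}(x)=|x|^{-\gamma(s)}\mathcal{U}(x/|x|)$ on $B_{r_2}\setminus\bar B_{r_1}$, then $\mathcal{V}(t,\theta)=r^{\gamma(s)}\mathcal{U}(r\sigma)=\mathcal{U}(\sigma)$ depends only on $\theta$ on the corresponding cylindrical slab. Hence $\mathcal{V}^{(j)}\equiv0$ for all $j\geqslant1$, and \eqref{eq:monotonicityformula} immediately gives $\partial_t\mathcal{P}_{\rm cyl}(t,\mathcal{V})\equiv0$, so $\mathcal{P}_{\rm cyl}$ and thus $\mathcal{P}_{\rm sph}$ are constant on the interval.

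There is really no hard obstacle here; the only points that demand care are a clean bookkeeping of the sign conventions for $K_1$ and $K_3$ (the signs coming from Remark~\ref{rmk:signoncoefficients} are what make the integrand a definite quadratic form), and the identification of the \emph{constant-in-}$t$ profile $W$ on the cylinder with the angular profile of $\mathcal{U}$ on $\mathbb{R}^n\setminus\{0\}$. Both are routine once the cylindrical transformation is invoked.
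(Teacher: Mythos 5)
Your proof is correct, but it takes a genuinely different route from the paper's. The paper argues entirely in spherical coordinates: it invokes the explicit spherical Pohozaev formula \eqref{pohozaevsphericalautonomous} and asserts that constancy of $\mathcal{P}_{\rm sph}(r,\mathcal{U})$ on the interval forces the Robin-type relation $\partial_{\nu}\mathcal{U}=\gamma(s)r^{-1}\mathcal{U}$ on each sphere $\partial B_{r}$ (the Euler relation for homogeneity), a step it leaves quite terse and whose mechanism is not spelled out. You instead pass to cylindrical coordinates via $\mathfrak{F}$ and read the conclusion directly off the monotonicity formula \eqref{eq:monotonicityformula}: constant $\mathcal{P}_{\rm cyl}$ forces the vanishing of an integral whose integrand, by the sign conditions $K_{1}>0$ and $K_{3}<0$ from Remark~\ref{rmk:signoncoefficients}, is a nonnegative quadratic form, so $\mathcal{V}^{(1)}\equiv0$ and hence $\mathcal{V}$ is $t$-independent on the slab, which is homogeneity of $\mathcal{U}$. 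This is morally the same observation (the Robin condition and $\mathcal{V}^{(1)}=0$ are the same statement in different coordinates), but your derivation makes the key mechanism explicit — the strict positivity of the coefficients is what forces the vanishing — where the paper's version leaves this implicit. You also carry out the converse implication explicitly, whereas the paper omits it; and your care about the identification of the $t$-independent profile $W$ with $\mathcal{U}(x/|x|)$ when the unit sphere lies outside the annulus is a good catch that the paper's formulation glosses over.
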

	
	\begin{proof}
		Notice that if $s\neq 2^{**}-1$, then $K_0(n,s)\neq 0$.
		Thus, supposing that $\mathcal{P}_{\rm sph}(r,\mathcal{U})$ is constant for $r_{1}<r<r_{2}$, together with \eqref{pohozaevsphericalautonomous} yields that $\partial_{\nu}\mathcal{U}=\gamma(s)r^{-1}\mathcal{U}$ on $\partial B_{r}$ for any $r_{1}<r<r_{2}$, where $\nu$ is the unit normal pointing towards the origin. 
		Therefore, $\mathcal{U}$ is homogeneous of degree $-\gamma(s)$ in $B_{r_{2}}\setminus\bar{B}_{r_{1}}$, which concludes the proof.
	\end{proof}
	
	\subsection{Nonautonomous case}
	In the lower critical case $s=2_{**}$, since $K_{0,*}:=K_0(n,2_{**})=0$ and $\gamma(2_{**})=n-4$, a new cylindrical transformation was defined.
	Concerning this nonautonomous cylindrical transformation, we compute the associated Pohozaev functional, which in this situation has some time-dependent terms.
	Then, we also need to prove a monotonicity formula such as the one in Lemma~\ref{lm:monotonicityformula}.
	Nevertheless, a far more subtle analysis is required because of the nonautonomous terms, which now depends on the dimension.
	We believe that this new monotonicity formula is relevant on its own, since it can be further applied to provide asymptotic analysis for general PDE systems, for which a usual autonomous Pohozaev invariant is not available.
	First, to shorten our notation, we omit the variables $n$ and $t$ for the coefficients defined by \eqref{nonautonomouscoeficcients}, and the variables $\theta$ and $t$ for $p$-maps on the cylinder, when it is convenient.
	\begin{definition} 
		For any $\mathcal{W}$ solution to \eqref{eq:subcriticalcylindricalsystemlower}, let us consider its {\it Hamiltonian energy} defined by
		\begin{align}\label{modifiedvectenergy}
		\widetilde{\mathcal{H}}(t,\theta,\mathcal{W}):=\widetilde{\mathcal{H}}_{{\rm rad},1}(t,\theta,\mathcal{W})+\widetilde{\mathcal{H}}_{{\rm rad},2}(t,\theta,\mathcal{W})+\widetilde{\mathcal{H}}_{\rm ang}(t,\theta,\mathcal{W}).
		\end{align}
		Here,
		\begin{align*}
		\widetilde{\mathcal{H}}_{{\rm rad},1}(t,\theta,\mathcal{W})&=-t\left(\langle \mathcal{W}^{(3)}(t,\theta),\mathcal{W}^{(1)}(t,\theta)\rangle+\widetilde{K}_{3}(n,t)\langle\mathcal{W}^{(2)}(t,\theta),\mathcal{W}^{(1)}(t,\theta)\rangle\right),\\\nonumber
		\widetilde{\mathcal{H}}_{{\rm rad},2}(t,\theta,\mathcal{W})&=\frac{t}{2}\left(|\mathcal{W}^{(2)}(t,\theta)|^{2}-{\widetilde{K}_{2}(n,t)}|\mathcal{W}^{(1)}(t,\theta)|^{2}
		-{\widetilde{K}_{0}(n,t)}|\mathcal{W}(t,\theta)|^2\right)\\
		&+(2_{**}+1)^{-1}|\mathcal{W}(t,\theta)|^{2_{**}+1},\\\nonumber
		\widetilde{\mathcal{H}}_{\rm ang}(t,\theta,\mathcal{W})&=-\frac{t}{2}\left(|\Delta_{\theta}\mathcal{V}(t,\theta)|^2-\widetilde{J}_{0}(n,t)|\nabla_{\theta}\mathcal{V}(t,\theta)|^2-2|\partial_t\nabla_{\theta}\mathcal{V}(t,\theta)|^2\right),
		\end{align*}
		where $\widetilde{K}_{j}(n,t)$ for $j=0,1,2,3$ and $\widetilde{J}_{1}(n,t)$ are defined by \eqref{nonautonomouscoeficcients}.
	\end{definition}

	\begin{remark}
		As in the autonomous case, when $R=\infty$, using radial symmetry, we find
		$\widetilde{\mathcal{H}}_{\rm ang}(t,\theta,\mathcal{W})\equiv0$ for all $t\in\mathbb{R}$ and $\mathcal{W}$ solution to \eqref{eq:subcriticalcylindricalsystemlower}.
	\end{remark}
	
	Again, for each fixed slice, we can integrate \eqref{modifiedvectenergy} over $\mathbb{S}_t^{n-1}$ to define another functional, which will be well-defined for any $t\in(0,\infty)$ since $\mathcal{U}$ is smooth away from the origin. 
	
	\begin{definition} 
		For any $\mathcal{W}$ solution to \eqref{eq:subcriticalcylindricalsystemlower}, let us define its {\it cylindrical Pohozaev functional} by 
		\begin{align*}
		\widetilde{\mathcal{P}}_{\rm cyl}(t,\mathcal{W})&=\displaystyle\int_{\mathbb{S}_t^{n-1}}\widetilde{\mathcal{H}}(t,\theta,\mathcal{W})\ud\theta.&
		\end{align*} 
	\end{definition}
	
	\begin{remark}\label{rmk:estimates}
		Due to Proposition~\ref{prop:asymptoticsymmetry}, for $R<\infty$, we get that $|\mathcal{W}(t,\theta)-\overline{\mathcal{W}}(t)|=\mathcal{O}\left(e^{-\beta t}\right)$ as $t\rightarrow\infty$, for some $\beta>0$, where $\overline{\mathcal{W}}(t)$ is the average of $\mathcal{W}(t,\theta)$ over $\theta\in \mathbb{S}_t^{n-1}$. In particular, one can find some large $T_{0}\gg1$ and $C>0$ (independent of $t$) satisfying
		\begin{equation}\label{angularestimates}
		\left|\nabla_{\theta} \mathcal{W}(t,\theta)\right|+\left|\Delta_{\theta} \mathcal{W}(t,\theta)\right| \leqslant Ce^{-\beta t} \quad {\rm in} \quad \mathcal{C}_{T_0}.
		\end{equation}
		Moreover, from the sharp estimate in Proposition~\ref{lm:monotonicityformula} and the gradient estimate \eqref{lm:gradientestimates}, it follows 
		\begin{equation}\label{radialestimates}
		|\mathcal{W}(t,\theta)|+|\mathcal{W}^{(1)}(t,\theta)|+|\mathcal{W}^{(2)}(t,\theta)|+|\mathcal{W}^{(3)}(t,\theta)|\leqslant C \quad {\rm in} \quad \mathcal{C}_{T_0}.
		\end{equation}
	\end{remark}
	
	Before we proceed to the proof of the monotonicity formula, we need to show an auxiliary result concerning the asymptotic behavior of local solutions to \eqref{eq:subcriticalsystem} and its derivatives
	when $t\gg1$ is sufficiently large.
	
	\begin{lemma}\label{lm:signproperties}
		Let $R<\infty$, $s=2_{**}$, and $\mathcal{W}$ be the nonautonomous cylindrical transform given by \eqref{newcylindrical}. Then, $\lim_{t\rightarrow\infty}|\mathcal{W}(t,\theta)|\in \left\{0,\widehat{K}_{0}(n)^{\frac{n-4}{4}}\right\}$, 
		where $\widehat{K}_{j}(n)=\lim_{t\rightarrow\infty}t\widetilde{K}_j(n,t)$ for $j=0,1,2,3$.
		Moreover,
		\begin{equation*}
		\lim_{t\rightarrow\infty}|{\mathcal{W}}^{(j)}(t,\theta)|=\lim_{t\rightarrow\infty}|\overline{\mathcal{W}}^{(j)}(t)|=0 \quad {\rm for \ all} \quad j\geqslant1.
		\end{equation*}
	\end{lemma}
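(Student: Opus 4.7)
The plan is to combine a compactness argument on time-shifted solutions with a characteristic-root analysis of the frozen linear system, and then identify the admissible limiting values through a further expansion in $1/t$.

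From Remark~\ref{rmk:estimates}, the estimates~\eqref{radialestimates} and~\eqref{angularestimates} give uniform bounds on $\mathcal{W}$ and $\mathcal{W}^{(j)}$ for $j=1,2,3$, together with the exponential decay $|\nabla_\theta\mathcal{W}|+|\Delta_\theta\mathcal{W}|=O(e^{-\beta t})$. Since each coefficient $\widetilde{K}_j(n,t)$ from~\eqref{nonautonomouscoeficcients} is bounded for $t$ large and the right-hand side $t^{-1}|\mathcal{W}|^{2_{**}-1}\mathcal{W}$ is $O(1/t)$, equation~\eqref{eq:subcriticalcylindricalsystemlower} yields a uniform bound on $\mathcal{W}^{(4)}$, and differentiating once more controls $\mathcal{W}^{(5)}$.

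Fix any sequence $t_k\to\infty$ and set $\mathcal{W}_k(s,\theta):=\mathcal{W}(t_k+s,\theta)$. By Arzelà--Ascoli a subsequence converges in $C^3_{\rm loc}(\mathbb{R}\times\mathbb{S}^{n-1})$ to a limit $\mathcal{W}_\infty(s,\theta)$ that is independent of $\theta$ by the angular decay. Passing to the limit in~\eqref{eq:subcriticalcylindricalsystemlower} using $\widetilde{K}_3\to2(n-4)$, $\widetilde{K}_2\to n^2-10n+20$, $\widetilde{K}_1\to-2(n-4)(n-2)$, $\widetilde{K}_0\to0$, and $(t_k+s)^{-1}\to0$, we obtain the constant-coefficient linear system
\begin{equation*}
\mathcal{W}_\infty^{(4)}+2(n-4)\mathcal{W}_\infty^{(3)}+(n^2-10n+20)\mathcal{W}_\infty^{(2)}-2(n-4)(n-2)\mathcal{W}_\infty^{(1)}=0.
\end{equation*}
The characteristic polynomial factors as $\lambda(\lambda-2)(\lambda+n-4)(\lambda+n-2)$; since for $n\geqslant 5$ the nonzero roots $2,\,4-n,\,2-n$ all produce modes that are unbounded on $\mathbb{R}$, boundedness of $\mathcal{W}_\infty$ forces $\mathcal{W}_\infty\equiv L$ for some constant $L\in\mathbb{R}^p$. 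In particular $\mathcal{W}^{(j)}(t_k,\theta)=\mathcal{W}_k^{(j)}(0,\theta)\to0$ for every $j\geqslant1$, and since every divergent sequence admits such a subsubsequence we conclude $\lim_{t\to\infty}|\mathcal{W}^{(j)}(t,\theta)|=0$ for all $j\geqslant1$; the identical argument applied to $\overline{\mathcal{W}}$ yields the averaged statement.

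To pin down $L$, I would bootstrap to the quantitative rate $\mathcal{W}^{(j)}(t)=o(1/t)$: writing $\mathcal{W}(t)=L+R(t)$, the remainder $R$ satisfies a linear fourth-order ODE with asymptotically constant coefficients whose characteristic roots $\{0,2,4-n,2-n\}$ have exactly one neutral element, and with source $-\widetilde{K}_0(n,t)L+t^{-1}|L|^{2_{**}-1}L+O(R/t)$ of order $1/t$; a Fuchsian-type variation of constants then delivers $R,R^{(j)}=O(1/t)$ plus exponentially small angular corrections. Multiplying~\eqref{eq:subcriticalcylindricalsystemlower} through by $t$ and letting $t\to\infty$, the contributions $t\widetilde{K}_j\mathcal{W}^{(j)}$ for $j\geqslant1$ vanish, the angular terms disappear by their exponential decay, and $t\widetilde{K}_0(n,t)\to\widehat{K}_0(n)$, giving the algebraic identity $\widehat{K}_0(n)L_i=|L|^{2_{**}-1}L_i$ componentwise. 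Thus either $L=0$ or $|L|^{2_{**}-1}=\widehat{K}_0(n)$, i.e.\ $|L|=\widehat{K}_0(n)^{(n-4)/4}$. Finally, $|\mathcal{W}(\cdot,\theta)|$ is uniformly continuous (bounded with bounded first derivative), so its $\omega$-limit set is closed and connected; being contained in the two-point set $\{0,\widehat{K}_0(n)^{(n-4)/4}\}$, it reduces to a single point, giving the full limit in this set.

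The principal obstacle is exactly the quantitative bootstrap $\mathcal{W}^{(j)}(t)=o(1/t)$: the qualitative decay from the compactness step does not suffice to control $t\widetilde{K}_j\mathcal{W}^{(j)}$ after multiplication by $t$, so one really has to analyze perturbations of the linearized equation near the neutral root $\lambda=0$ via a Fuchsian or variation-of-constants argument for the nonautonomous fourth-order ODE. This is substantially more delicate than the constant-coefficient analysis in the compactness step, and crucially cannot rely on the monotonicity of the nonautonomous Pohozaev functional, which is established only after this lemma.
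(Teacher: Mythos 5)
Your compactness step is correct and gives a clean proof of the derivative convergence: the time-shifts $\mathcal{W}(t_k+\cdot,\theta)$ are precompact in $C^3_{\rm loc}$ by the bounds in Remark~\ref{rmk:estimates} together with the equation; the angular decay forces any limit to be $\theta$-independent; the limiting constant-coefficient equation has characteristic polynomial $\lambda(\lambda-2)(\lambda+n-4)(\lambda+n-2)$ (your factorization checks out), whose nonzero roots are all off the imaginary axis for $n\geqslant 5$, so a solution bounded on all of $\mathbb{R}$ must be constant. This differs in route from the paper, which reduces to the spherical-average ODE, writes the fourth-order operator as a composition of two second-order ones, and invokes \cite[Lemma~2.1]{MR4094467} (and its analogue \cite[Lemma~38]{arXiv:2002.12491}) as a black box. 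Your approach is more self-contained on the first half.

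The identification of the limiting value $|L|$ --- the real content of the lemma --- has a genuine gap, which you partially recognize but do not close. Two distinct problems. First, the ``multiply through by $t$'' step needs $\mathcal{W}^{(j)}(t)=o(1/t)$ for $j=1,\dots,4$ in order to kill $t\widetilde{K}_j(n,t)\mathcal{W}^{(j)}$ and $t\mathcal{W}^{(4)}$; your own description of the Fuchsian bootstrap ends at $R,R^{(j)}=O(1/t)$, which makes those terms merely bounded, not vanishing, so the algebraic identity $\widehat{K}_0(n)L_i=|L|^{2_{**}-1}L_i$ does not follow as written. Second, and more fundamentally, the decomposition $\mathcal{W}=L+R$ with $R\to0$ already presupposes that $\mathcal{W}$ converges to a single constant, which is part of what has to be proved; as set up, the $\omega$-limit argument at the end is circular, because containment of the $\omega$-limit set in the two-point set is only argued for a limit $L$ that the whole orbit has already been assumed to converge to. The way out is a contradiction argument at the level of $\omega$-limits: if $L$ is any $\omega$-limit for which $c:=|L|^{2_{**}-1}L-\widehat{K}_0(n)L\neq 0$, then feeding the $c/t$ source into the neutral factor $\partial_t$ of the (approximately) factored operator forces the quantity $(\partial_t-2)(\partial_t+n-4)(\partial_t+n-2)(\mathcal{W}-L)$ to accumulate a $c\ln t$ contribution, contradicting its boundedness --- but to make this rigorous one must control the $O(1/t)$ perturbations of the roots (the perturbed neutral root is itself of size $O(1/t)$ because $\widetilde{K}_0(n,t)\sim\widehat{K}_0(n)/t$) and do so uniformly over the $\omega$-limit set without assuming convergence. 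That careful nonautonomous ODE analysis is exactly what \cite[Lemma~2.1]{MR4094467} supplies and what you would need to reproduce; as it stands your proposal names the right obstruction but does not remove it.
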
 
	
	\begin{proof}
		Indeed, using the asymptotic symmetry in Proposition~\ref{prop:asymptoticsymmetry} and the estimates in Remark~\ref{rmk:estimates}, we have that the cylindrical transformation of spherical average $\overline{\mathcal{W}}=\widetilde{\mathfrak{F}}(\overline{\mathcal{U}})$ satisfies,
		\begin{equation*}
		t\overline{w}_i^{(4)}+\widehat{K}_{3}(n)\overline{w}_i^{(3)}+\widehat{K}_{2}(n)\overline{w}_i^{(2)}+\widehat{K}_{1}(n)\overline{w}_i^{(1)}=|\overline{\mathcal{W}}|^{2_{**}-1}\overline{w}_i-\widehat{K}_{0}(n)\overline{w}_i+\mathcal{O}(\overline{w}_i(t)e^{-t}) \quad \mbox{as} \quad t\rightarrow\infty.
		\end{equation*} 
		Thus, decomposing the left-hand side of the last equation into a product of two second order operators, we can adapt the arguments in \cite[Section~2]{MR4094467} (see also \cite[Section~4]{arXiv:2002.12491})
		for the last fourth order asymptotic identity. In this fashion, we split the proof of the claim into two steps:
		
		\noindent{\bf Step 1:} Either $|\mathcal{W}(t,\theta)|=o(1)$ or $|\mathcal{W}(t,\theta)|=\widehat{K}_0(n)^{\frac{n-4}{4}}+o(1)$ as $t\rightarrow\infty$.
		
		\noindent As a matter of fact, the conclusion follows directly from the same argument in \cite[Lemma~2.1]{MR4094467} to the limit ODE \eqref{eq:subcriticalODEsystem} (see also \cite[Lemma~38]{arXiv:2002.12491}).
		
		\noindent{\bf Step 2:} $|\partial_t^{(j)}\mathcal{W}(t,\theta)|=o(1)$ as $t\rightarrow\infty$ for all $j\geqslant1$.
		
		\noindent Indeed, we can adapt the proof in \cite[Lemma~2.1]{MR4094467} to the ODE system \eqref{eq:subcriticalODEsystem} (see also \cite[Lemma~39]{arXiv:2002.12491}).	
		
		Steps 1 and 2 together give the proof for the first claim.
	\end{proof}

		To prove the full existence of $\widetilde{\mathcal{P}}_{\rm cyl}(\infty,\mathcal{W})$, we shall verify the estimates in the next lemma, which is a fourth order version of \cite[Lemma~7.13]{MR4085120} (see also \cite[Lemma~3.2]{MR875297}). 
	Due to the appearance of higher order derivative terms, we give a different proof to the ones in the lemma quoted above. 
	Namely, our proof is based on Lemma~\ref{lm:signproperties} combined with a simple L'H\^ospital rule.

	\begin{lemma}\label{lm:estimateangularparts}
		Let $R<\infty$, $s=2_{**}$, and $\mathcal{W}$ be the nonautonomous cylindrical transform given by \eqref{nonautonomouscyltransform}. Then, 
		\begin{equation}\label{radialestimate}
			\lim_{t\rightarrow\infty}\Xi_{\rm rad}(t,\mathcal{W})=0,
		\end{equation}
		where $\Xi_{\rm rad}$ is defined by \eqref{radialerror}.
	\end{lemma}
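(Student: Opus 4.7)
The plan is to leverage Lemma~\ref{lm:signproperties} together with the uniform estimates of Remark~\ref{rmk:estimates} and the explicit decay rates of the coefficients in \eqref{nonautonomouscoeficcients}. The error term $\Xi_{\rm rad}(t,\mathcal{W})$ collects those contributions to $\partial_t\widetilde{\mathcal{P}}_{\rm cyl}(t,\mathcal{W})$ that do not match the autonomous-type monotonicity identity of Proposition~\ref{lm:monotonicityformula}. These extra contributions come from two sources in \eqref{modifiedvectenergy}: differentiating the prefactor $t$ in front of each radial piece of the Hamiltonian (which reproduces the same Hamiltonian density without the $t$ factor), and differentiating the coefficients $\widetilde{K}_j(n,t)$, each of which decays at rate at most $\mathcal{O}(t^{-1})$ in its non-constant part.

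My first step is to sort the summands of $\Xi_{\rm rad}$ into two classes. Class (I) consists of quadratic expressions in the derivatives $\mathcal{W}^{(j)}$, $j\geqslant 1$, multiplied by coefficients uniformly bounded in $t$. For every such summand, Lemma~\ref{lm:signproperties} yields $|\mathcal{W}^{(j)}(t,\theta)|\to 0$ as $t\to\infty$ uniformly in $\theta\in\mathbb{S}^{n-1}$; combined with the uniform radial bound \eqref{radialestimates}, the integrand tends to $0$ pointwise on $\mathbb{S}_t^{n-1}$ and is dominated by a constant, so the integral vanishes by dominated convergence. Class (II) consists of purely algebraic expressions of the form $\psi(t)|\mathcal{W}(t,\theta)|^{k}$ with $\psi(t)=\mathcal{O}(t^{-1})$; here the universal bound on $|\mathcal{W}|$ from \eqref{radialestimates} immediately yields an $\mathcal{O}(t^{-1})$ contribution.

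The principal obstacle is to verify that every summand indeed fits into one of these two classes, which requires a careful bookkeeping of the expansions $\partial_t\!\left[t\widetilde{K}_j(n,t)\right]$ for $j=0,1,2,3$. Specifically, any non-decaying part of such a coefficient must be paired with a genuine derivative $\mathcal{W}^{(j)}$ of order $j\geqslant 1$ so as to qualify as Class (I); a direct inspection of \eqref{nonautonomouscoeficcients} shows that this is precisely the algebraic structure of those coefficients. To handle the potentially stubborn algebraic term coming from the nonlinearity $|\mathcal{W}|^{2_{**}+1}$ in the Hamiltonian, I would substitute the ODE \eqref{nonautonomousodesystem} so as to convert the factor $t^{-1}|\mathcal{W}|^{2_{**}-1}\mathcal{W}$ into an expression involving $\mathcal{W}^{(4)}$ plus lower-order derivatives with manageable coefficients, thereby trading an algebraic piece for a derivative piece that vanishes by Lemma~\ref{lm:signproperties}. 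A final L'H\^ospital-type argument, used to transfer the limit in averages of oscillatory quantities to the limit of their derivatives (the latter falling into Class (I)), closes the proof of \eqref{radialestimate}.
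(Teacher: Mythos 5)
Your overall strategy (termwise analysis of \eqref{radialerror}, using Lemma~\ref{lm:signproperties} for the derivative factors and the explicit decay of the coefficients for the rest) is the same as the paper's, but your two-class scheme has a genuine gap at exactly the one term where the proof is nontrivial. By \eqref{nonautonomouspohozaevcoefficients}, the coefficient $\mathfrak{p}_1(n,t)$ multiplying $|\mathcal{W}^{(1)}|^2$ contains the summand $-2(n-2)(n-4)\,t$, which grows linearly. This term fits neither of your classes: it is quadratic in a derivative but its coefficient is \emph{not} uniformly bounded (so it is not Class (I)), and it is not an algebraic power of $|\mathcal{W}|$ with an $\mathcal{O}(t^{-1})$ weight (so it is not Class (II)). Your assertion that ``a direct inspection of \eqref{nonautonomouscoeficcients} shows'' every non-decaying coefficient pairs with a derivative ``so as to qualify as Class (I)'' is therefore false as stated: pairing with a derivative is not enough, because the purely qualitative vanishing $|\mathcal{W}^{(1)}(t,\theta)|\to 0$ supplied by Lemma~\ref{lm:signproperties} carries no rate and cannot by itself beat a factor of $t$. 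The paper's proof isolates precisely this difficulty: it splits $\mathfrak{p}_1=\widetilde{\mathfrak{p}}_1+\widehat{\mathfrak{p}}_1$ with $\widehat{\mathfrak{p}}_1(n,t)=-2(n-2)(n-4)t$, disposes of the bounded part as in your Class (I), and then treats $\lim_{t\to\infty}\widehat{\mathfrak{p}}_1(n,t)\int_{\mathbb{S}^{n-1}}|\mathcal{W}^{(1)}(t,\theta)|^2\,\ud\theta$ by a L'H\^ospital argument applied to the quotient of $t$ by $\bigl(\int_{\mathbb{S}^{n-1}}|\mathcal{W}^{(1)}|^2\ud\theta\bigr)^{-1}$. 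Your closing sentence does invoke a ``L'H\^ospital-type argument,'' but it is aimed at ``averages of oscillatory quantities'' in general and is never connected to this specific weighted integral; as written, your argument fails (or at least is not closed) at the $\mathfrak{p}_1$ term.

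Two smaller remarks. First, your concern about the nonlinearity $|\mathcal{W}|^{2_{**}+1}$ is moot for this lemma: after substituting \eqref{eq:subcriticalcylindricalsystemlower} in the computation of $\partial_t\widetilde{\mathcal{P}}_{\rm cyl}$ (which is done in the proof of Proposition~\ref{lm:lowermonotonicity}, where \eqref{radialerror} is derived), the nonlinear contribution cancels against the derivative of the potential term, so no such term appears in $\Xi_{\rm rad}$; it is not ``traded for a $\mathcal{W}^{(4)}$ piece.'' Second, for the terms that do fit your classes ($\langle\mathcal{W}^{(3)},\mathcal{W}^{(1)}\rangle$, $\mathfrak{p}_3\langle\mathcal{W}^{(2)},\mathcal{W}^{(1)}\rangle$, $\mathfrak{p}_2|\mathcal{W}^{(2)}|^2$, and $\mathfrak{p}_0|\mathcal{W}|^2$ with $\mathfrak{p}_0=\mathcal{O}(t^{-2})$), your treatment via Lemma~\ref{lm:signproperties}, the bounds \eqref{radialestimates}, and dominated convergence coincides with the paper's Steps 1--3 and 5 and is fine.
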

	
	\begin{proof}
		Now we combine the asymptotic estimate proved in Lemma~\ref{lm:signproperties} with the L'H\^ospital rule to prove this convergence to zero.
		In fact, notice that \eqref{radialestimate} can be rewritten as
		\begin{align*}
			\Xi_{\rm rad}(t,\mathcal{W})&:=I_4+I_3+I_2+I_1+I_0.
		\end{align*}
		Next, we estimate each term of the last identity separately by steps.
		
		\noindent{\bf Step 1:} $I_4:=\lim_{t\rightarrow\infty}\int_{\mathbb{S}^{n-1}}\langle \mathcal{W}^{(3)}(t,\theta),\mathcal{W}^{(1)}(t,\theta)\rangle\ud\theta=0$.
		
		\noindent It follows directly by Lemma~\ref{lm:signproperties}.
		
		\noindent{\bf Step 2:} $I_3=\lim_{t\rightarrow\infty}\int_{\mathbb{S}^{n-1}}\mathfrak{p}_3(n,t)\langle\mathcal{W}^{(2)}(t,\theta),\mathcal{W}^{(1)}(t,\theta)\rangle\ud\theta=0$.
		
		\noindent In fact, using \eqref{nonautonomouspohozaevcoefficients}, one has that  $\lim_{t\rightarrow\infty}\mathfrak{p}_3(n,t)<\infty$. Thus, 
		\begin{align*}
			\lim_{t\rightarrow\infty}\int_{\mathbb{S}^{n-1}}\mathfrak{p}_3(n,t)\langle\mathcal{W}^{(2)}(t,\theta),\mathcal{W}^{(1)}(t,\theta)\rangle\ud\theta=\left(\lim_{t\rightarrow\infty}\mathfrak{p}_3(n,t)\right)\left(\lim_{t\rightarrow\infty}\int_{\mathbb{S}^{n-1}}\langle\mathcal{W}^{(2)}(t,\theta),\mathcal{W}^{(1)}(t,\theta)\rangle\ud\theta\right)=0,&
		\end{align*}
		which by Lemma~\ref{lm:signproperties}
		leads to the conclusion.
		
		\noindent{\bf Step 3:}
		$I_2=\lim_{t\rightarrow\infty}\int_{\mathbb{S}^{n-1}}\mathfrak{p}_2(n,t)|\mathcal{W}^{(2)}(t,\theta)|^{2}\ud\theta=0.$
		
		\noindent Again by \eqref{nonautonomouspohozaevcoefficients}, it holds that $\lim_{t\rightarrow\infty}\mathfrak{p}_2(n,t)<\infty$ and using
		\begin{align*}
			\lim_{t\rightarrow\infty}\int_{\mathbb{S}^{n-1}}\mathfrak{p}_2(n,t)|\mathcal{W}^{(2)}(t,\theta)|^2\ud\theta=\left(\lim_{t\rightarrow\infty}\mathfrak{p}_2(n,t)\right)\left(\lim_{t\rightarrow\infty}\int_{\mathbb{S}^{n-1}}|\mathcal{W}^{(2)}(t,\theta)|^2\ud\theta\right)=0,&
		\end{align*}
		the proof of this step follows promptly.
		
		\noindent{\bf Step 4:} $I_1=\lim_{t\rightarrow\infty}\int_{\mathbb{S}^{n-1}}\mathfrak{p}_1(n,t)|\mathcal{W}^{(1)}(t,\theta)|^2\ud\theta=0$.
		
		\noindent The difference in this situation is that  $\lim_{t\rightarrow\infty}\mathfrak{p}_1(n,t)=\infty$.
		However, using \eqref{nonautonomouspohozaevcoefficients}, we can decompose $\mathfrak{p}_1(n,t)=\widetilde{\mathfrak{p}}_1(n,t)+\widehat{\mathfrak{p}}_1(n,t)$, where $\lim_{t\rightarrow\infty}\widetilde{\mathfrak{p}}_1(n,t)<\infty$ and
		\begin{equation*}
			\widehat{\mathfrak{p}}_1(n,t)=-2(n-2)(n-4)t.
		\end{equation*}
		Then, we get
		\begin{align*}
			\lim_{t\rightarrow\infty}\int_{\mathbb{S}^{n-1}}\mathfrak{p}_1(n,t)|\mathcal{W}(t,\theta)|^2\ud\theta&=\lim_{t\rightarrow\infty}\left[\widetilde{\mathfrak{p}}_1(n,t)\int_{\mathbb{S}^{n-1}}|\mathcal{W}(t,\theta)|^2\ud\theta\right]+\lim_{t\rightarrow\infty}\left[\widehat{\mathfrak{p}}_1(n,t)\int_{\mathbb{S}^{n-1}}|\mathcal{W}(t,\theta)|^2\ud\theta\right]&\\
			&:=\widetilde{I}_{1}+\widehat{I}_{1}.&
		\end{align*}
		Notice that since $\lim_{t\rightarrow\infty}\widetilde{\mathfrak{p}}_1(n,t)=0$, we trivially see that $\widetilde{I}_{1}=0$, . Moreover, in order to estimate $\widehat{I}_{1}$, we use the L'H\^ospital rule as follows
		\begin{align*}
			\widehat{I}_{1}=\lim_{t\rightarrow\infty}\int_{\mathbb{S}^{n-1}}\widehat{\mathfrak{p}}_1(n,t)|\mathcal{W}(t,\theta)|^2\ud\theta=\lim_{t\rightarrow\infty}\frac{-2(n-2)(n-4)}{\partial^{(1)}_t\left[\left(\int_{\mathbb{S}^{n-1}}|\mathcal{W}(t,\theta)|^2\ud\theta\right)^{-1}\right]}=0,
		\end{align*}
		which proves this step.
		
		\noindent{\bf Step 5:}
		$I_0=\lim_{t\rightarrow\infty}\int_{\mathbb{S}^{n-1}}\mathfrak{p}_0(n,t)|\mathcal{W}(t,\theta)|^2\ud\theta=0$.
		
		\noindent As  before, we can compute $\lim_{t\rightarrow\infty}\mathfrak{p}_0(n,t)=0$. Consequently, we get
		\begin{align*}
			\lim_{t\rightarrow\infty}\int_{\mathbb{S}^{n-1}}\mathfrak{p}_0(n,t)|\mathcal{W}^{(1)}(t,\theta)|^2\ud\theta=\left(\lim_{t\rightarrow\infty}\mathfrak{p}_0(n,t)\right)\left(\lim_{t\rightarrow\infty}\int_{\mathbb{S}^{n-1}}|\mathcal{W}^{(1)}(t,\theta)|^2\ud\theta\right)=0,&
		\end{align*}
		which gives us the desired conclusion.
		
		Finally, putting together all the steps, it follows that Claim 2 holds, and therefore, the proof is concluded.
	\end{proof}
	
	The next proposition is the most important result of this subsection, namely the monotonicity of this new Pohozaev functional.
	We should point out that different from the second order case, the asymptotic sign of the Pohozaev functional derivative depends on the dimension.
	Notice that for a well-defined limit behavior, it is only necessary that the sign is constant and non-zero for large time.
	
	\begin{proposition}\label{lm:lowermonotonicity}
		Let $R<\infty$, $s=2_{**}$, and $\mathcal{U}$ be a nonnegative solution to \eqref{eq:subcriticalsystem}.
		Then, there exists $T_1\gg1$ sufficiently large such that $\operatorname{sgn}(\partial_t\widetilde{\mathcal{P}}_{\rm cyl}(t,\mathcal{W}))$ is non-vanishing and constant for $t>T_1$. Namely,\\
		\noindent{\rm (i)}
		if $5\leqslant n\leqslant7$, then
		there exists $T_*\gg1$ such that $\widetilde{\mathcal{P}}_{\rm cyl}(t,\mathcal{W})$ is nonincreasing for $t>T_*$.\\
		\noindent{\rm (ii)} if $n\geqslant8$, then
		there exists $T^*\gg1$ such that $\widetilde{\mathcal{P}}_{\rm cyl}(t,\mathcal{W})$ is nondecreasing for $t>T^*$.\\
		Moreover, $\widetilde{\mathcal{P}}_{\rm cyl}(\infty,\mathcal{W}):=lim_{t\rightarrow\infty}\widetilde{\mathcal{P}}_{\rm cyl}(t,\mathcal{W})$ exists.
	\end{proposition}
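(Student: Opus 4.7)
The plan is to compute $\partial_t\widetilde{\mathcal{P}}_{\rm cyl}(t,\mathcal{W})$ directly and show that, for $t$ large, the sign of this derivative is controlled by a single dominant term whose coefficient depends polynomially on $n$. To set up the identity, I would take the inner product of \eqref{eq:subcriticalcylindricalsystemlower} with $t\mathcal{W}^{(1)}$, integrate over the slice $\mathbb{S}^{n-1}_t$, and integrate by parts in $\theta$ and $t$, paralleling the autonomous computation in the proof of Proposition~\ref{lm:monotonicityformula}. The new feature with respect to that autonomous case is the explicit time dependence of the coefficients $\widetilde{K}_j(n,t)$, $\widetilde{J}_j(n,t)$ together with the $t$-prefactor in every quadratic summand of $\widetilde{\mathcal{H}}$; differentiating these produces additional lower-order terms. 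After bookkeeping, I expect a decomposition
\begin{equation*}
\partial_t\widetilde{\mathcal{P}}_{\rm cyl}(t,\mathcal{W})=\Xi_{\rm rad}(t,\mathcal{W})+\Xi_{\rm ang}(t,\mathcal{W}),
\end{equation*}
with $\Xi_{\rm rad}$ the radial error defined by \eqref{radialerror} and $\Xi_{\rm ang}$ the analogous angular expression built out of $\nabla_\theta\mathcal{W}$, $\Delta_\theta\mathcal{W}$, and the coefficients $\widetilde{J}_j(n,t)$.

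The second step is to show that $\Xi_{\rm ang}$ is negligible for large $t$. Combining the asymptotic symmetry estimate $|\nabla_\theta\mathcal{W}|+|\Delta_\theta\mathcal{W}|=\mathcal{O}(e^{-\beta t})$ from \eqref{angularestimates} with the uniform radial bound \eqref{radialestimates}, I obtain $\Xi_{\rm ang}(t,\mathcal{W})=\mathcal{O}(e^{-\beta t})$ as $t\to\infty$, so the sign of $\partial_t\widetilde{\mathcal{P}}_{\rm cyl}$ is eventually dictated by $\Xi_{\rm rad}$.

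Third, I would extract the asymptotic sign of $\Xi_{\rm rad}$. By Lemma~\ref{lm:signproperties}, along every divergent sequence $|\mathcal{W}(t,\theta)|\to\widehat{K}_0(n)^{(n-4)/4}$ (the trivial alternative being excluded by the singular nature of $\mathcal{U}$), while $|\mathcal{W}^{(j)}(t,\theta)|\to 0$ for $j\geqslant 1$. Expanding each $\widetilde{K}_j(n,t)$ of \eqref{nonautonomouscoeficcients} in inverse powers of $t$ and inserting these asymptotics into $\Xi_{\rm rad}$, the leading nonvanishing contribution should reduce to a fixed polynomial in $n$ times $\widehat{K}_0(n)^{(n-4)/2}$; direct inspection of \eqref{nonautonomouscoeficcients} and Remark~\ref{rmk:signoncoefficients} suggests that this polynomial is a positive multiple of $K_{2,*}(n)=n^2-10n+20$, which is strictly negative for $5\leqslant n\leqslant 7$ and strictly positive for $n\geqslant 8$. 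Matching signs in the identity of the first step then yields the claimed dichotomy: there exists $T_*\gg1$ such that $\partial_t\widetilde{\mathcal{P}}_{\rm cyl}<0$ for $t>T_*$ when $5\leqslant n\leqslant 7$, and analogously $T^*\gg 1$ with $\partial_t\widetilde{\mathcal{P}}_{\rm cyl}>0$ for $t>T^*$ when $n\geqslant 8$.

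Finally, Lemma~\ref{lm:estimateangularparts} furnishes $\Xi_{\rm rad}(t,\mathcal{W})\to 0$ as $t\to\infty$, so $\partial_t\widetilde{\mathcal{P}}_{\rm cyl}(t,\mathcal{W})$ has eventually constant sign and vanishes at infinity; coupled with the monotonicity already established, this forces $\widetilde{\mathcal{P}}_{\rm cyl}(t,\mathcal{W})$ to converge to a finite limit $\widetilde{\mathcal{P}}_{\rm cyl}(\infty,\mathcal{W})$. The main obstacle is the algebraic bookkeeping in the identification of the dominant coefficient of $\Xi_{\rm rad}$: numerous cross terms arise from $\partial_t\widetilde{K}_j(n,t)$, from $t^{-1}\widetilde{K}_j(n,t)$, and from the quadratic forms in $\mathcal{W}^{(k)}$, and one must verify that no subtle cancellation between them destroys the putative leading polynomial $n^2-10n+20$, since this is precisely what encodes the critical dimension threshold between $n=7$ and $n=8$. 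A secondary technical point is to confirm that the integration by parts in $\theta$ does not introduce any leading contribution through the $\widetilde{J}_j$-terms before the exponential decay of the angular derivatives is invoked.
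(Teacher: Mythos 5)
Your outline reproduces the paper's strategy closely: the same inner-product-with-$t\mathcal{W}^{(1)}$ computation, the same decomposition $\partial_t\widetilde{\mathcal{P}}_{\rm cyl}=\Xi_{\rm rad}+\Xi_{\rm ang}$ with $\Xi_{\rm ang}$ killed by the exponential angular decay \eqref{angularestimates}, and the reduction of the sign of $\Xi_{\rm rad}$ to the $|\mathcal{W}|^{2}$ term once Lemma~\ref{lm:signproperties} forces $\mathcal{W}^{(j)}\to 0$ for $j\geqslant 1$. Your identification of the critical polynomial as a positive multiple of $K_{2,*}(n)=n^{2}-10n+20$ is correct: the dominant piece of $\mathfrak{p}_0(n,t)$ in \eqref{nonautonomouspohozaevcoefficients} is $\tfrac{(n-4)n(n^2-10n+20)}{32t^{2}}$, which changes sign between $n=7$ and $n=8$. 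So both the structure and the critical threshold are anticipated correctly.

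There is, however, a genuine gap in your last step. You conclude the existence of $\widetilde{\mathcal{P}}_{\rm cyl}(\infty,\mathcal{W})$ from ``$\Xi_{\rm rad}\to 0$, hence $\partial_t\widetilde{\mathcal{P}}_{\rm cyl}\to 0$, combined with eventual monotonicity.'' This inference is invalid: a monotone function whose derivative tends to zero may still diverge (take $f(t)=\ln t$). Eventual monotonicity only gives convergence to a limit in $[-\infty,\infty]$; to rule out $\pm\infty$ you must separately show $\widetilde{\mathcal{P}}_{\rm cyl}(t,\mathcal{W})$ is bounded above and below. That boundedness is precisely what the paper extracts from the a priori estimates in Remark~\ref{rmk:estimates} --- the exponential angular decay \eqref{angularestimates} together with the uniform radial bounds \eqref{radialestimates} coming from Proposition~\ref{prop:asymptoticsymmetry} and Corollary~\ref{cor:universalestimates} --- and it is an essential ingredient, not a consequence of $\partial_t\widetilde{\mathcal{P}}_{\rm cyl}\to 0$. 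In the same vein, your passing claim that the case $|\mathcal{W}|\to 0$ ``is excluded by the singular nature of $\mathcal{U}$'' is premature at this stage of the argument: the proposition must hold for both alternatives of Lemma~\ref{lm:signproperties}, and the distinction between the zero and nonzero limit is only settled later (via Lemma~\ref{lm:removablesingularityaviles}) when identifying which limit corresponds to a removable singularity.
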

	
	\begin{proof}
		By differentiating $\widetilde{\mathcal{H}}(t,\mathcal{W})$ with respect to $t$, we get
		\begin{align*}
		\partial_t\widetilde{\mathcal{H}}(t,\theta,\mathcal{W})&=-\left(\langle \mathcal{W}^{(3)},\mathcal{W}^{(1)}\rangle+\widetilde{K}_{3}\langle\mathcal{W}^{(2)},\mathcal{W}^{(1)}\rangle\right)+\frac{1}{2}\left(|\mathcal{W}^{(2)}|^{2}-{\widetilde{K}_{2}}|\mathcal{W}^{(1)}|^{2}
		-{\widetilde{K}_{0}}|\mathcal{W}|^2\right)&\\
		&-\frac{1}{2}\left(|\Delta_{\theta}\mathcal{W}|^2+\widetilde{J}_{0}|\nabla_{\theta}\mathcal{W}|^2-2|\partial_t\nabla_{\theta}\mathcal{W}|^2\right)+\langle|\mathcal{W}|^{2_{**}}\mathcal{W},\mathcal{W}^{(1)}\rangle&\\\nonumber
		&-t\left(\widetilde{K}_{3}|\mathcal{W}^{(2)}|^2+\widetilde{K}^{(1)}_{3}\langle\mathcal{W}^{(2)},\mathcal{W}^{(1)}\rangle-\frac{\widetilde{K}^{(1)}_{2}}{2}|\mathcal{W}^{(1)}|^2-\frac{\widetilde{K}^{(1)}_{0}}{2}|\mathcal{W}|^2+\frac{\widetilde{J}^{(1)}_{0}}{2}|\nabla_{\theta}\mathcal{W}|^2\right)&\\\nonumber
		&-t\left(\langle \mathcal{W}^{(4)}+\widetilde{K}_{3}\mathcal{W}^{(3)}+\widetilde{K}_{2}\mathcal{W}^{(2)}+\widetilde{K}_{0}\mathcal{W},\mathcal{W}^{(1)}\rangle\right)-\frac{t}{2}\langle\Delta_{\theta}\mathcal{W},\partial_t\Delta_{\theta}\mathcal{W}\rangle&\\
		&-\frac{t}{2}\langle\widetilde{J}_{0}\nabla_{\theta}\mathcal{W},\partial_t\nabla_{\theta}\mathcal{W}\rangle-t\langle 2\partial^{(2)}_t\nabla_{\theta}\mathcal{W},\mathcal{W}^{(1)}\rangle.&
		\end{align*}
		Now integrating by parts over $\mathbb{S}_t^{n-1}$ (using differentiation under the integral sign one can even omit the dependence on $t$) combined with \eqref{eq:subcriticalcylindricalsystemlower}, provides
		\begin{align*}
		\partial_t\widetilde{\mathcal{P}}_{\rm cyl}(t,\mathcal{W})&=\int_{\mathbb{S}^{n-1}_t}\left[-\left(\langle \mathcal{W}^{(3)},\mathcal{W}^{(1)}\rangle+\widetilde{K}_{3}\langle\mathcal{W}^{(2)},\mathcal{W}^{(1)}\rangle\right)+\frac{1}{2}\left(|\mathcal{W}^{(2)}|^{2}-{\widetilde{K}_{2}}|\mathcal{W}^{(1)}|^{2}
		-{\widetilde{K}_{0}}|\mathcal{W}|^2\right)\right.&\\
		&-t\left(\widetilde{K}_{3}|\mathcal{W}^{(2)}|^2+\widetilde{K}^{(1)}_{3}\langle\mathcal{W}^{(2)},\mathcal{W}^{(1)}\rangle+\frac{\widetilde{K}^{(1)}_{2}}{2}|\mathcal{W}^{(1)}|^2+\frac{\widetilde{K}^{(1)}_{0}}{2}|\mathcal{W}|^2-\widetilde{K}_{1}|\mathcal{W}^{(1)}|^2\right)&\\\nonumber
		&\left.-\frac{1}{2}\left(|\Delta_{\theta}\mathcal{W}|^2+\widetilde{J}_{0}|\nabla_{\theta}\mathcal{W}|^2-2|\partial_t\nabla_{\theta}\mathcal{W}|^2+\widetilde{J}^{(1)}_{0}|\nabla_{\theta}\mathcal{W}|^2-\widetilde{J}_{1}|\partial_t\Delta_{\theta}\mathcal{W}|\right)\right]\ud\theta,&
		\end{align*}
		which can be reformulated as
		\begin{align*}
		\partial_t\widetilde{\mathcal{P}}_{\rm cyl}(t,\mathcal{W})
		=\Xi_{\rm ang}(t,\mathcal{W})+\Xi_{\rm rad}(t,\mathcal{W}).
		\end{align*}
		Here 
		\begin{align}\label{angularerror}
		\Xi_{\rm ang}(t,\mathcal{W}):=-\frac{1}{2}\int_{\mathbb{S}^{n-1}}\left(|\Delta_{\theta}\mathcal{W}|^2+\widetilde{J}_{0}|\nabla_{\theta}\mathcal{W}|^2-2|\partial_t\nabla_{\theta}\mathcal{W}|^2+\widetilde{J}^{(1)}_{0}|\nabla_{\theta}\mathcal{W}|^2-\widetilde{J}_{1}|\partial_t\Delta_{\theta}\mathcal{W}|^2\right)\ud\theta
		\end{align}
		and
		\begin{align}\label{radialerror}
		\Xi_{\rm rad}(t,\mathcal{W})&:=\int_{\mathbb{S}^{n-1}}\left[\langle -\mathcal{W}^{(3)}+\mathfrak{p}_3\mathcal{W}^{(2)},\mathcal{W}^{(1)}\rangle+\mathfrak{p}_2|\mathcal{W}^{(2)}|^{2}+\mathfrak{p}_1|\mathcal{W}^{(1)}|^2+\mathfrak{p}_0|\mathcal{W}|^2\right]\ud\theta,&
		\end{align}
		where
		\begin{align*}
		&\mathfrak{p}_3(n,t):=-\left[\widetilde{K}_{3}(n,t)+\widetilde{K}_{3}^{(1)}(n,t)\right], \quad \mathfrak{p}_2(n,t):=-\frac{1}{2}\left[2t\widetilde{K}_{3}(n,t)-1\right],&\\
		&\mathfrak{p}_1(n,t):=-\frac{1}{2}\left[{\widetilde{K}_{2}(n,t)}+
		t{\widetilde{K}_{2}^{(1)}(n,t)}-2t\widetilde{K}_{1}(n,t)\right], \quad  \mbox{and} \quad \mathfrak{p}_0(n,t):=-\frac{1}{2}\left[{\widetilde{K}_{0}(n,t)}+{t}{\widetilde{K}_{0}^{(1)}(n,t)}\right].&
		\end{align*}
		More explicitly,
		\begin{align}\label{nonautonomouspohozaevcoefficients}
		\nonumber
		&\mathfrak{p}_3(n,t):=-\frac{n-4}{t^2}-\frac{n-4}{t}-2(n-4),&\\	\nonumber
		&\mathfrak{p}_2(n,t):=-\frac{n-4}{t}-\frac{4n-17}{2},&\\
		&\mathfrak{p}_1(n,t):=\frac{n(n+7)(n-4)}{16t^2}+\frac{3n(n-4)^2}{8t}+5(7n-10)-2(n-2)(n-4)t,&\\\nonumber &\mathfrak{p}_0(n,t):=\frac{3(n-4)n(n+4)(n+8)}{512t^4}+\frac{(n-4)^2n(n+4)}{32t^3}+\frac{(n-4)n(n^2-10n+20)}{32t^2}.&
		\end{align}
		Let us consider
		\begin{equation*}
		\Upsilon(t,\mathcal{W})=\langle -\mathcal{W}^{(3)}+\mathfrak{p}_3(n,t)\mathcal{W}^{(2)},\mathcal{W}^{(1)}\rangle+\mathfrak{p}_2(n,t)|\mathcal{W}^{(2)}|^{2}+\mathfrak{p}_1(n,t)|\mathcal{W}^{(1)}|^2+\mathfrak{p}_0(n,t)|\mathcal{W}|^2,
		\end{equation*}
		the integrand in \eqref{radialerror}.
		Notice that by Remark~\ref{rmk:estimates} and Lemma~\ref{lm:signproperties} there exists $T_0\gg1$ such that $\operatorname{sign}(\Upsilon(t,\mathcal{W}))=\operatorname{sign}(\mathfrak{p}_0(n,t)|\mathcal{W}|^2)$ for $t>T^*$. 
		Besides, we directly verify for $n\geqslant8$ that there exist $t_0\gg1$ sufficiently large such that 
		$\mathfrak{p}_0(n,t)>0$ for $t>t_0$, which, by taking $T^*:=\max\{T_0,t_0\}$, implies that $\widetilde{\mathcal{P}}_{\rm cyl}(t,\mathcal{W})$ is nondecreasing for $t>T^*$.
		However, for $5\leqslant n\leqslant7$, there exists $t_1\gg1$ sufficiently large such that 
		$\mathfrak{p}_0(t)>0$ for $t>t_1$; thus, setting $T_*:=\max\{T_0,t_1\}$, we get that $\widetilde{\mathcal{P}}_{\rm cyl}(t,\mathcal{W})$ is nonincreasing for $t>T_*$.
		Hence, the existence of $\widetilde{\mathcal{P}}_{\rm cyl}(\infty,\mathcal{W})$ follows  since $\widetilde{\mathcal{P}}_{\rm cyl}(t,\mathcal{W})$ is uniformly bounded both from above and below as $t \rightarrow \infty$, and, by \eqref{angularestimates}, it gives us
		\begin{equation*}
		\lim_{t\rightarrow\infty}\Xi_{\rm ang}(t,\mathcal{W})=0,
		\end{equation*}
		where $\Xi_{\rm ang}$ is defined by \eqref{angularerror}.
		This combined with \eqref{radialestimates} yields that $\liminf _{t \rightarrow \infty} \widetilde{\mathcal{P}}_{\rm cyl}(t,\mathcal{W})<\infty$.
		The proof is concluded.
	\end{proof}

	\section{Classification for the blow-up limit solutions}\label{sec:blowlimitcase}
	This section is devoted to provide the proof of Theorem~\ref{Thm3.1:classification}. 
	Our strategy is based on a blow-up (shrink-down) argument inspired by \cite{MR157263,MR3190428} combined with the moving spheres technique from \cite{MR2055032} for the system of integral equations given by \eqref{integralsystem}. Next, we use the study of the limiting levels of the Pohozaev functional under the family of rescalings to prove the remaining part of the theorem. The outline here is similar in spirit to the one in \cite{MR4085120}.
	
	\subsection{Non-singular case: The strong Liouville theorem}\label{subsec:non-singularcase}
	We assume that the origin is a removable singularity. Our objective is to prove that solutions must be trivial, a type of Pohozaev non-existence result for System \eqref{eq:subcriticalsystem}.
	To classify the solutions to \eqref{eq:subcriticalsystem} in the whole space $\mathbb{R}^{n}$, we use an integral version of the moving spheres method, based on the Kelvin transform introduced in Subsection~\ref{sec:kelvintransform}.
	
	\begin{proposition}\label{prop:nonexistencenonsingular}
		Let $R=\infty$, $s\in(1,2^{**}-1)$, and $\mathcal{U}$ be a nonnegative non-singular solution to \eqref{eq:subcriticalsystem}. If the origin is a removable singularity, then
		$\mathcal{U}\equiv0$.
	\end{proposition}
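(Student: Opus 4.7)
The plan is to adapt the integral version of the moving spheres method to the vectorial subcritical setting, following the Chen--Li--Ou framework already invoked in \cite{arxiv:1901.01678, MR2055032}. Since the origin is a removable singularity, I would first extend $\mathcal{U}$ to a nonnegative $C^4$ solution on all of $\mathbb{R}^n$. By nonnegativity and the equation $\Delta^2 u_i = |\mathcal{U}|^{s-1}u_i \geqslant 0$, a standard Green representation on balls combined with an iteration argument shows that each $u_i$ is superharmonic (hence the strong maximum principle splits $I = I_0 \sqcup I_+$). Then, following the proof of Proposition~\ref{lm:integralrepresentation} and passing to the limit $R\to\infty$, I obtain the global integral representation
\begin{equation*}
u_i(x)=c_{n}\int_{\mathbb{R}^n}|x-y|^{4-n}|\mathcal{U}(y)|^{s-1}u_i(y)\,\ud y,\qquad x\in\mathbb{R}^n,\ i\in I_{+},
\end{equation*}
where the biharmonic ``free'' part is killed by the superharmonicity together with the growth control $u_i(x)=o(|x|^{4-n})$ inherited from the non-singularity at the origin and the Kelvin transform.

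Next, for each $x_{0}\in\mathbb{R}^n$ and $\mu>0$, I would use Proposition~\ref{prop:conformalinvariance}, which says that the Kelvin transform $\mathcal{U}_{x_0,\mu}$ solves
\begin{equation*}
\Delta^{2}(u_{i})_{x_0,\mu}=K_{x_0,\mu}(x)^{\tau}|\mathcal{U}_{x_0,\mu}|^{s-1}(u_{i})_{x_0,\mu}, \qquad \tau:=(n-4)s-(n+4).
\end{equation*}
The crucial observation is that in the subcritical range $s\in(1,2^{**}-1)$ one has $\tau<0$, so $K_{x_0,\mu}^{\tau}<1$ on $\mathbb{R}^n\setminus\overline{B_{\mu}(x_{0})}$ and $>1$ inside. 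Writing the integral representation for $(u_{i})_{x_0,\mu}$ via a change of variables and subtracting gives, for $|x-x_{0}|\geqslant\mu$ and $i\in I_{+}$,
\begin{equation*}
u_{i}(x)-(u_{i})_{x_0,\mu}(x)=c_{n}\int_{|y-x_{0}|\geqslant\mu}\Bigl[|x-y|^{4-n}-\bigl(\tfrac{\mu}{|x-x_{0}|}\bigr)^{n-4}|x-y^{*}|^{4-n}\Bigr]\bigl[G_{\mu}(y)-H_{\mu}(y)\bigr]\,\ud y,
\end{equation*}
where $y^{*}=\mathcal{I}_{x_{0},\mu}(y)$ and $G_\mu, H_\mu$ encode the nonlinear terms for $\mathcal U$ and $\mathcal U_{x_0,\mu}$ respectively; the kernel in brackets is nonnegative on that region.

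With this in hand, I would define
\begin{equation*}
\lambda(x_{0})=\sup\bigl\{\mu>0\,:\,(u_{i})_{x_{0},\mu'}\leqslant u_{i}\ \text{on}\ \mathbb{R}^n\setminus\overline{B_{\mu'}(x_{0})}\ \text{for all}\ i\in I_{+}\ \text{and all}\ \mu'\in(0,\mu)\bigr\},
\end{equation*}
and run the standard dichotomy. A small-$\mu$ start shows $\lambda(x_{0})>0$ by positivity of $u_{i}$ near $x_{0}$. If $\lambda(x_{0})<\infty$ for every $x_{0}$, then by continuity one obtains the nonstrict inequality at $\mu=\lambda(x_{0})$; but the strictly subcritical factor $K^{\tau}<1$ forces the inequality to be strict in the interior, which (via a Hopf-type lemma on the integral operator) allows me to push $\mu$ slightly past $\lambda(x_{0})$, contradicting maximality. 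Thus $\lambda(x_{0})=\infty$ for some $x_{0}$, which gives the homogeneity bound $u_{i}(y)\geqslant c(x_{0})|y-x_{0}|^{4-n}$ for $|y|\to\infty$. Inserting this lower bound back into the integral representation and iterating, in a finite number of steps the growth of $|\mathcal{U}|^{s-1}u_{i}$ at infinity violates local integrability whenever $s<2^{**}-1$, yielding a contradiction.

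The main obstacle I expect is the vectorial coupling. Individual components may vanish identically, and the moving spheres comparison must be run simultaneously on all of $I_{+}$, exploiting the scalar scaling of $|\mathcal{U}|^{s-1}$ to control each $u_{i}$ in turn. A secondary technical point is justifying the integral representation in the whole space for a non-singular $\mathcal{U}$: one needs to certify that the biharmonic remainder vanishes, which uses superharmonicity of the components together with a decay estimate coming from the Kelvin transform applied at the origin (where the removable singularity provides regularity). Once these two points are dealt with, the rest of the argument is the standard subcritical rigidity.
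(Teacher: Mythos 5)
Your overall strategy (integral moving spheres with the subcritical Kelvin factor, in the spirit of \cite{MR2055032,arxiv:1901.01678}) is the same as the paper's, but two of your steps have genuine gaps, and the second one is fatal in exactly the interesting range of exponents.

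First, the global representation $u_i(x)=c_n\int_{\mathbb{R}^n}|x-y|^{4-n}|\mathcal{U}|^{s-1}u_i\,\ud y$ with no biharmonic remainder is not justified: removability of the singularity at the origin gives regularity there, but says nothing about decay of $\mathcal{U}$ at infinity, and the bound $u_i=o(|x|^{4-n})$ you invoke is essentially part of what is being proved (a priori an entire nonnegative solution could decay slowly or not at all). The paper avoids this by working with the \emph{local} representation \eqref{integralsystem} of Proposition~\ref{lm:integralrepresentation}, keeping the positive biharmonic part $\psi_i$ and starting the spheres via the log-gradient bound of Lemma~\ref{movingpsheresineq}; some such device is needed in your argument as well.

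Second, and more seriously, your endgame fails for $s\in(2_{**},2^{**}-1)$. From $\lambda(x_0)=\infty$ you only extract $u_i(y)\gtrsim|y-x_0|^{4-n}$ and then bootstrap in the integral equation. The exponent map of that bootstrap is $a\mapsto as-4$, with fixed point $\gamma(s)=4/(s-1)$; a contradiction (divergence of the integral at infinity) only occurs if the iterates reach $a s\leqslant 4$, which happens precisely when the starting exponent $a_0=n-4$ lies below $\gamma(s)$, i.e. when $s<2_{**}$. For $2_{**}<s<2^{**}-1$ one has $n-4>\gamma(s)$, the iterates move away from divergence, and no contradiction arises — indeed the singular solution $K_0(n,s)^{1/(s-1)}|x|^{-\gamma(s)}$ shows that the lower bound $|y|^{4-n}$ at infinity is perfectly compatible with the equation there. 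The correct conclusion from \quotes{critical radius infinite for every center} is the calculus lemma of Li--Zhang \cite[Lemma~11.2]{MR2001065}: $u_i$ must be constant, and a constant solution of \eqref{eq:subcriticalsystem} is zero; this is how the paper closes the argument (Lemma~\ref{lm:movingspheres} plus \cite[Lemmas~11.1 and 11.2]{MR2001065}, the finite-radius alternative yielding the bubble profile, which is incompatible with the strictly subcritical exponent). Relatedly, your claim that $\lambda(x_0)<\infty$ is outright contradictory is not accurate: at $\mu=\lambda(x_0)$ equality $\mathcal{U}=\mathcal{U}_{x_0,\lambda(x_0)}$ can occur, and in the subcritical case it forces $|\mathcal{U}|^{s-1}u_i\equiv0$ (hence triviality) rather than an immediate contradiction, so this case must be folded into the conclusion rather than excluded.
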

	
	Initially, we have the following background results from \cite[Lemma~2.1]{MR2001065}, which allow us to run the sliding technique for any component solution.
	
	\begin{lemmaletter}\label{lmlt:zhang1}
		Let $u \in C^{2}\left(\mathbb{R}^{n}\right)$ be a  nonnegative superharmonic function in $\mathbb{R}^{n}$. Then, for each $x \in \mathbb{R}^{n}$, there exists $\mu_{0}>0,$ which may depend on $u$ and $x\in\mathbb{R}^n$ such that for all $0<\mu<\mu_{0}$, it follows that $u_{x,\mu} \leqslant u$  in $\mathbb{R}^{n}\setminus B_{\mu}(z)$.
	\end{lemmaletter}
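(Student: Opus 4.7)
The plan is, after translating so that $x=0$, to reduce the inequality to a monotonicity property of the one-parameter family $f_\omega(s):=s^{(n-2)/2} u(s\omega)$ indexed by $\omega\in\mathbb{S}^{n-1}$. A direct algebraic rewriting of the definition of the Kelvin transform identifies $u_{0,\mu}(y)\leqslant u(y)$ on $\{|y|\geqslant\mu\}$ with $f_\omega(\mu^2/r)\leqslant f_\omega(r)$ for every $r\geqslant\mu$ and every $\omega$, and I would split the verification into a near regime $\mu\leqslant r\leqslant \mu_1$ and a far regime $r\geqslant \mu_1$, with $\mu_1$ chosen below.

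For the near regime I would differentiate, obtaining
\begin{equation*}
f_\omega'(s) = s^{(n-4)/2}\left[\tfrac{n-2}{2}\,u(s\omega) + s\,\omega\cdot\nabla u(s\omega)\right].
\end{equation*}
Since $u\in C^{2}(\mathbb{R}^n)$ is nonnegative superharmonic and nontrivial (the case $u\equiv 0$ is immediate), the strong maximum principle forces $u(0)>0$, so the bracket converges to $\tfrac{n-2}{2}u(0)>0$ as $s\to 0^{+}$. By continuity of $u,\nabla u$ and compactness of $\mathbb{S}^{n-1}$, there is a uniform $\mu_1>0$ on which $f_\omega$ is strictly increasing for every $\omega$, which yields $f_\omega(\mu^2/r)\leqslant f_\omega(\mu)\leqslant f_\omega(r)$ whenever $\mu\leqslant r\leqslant \mu_1$.

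For the far regime I would exploit superharmonicity to produce a fundamental-solution-type lower bound on $u$. Setting $c:=\min_{|y|=\mu_1}u>0$, the explicit harmonic function $v(y):=c(\mu_1/|y|)^{n-2}$ matches $c$ on $\partial B_{\mu_1}$ and decays to zero at infinity; hence $u-v$ is superharmonic on $\mathbb{R}^{n}\setminus \overline{B_{\mu_1}}$, nonnegative on the inner boundary, and $\liminf_{|y|\to\infty}(u-v)\geqslant 0$. The standard minimum principle for superharmonic functions on exterior domains (Phragm\'en--Lindel\"of type, bounded-below case) then gives $u(y)\geqslant c(\mu_1/|y|)^{n-2}$ for $|y|\geqslant \mu_1$, i.e.\ $f_\omega(r)\geqslant c\mu_1^{n-2}\,r^{-(n-2)/2}$. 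Combining this with the bound $f_\omega(\mu^2/r)\leqslant 2u(0)\,\mu^{n-2}\,r^{-(n-2)/2}$, valid for $\mu$ small by continuity of $u$ at the origin, reduces the far-regime inequality to the algebraic constraint $2u(0)\mu^{n-2}\leqslant c\mu_1^{n-2}$. This determines $\mu_0$ explicitly in terms of $u(x),\ \nabla u(x)$ (through $\mu_1$) and the geometric quantity $c$, making transparent the claimed dependence of $\mu_0$ on $u$ and $x$.

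The main obstacle is the far-regime step: it is precisely where superharmonicity enters essentially, because a merely nonnegative $C^{2}$ function on $\mathbb{R}^{n}$ could in principle decay faster than $|y|^{2-n}$ at infinity, in which case the comparison with the Kelvin transform (which itself contains the bounded factor $u(\mu^{2}y/|y|^{2})$) would fail and no admissible $\mu_0$ would exist.
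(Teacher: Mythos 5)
The paper does not prove Lemma~\ref{lmlt:zhang1} itself; it imports it from Li--Zhang \cite[Lemma~2.1]{MR2001065}. Your argument is a correct and complete reconstruction of that proof: the monotonicity reformulation via $f_\omega$, the near-regime positivity of $f_\omega'$ coming from $u(0)>0$ together with compactness of $\mathbb{S}^{n-1}$, and the far-regime comparison with the harmonic barrier $c(\mu_1/|y|)^{n-2}$ are all sound, and you correctly single out superharmonicity as the hypothesis that rescues the far regime.

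You should, however, flag one discrepancy. Your choice $f_\omega(s)=s^{(n-2)/2}u(s\omega)$ encodes the \emph{second-order} Kelvin transform with exponent $n-2$, which is what Li--Zhang use; but this paper's $u_{x,\mu}$ (Subsection~\ref{sec:kelvintransform}) carries the \emph{fourth-order} exponent $n-4$. With that exponent the near regime still works (the sign of $f_\omega'$ near $0$ is now governed by $\tfrac{n-4}{2}u(0)>0$), but your far-regime estimate collapses: one gets $f_\omega(\mu^2/r)\leqslant 2u(0)\mu^{n-4}r^{-(n-4)/2}$, whereas the superharmonic barrier only yields $f_\omega(r)\geqslant c\mu_1^{n-2}r^{-n/2}$, and the resulting requirement $2u(0)\mu^{n-4}r^{2}\leqslant c\mu_1^{n-2}$ is violated as $r\to\infty$. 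This is not a defect in your argument but in the bare-superharmonicity hypothesis when the exponent is $n-4$: the smooth positive superharmonic function $u(y)=(1+|y|^2)^{(2-n)/2}$ has $u_{x,\mu}/u\to\infty$ at infinity for every $\mu>0$. The lemma is therefore to be read with the exponent $n-2$ as in the cited source; with the paper's $n-4$ convention one would need the stronger decay lower bound $u(y)\gtrsim|y|^{4-n}$, which holds for the $u_i$ in the paper's applications because there $-\Delta u_i$ is itself nonnegative and superharmonic (as $\Delta^2 u_i\geqslant0$), but does not follow from superharmonicity of $u$ alone. Making this explicit would strengthen your write-up and also expose why the statement's hypothesis looks under-powered relative to the operator $\Delta^2$.
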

	
	\begin{lemmaletter}\label{lmll:zhang2}
		Let $u \in C^{2}\left(\mathbb{R}^{n}\right), x \in \mathbb{R}^{n},$ and $\mu_{0}>0$ satisfying  $-\Delta\left(u-u_{x,\mu}\right) \geqslant0$ and $u_{x,\mu_{0}}<u$ in $\mathbb{R}^{n}\setminus \bar{B}_{\mu_{0}}(z)$.
		Then, there exists a small $0<\varepsilon\ll1$ such that for any $\mu_{0}<\mu<\mu_{0}+\varepsilon$, it follows that $u_{x,\mu}<u$ in $\mathbb{R}^{n}\setminus B_{\mu}(z)$.
	\end{lemmaletter}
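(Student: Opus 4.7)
After translating the Kelvin center to the origin I set $u_{\mu} := u_{x,\mu}$ and $h_{\mu} := u - u_{\mu}$. Three structural ingredients are available: $h_{\mu}\equiv 0$ on the sphere $\partial B_{\mu}(x)$, for every $\mu>0$, by the defining identity of the Kelvin inversion; the hypothesis delivers strict positivity $h_{\mu_{0}}>0$ throughout the open exterior $\mathbb{R}^{n}\setminus\bar{B}_{\mu_{0}}(x)$; and the Laplacian inequality makes $h_{\mu}$ a smooth superharmonic function in that exterior for every admissible $\mu$ near $\mu_{0}$. The plan is to combine Hopf's boundary point lemma, which opens a uniform positive margin at $\partial B_{\mu_{0}}(x)$, with a continuity argument in the parameter $\mu$.

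First I would apply Hopf to $h_{\mu_{0}}$ in $\mathbb{R}^{n}\setminus\bar{B}_{\mu_{0}}(x)$: since it is nonnegative, superharmonic, nontrivial, and vanishes on the inner sphere, its outward radial derivative satisfies $\partial_{r}h_{\mu_{0}}(y)>0$ pointwise on $\partial B_{\mu_{0}}(x)$. Compactness of the sphere and continuity of $\nabla h_{\mu_{0}}$ upgrade this to a uniform lower bound $\partial_{r}h_{\mu_{0}}\geqslant\delta_{0}>0$ on a thin annular neighborhood $\mathcal{A}_{\eta}:=\{\mu_{0}-\eta<|y-x|<\mu_{0}+\eta\}$. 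Because the Kelvin transform $u_{\mu}(y)$ depends jointly smoothly on $(\mu,y)$ for $y\neq x$, the map $(\mu,y)\mapsto\partial_{r}h_{\mu}(y)$ is continuous, so after choosing $\varepsilon$ small I secure $\partial_{r}h_{\mu}\geqslant\delta_{0}/2$ on $\mathcal{A}_{\eta}$ for every $\mu\in[\mu_{0},\mu_{0}+\varepsilon]$. Integrating this radial derivative outward from the sphere $\partial B_{\mu}(x)$, where $h_{\mu}$ vanishes, yields $h_{\mu}(y)\geqslant(\delta_{0}/2)(|y-x|-\mu)>0$ on the sub-annulus $\mu<|y-x|\leqslant\mu_{0}+\eta$.

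On the complementary region $\{|y-x|\geqslant\mu_{0}+\eta\}$, the strict positivity of $h_{\mu_{0}}$ together with uniform continuity of $h_{\mu}$ in $\mu$ on bounded pieces allows me, after possibly shrinking $\varepsilon$, to maintain $h_{\mu}>0$ on every compact subset; the strong maximum principle applied to the superharmonic function $h_{\mu}$ then upgrades this to strict positivity throughout the exterior, ruling out the trivial case $h_{\mu}\equiv 0$ by Step~2.

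The delicate step is the control at infinity: both $u_{x,\mu_{0}}(y)$ and $u_{x,\mu}(y)$ decay at the common rate dictated by the Kelvin conformal factor, so the perturbation $u_{x,\mu}-u_{x,\mu_{0}}$ has the same order of magnitude as the margin $h_{\mu_{0}}$ and threatens to absorb it. The resolution is to invoke the hypothesis $-\Delta h_{\mu}\geqslant 0$ for every $\mu$ in a neighborhood of $\mu_{0}$ and apply the exterior maximum principle for superharmonic functions with controlled behavior at infinity, which propagates the strict positivity already obtained on the inner annulus to the whole exterior and closes the argument.
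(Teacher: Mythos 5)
The paper does not actually prove this lemma; it is quoted verbatim as a background result from Li--Zhang \cite[Lemma~2.1]{MR2001065}, so your attempt has to be measured against the standard argument behind that citation. Your first two ingredients are fine and coincide with the usual first step: Hopf's lemma for the superharmonic function $h_{\mu_0}=u-u_{x,\mu_0}$ on $\partial B_{\mu_0}(x)$, a uniform radial-derivative bound on a thin annulus by compactness and joint continuity in $(\mu,y)$, and radial integration from $\partial B_\mu(x)$, which gives $h_\mu>0$ on $\{\mu<|y-x|\leqslant\mu_0+\eta\}$ for $\mu-\mu_0$ small. The genuine gap is in your treatment of $\{|y-x|\geqslant\mu_0+\eta\}$: you invoke ``the exterior maximum principle for superharmonic functions with controlled behavior at infinity'' but never verify the behavior at infinity, and that verification is precisely where the whole difficulty of the lemma lives (as you yourself note, the perturbation $u_{x,\mu}-u_{x,\mu_0}$ decays at the same rate as the margin). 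Positivity of $h_\mu$ on the inner sphere plus superharmonicity does \emph{not} imply $h_\mu\geqslant0$ in the exterior: the harmonic function $2|y|^{2-n}-1$ is positive on $\partial B_1$ and negative near infinity. What you must check is $\liminf_{|y|\to\infty}h_\mu(y)\geqslant0$; this is available (since the Kelvin transform satisfies $u_{x,\mu}(y)\to0$ as $|y|\to\infty$, one writes $h_\mu=h_{\mu_0}+(u_{x,\mu_0}-u_{x,\mu})$ with the first term positive and the second tending to zero), but in your write-up the decisive condition is asserted rather than proven, and your earlier sentence letting ``the strong maximum principle upgrade compact-set positivity to the whole exterior'' is not valid without it.

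A second point to flag is that your route uses $-\Delta(u-u_{x,\mu})\geqslant0$ for every $\mu$ slightly above $\mu_0$. In the situations where this lemma is applied (and in the Li--Zhang proof), that superharmonicity is typically only known for $\mu=\mu_0$, because for the subcritical nonlinearity it is deduced from $u_{x,\mu}\leqslant u$ — the very inequality you are trying to extend — so assuming it for $\mu>\mu_0$ is delicate and, in applications, borders on circular. The standard argument avoids both issues quantitatively: from superharmonicity and positivity of $h_{\mu_0}$ alone, comparison with the harmonic function $m\bigl(\tfrac{\mu_0+1}{|y-x|}\bigr)^{n-2}$, where $m=\min_{\partial B_{\mu_0+1}(x)}h_{\mu_0}>0$, yields $h_{\mu_0}(y)\geqslant m\bigl(\tfrac{\mu_0+1}{|y-x|}\bigr)^{n-2}$ for $|y-x|\geqslant\mu_0+1$, while the Lipschitz estimate $|u_{x,\mu}(y)-u_{x,\mu_0}(y)|\leqslant C(\mu-\mu_0)|y-x|^{2-n}$ holds there; choosing $\varepsilon$ with $C\varepsilon<m(\mu_0+1)^{n-2}$ settles the far region with no PDE information on $h_\mu$ for $\mu\neq\mu_0$. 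You should either supply the $\liminf$ verification at infinity explicitly or replace your far-region step by this comparison estimate; with either repair the remainder of your argument goes through.
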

	
	We start our analysis by showing that the blow-up limit solutions are superharmonic, which proof is based on \cite{MR1769247}.
	Let us remark that the same argument appears in \cite[Proposition~11]{arXiv:2002.12491} when $s=2^{**}-1$.
	Nonetheless, we include the proof here for the sake of completeness.
	
	\begin{lemma}\label{lm:strongsuperharmonicity}
		Let $R=\infty$, $s\in(2_{**},2^{**}-1)$, and $\mathcal{U}$ be a nonnegative non-singular solution to \eqref{eq:subcriticalsystem}. Then, $\mathcal{U}$ is superharmonic, that is, $-\Delta u_i\geqslant0$ in $\mathbb{R}^n$ for all $i\in I$.
	\end{lemma}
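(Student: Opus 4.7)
The plan is to set $v_i := -\Delta u_i$ for each $i \in I$ and argue by contradiction. Since the origin is removable, each $u_i$, and hence $v_i$, extends smoothly to all of $\mathbb{R}^n$. Applying $-\Delta$ and using \eqref{eq:subcriticalsystem},
\[
  -\Delta v_i \,=\, \Delta^2 u_i \,=\, |\mathcal{U}|^{s-1}u_i \,\geq\, 0,
\]
so each $v_i$ is classically superharmonic on $\mathbb{R}^n$. Assume toward contradiction that $v_i(x_0) < 0$ for some index $i$ and some $x_0\in\mathbb{R}^n$. Writing $\bar u_i(x_0,r)$ and $\bar v_i(x_0,r)$ for the spherical averages over $\partial B_r(x_0)$, the mean value property for superharmonic functions gives that $\bar v_i(x_0,\cdot)$ is nonincreasing, so $\bar v_i(x_0,r) \leq v_i(x_0) < 0$ for every $r > 0$.

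Next, using the radial identity $(r^{n-1}\bar u_i'(r))' = -r^{n-1}\bar v_i(r)$ and integrating twice from $0$ to $r$, I obtain the quadratic seed bound
\[
  \bar u_i(x_0,r) \,\geq\, u_i(x_0) + \frac{|v_i(x_0)|}{2n}\, r^2, \qquad r>0.
\]
I then bootstrap. Since $|\mathcal{U}| \geq u_i$ and $s > 1$ yield $|\mathcal{U}|^{s-1}u_i \geq u_i^s$, Jensen's inequality applied to the convex map $t\mapsto t^s$ gives $\overline{u_i^s}(x_0,r) \geq \bar u_i(x_0,r)^s$, and the averaged equation becomes $(r^{n-1}\bar v_i'(r))' \leq -r^{n-1}\bar u_i(x_0,r)^s$. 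Feeding the seed bound into this inequality and integrating four times improves the growth to $\bar u_i(x_0,r) \gtrsim r^{2s+4}$ for large $r$. Iterating produces $\bar u_i(x_0,r) \gtrsim r^{\alpha_k}$ with $\alpha_0=2$ and $\alpha_{k+1} = s\alpha_k + 4$; since $s > 1$, $\alpha_k \to \infty$, so $\bar u_i(x_0,\cdot)$ grows faster than any polynomial.

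To close, I will convert the equation into an integral identity on expanding balls in the spirit of Lemma~\ref{integrability}: on $B_R$ with Navier boundary data,
\[
  u_i(x) \,=\, \int_{B_R} G_2^R(x,y)\,|\mathcal{U}(y)|^{s-1}u_i(y)\,\mathrm{d}y + h_i^R(x),
\]
where $G_2^R(x_0,y) \sim c_n|x_0-y|^{4-n}$ for $|y| \ll R$ and $h_i^R$ is biharmonic in $B_R$. Evaluating at $x = x_0$ and using the super-polynomial lower bound on $\overline{u_i^s}$ forces the integral term to diverge as $R\to\infty$, while a maximum-principle argument on the positive part of the boundary data keeps $h_i^R$ from absorbing this divergence. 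Since $u_i(x_0)$ is a fixed finite number, this is the desired contradiction. The line of argument mirrors the one in \cite{MR1769247} and its adaptation to systems in \cite{arXiv:2002.12491}.

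The step I expect to be most delicate is the last one, namely controlling the biharmonic remainder $h_i^R$ so that it cannot absorb the divergence of the Riesz potential term. The standard device is to factor $G_2^R = G_1 \circ G_1$ as in the construction preceding \eqref{greenfunction} and to apply the classical Laplace maximum principle separately to the Poisson kernel $H_1$ and to the positive part of the boundary data, reducing the fourth-order comparison to two second-order comparisons. The bootstrap itself only uses $s > 1$, inherited from $s > 2_{**}$, so the upper bound $s < 2^{**}-1$ is not needed for this particular lemma.
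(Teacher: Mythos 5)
Your overall route is the same as the paper's: both reduce to the Lane--Emden system $-\Delta u_i = h_i$, $-\Delta h_i = |\mathcal{U}|^{s-1}u_i$, take spherical averages around a putative point where $-\Delta u_i<0$, apply Jensen to get $\Delta \overline{h}_i + \overline{u}_i^{\,s}\leqslant 0$, and then run the iteration of Xu \cite{MR1769247}. Your seed bound and the bootstrap recursion $\alpha_{k+1}=s\alpha_k+4$ are set up correctly, and you rightly note that only $s>1$ is needed. The gap is in the way you close the iteration.

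First, from $\overline{u}_i(x_0,r)\geqslant c_k\,r^{\alpha_k}$ with $\alpha_k\to\infty$ you conclude ``$\overline{u}_i$ grows faster than any polynomial,'' but the constants $c_k$ depend on $k$ and can decay; the exponents alone do not give super-polynomial growth, so this step is unjustified as written. Second, your replacement contradiction via the Navier Green representation on $B_R$ is not controlled: the biharmonic remainder satisfies
\[
h_i^R(x_0)=\int_{\partial B_R}H_1(x_0,y)\,u_i(y)\,\ud\sigma_y-\int_{\partial B_R}H_2(x_0,y)\,\Delta u_i(y)\,\ud\sigma_y,
\]
and while the first term is $\geqslant 0$, the second has no a priori sign or lower bound because $\Delta u_i$ need not be $\leqslant 0$ on $\partial B_R$ (you are precisely in the situation where $\Delta u_i(x_0)>0$), so $h_i^R(x_0)\to-\infty$ is entirely possible and can absorb the divergence of the Riesz-potential term without contradiction. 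The factorization $G_2=G_1\circ G_1$ and a maximum principle on ``the positive part of the boundary data'' does not resolve this, since the offending contribution comes from the $H_2$-term with indefinite boundary datum $\Delta u_i|_{\partial B_R}$.

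What actually closes the argument (and what the paper delegates to \cite[Theorem~2.1]{MR1769247}) is tracking the constants in the iteration. Because the seed inequality $\overline{u}_i(x_0,r)\geqslant u_i(x_0)+\tfrac{|v_i(x_0)|}{2n}r^2$ holds for \emph{all} $r>0$ (not only $r$ large), each subsequent integration also produces a bound valid for all $r>0$, and one checks that $c_{k+1}\geqslant c_k^{\,s}/(\alpha_k s+n+4)^4$, whence $\alpha_k\sim A s^k$ and $\ln c_k \geqslant -B s^k$ for explicit $A,B>0$. Evaluating at any fixed $r_0>e^{B/A}$ then gives $\overline{u}_i(x_0,r_0)\geqslant c_k r_0^{\alpha_k}\to\infty$, contradicting the continuity of $u_i$. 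I would replace your Green-representation paragraph with this constant-tracking argument, or else simply cite \cite[Theorem~2.1]{MR1769247} as the paper does.
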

	
	\begin{proof} 
		By contradiction, assume that the proposition does not hold.
		Whence, one can find $i\in I$ and $x_0\in\mathbb{R}^n$ such that $-\Delta u_{i}(x_0)<0$. By the invariance of the Laplacian, we suppose without loss of generality that $x_0=0$. Let us rewrite \eqref{eq:subcriticalsystem} as the following system in the whole space $\mathbb{R}^n$,
		\begin{align}\label{lane-emden-system}
		\begin{cases}
		-\Delta {u}_i={h}_i\\
		-\Delta {h}_i=|{\mathcal{U}}|^{s-1}{u}_i.
		\end{cases}
		\end{align}
		
		Letting $B_r\subseteq\mathbb{R}^n$ be the ball of radius $r>0$, and $\omega_{n-1}$ be 
		the $(n-1)$-dimensional surface measure of the unit sphere, we consider 
		\begin{equation*}
		\overline{u}_i=\frac{1}{n\omega_{n-1}r^{n-1}}\displaystyle\int_{\partial{B_r}}u_i\ud \sigma_r \quad {\rm and} \quad \overline{h}_i=\frac{1}{n\omega_{n-1}r^{n-1}}\displaystyle\int_{\partial{B_r}}h_i\ud \sigma_r, 
		\end{equation*}
		the spherical averages of $u_i$ and $h_i$, respectively. Now taking the spherical average on the first line of \eqref{lane-emden-system}, and using that $\overline{\Delta u_i}=\Delta\overline{u}_i$, implies
		\begin{equation*}
		\Delta\overline{u}_i+\overline{h}_i=0.
		\end{equation*}
		Furthermore, we rewrite the second equality of \eqref{lane-emden-system} to get $\Delta h_i+|\mathcal{U}|^{s-1}u_i=0$, which, by taking again the spherical average in both sides, provides
		\begin{equation*}
		\Delta \overline{h}_i+\overline{u}_i^{s}\leqslant0,
		\end{equation*}
		from which one can generate a contradiction using the iteration argument in \cite[Theorem~2.1]{MR1769247}.
		The proof is concluded.
	\end{proof}
	
	We have the following result from \cite[Lemma 3.1]{arxiv:1901.01678}, whose proof we also include here for completeness. 
	
	\begin{lemma}\label{movingpsheresineq}
		Suppose $\psi \in C^{1}\left(B_{2}\right)$ is positive and
		\begin{equation*}
		|\nabla \ln \psi|\leqslant C_{0} \quad {\rm in} \quad B_{3/2},
		\end{equation*}
		for some $C_{0}>0$. Then, there exists $0<r_{0}<1/2$, depending only on $n$ and $C_{0}>0$, such that for every $x\in B_{1}$ and $0<\mu \leqslant r_{0}$, it holds that $\psi_{x,\mu}(y) \leqslant \psi(y)$ for any $|y-x| \geqslant \mu$ and $y\in B_{3/2}$.
	\end{lemma}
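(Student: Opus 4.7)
The plan is to reduce the sphere inequality to a one-dimensional monotonicity statement along rays from $x$, and then to exploit the gradient bound on $\ln\psi$ to verify it. For fixed $x\in B_{1}$ and $\sigma\in\mathbb{S}^{n-1}$, I parametrize $y=x+r\sigma$ with $r=|y-x|\geqslant\mu$. Since the inversion satisfies $\mathcal{I}_{x,\mu}(y)=x+(\mu^{2}/r)\sigma$, a direct substitution into the definition of $\psi_{x,\mu}$ shows that $\psi_{x,\mu}(y)\leqslant\psi(y)$ is equivalent to $\phi(\mu^{2}/r)\leqslant\phi(r)$, where $\phi(t):=t^{(n-4)/2}\psi(x+t\sigma)$. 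Thus it suffices to establish $\phi(r_{1})\leqslant\phi(r_{2})$ for all pairs $0<r_{1}\leqslant\mu\leqslant r_{2}\leqslant 5/2$ with $r_{1}r_{2}=\mu^{2}$ (the upper bound $5/2$ coming from $|y-x|\leqslant|y|+|x|<3/2+1$), uniformly in $x$ and $\sigma$.

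I first exploit monotonicity. Differentiating gives
\begin{equation*}
\phi'(t)=t^{(n-6)/2}\psi(x+t\sigma)\bigl[(n-4)/2+t\,\nabla\ln\psi(x+t\sigma)\cdot\sigma\bigr],
\end{equation*}
so the gradient bound $|\nabla\ln\psi|\leqslant C_{0}$ yields $\phi'\geqslant 0$ on the interval $(0,R_{1})$ with $R_{1}:=(n-4)/(2C_{0})$ (note $n\geqslant 5$). Hence $\phi$ is nondecreasing on $(0,R_{1})$, and the desired inequality holds automatically whenever $r_{2}\leqslant R_{1}$, since then both $r_{1}$ and $r_{2}$ lie in this monotonicity interval.

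For the remaining regime $r_{2}\in(R_{1},5/2]$, $\phi$ is no longer controlled by monotonicity, so I argue directly. The straight segment from $x+r_{1}\sigma$ to $x+r_{2}\sigma$ stays in $B_{3/2}$ since $t\mapsto|x+t\sigma|^{2}$ is convex, whence the gradient bound gives $\psi(x+r_{2}\sigma)/\psi(x+r_{1}\sigma)\geqslant e^{-C_{0}(r_{2}-r_{1})}\geqslant e^{-5C_{0}/2}$. Combining with $r_{2}/r_{1}=(r_{2}/\mu)^{2}\geqslant(R_{1}/\mu)^{2}$, one obtains
\begin{equation*}
\frac{\phi(r_{2})}{\phi(r_{1})}\geqslant\Bigl(\frac{n-4}{2C_{0}\mu}\Bigr)^{n-4}e^{-5C_{0}/2},
\end{equation*}
which is $\geqslant 1$ as soon as $\mu\leqslant r_{0}:=\min\{1/4,\;\tfrac{n-4}{2C_{0}}\,e^{-5C_{0}/[2(n-4)]}\}$. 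This $r_{0}$ depends only on $n$ and $C_{0}$, as required.

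The main obstacle is precisely this second regime: once $r_{2}$ leaves the monotonicity interval $(0,R_{1})$, the exponential oscillation of $\psi$ across $B_{3/2}$ has to be defeated by the geometric blow-up factor $(r_{2}/\mu)^{n-4}$ produced by the inversion. Shrinking $r_{0}$ amplifies the latter and closes the gap; making this trade-off quantitative is the crux of the estimate.
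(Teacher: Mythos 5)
Your proof is correct and uses the same two-case strategy as the paper's proof: radial monotonicity of a power-weighted $\psi$ handles small $|y-x|$, while a quantitative estimate in which the inversion factor $(\mu/|y-x|)^{n-4}$ overwhelms the Harnack oscillation of $\psi$ handles the complementary range. The one substantive difference is the monotonicity exponent: you take $\phi(t)=t^{(n-4)/2}\psi(x+t\sigma)$, which (given $r_1 r_2=\mu^2$) is precisely what the Kelvin weight $(\mu/|y-x|)^{n-4}$ dictates, whereas the paper's displayed argument differentiates $r^{\gamma(s)}\psi(x+r\theta)$ with $\gamma(s)=4/(s-1)$. These agree only at $s=2^{**}-1$; for subcritical $s$ one has $\gamma(s)>(n-4)/2$, and monotonicity of $r^{\gamma(s)}\psi$ then yields a ratio bound on $\psi$ that is strictly weaker than what $\psi_{x,\mu}(y)\leqslant\psi(y)$ requires. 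So your exponent is the correct one to carry the argument. A small caveat in your writeup: $\phi'\geqslant 0$ should be asserted only for $t$ with $x+t\sigma\in B_{3/2}$, where the gradient bound is in force; this is harmless, since both endpoints $x+r_1\sigma$ and $y=x+r_2\sigma$ lie in $B_{3/2}$ and, as you observe via convexity of $t\mapsto|x+t\sigma|^2$, so does the whole segment between them, which is all the argument actually uses.
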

	
	\begin{proof}
		For any $x \in B_{1}$, we have
		\begin{align}\label{derivative}
		\frac{\ud}{\ud r}\left(r^{\gamma(s)}\psi(x+r \theta)\right)
		&=r^{\gamma(s)-1} \psi(x+r \theta)\left(\gamma(s)-r \frac{\nabla \psi\cdot\theta}{\psi}\right)\geqslant r^{\gamma(s)-1} \psi(x+r \theta)\left(\gamma(s)-C_{0} r\right)>0.&
		\end{align}
		Hence, for $0<r<\bar{r}:=\min\left\{\frac{1}{2},\frac{\gamma(s)}{C_{0}}\right\}$ and $\theta\in \mathbb{S}^{n-1}$. For any $y \in B_{\bar{r}}(x)$ and $0<\mu<|y-x|\leqslant \bar{r}$, let us consider
		\begin{equation*}
		\theta=\frac{y-x}{|y-x|}, \quad r_{1}=|y-x| \quad \mbox{and} \quad r_{2}=\frac{\mu^{2}}{|y-x|^{2}}r_{1}.
		\end{equation*}
		Besides, from \eqref{derivative},  it follows that $r_{2}^{\gamma(s)} \psi\left(x+r_{2}\theta\right)<r_{1}^{\gamma(s)}\psi\left(x+r_{1} \theta\right)$, and so $\psi_{x, \mu}(y)\leqslant \psi(y)$, for $0<\mu<|y-x|\leqslant\bar{r}$.
		On the other hand, we get
		\begin{equation*}
		\psi_{x, \mu}(y)=\left(\frac{\mu}{|y-x|}\right)^{n-4} \psi\left(\mathcal{I}_{x, \mu}(y)\right) \leqslant\left(\frac{\mu}{\bar{r}}\right)^{n-4} \max_{B_{3/2}(x)} \psi\leqslant e^{\frac{3}{2} C_{0}}\left(\frac{\mu}{\bar{r}}\right)^{n-4} \inf_{B_{3 / 2}(x)}\psi\leqslant \psi(y),
		\end{equation*}
		for $|y-x| \geqslant \bar{r}$. Finally, choosing $\mu\leqslant r_{0}$ with $e^{\frac{3}{2} C_{0}}\left(\frac{r_{0}}{r}\right)^{n-4}\leqslant 1$, the proof of the lemma follows.
	\end{proof}
	
	The following lemma is the first step to apply the integral moving spheres method.
	
	\begin{lemma}\label{msestimate}
		Let $R=\infty$, $s\in(1,2^{**}-1]$, and $\mathcal{U}$ be a nonnegative non-singular solution to \eqref{integralsystem}. For any $x \in B_{1}$, $z \in B_{2}\setminus\left(\{0\}\cup B_{\mu}(x)\right)$ and $\mu<1$, it holds that $    u_i(z)-(u_i)_{x, \mu}(z)>0$ for $i\in I$.
	\end{lemma}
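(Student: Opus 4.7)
The plan is to write the difference $u_i(z)-(u_i)_{x,\mu}(z)$ using the integral representation \eqref{integralsystem} and to split it into a biharmonic (boundary) piece coming from $\psi_i$ and a Riesz-potential piece coming from the nonlinearity, each to be handled by a separate mechanism.

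First, I would apply Proposition~\ref{lm:integralrepresentation} to the original $u_i$ and, performing the Kelvin inversion centered at $x$ with radius $\mu$, derive an analogous representation for $(u_i)_{x,\mu}(z)$. Concretely, I would change variables $y\mapsto \mathcal{I}_{x,\mu}(y)$ in the nonlocal integral for $(u_i)_{x,\mu}(z)$, using the classical conformal identity
\begin{equation*}
|\mathcal{I}_{x,\mu}(z)-\mathcal{I}_{x,\mu}(y)|=\frac{\mu^{2}|z-y|}{|z-x|\,|y-x|},
\end{equation*}
together with $f^s_i(\mathcal{U}(\mathcal{I}_{x,\mu}(y)))=K_{x,\mu}(y)^{-(n-4)s}f^s_i(\mathcal{U}_{x,\mu}(y))$, which follows from the definition of the fourth order Kelvin transform and the criticality exponent. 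This substitution converts $(u_i)_{x,\mu}(z)$ into an integral against the same kernel $|z-y|^{4-n}$ but with the integrand reweighted by the factor $(\mu/|y-x|)^{(n-4)(2^{**}-1-s)}\ge 0$ for $s\leqslant 2^{**}-1$ and with $\mathcal{U}_{x,\mu}$ in place of $\mathcal{U}$, plus a Kelvin-transformed boundary term $(\psi_i)_{x,\mu}$.

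Next, the subtraction yields an expression of the form
\begin{equation*}
u_i(z)-(u_i)_{x,\mu}(z)=\int G(x,\mu;z,y)\,f^s_i(\mathcal{U}(y))\,\ud y+\bigl[\psi_i(z)-(\psi_i)_{x,\mu}(z)\bigr]+\text{lower order},
\end{equation*}
where $G(x,\mu;z,y)$ is the symmetrized Green kernel. The key pointwise positivity
\begin{equation*}
|z-y|^{4-n}-\left(\tfrac{\mu}{|z-x|}\right)^{\!n-4}|\mathcal{I}_{x,\mu}(z)-y|^{4-n}\geqslant 0,\qquad |y-x|\geqslant\mu,\ |z-x|\geqslant\mu,
\end{equation*}
is the standard Riesz-kernel inequality that drives every integral moving-sphere argument; I would invoke it directly (as in \cite{MR2055032,arxiv:1901.01678}) to guarantee that the nonlinear piece is nonnegative. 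For the harmonic boundary piece, I would apply Lemma~\ref{movingpsheresineq} to $\psi_i$, which is licit because Proposition~\ref{lm:integralrepresentation} yields the uniform bound $\|\nabla\ln\psi_i\|_{C^{0}(B_{\widetilde{r}})}\leqslant C(\widetilde r)$; this provides $\psi_i(z)\geqslant(\psi_i)_{x,\mu}(z)$ for all $x\in B_1$ and all sufficiently small $\mu$ (after shrinking $\mu_0$ if necessary so that both the Riesz and harmonic pieces are controlled simultaneously).

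The main obstacle is keeping track of the subcritical weight $(\mu/|y-x|)^{(n-4)(2^{**}-1-s)}$ that arises from the change of variables — it does not allow an exact conformal cancellation as in the critical case $s=2^{**}-1$, so the comparison is \emph{strict}. However, for small $\mu$ this factor is uniformly $\leqslant 1$ on the exterior region $\{|y-x|\geqslant\mu\}$, which still makes the integral contribution nonnegative; combined with the strict positivity of $\psi_i-(\psi_i)_{x,\mu}$ (visible from the argument in Lemma~\ref{movingpsheresineq} since $\psi_i>0$ and the interior maximum principle gives strict inequality in the open region), one obtains strict positivity of the whole difference on $B_2\setminus(\{0\}\cup B_\mu(x))$, completing the proof.
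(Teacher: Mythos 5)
Your overall route coincides with the paper's: rescale to $B_2^*$, use the integral representation from Proposition~\ref{lm:integralrepresentation}, perform the Kelvin change of variables as in Li's identities, and compare kernels. You also correctly compute the subcritical reweighting exponent $(n-4)(2^{**}-1-s)=(n+4)-(n-4)s$, and you are right that the harmonic piece $\psi_i-(\psi_i)_{x,\mu}$ is to be treated by Lemma~\ref{movingpsheresineq}.

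However, there is a real gap in your handling of the nonlinear piece, and it is not a matter of bookkeeping. After splitting the Riesz integral at $\partial B_\mu(x)$ and changing variables $y\mapsto\mathcal I_{x,\mu}(y)$ in the inner region, the honest identity is
\begin{equation*}
u_i(z)-(u_i)_{x,\mu}(z)=\int_{|y-x|\geqslant\mu}E(x,y,z,\mu)\Bigl[f^s_i(\mathcal U(y))-\Bigl(\tfrac{\mu}{|y-x|}\Bigr)^{(n+4)-(n-4)s}f^s_i(\mathcal U_{x,\mu}(y))\Bigr]\,\ud y+\bigl[\psi_i(z)-(\psi_i)_{x,\mu}(z)\bigr],
\end{equation*}
with $E$ as in \eqref{kernelkelvintransform}. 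Your proposal instead records the nonlinear contribution as $\int G\,f^s_i(\mathcal U(y))\,\ud y$ plus ``lower order,'' silently discarding the $f^s_i(\mathcal U_{x,\mu})$ term. That term is not lower order: it is exactly symmetric in size to the $f^s_i(\mathcal U)$ term (indeed at the critical exponent they coincide in form), and it carries the opposite sign. Consequently the Riesz-kernel inequality $E\geqslant 0$ on $\{|y-x|\geqslant\mu>|z-x|^{-1}\}$ by itself does not give nonnegativity of the bracket, because $f^s_i(\mathcal U)-w\,f^s_i(\mathcal U_{x,\mu})$ is not sign-definite — establishing its sign for $|y-x|\geqslant\mu$ is essentially equivalent to the conclusion of the lemma. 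This is the same step the paper's own proof skips over (it asserts $E>0$ and then claims this ``concludes the proof''), so you are inheriting a genuine lacuna in the source, but your rewriting of the decomposition obscures rather than addresses it. A correct completion either (i) restricts to small $\mu$ and runs a fixed-point/bootstrap argument to show the bracket is nonnegative, or (ii) invokes Lemma~\ref{lmlt:zhang1} directly on the superharmonic component $u_i$ to get the starting inequality, bypassing the integral formula. You do note that $\mu$ must be shrunk, which is consistent with the fact that the lemma's ``for any $\mu<1$'' is stronger than what either argument proves; keep that observation, but do not present the Riesz inequality as closing the nonlinear term on its own.
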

	
	\begin{proof}
		Let $\mathcal{U}$ be a nonnegative solution to \eqref{integralsystem}. By rescaling, we can replace $u_i(x)$ by $r^{\gamma(s)}u_i(r x)$ for $r={1}/{2},$ we may consider the equation defined in $B^*_{2}$ for convenience, that is,
		\begin{equation*}
		u_i(x)=\int_{B_{2}}|x-y|^{4-n}f^s_i(\mathcal{U}(y))\ud y+\psi_i(x) \quad \mbox{in} \quad B^*_{2}
		\end{equation*}
		such that $u_i\in C\left(B^*_{2}\right) \cap L^{s}\left(B_{2}\right)$ and $|\nabla \ln \psi_i| \leqslant C_{0} $ in $B_{3/2}$. Extending $u_i$ to be identically zero outside $B_{2}$, we have
		\begin{equation*}
		u_i(x)=\int_{\mathbb{R}^n} |x-y|^{4-n}f^s_i(\mathcal{U}(y)) \ud y+\psi_i(x) \quad \mbox{in} \quad B^*_{2}.
		\end{equation*}
		Using the identities in \cite[page 162]{MR2055032}, one has 
		\begin{equation*}
		\left(\frac{\mu}{|z-x|}\right)^{n-4}\int_{|y-x| \geqslant\mu} {\left|\mathcal{I}_{x,\mu}(z)-y\right|^{n-4}}{f^s_i(\mathcal{U}(y))} \ud y=\int_{|y-x| \leqslant \mu} \left|z-y\right|^{n-4}{f^s_i(\mathcal{U}(z))}\ud y
		\end{equation*}
		and
		\begin{equation*}
		\left(\frac{\mu}{|z-x|}\right)^{n-4} \int_{|y-x|\leqslant\mu} {\left|\mathcal{I}_{x,\mu}(z)-y\right|^{n-4}}{f^s_i(\mathcal{U}(y))} \ud y=\int_{|y-x|\geqslant \mu} \left|z-y\right|^{n-4}{f^s_i(\mathcal{U}(y))}\ud y,
		\end{equation*}
		which yields
		\begin{equation}\label{integralconformalinvariance}
		{(u_i)}_{x, \mu}(z)=\int_{\mathbb{R}^{n}} \left|z-y\right|^{n-4}{f^s_i(\mathcal{U}(y))}\ud y+{(\psi_i)}_{x,\mu}(z) \quad \mbox{for} \quad z\in \mathcal{I}_{x,\mu}(B_{2}),
		\end{equation}
		Consequently, for any $x \in B_{1}$ and $\mu<1$, we have that for $z\in B^*_{2}\cup B_{\mu}(x)$,
		\begin{equation*}
		u_i(z)-(u_i)_{x, \mu}(z)=\int_{|y-x|\geqslant\mu} E(x,y,\mu,z)\left[f^s_i(\mathcal{U})-f^s_i(\mathcal{U}_{x,\mu})\right]\ud y+\left[(\psi_i)_{x, \mu}(z)-\psi_i(z)\right],
		\end{equation*}
		where 
		\begin{equation}\label{kernelkelvintransform}
		E(x,y,z,\mu):={|z-y|^{4-n}}-\left(\frac{|z-x|}{\mu}\right)^{4-n} {\left|\mathcal{I}_{x,\mu}(z)-y\right|^{4-n}}
		\end{equation}
		is used to estimate the difference between a $\mathcal{U}$ and its Kelvin transform $\mathcal{U}_{x,\mu}$. Finally, using the its decay properties, it is straightforward to check that $E(x,y,z,\mu)>0$ for all $|z-x|>\mu>0$, which concludes the proof.
	\end{proof}
	
	\begin{remark}
		Notice that the same proof in \cite[Lemma~3.1]{MR2055032} (see also \cite[Lemma~2.1]{MR2001065}) would also adapt to this situation.
	\end{remark}
	
	Next, let us introduce the {\it critical sliding parameter} as the supremum for which an inequality relating a component function and its Kelvin transform is satisfied.
	
	\begin{definition}
		Given $x \in \mathbb{R}^{n}$,  for each $i\in I$, let us define
		\begin{equation}\label{criticalparameter}
		\mu_i^*(x)=\sup \left\{\mu>0:(u_{i})_{r,x}\leqslant u_{i} \ {\rm in} \ \mathbb{R}^{n}\setminus B_{r}(x) \ {\rm for \ any} \ 0<r<\mu\right\}.
		\end{equation}
		Since each $u_{i}$ is superharmonic for all $i\in I$, by using Lemma~\ref{lmlt:zhang1},  we get $\mu^*_{i}(x)>0$ for $i\in I$. Thus, we can define
		\begin{equation*}
		\mu^*(x)=\inf_{i\in I} \mu^*_{i}(x)>0.
		\end{equation*}
	\end{definition}
	
	The next lemma is essentially the moving spheres technique in its integral form.
	This method provides the exact form for any blow-up limit solution to \eqref{eq:subcriticalsystem}, which depends on whether the critical sliding parameter $\mu^*(x)$ is finite or infinite. 
	The main ingredient in the proof is the integral version of the moving spheres technique contained in \cite[Lemma~3.2]{MR2055032} (see also \cite[Proposition~3.2]{arxiv:1901.01678} and \cite[Proposition~59]{arXiv:2003.03487}).
	
	\begin{lemma}\label{lm:movingspheres}
		Let $R=\infty$, $s\in(1,2^{**}-1]$, and $\mathcal{U}$ be a nonnegative non-singular solution to \eqref{eq:subcriticalsystem}, $z\in\mathbb{R}^n$ and $\mu^*(z)>0$ given by \eqref{criticalparameter}. 
		Assume that 
		the origin is a removable singularity.
		The following holds:\\
		\noindent{\rm (i)} if $\mu^*(x)<\infty$ is finite, then 
		$\mathcal{U}_{x,\mu^*(x)}=\mathcal{U}$ in $\mathbb{R}^{n} \setminus\{x\}$.\\
		\noindent{\rm (ii)} if $\mu^*(x_0)=\infty$, for some $x_{0} \in \mathbb{R}^{n}$, then $\mu^*(x)=\infty$ for all $x \in \mathbb{R}^{n}$.	
	\end{lemma}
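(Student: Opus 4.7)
The approach combines the integral representation of each component $u_i$ with the positivity of the Kelvin kernel $E(x,y,z,\mu)$ defined in \eqref{kernelkelvintransform}, in the spirit of the integral moving spheres method of \cite{MR2055032}. The coupled structure of \eqref{eq:subcriticalsystem} enters because $f^s_i(\mathcal{U})=|\mathcal{U}|^{s-1}u_i$ is monotone nondecreasing in each nonnegative component, so a componentwise inequality $\mathcal{U}_{x,\mu}\leqslant\mathcal{U}$ propagates simultaneously across all $i\in I$.

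For part (i), I fix $x$ and set $\mu^{*}:=\mu^{*}(x)<\infty$. Continuity of the Kelvin transform in $\mu$, together with the definition \eqref{criticalparameter}, yields $\mathcal{U}_{x,\mu^{*}}\leqslant\mathcal{U}$ componentwise in $\mathbb{R}^n\setminus B_{\mu^{*}}(x)$. Assuming by contradiction that equality fails, I will combine Proposition~\ref{lm:integralrepresentation} (applied after rescaling to a large ball, as in Lemma~\ref{msestimate}) with the symmetry identities from the proof of Lemma~\ref{msestimate} to obtain the difference formula
\begin{equation*}
u_i(z)-(u_i)_{x,\mu^{*}}(z)=\int_{|y-x|\geqslant\mu^{*}}E(x,y,z,\mu^{*})\bigl[f^s_i(\mathcal{U}(y))-f^s_i(\mathcal{U}_{x,\mu^{*}}(y))\bigr]\ud y + \bigl[(\psi_i)_{x,\mu^{*}}(z)-\psi_i(z)\bigr].
\end{equation*}
Using positivity of $E$ on $\{|y-x|,|z-x|>\mu^{*}\}$ (from the proof of Lemma~\ref{msestimate}) together with the coupled monotonicity of $f^s_i$, the failure of equality at a single point will upgrade to strict inequality everywhere outside $\bar B_{\mu^{*}}(x)$ for every $i$. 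The perturbation result Lemma~\ref{lmll:zhang2} then allows me to slide the radius slightly past $\mu^{*}$, contradicting the maximality in \eqref{criticalparameter}.

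For part (ii), I assume $\mu^{*}(x_0)=\infty$, so that $(u_i)_{x_0,\mu}(z)\leqslant u_i(z)$ for every $\mu>0$ and every $|z-x_0|>\mu$. Sending $|z-x_0|\to\infty$ with $\mu$ fixed and using $\mathcal{I}_{x_0,\mu}(z)\to x_0$ yields the far-field lower bound
\begin{equation*}
\liminf_{|z|\to\infty}|z|^{n-4}u_i(z)\geqslant\mu^{n-4}u_i(x_0)\quad\text{for every}\quad\mu>0\text{ and }i\in I,
\end{equation*}
so for each $i$ either $u_i\equiv 0$ or $|z|^{n-4}u_i(z)\to\infty$. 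I then plan to transfer this property to an arbitrary $x\in\mathbb{R}^n$: at infinity $(u_i)_{x,\mu}(z)\sim\mu^{n-4}|z|^{-(n-4)}u_i(x)$, which is dominated by the lower bound on $u_i$, and at intermediate distances the integral difference identity above, now centred at $x$, together with positivity of $E$, preserves $(u_i)_{x,\mu}\leqslant u_i$ for every $\mu$. Hence $\mu^{*}(x)=\infty$.

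The main obstacle I anticipate is the intermediate-distance regime in part~(ii): the asymptotic lower bound alone controls only the far field, so propagating the comparison to every finite $\mu$ requires an iterative use of the integral identity with careful tracking of the coupling between components. A related subtle point is the treatment of indices with $u_i(x_0)=0$, where the strongly coupled structure, together with superharmonicity from Lemma~\ref{lm:strongsuperharmonicity}, must be invoked to either confine such indices to $I_0$ or extract the growth from a nonvanishing component.
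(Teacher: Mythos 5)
Your part~(i) pivots on Lemma~\ref{lmll:zhang2}, but that lemma is a second-order perturbation result: its hypothesis is $-\Delta(u-u_{x,\mu})\geqslant 0$. For the biharmonic problem one only gets $\Delta^{2}(u_i-(u_i)_{x,\mu})\geqslant0$ in $\mathbb{R}^n\setminus B_{\mu^*}(x)$ from the componentwise comparison and the monotone coupling, and that does not yield superharmonicity of the difference. This is exactly the obstacle the paper flags in the introduction (lack of a strong maximum principle for the bi-Laplacian), and it is why the paper's proof never invokes Lemma~\ref{lmll:zhang2}. Instead it first establishes a quantitative far-field gap $u_i(z)-(u_i)_{\mu^*}(z)\geqslant\varepsilon_1|z|^{4-n}$ for $|z|\geqslant\mu^*+1$ via the representation formula, stabilizes it for $\mu\in[\mu^*,\mu^*+\varepsilon_2]$, and then closes the intermediate annulus $\mu\leqslant|z|\leqslant\mu^*+1$ purely by estimating the kernel: a positive contribution $\delta_1\delta_2(|z|-\mu)$ from the shell $\mu^*+2\leqslant|y|\leqslant\mu^*+3$ (using that $E$ vanishes on $|z|=\mu$ with positive radial derivative), against an error of size $C\varepsilon(|z|-\mu)$ from the thin shell near $\partial B_\mu$. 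Without this kind of direct integral estimate, your sliding step has a genuine gap.

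For part~(ii) your intended iterative transfer of $(u_i)_{x,\mu}\leqslant u_i$ to every finite $\mu$ for an arbitrary $x$ is not needed and, as you already acknowledge, is not clearly controlled. The paper's argument is the contrapositive and uses only one observation: the quantity $l:=\liminf_{|z|\to\infty}|z|^{n-4}|\mathcal{U}(z)|$ is independent of which base point one uses. From the definition of $\mu^*(x)$ one gets $l\geqslant\mu^{n-4}|\mathcal{U}(x)|$ for all $0<\mu<\mu^*(x)$, so $\mu^*(x_0)=\infty$ and $|\mathcal{U}(x_0)|>0$ (which holds for a nontrivial $\mathcal{U}$ by weak positivity) force $l=\infty$. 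On the other hand, part~(i) applied at any $x$ with $\mu^*(x)<\infty$ gives the exact identity $l=\mu^*(x)^{n-4}|\mathcal{U}(x)|<\infty$. These two statements about the single number $l$ are incompatible, so $\mu^*(x)=\infty$ everywhere. Your far-field computation is the right starting point; the missing idea is to feed part~(i) back in to pin down $l$ exactly, rather than trying to re-run the comparison at every scale around the new center.
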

	
	\begin{proof}
		Without loss of generality, we may assume $x_0=0$. Let us fix $\mu^*=\mu^*(0)$ and $(u_i)_{\mu}=(u_i)_{0,\mu}$ for all $i\in I$. 
		By the definition of $\mu^*(x)$, we have that $(u_i)_{\mu^*}(x) \leqslant u_i(x)$, for all $|x| \geqslant \mu^*$.
		Thus, by \eqref{integralconformalinvariance}, with $x=0$ and $\mu=|x|\geqslant \mu^*$, and the positivity of the kernel $E(0,y,z,\mu)$ given by \eqref{kernelkelvintransform}, either $u_{\mu^*}(y)=u(y)$ for all $|x|\geqslant \mu^*$ or $(u_i)_{\mu^*}(y)<(u_i)(y)$ for all $|x|>\mu^*$. In the former case, the conclusion easily follows.
		In the sequel, we assume that the last condition holds.
		Hence, Proposition~\ref{lm:integralrepresentation} yields
		\begin{align*}
		\liminf_{|z|\rightarrow\infty}|z|^{n-4}\left[u_i(z)-(u_i)_{\mu^*}(z)\right]&=\liminf_{|z|\rightarrow \infty} \int_{|y| \geqslant \mu^*}|z|^{n-4} E(0,y,z,\mu^*)\left[f_i^s(\mathcal{U}(y))-f_i^s(\mathcal{U}_{\mu^*}(y))\right]\ud y&\\
		&\geqslant \int_{|y|\geqslant \mu^*}\left(1-\left(\frac{\mu^*}{|y|}\right)^{n-4}\right)\left[f_i^s(\mathcal{U}(y))-f_i^s(\mathcal{U}_{\mu^*}(y))\right]\ud y>0,&
		\end{align*}
		which implies that there exists $\varepsilon_{1} \in(0,1)$ satisfying $u_i(z)-(u_i)_{\mu^*}(z)\geqslant {\varepsilon_{1}}{|z|^{4-n}}$ for all $|z| \geqslant \mu^*+1$ and $i\in I$.
		Moreover, there exists $\varepsilon_{2}\in(0,\varepsilon_{1})$ such that,
		for $|z|\geqslant \mu^*+1 \ \mbox{and} \ \mu^* \leqslant \mu \leqslant \mu^*+\varepsilon_{2}$, we find 
		\begin{equation}\label{blowup1}
		\left(u_i-(u_i)_{\mu^*}\right)(z) \geqslant {\varepsilon_{1}}{|z|^{4-n}}+\left((u_i)_{\mu^*}-(u_i)_{\mu}\right)(z) \geqslant \frac{\varepsilon_{1}}{2}|z|^{4-n}.
		\end{equation}
		Whence, for any $\varepsilon \in\left(0, \varepsilon_{2}\right)$ (to be chosen later), $\mu^* \leqslant \mu \leqslant \mu^*+\varepsilon$, and $\mu \leqslant|y| \leqslant \mu^*+1$, we have
		\begin{align*}
		\left(u_i-(u_i)_{\mu^*}\right)(z)&=\int_{|y| \geqslant \mu} E(0,y,z,\mu)\left[f_i^s(\mathcal{U}(y))-f_i^s(\mathcal{U}_{\mu}(y))\right]\ud y\\
		&\geqslant \int_{\mu \leqslant|y| \leqslant \mu+1} E(0,y,z,\mu)\left[f_i^s(\mathcal{U}(y))-f_i^s(\mathcal{U}_{\mu}(y))\right]\ud y\\
		&+\int_{\mu^*+2 \leqslant|z| \leqslant \mu^*+3} E(0,y,z,\mu)\left[f_i^s(\mathcal{U}(y))-f_i^s(\mathcal{U}_{\mu}(y))\right]\ud y\\
		&\geqslant \int_{\mu^* \leqslant|y| \leqslant \mu^*+1} E(0,y,z,\mu)\left[f_i^s(\mathcal{U}_{\mu^*}(y))-f_i^s(\mathcal{U}_{\mu}(y))\right]\ud y\\
		&+\int_{\mu^*+2 \leqslant|y| \leqslant \mu^*+3} E(0,y,z,\mu)\left[f_i^s(\mathcal{U}(y))-f_i^s(\mathcal{U}_{\mu}(y))\right] \ud y.
		\end{align*}
		Now using \eqref{blowup1}, there exists $\delta_{1}>0$ such that $f_i^s(\mathcal{U}(y))-f_i^s(\mathcal{U}_{\mu}(y))\geqslant\delta_1$ for $\mu^*+2 \leqslant|y| \leqslant \mu^*+3$.
		Since $E(0, y, z, \mu)=0$ for all $|z|=\lambda$ and
		\begin{equation*}
		\nabla_{z} E(0, y, z, \mu)\cdot z\big|_{|z|=\mu}=(n-4)|z-y|^{6-n}\left(|z|^{2}-|y|^{2}\right)>0 \quad \mbox{for all} \quad \mu^*+2 \leqslant|y| \leqslant \mu^*+3,
		\end{equation*}
		where $\delta_{2}>0$ is a constant independent of $\varepsilon$. 
		Then, there exists $C>0$ such that, for $\mu^* \leqslant \mu \leqslant \mu^*+\varepsilon$, we get
		\begin{equation*}
		\left|f_i^s(\mathcal{U}(y))-f_i^s(\mathcal{U}_{\mu^*}(y))\right| \leqslant C(\mu-\mu^*) \leqslant C \varepsilon \quad \mbox{for all} \quad \mu^* \leqslant \mu \leqslant|y| \leqslant \mu^*+1.
		\end{equation*}
		Furthermore, recalling that $\mu \leqslant|z| \leqslant \mu^*+1$, we obtain
		\begin{align*}
		\int_{\mu \leqslant|y| \leqslant \mu^*} E(0, y, z, \mu) \ud y &\leqslant \left|\int_{\mu \leqslant|y| \leqslant \mu^*+1}\left[{|y-z|^{4-n}}-{\left|\mathcal{I}_{\mu}(z)-y\right|^{4-n}}+\left(\frac{\mu}{|z|}-1\right)^{n-4}\left|\mathcal{I}_{\mu}(z)-y\right|^{n-4}\right] \ud y\right|\\
		&\leqslant C\left|\mathcal{I}_{\mu}(z)-z\right|+C(|z|-\mu) \leqslant C(|z|-\mu),
		\end{align*}
		which, provides that for small $0<\varepsilon\ll1$, $\mu^* \leqslant \mu \leqslant \mu^*+\varepsilon$, and $\mu\leqslant|z| \leqslant \mu^*+1$, it follows
		\begin{align*}
		\left(u_i-(u_i)_{\mu^*}\right)(z)& 
		\geqslant-C \varepsilon \int_{\mu \leqslant|y| \leqslant \mu^*+1} E(0, y, z, \mu)\ud y+\delta_{1}\delta_{2}(|z|-\mu) \int_{\mu^*+2 \leqslant|z| \leqslant \mu^*+3} \ud y&\\
		&\geqslant\left(\delta_{1}\delta_{2} \int_{\mu^*+2 \leqslant|y| \leqslant \mu^*+3}\ud z-C\varepsilon\right)(|z|-\mu)&\\
		&\geqslant 0.&
		\end{align*}
		This is a contradiction to the definition of $\mu^*>0$. Therefore, the first part of the lemma is established.
		
		Next, by the definition of $\mu^*(x)$, we know that $|\mathcal{U}_{x, \mu}(z)|\leqslant|\mathcal{U}(z)|$ for all $0<\mu<\mu^*(x)$, $|z-x|\geqslant\mu$; thus, multiplying it by $|z|^{n-4}$, and taking the limit as $|z|\rightarrow\infty$, yields
		\begin{equation}\label{blowup2}
		l=\liminf_{|z| \rightarrow \infty}|z|^{n-4}|\mathcal{U}(z)|\geqslant \mu^{n-4}|\mathcal{U}(z)| \quad \mbox{for all} \quad 0<\mu<\mu^*(x).
		\end{equation}
		On the other hand, if $\mu^*(x_0)<\infty$, multiplying the identity obtained in (i) by $|z|^{n-4}$ and passing to the limit when $|z|\rightarrow\infty$, we obtain
		\begin{equation}\label{blowup3}
		l=\lim_{|z| \rightarrow \infty}|z|^{n-4} |\mathcal{U}(z)|=\mu^*(x_0)^{n-4} |\mathcal{U}(x_0)|<\infty.
		\end{equation}
		Finally, by \eqref{blowup2} and \eqref{blowup3}, if there exists $x_0 \in \mathbb{R}^{n}$ such that $\mu^*(x_0)<\infty$, then $\mu^*(x)<\infty$ for all $x \in \mathbb{R}^{n}$.
	\end{proof}

Eventually, we are ready to classify the non-singular global solutions using a standard blow-up technique.
Let us also remark that Appendix~\ref{app:alternativamovingspheres} contains an alternative proof of this fact, which directly uses the superharmonicity lemma.

\begin{proof}[Proof of Proposition~\ref{prop:nonexistencenonsingular}]
Using Lemma~\ref{lm:movingspheres}, we get that either $\mu^*(z)<\infty$ or $\mu^*(z)=\infty$ for all $z \in \mathbb{R}^{n}$. 
If $\mu^*(z)<\infty$ for all $z \in \mathbb{R}^{n}$, then $\mathcal{U}_{z,\mu^*(z)}=\mathcal{U}$ in $\mathbb{R}^{n} \setminus\{z\}$ for every $z \in \mathbb{R}^{n}$. 
Hence, we can use \cite[Lemma~11.1]{MR2001065} to conclude that there exist $a_{i} \geqslant 0$, $\mu_{i}>0$ and $z_{i} \in \mathbb{R}^{n}$ for $i\in I$ such that
\begin{equation}\label{classification1}
u_{i}(x)=a_i \mu_{i}^{-\frac{n-4}{2}}\left(\frac{\mu_{i}}{\mu_{i}^{2}+\left|x-z_{i}\right|^{2}}\right)^{\frac{n-4}{2}}.
\end{equation}
Otherwise, if $\mu^*(z)=\infty$ for all $z\in\mathbb{R}^{n}$, then \eqref{criticalparameter} holds for any $\mu>0$ and $z \in \mathbb{R}^{n}$. 
Thus, due to \cite[Lemma~11.2]{MR2001065}, there exist $\lambda_{i} \geqslant 0$ for $i\in I$ such that 
\begin{equation}\label{classification2}
u_{i}(x)=\lambda_{i}.
\end{equation}

Moreover, suppose that $\mathcal{U}$ satisfies \eqref{classification2}, that is, $\mathcal{U}$ is constant everywhere on $\mathbb{R}^{n}$. 
Since $\mathcal{U}$ is a nonnegative solution to \eqref{eq:subcriticalsystem} with $R=\infty$, it follows that $\mathcal{U}$ must be equivalently zero, which proves the proposition.

Finally, for $s=2^{**}-1$, if $u_{i}$ satisfies \eqref{classification1}, then the proof is equal to the one in \cite{arXiv:2002.12491}.
\end{proof}

\begin{remark}
Another approach to prove the last lemma would be to use the equivalence between the PDE system \eqref{eq:subcriticalsystem} and the IE system \eqref{integralsystem}.
In this fashion, the scaling moving spheres technique, recently developed in \cite[Section~2.2]{arXiv:1810.02752}, could provide the desired radial symmetry results.
In this fashion, instead of using the classical maximum principle, one needs a narrow region principle.
\end{remark}

\subsection{Radial symmetry of the blow-up limit solutions}\label{subsec:singularcase}
In contrast with the last subsection, we assume that the origin is a non-removable singularity. In this case, a more delicate analysis is required, namely, we need to divide the proof into two cases: $s\in(1,2_{**}]$ and $s\in(2_{**},2^{**}]$. The exponent restriction is related to the removability of isolated singularities to nonnegative solution to \eqref{eq:subcriticalsystem}.
When $p=1$, the same symmetry result was proved in \cite{MR1611691}  by using a classical moving planes method, whose integral form could also be used in our situation.

\begin{proposition}\label{prop:asymptoticsymmetrylimit}
Let $R=\infty$, $s\in(1,2^{**}-1)$, and $\mathcal{U}$ be a nonnegative singular solution to \eqref{eq:subcriticalsystem}. Then, $|\mathcal{U}|$ is radially symmetric about the origin.
\end{proposition}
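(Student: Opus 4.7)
The strategy is the moving planes method in integral form applied to the full system. Since the origin is the only non-removable singularity and $R=\infty$, a global extension of Lemma~\ref{integrability} yields the integral representation
\[
u_i(x) = c(n,2)\int_{\mathbb{R}^n}|x-y|^{4-n}|\mathcal{U}(y)|^{s-1}u_i(y)\,\ud y + h_i(x),
\]
where each $h_i\geqslant 0$ is biharmonic on $\mathbb{R}^n\setminus\{0\}$ and absorbs the fundamental-solution contribution carried by the origin. The $L^s$-integrability of $|\mathcal{U}|$ combined with standard Riesz-potential mapping properties then gives $|\mathcal{U}(x)|\to 0$ as $|x|\to\infty$.

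Fix a direction $e\in\mathbb{S}^{n-1}$, and for $\lambda\in\mathbb{R}$ set $T_\lambda=\{x\cdot e=\lambda\}$, $\Sigma_\lambda=\{x\cdot e>\lambda\}$, and let $x^\lambda$ denote the reflection of $x$ across $T_\lambda$. With $u_i^\lambda(x)=u_i(x^\lambda)$ and $w_i^\lambda=u_i^\lambda-u_i$, subtracting the integral equation from its reflection yields
\[
w_i^{\lambda}(x)=c(n,2)\!\int_{\Sigma_\lambda}\!\!\bigl[|x-y^\lambda|^{4-n}-|x-y|^{4-n}\bigr]\bigl(|\mathcal{U}^\lambda|^{s-1}u_i^\lambda-|\mathcal{U}|^{s-1}u_i\bigr)(y)\,\ud y+\bigl(h_i^\lambda-h_i\bigr)(x),
\]
in which the kernel difference is strictly positive on $\Sigma_\lambda$, while $h_i^\lambda-h_i>0$ on $\Sigma_\lambda$ whenever $\lambda<0$ because the origin then lies in $\Sigma_\lambda^c$. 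For $\lambda\ll 0$ the decay of $|\mathcal{U}|$ makes $w_i^\lambda\geqslant 0$ automatic via a Hardy--Littlewood--Sobolev estimate on the above integral operator. Defining $\lambda^*:=\sup\{\lambda\leqslant 0 :\, w_i^\mu\geqslant 0 \text{ on } \Sigma_\mu \text{ for all } \mu\leqslant\lambda \text{ and all } i\in I\}$ and arguing by contradiction, the strict positivity of $h_i^{\lambda^*}-h_i$ forces $\lambda^*=0$. Running the mirror argument with $-e$ in place of $e$ supplies the opposite inequality, so $u_i(x)=u_i(x^0)$ for every $x\in\Sigma_0$ and every $i\in I$; since $e$ was arbitrary, $|\mathcal{U}|$ is radially symmetric about the origin.

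The main obstacle is the coupling. A component-wise maximum principle is unavailable because $|\mathcal{U}|^{s-1}$ depends on all $u_j$, so the HLS step must be carried out on the full vector. Bounding
\[
\bigl||\mathcal{U}^\lambda|^{s-1}u_i^\lambda-|\mathcal{U}|^{s-1}u_i\bigr|\leqslant C\bigl(|\mathcal{U}^\lambda|^{s-1}+|\mathcal{U}|^{s-1}\bigr)|\mathcal{U}^\lambda-\mathcal{U}|
\]
and summing over $i\in I$ reduces matters to a scalar Hardy--Littlewood--Sobolev bound on $|\mathcal{U}^\lambda-\mathcal{U}|$ in the narrow strip $\Sigma_\lambda\setminus\Sigma_{\lambda+\varepsilon}$, where the decay of $|\mathcal{U}|$ at infinity delivers the small HLS constant needed to close the bootstrap. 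A secondary technicality is the limiting behavior of $h_i^\lambda-h_i$ as $\lambda\uparrow 0$, since $T_0$ passes through the singularity itself; this forces one to run the sliding argument strictly on $\{\lambda<0\}$ and to recover $\lambda^*=0$ by a careful limiting passage that exploits the uniform strict positivity of $h_i^\lambda-h_i$ away from $T_\lambda$.
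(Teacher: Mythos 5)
Your proposal takes a genuinely different route from the paper's --- you run a moving-planes argument on an integral representation, whereas the paper runs an \emph{asymptotic moving-spheres} argument: exploiting that the singular component $u_i$ blows up along a sequence $x_k\to0$, it first shows the critical moving-sphere radius obeys $\widetilde{\mu}^*(x)\leqslant|x|$ (Step~1), then the strong maximum principle forces $\widetilde{\mu}^*(x)=|x|$ (Step~2), and the calculus lemma from \cite[Lemma~2.1]{MR2479025} converts this into radial symmetry about the origin. However, your alternative has several genuine gaps that prevent it from closing as written. The global Riesz representation and the attendant decay at infinity are circular at this point in the paper's logical chain: the pointwise bound $|\mathcal{U}(x)|\leqslant C|x|^{-\gamma(s)}$ of Lemma~\ref{lm:limituppperestiaamte} is \emph{derived from} Proposition~\ref{prop:asymptoticsymmetrylimit}, and before that only $L^s_{\rm loc}$ integrability of $\mathcal{U}$ is available. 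Moreover, even if one grants that decay, for $s\geqslant 2_{**}$ one has $s\gamma(s)=\tfrac{4s}{s-1}\leqslant n$, so $|\mathcal{U}|^{s-1}u_i\notin L^1(\mathbb{R}^n)$ at infinity and the convolution $\int_{\mathbb{R}^n}|x-y|^{4-n}|\mathcal{U}|^{s-1}u_i\,\ud y$ need not converge; your representation therefore requires justification it does not currently have.

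Second, the central sign claim driving your comparison is false. With $\Sigma_\lambda=\{x\cdot e>\lambda\}$ and $\lambda<0$, the origin satisfies $0\cdot e=0>\lambda$ and therefore lies \emph{inside} $\Sigma_\lambda$, not in $\Sigma_\lambda^c$ as you assert. For $x\in\Sigma_\lambda$ a direct computation gives $|x|^2-|x^\lambda|^2=4\lambda\,(x\cdot e-\lambda)<0$, so $|x^\lambda|>|x|$; if $h_i$ behaves like a positive multiple of $|x|^{4-n}$ near the origin, then $h_i^\lambda-h_i\leqslant0$ on $\Sigma_\lambda$, the opposite of the strict positivity you need to get the comparison started. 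Finally, you yourself flag the passage of the moving plane through the singularity at $T_0$ as a ``secondary technicality,'' but it is in fact the crux: a plane cannot sweep past an interior non-removable singular point without either a Kelvin transform relocating the singularity to infinity or an independent pinning argument, neither of which you supply. The moving-spheres framework sidesteps this entirely because the test spheres never have to engulf the singular point, and the blow-up $u_i(x_k)\to\infty$ delivers the pinning $\widetilde{\mu}^*(x)\leqslant|x|$ directly.
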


The proof is also based on the integral form of the moving spheres method performed in the previous section, and relies on the following result from \cite[Lemma~2.1]{MR2001065}.

\begin{lemmaletter}
If $u \in C^{1}\left(\mathbb{R}^{n}\setminus\{0\}\right)$ satisfies, for each $z \in \mathbb{R}^{n} \backslash\{0\}$ and for any $0<\mu<|z|$  that $u_{z, \mu} \leqslant u$ in $\mathbb{R}^{n} \setminus\left(B_{\mu}(z) \cup\{0\}\right)$,	then $u$ is radially symmetric about the origin.
\end{lemmaletter}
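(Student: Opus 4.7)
The goal is to show that $u(x_1) = u(x_2)$ for any two points $x_1, x_2 \in \mathbb{R}^n \setminus \{0\}$ with $|x_1| = |x_2|$, which immediately yields the desired radial symmetry about the origin. My plan is to construct, for each such pair, a one-parameter family of inversions $\mathcal{I}_{z(t),\mu(t)}$ that interchange $x_1$ and $x_2$ and to which the comparison hypothesis applies directly.

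First, I would place the inversion center on the line through $x_1$ and $x_2$, parametrized by $z(t) = x_1 + t(x_2 - x_1)$ with $t < 0$, so that $z(t)$ lies on $\ell$ beyond $x_1$ and the vectors $x_1 - z(t)$, $x_2 - z(t)$ point in the same direction. Choosing
\begin{equation*}
\mu(t) = \sqrt{|x_1 - z(t)|\cdot|x_2 - z(t)|} = \sqrt{-t(1-t)}\,|x_2-x_1|
\end{equation*}
is precisely the condition for $\mathcal{I}_{z(t),\mu(t)}$ to swap $x_1$ and $x_2$. A short algebraic calculation, which uses $|x_1|=|x_2|$ in a crucial way, gives the key identity $|z(t)|^2 - \mu(t)^2 = |x_1|^2$ for every $t<0$, so the required conditions $z(t)\neq 0$ and $0<\mu(t)<|z(t)|$ both hold. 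A further direct check shows $|x_1-z(t)|<\mu(t)<|x_2-z(t)|$, hence $x_2$ lies in the outer region $\{|y-z(t)|\geqslant\mu(t)\}$ on which the hypothesis provides $u_{z(t),\mu(t)}(x_2)\leqslant u(x_2)$.

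Unpacking the definition of the Kelvin transform and using $\mathcal{I}_{z(t),\mu(t)}(x_2)=x_1$, the comparison inequality becomes
\begin{equation*}
\left(\frac{\mu(t)}{|x_2-z(t)|}\right)^{n-4} u(x_1) \;\leqslant\; u(x_2),
\end{equation*}
where the prefactor simplifies to $\bigl(-t/(1-t)\bigr)^{(n-4)/2}$ and therefore converges to $1$ as $t\to-\infty$. Passing to this limit yields $u(x_1)\leqslant u(x_2)$. Repeating the argument with $t>1$, i.e., placing the inversion center on the opposite side of the segment $[x_1,x_2]$, swaps the roles of the two points and supplies the reverse inequality, so $u(x_1) = u(x_2)$.

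The main obstacle is a purely geometric bookkeeping issue: one must arrange the inversion to exchange $x_1$ and $x_2$ while simultaneously maintaining $z(t)\neq 0$, $\mu(t)<|z(t)|$, and correctly identifying which of the two points lies in the outer region where the hypothesis is valid. The equal-norm assumption $|x_1|=|x_2|$ enters in exactly one decisive step---the identity $|z(t)|^2-\mu(t)^2 = |x_1|^2$---and it is this identity that makes the entire family of inversions admissible for the hypothesis, so no single inversion suffices; one genuinely needs to take $|z(t)|\to\infty$ to kill the factor $\mu(t)/|x_2-z(t)|$.
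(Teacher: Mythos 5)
The paper does not prove this lemma: it is quoted verbatim (as a ``lemmaletter'') from Li--Zhang~\cite{MR2001065}, Lemma~2.1, and used as a black box in Subsection~4.2; there is therefore no in-paper argument to compare against. That said, your proof is correct and self-contained, and it follows the standard calculus strategy behind such moving-sphere symmetry lemmas. The key algebraic identity you rely on,
\begin{equation*}
|z(t)|^{2}-\mu(t)^{2}=|x_1|^{2}+t\bigl(|x_2|^{2}-|x_1|^{2}\bigr),
\end{equation*}
does reduce to $|x_1|^{2}$ exactly when $|x_1|=|x_2|$; this guarantees simultaneously $z(t)\neq0$, $0<\mu(t)<|z(t)|$, and $|z(t)|>\mu(t)$ so that $0\notin B_{\mu(t)}(z(t))$, for every $t<0$. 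The collinearity choice $z(t)=x_1+t(x_2-x_1)$ with $t<0$ makes $x_1-z(t)$ and $x_2-z(t)$ positive multiples of each other, so $\mu(t)=\sqrt{|x_1-z(t)|\,|x_2-z(t)|}$ indeed forces $\mathcal{I}_{z(t),\mu(t)}(x_2)=x_1$, and the inequalities $|x_1-z(t)|<\mu(t)<|x_2-z(t)|$ correctly identify $x_2$ as the admissible evaluation point. The prefactor $(\mu(t)/|x_2-z(t)|)^{n-4}=\bigl(-t/(1-t)\bigr)^{(n-4)/2}$ increases to $1$ as $t\to-\infty$, and since only the prefactor (not the arguments of $u$) depends on $t$, the limit passage uses no regularity of $u$ beyond the pointwise inequality; your use of the exponent $n-4$ is consistent with the paper's fourth-order Kelvin transform. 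The reverse inequality indeed follows either by taking $t>1$ (whereupon $x_1$ becomes the exterior point and $(t-1)/t\to1$), or simply by interchanging $x_1$ and $x_2$. In short, this is a valid proof of the quoted lemma, supplying what the paper delegates to the reference.
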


Before, we give the proof of the rotational symmetry for singular solutions to \eqref{eq:subcriticalsystem}, we need to establish a set of lemmas. The first one concerns the superharmonicity of the component solutions to \eqref{eq:subcriticalsystem}.
Here the arguments are similar in spirit to the ones in \cite[Theorem~3.7]{MR1769247}.
	
	\begin{lemma}\label{lm:superharmonicity}
		Let $R=\infty$, $s\in(1,2^{**}-1)$, and $\mathcal{U}$ be a nonnegative singular solution to \eqref{eq:subcriticalsystem}. Then, $-\Delta \mathcal{U}$ is a superharmonic $p$-map in the distributional sense, that is, for all nonnegative $\Phi\in C^{\infty}_c(\mathbb{R}^{n},\mathbb{R}^{p})$, one has
		\begin{equation*}
		\int_{\mathbb{R}^{n}} \langle\Delta \mathcal{U}, \Delta \Phi\rangle\ud x\geqslant 0.
		\end{equation*}
		Moreover, $-\Delta \mathcal{U}\geqslant0$ in $\mathbb{R}^{n} \setminus\{0\}$.
	\end{lemma}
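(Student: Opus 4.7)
The plan is to adapt the spherical-averaging scheme of Lemma~\ref{lm:strongsuperharmonicity} to the singular setting, handling the origin via the cutoff argument from Lemma~\ref{lm:integrability} and invoking the iteration method of \cite[Theorem~2.1]{MR1769247} on the averaged functions. I would split the statement into the distributional inequality on $\mathbb{R}^n$ and the classical pointwise inequality on $\mathbb{R}^n \setminus \{0\}$, and treat them in turn.

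For the distributional inequality, I fix nonnegative $\Phi = (\phi_1,\ldots,\phi_p) \in C_c^\infty(\mathbb{R}^n,\mathbb{R}^p)$ and test against $\phi_i \eta_\epsilon$, where $\eta_\epsilon$ is the cutoff from \eqref{cutoff}. Since $\phi_i \eta_\epsilon$ is compactly supported in $\mathbb{R}^n \setminus \{0\}$, where \eqref{eq:subcriticalsystem} holds classically, two integrations by parts yield
\begin{equation*}
\int_{\mathbb{R}^n} \Delta u_i \, \Delta(\phi_i \eta_\epsilon)\,\ud x = \int_{\mathbb{R}^n} |\mathcal{U}|^{s-1} u_i \, \phi_i \eta_\epsilon \,\ud x \geqslant 0.
\end{equation*}
Expanding the product derivative and exploiting $|D^{(j)} \eta_\epsilon| \leqslant C\epsilon^{-j}$ together with the local $L^s$-integrability of $|\mathcal{U}|$ furnished by Lemma~\ref{lm:integrability} (and interior gradient bounds for $u_i$ from standard elliptic estimates applied to $\Delta u_i = -h_i$ on annular shells), the cross terms supported in $\{\epsilon \leqslant |x| \leqslant 2\epsilon\}$ vanish as $\epsilon \to 0$, while the leading term converges to $\int \Delta u_i \, \Delta \phi_i \,\ud x$ by dominated convergence. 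This establishes the first claim.

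For the pointwise inequality, I set $h_i = -\Delta u_i$ so that $(u_i, h_i)$ satisfies the decoupled cascade \eqref{lane-emden-system} classically on $\mathbb{R}^n \setminus \{0\}$, with $-\Delta h_i = |\mathcal{U}|^{s-1} u_i \geqslant u_i^s \geqslant 0$ (using $|\mathcal{U}| \geqslant u_i$); in particular each $h_i$ is classically superharmonic there. Taking spherical averages $\overline{u}_i(r)$, $\overline{h}_i(r)$ and applying Jensen's inequality yields
\begin{equation*}
\Delta \overline{u}_i + \overline{h}_i = 0, \qquad \Delta \overline{h}_i + \overline{u}_i^{\,s} \leqslant 0 \quad \text{on } (0,\infty),
\end{equation*}
which is exactly the radial cascade handled by the iteration argument of \cite[Theorem~2.1]{MR1769247}. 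Assuming toward contradiction that $h_i(y_0) < 0$ at some $y_0 \neq 0$, one transfers this sign information to $\overline{h}_i(|y_0|) < 0$ using the classical superharmonicity of $h_i$ on punctured neighborhoods of $y_0$ and then runs the iteration on $(|y_0|,\infty)$ to produce growth estimates on $\overline{u}_i$ incompatible with the subcritical hypothesis and the integrability bounds of Lemma~\ref{lm:integrability}.

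The main obstacle is transferring a pointwise failure $h_i(y_0) < 0$ to an averaged failure $\overline{h}_i(|y_0|) < 0$: unlike in Lemma~\ref{lm:strongsuperharmonicity}, the singular origin cannot be translated to $y_0$, so one must either localize Lin's iteration to a ball $B_\rho(y_0)$ with $\rho < |y_0|$, or use superharmonicity of $h_i$ together with symmetrization to propagate the sign defect to the spherical mean centered at the origin. A secondary technical point is the gradient integrability $\nabla u_i \in L^1_{\rm loc}$ needed to kill the cross terms in the distributional step; this is not stated explicitly in Lemma~\ref{lm:integrability} but follows from interior Calder\'on--Zygmund estimates on annuli shrinking to the origin.
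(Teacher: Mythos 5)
Your distributional argument is essentially the paper's, but the error-term estimate is handled differently: the paper integrates by parts one more time to write $\int \Delta u_i\,\varsigma_i^\varepsilon\,\ud x = \int u_i\,\Delta\varsigma_i^\varepsilon\,\ud x$, so that only $u_i\in L^s_{\rm loc}$ is needed (via H\"older against the $\varepsilon^{-4}$-bound on $\Delta\varsigma_i^\varepsilon$), whereas you propose to control the cross terms directly, which indeed requires $\nabla u_i$ (or $\Delta u_i$) integrability on the shrinking annuli. Your extra Calder\'on--Zygmund detour would work, but the paper's version is tighter and avoids a priori information on the derivatives.

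The pointwise part is where your proposal genuinely breaks down, and you yourself flag the obstruction: running the iteration of \cite[Theorem~2.1]{MR1769247} requires the sign defect $h_i(y_0)<0$ to survive in the spherical average $\overline{h}_i$ \emph{centered at the origin}, but $y_0\neq 0$ and the origin cannot be translated to $y_0$ without moving the singularity. Superharmonicity of $h_i$ on $\mathbb{R}^n\setminus\{0\}$ gives a mean-value inequality over spheres centered at $y_0$, not over spheres centered at $0$, and symmetrizing does not transfer the pointwise negativity to $\overline{h}_i(|y_0|)<0$ (the average could still be nonnegative). No amount of ``localizing Lin's iteration to $B_\rho(y_0)$'' fixes this, because that iteration produces growth of $\overline{u}_i$ at $\infty$ only when the differential inequalities hold on an unbounded interval with the correct sign at the left endpoint.

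The paper avoids the averaging issue entirely and, crucially, \emph{uses the first part of the lemma in the proof of the second}: having established that $-\Delta u_i$ is distributionally superharmonic on all of $\mathbb{R}^n$ (singularity included), it considers $\widetilde{u}_i^\varepsilon := -\Delta u_i + \varepsilon$, observes from the a priori decay estimates that $|\Delta u_i(x)| \to 0$ as $|x|\to\infty$ (so $\widetilde{u}_i^\varepsilon > \varepsilon/2$ outside a large ball $B_{R_\varepsilon}$), and then concludes $\widetilde{u}_i^\varepsilon \geqslant 0$ on $\mathbb{R}^n\setminus\{0\}$ from the minimum principle for distributionally superharmonic functions. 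Letting $\varepsilon\to 0$ gives $-\Delta u_i\geqslant 0$ pointwise. This is both shorter and sidesteps the origin-vs.-$y_0$ incompatibility. If you want to keep your iteration route for the pointwise inequality you would need a new mechanism to pass from a pointwise sign at $y_0$ to a sign of the origin-centered spherical average, and no such mechanism is provided.
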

	
	\begin{proof}
		Proceeding similarly to Lemma~\ref{integrability}, one can prove that $|\mathcal{U}| \in L_{\rm loc}^{s}(\mathbb{R}^{n})$. 
		Let $\eta_{\varepsilon} \in C^{\infty}(\mathbb{R}^{n})$ be the cut-off function given by \eqref{cutoff} and $\Phi \in C_{c}^{\infty}(\mathbb{R}^{n},\mathbb{R}^p)$ be a nonnegative test $p$-map. 
		Then, multiplying \eqref{eq:subcriticalsystem} by $\eta_{\varepsilon} \phi_i$ for each $i\in I$, and integrating by parts twice, we get
		\begin{align*}
		0 & \leqslant \int_{\mathbb{R}^{n}} \eta_{\varepsilon} \phi_i |\mathcal{U}|^{s-1}u_i\ud x \\
		&=\int_{\mathbb{R}^{n}} \Delta\left(\eta_{\varepsilon}\phi_i\right) \Delta u_i\ud x \\
		&=\int_{\mathbb{R}^{n}} \Delta u_i\left(\Delta \phi_i \eta_{\varepsilon}+\varsigma^{\varepsilon}_i\right)\ud x,
		\end{align*}
		where $\varsigma^{\varepsilon}_i:=2 \langle\nabla \phi_i, \nabla \eta_{\varepsilon}\rangle+\phi_i\Delta \eta_{\varepsilon}$.
		Notice that  $\varsigma_i^{\varepsilon}(x) \equiv 0$ when $|x| \leqslant \varepsilon$ or $|x| \geqslant 2 \varepsilon$, and $|\Delta \varsigma^{\varepsilon}_i(x)| \leqslant C \varepsilon^{-4}$, for some $C>0$. 
		In addition, since $n-4-n/s>0$, the following estimate holds,
		\begin{align*}
		\left|\int_{\mathbb{R}^{n}} \Delta u_i \varsigma^{\varepsilon}_i\ud x\right| & \leqslant \int_{\mathbb{R}^{n}} u_i|\Delta \varsigma^{\varepsilon}_i|\ud x\\
		& \leqslant C \varepsilon^{-4}\left(\int_{\{\varepsilon \leqslant|x| \leqslant 2 \varepsilon\}} |\mathcal{U}|^{s-1}u_i\ud x\right)^{1/s}\varepsilon^{n(1-1/s)} \\
		&\leqslant C \varepsilon^{-4}\left(\int_{\{\varepsilon \leqslant|x| \leqslant 2 \varepsilon\}} |\mathcal{U}|^{s}\ud x\right)^{1/s}\varepsilon^{n(1-1/s)} \\
		& \leqslant C \varepsilon^{n-4-n/s} \rightarrow 0 \quad \mbox{as} \quad \varepsilon \rightarrow 0,
		\end{align*}
		which implies
		\begin{align*}
			\int_{\mathbb{R}^{n}} \Delta u_i \Delta \phi_i\ud x =\lim_{\varepsilon \rightarrow 0}\int_{\mathbb{R}^{n}}\left(\Delta u_i \Delta \varsigma_i^{\varepsilon}\ud x+\eta_\varepsilon\Delta\phi_i \Delta u_i\right)\ud x=\int_{\mathbb{R}^{n}}\phi_i |\mathcal{U}|^{s-1}u_i\ud x\geqslant 0.
		\end{align*}
		Thus, $-\Delta u_i$ is superharmonic in the whole space $\mathbb{R}^{n}$ in the distributional sense for all $i\in I$, which proves the first part of the proof.
		
		To prove the second statement, given $0<\varepsilon\ll1$, let us consider $\widetilde{u}_i^{\varepsilon}:=-\Delta u_i+\varepsilon$.
		Using Lemma~\ref{integrability}, there exists a constant $C>0$, depending only on $n$ and $s$, such that for all $|x| \geqslant 4$, it holds
		\begin{equation*}
		\sum_{j=0}^3|x|^{\gamma(s)+j}\left|D^{(j)} u_i(x)\right| \leqslant C,
		\end{equation*}
		which yields that  $\lim _{|x| \rightarrow \infty}|\Delta u_i(x)|=0$ for all $i\in I$.
		Whence, for any $0<\varepsilon\ll1$, there exists $R_{\varepsilon}\gg1$ such that $\widetilde{u}_i^\varepsilon>{\varepsilon}/{2}$ for $|x| \geqslant R_{\varepsilon}$.
		Finally, using that $\widetilde{u}_i^{\varepsilon}$ is superharmonic in $\mathbb{R}^{n}$ in the distributional sense, we have that
		$\widetilde{u}_i^{\varepsilon} \geqslant 0$ in $\mathbb{R}^{n}\setminus\{0\}$, which, by passing to the limit as $\varepsilon\rightarrow 0$, provides $-\Delta u_i \geqslant 0$ in $\mathbb{R}^{n}\setminus\{0\}$.
		The last inequality concludes the proof of the lemma.
	\end{proof}
	
	We prove the main result of this subsection, namely, the radial symmetry of the blow-up limit solutions. 
	To this end, we use an asymptotic version of moving spheres techniques used in the proof of Lemma~\ref{lm:movingspheres} with minor modifications inspired by  \cite[Proposition~2.1]{MR2479025}.
	
	\begin{proof}[Proof of Proposition~\ref{prop:asymptoticsymmetrylimit}]
		Fixing $x \in \mathbb{R}^{n} \backslash\{0\}$ arbitrary, there exists $0<r_{0}<|x|$ such that for any $0<\mu \leqslant \mu_{0}$, $r\in(0,r_0)$ and $i\in I$, it follows
		$\left(u_{i}\right)_{x, \mu}^{*} \leqslant u_{i}$ in $\mathbb{R}^{n}\setminus\left(B_{r}(x) \cup\{0\}\right)$.
		Hence, similarly as before, let us define
		\begin{equation*}
		\widetilde{\mu}_{i}(x)=\sup \left\{\mu>0:\left(u_{i}\right)_{x,r} \leqslant u_{i} \ \mbox{ in } \ \mathbb{R}^{n} \setminus\left(B_{r}(x)\cup\{0\}\right) \ \mbox{ for any } \ 0<r<\mu\right\}
		\end{equation*}
		and
		\begin{equation*}
		\widetilde{\mu}^*(x)=\inf_{i\in I}\widetilde{\mu}_{i}(x).
		\end{equation*}
		
		Let us divide the rest of the argument into two steps.
		
		\noindent{\bf Step 1:} $0<\widetilde{\mu}^*(z) \leqslant|z|$.
		
		\noindent Indeed, it is straightforward to see that $\widetilde{\mu}^*(x)>0$.
		Next, since the origin is a non-removable singularity, there exist $\{x_{k}\}_{k\in\mathbb{N}}\subset\mathbb{R}^n$ such that $x_k\rightarrow 0$ as $k\rightarrow\infty$ and $i\in I$ satisfying $u_{i}\left(x_{k}\right) \rightarrow \infty$ as $k\rightarrow\infty$.
		Now suppose by contradiction that $\widetilde{\mu}^*(x)>|x|$, then there exist $r>|x|$ such that
		\begin{equation}\label{asymptoticsymmetry2}
		\left(u_{i}\right)_{x, r} \leqslant u_{i} \quad \mbox{in} \quad \mathbb{R}^{n} \setminus B_{r}(x).
		\end{equation}
		Defining $y_{k}:=\mathcal{I}_{z,\mu}(x_k)$ for all $k\in\mathbb{N}$ to be the reflection of $x_{k}$ about $\partial B_{\mu}(z)$ and using that $x_{k} \rightarrow 0$, we find that $y_{k} \in \mathbb{R}^{n} \setminus B_{r}(x)$ for sufficiently large $k\gg1$.
		Moreover, $y_{k} \rightarrow y_{0}$ as  $k\rightarrow\infty$, where $y_{0}:=[1-({r}^2{|x|}^{-2})]x$.
		Whence, choosing $r>0$ satisfying $0<|r-|x||\ll1$, we have $y_{0} \neq 0$, which implies that $u_{i}$ is smooth at $y_{0}$. On the other hand, we get 
		\begin{equation*}
		u_{i}\left(y_{0}\right)=\lim_{k \rightarrow \infty} u_{i}\left(y_{k}\right) \geqslant \lim _{k \rightarrow \infty}\left(u_{i}\right)_{x,r}\left(y_{k}\right) \geqslant\left(\frac{|x|}{r}\right)^{n-4} \lim _{k\rightarrow \infty} u_{k}\left(x_{k}\right)=\infty.
		\end{equation*}
		This is a contradiction, and the first step is proved.
		
		\noindent{\bf Step 2:} $\widetilde{\mu}^*(x)=|x|$.
		
		\noindent Suppose by contradiction that
		$\widetilde{\mu}^*(x)<|x|$. Thus, since the origin is a non-removable singularity, together with Lemma~\ref{lm:strongsuperharmonicity}, and applying the classical maximum principle twice, we find that there exists $i\in I$ satisfying
		\begin{equation}\label{asymptoticsymmetry1}
		u_{i}>\left(u_{i}\right)_{x, \widetilde{\mu}^*(x)}^{*} \quad \mbox{ in } \quad \mathbb{R}^{n} \setminus\left(\bar{B}_{\widetilde{\mu}^*(x)}(x) \cup\{0\}\right),
		\end{equation}
		which yields $|\mathcal{U}|>|\mathcal{U}_{x, \widetilde{\mu}^*(x)}^{*}|$ in $\mathbb{R}^{n} \setminus\left(\bar{B}_{\widetilde{\mu}^*(x)}(x) \cup\{0\}\right)$.
		Hence, the maximum principle again implies that the strict inequality in \eqref{asymptoticsymmetry1} is valid for all nontrivial components. 
		Whence, as in Lemma~\ref{lmlt:zhang1}, one can find $0<\varepsilon\ll1$ such that \eqref{asymptoticsymmetry1} holds for all $i\in I$, where $\widetilde{\mu}^*(x)$ is replaced by some $\widetilde{\mu}^*(x)<r<\widetilde{\mu}^*(x)+\varepsilon$, which contradicts \eqref{asymptoticsymmetry2}; the proof of this step is finished.
		
		Finally, using Steps 1 and 2, one can prove that for any $z \in \mathbb{R}^{n} \setminus\{0\}$, $0<\mu<|x|$, and $i\in I$, it follows $\left(u_{i}\right)_{x,\mu}\leqslant u_{i}$ in $\mathbb{R}^{n} \setminus(B_{r}(x) \cup\{0\})$.
		Therefore, we can apply \cite[Lemma~2.1]{MR2479025} to conclude that $u_{i}$ is radially symmetric about the origin for any $i\in I$, which proves the proposition.
	\end{proof}
	
	\subsection{Singular case: The limiting Pohozaev levels}
	After proving the radial symmetry of singular solutions to \eqref{eq:subcriticalsystem}, we shall classify them in the blow-up and shrink-down limit.
	The idea is to use a blow-up/shrink-down analysis, which comes from tangent cone techniques from minimal hypersurface theory, and will be described in the sequel.
	
	For any $\mathcal{U}$ solution to \eqref{eq:subcriticalsystem} and $\lambda>0$, let us define the following $\lambda$-rescaling solution given by
	\begin{equation}\label{scalingfamily}
	\widehat{\mathcal{U}}_{\lambda}(x):=\lambda^{\gamma(s)}\mathcal{U}(\lambda x).
	\end{equation}
	Notice that the $\lambda$-rescaled solution is still a nonnegative solution to \eqref{eq:subcriticalsystem} with $R=\lambda^{-1}$.
	Besides, by a blow-up (resp. shrink-down) solution $\mathcal{U}_0$ (resp. $\mathcal{U}_{\infty}$) to \eqref{eq:subcriticalsystem}, we mean the limit $\mathcal{U}_{0}:=\lim_{\lambda\rightarrow0}\widehat{\mathcal{U}}_{\lambda}$ (resp. $\mathcal{U}_{\infty}:=\lim_{\lambda\rightarrow\infty}\widehat{\mathcal{U}}_{\lambda}$).
	In fact, utilizing some a priori estimates and the compactness of the family $\{\widehat{\mathcal{U}}_{\lambda}\}_{\lambda>0}\subset C^{4,\zeta}_{\rm loc}(\mathbb{R}^n,\mathbb{R}^p)$ is compact, for some $\zeta\in(0,1)$, these limits will be proven to exist.
	Next, we study the limit Pohozaev functional both as $r\rightarrow0$ (blow-up) and $r\rightarrow\infty$ (shrink-down), this will give the desired information about the asymptotic behavior for solutions to \eqref{eq:subcriticalsystem}.
	Here is our main result of this subsection:
	
	\begin{proposition}\label{prop:limitlocalbehavior}
		Let $R=\infty$ and $\mathcal{U}$ be a nonnegative singular solution to \eqref{eq:subcriticalsystem}.
		\begin{itemize}
			\item[{\rm (a)}] If $s\in(1,2_{**}]$, then $\mathcal{U}\equiv0$; 
			\item[{\rm (b)}] If $s\in(2_{**},2^{**}-1)$, then $\mathcal{P}_{\rm cyl}(r,\mathcal{U})$ converges both as $r\rightarrow0$ and $r\rightarrow\infty$, namely
			\begin{equation}\label{limintingconstant}
			\{\mathcal{P}_{\rm sph}(0, \mathcal{U}),\mathcal{P}_{\rm sph}(\infty, \mathcal{U})\}=\{-l^*(n,s),0\}, \quad \mbox{where} \quad l^*(n,s)=\frac{s-1}{2(s+1)}K_0(n,s)^{\frac{s+1}{s-1}}.
			\end{equation} 
			Moreover, there exists $\Lambda^*\in\mathbb{S}^{p-1}_{+,*}$ such that 
			\begin{equation*}
			\mathcal{U}(x)=\Lambda^* K_0(n,s)^{\frac{1}{s-1}}|x|^{-\gamma(s)},
			\end{equation*}
			where $\gamma(s)=\frac{4}{s-1}$ is the Fowler scaling exponent.
		\end{itemize}    
	\end{proposition}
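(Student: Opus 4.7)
My plan is to handle the two cases of the proposition separately. For part (a), I would combine the radial symmetry from Proposition~\ref{prop:asymptoticsymmetrylimit} with a Pohozaev integration argument. After applying Lemma~\ref{lm:pohozaevidentity} with $\widetilde{\mathcal{U}}=\mathcal{U}$ on an annulus $B_{r_2}\setminus B_{r_1}$, the superharmonicity of $\mathcal{U}$ (Lemma~\ref{lm:superharmonicity}) and the integrability estimates near the origin provide enough decay of the boundary terms to send $r_1\to 0$ and $r_2\to\infty$; the resulting interior identity has overall coefficient proportional to $(n-4)(s-2_{**})\leq 0$, which for $s<2_{**}$ forces $\int_{\mathbb{R}^n}|\mathcal{U}|^{s+1}\,\ud x=0$ and hence $\mathcal{U}\equiv 0$, contradicting non-removability. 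The borderline $s=2_{**}$ kills this coefficient, and there I would refine the argument by passing to the nonautonomous cylindrical coordinates \eqref{nonautonomouscyltransform} and exploiting Proposition~\ref{lm:lowermonotonicity} together with the asymptotic identification of Lemma~\ref{lm:signproperties}.

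For part (b), radial symmetry plus the Emden--Fowler transform $\mathfrak{F}$ from \eqref{cyltransform} converts \eqref{eq:subcriticalsystem} into the autonomous fourth-order ODE system \eqref{eq:subcriticalODEsystem} on $\mathbb{R}$. Under $\mathfrak{F}$ the rescaling family \eqref{scalingfamily} corresponds to the translates $\mathcal{V}(\,\cdot+\tau)$, and the invariance \eqref{scaleinvariance} reads $\mathcal{P}_{\rm cyl}(t,\widehat{\mathcal{V}}_\lambda)=\mathcal{P}_{\rm cyl}(t-\ln\lambda,\mathcal{V})$. Proposition~\ref{lm:monotonicityformula} combined with the a priori bounds of Lemma~\ref{lm:universalestimates} make $\mathcal{P}_{\rm cyl}(t,\mathcal{V})$ nondecreasing and bounded, so $\mathcal{P}_{\rm cyl}(\pm\infty,\mathcal{V})$ exist. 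Standard $C^{4,\zeta}_{\rm loc}$-compactness then extracts blow-up and shrink-down limits $\mathcal{V}_0,\mathcal{V}_\infty$ of the translates, each a nonnegative solution of \eqref{eq:subcriticalODEsystem} on $\mathbb{R}$ whose cylindrical Pohozaev functional is constant in $t$.

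The crux is the identification of these limits via the monotonicity formula \eqref{eq:monotonicityformula}: the signs $K_1(n,s)>0$ and $K_3(n,s)<0$ from Remark~\ref{rmk:signoncoefficients} imply that $\partial_t\mathcal{P}_{\rm cyl}\equiv 0$ forces $\mathcal{V}_0^{(1)}\equiv\mathcal{V}_0^{(2)}\equiv 0$, so $\mathcal{V}_0$ (and similarly $\mathcal{V}_\infty$) is constant in $t$. Substituting a constant $V_*$ into \eqref{eq:subcriticalODEsystem} yields the algebraic condition $K_0(n,s)\,v_{*,i}=|V_*|^{s-1}v_{*,i}$, so componentwise either $v_{*,i}=0$ or $|V_*|=K_0(n,s)^{1/(s-1)}$, giving $V_*=\Lambda K_0(n,s)^{1/(s-1)}$ with $\Lambda\in\mathbb{S}^{p-1}_+$; a direct evaluation of the Hamiltonian $\mathcal{H}$ at these two families of equilibria produces exactly the two levels $0$ and $-\omega_{n-1}l^*(n,s)$, confirming \eqref{limintingconstant}. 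The singular hypothesis rules out $V_*=0$ as the blow-up limit at $t=+\infty$, and combining this with the monotonicity of $\mathcal{P}_{\rm cyl}$ and the equality case of Lemma~\ref{lm:homogeneity} forces $\mathcal{V}\equiv\Lambda^*K_0(n,s)^{1/(s-1)}$ throughout; returning via $\mathfrak{F}^{-1}$ yields the claimed formula. The main obstacle is ruling out a genuine heteroclinic orbit of \eqref{eq:subcriticalODEsystem} interpolating between two distinct equilibria, and here the strict monotonicity encoded by $K_1(n,s)>0$ in the subcritical range (which degenerates at criticality $s=2^{**}-1$, precisely where the Emden--Fowler family appears) is essential; a secondary technical point is upgrading $\Lambda\in\mathbb{S}^{p-1}_+$ to $\Lambda^*\in\mathbb{S}^{p-1}_{+,*}$, which I would handle via the integral representation of Proposition~\ref{lm:integralrepresentation} and a strong-maximum-principle argument applied to the limiting constant.
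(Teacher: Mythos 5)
Your part~(b) plan is essentially the paper's argument: translate to the cylinder, use the monotonicity of Proposition~\ref{lm:monotonicityformula} together with the a priori bounds and local $C^{4,\zeta}$-compactness to extract blow-up and shrink-down limits whose Pohozaev functional is constant, identify these limits as equilibria of \eqref{eq:subcriticalODEsystem} via the strict signs of $K_1$ and $K_3$, evaluate $\mathcal{H}$ at the two equilibrium families to get the levels, and then squeeze with monotonicity plus Lemma~\ref{lm:homogeneity} to force $\mathcal{U}$ homogeneous.  This is the same route the paper takes through Lemmas~\ref{lm:limitlimitinglevels} and \ref{lm:blowupclassification}.

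Your part~(a) plan, however, departs from the paper and has a real gap.  You propose to take $\widetilde{\mathcal{U}}=\mathcal{U}$ in Lemma~\ref{lm:pohozaevidentity} on $B_{r_2}\setminus B_{r_1}$, send $r_1\to0$ and $r_2\to\infty$ assuming the boundary terms vanish, and read off an interior identity with ``overall coefficient proportional to $(n-4)(s-2_{**})$''.  Two problems.  First, the coefficient is not what you claim: with $\widetilde{\mathcal{U}}=\mathcal{U}$ and $\Delta^2u_i=|\mathcal{U}|^{s-1}u_i$, the interior integrand is $2\bigl[\langle x,\nabla F(\mathcal{U})\rangle-\gamma(s)|\mathcal{U}|^{s+1}\bigr]$ with $F=\tfrac{1}{s+1}|\mathcal{U}|^{s+1}$, and one further integration by parts turns it into $-2\bigl(\tfrac{n}{s+1}+\gamma(s)\bigr)\int|\mathcal{U}|^{s+1}$ plus a flux term; the factor $\tfrac{n}{s+1}+\tfrac{4}{s-1}$ is strictly positive for all $s>1$ and has no zero at $s=2_{**}$, so the dichotomy you want does not come out of this identity.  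Second, and more fundamentally, for a genuinely singular solution the boundary contributions at $r_1\to0$ do \emph{not} decay: their limit is exactly the Pohozaev invariant $\mathcal{P}_{\rm sph}(0,\mathcal{U})$, whose value is precisely what one is trying to determine.  Pretending the flux vanishes begs the question.  This is why the paper does not run a direct Pohozaev integration for part~(a), but instead passes to blow-up and shrink-down limits, proves these are homogeneous (Lemma~\ref{lm:limitlimitinglevels}), and then classifies the homogeneous profiles via the angular system \eqref{angularsystem} (Lemma~\ref{lm:blowupclassification}(a)); the triviality of the limits forces both endpoint Pohozaev levels to be zero, monotonicity then collapses $\mathcal{P}_{\rm sph}\equiv0$, and Lemma~\ref{lm:homogeneity} plus the homogeneous classification kills $\mathcal{U}$.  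You would need to replace your Pohozaev step by this (or an equivalent) argument.  Finally, the fallback you propose for $s=2_{**}$, namely passing to the nonautonomous transformation \eqref{nonautonomouscyltransform} and invoking Proposition~\ref{lm:lowermonotonicity} and Lemma~\ref{lm:signproperties}, is not available here: those tools are set up for $R<\infty$, where $t=-\ln r>0$; on the punctured space the factor $(-\ln r)^{(4-n)/4}$ in \eqref{nonautonomouscyltransform} is undefined for $r>1$, and the whole nonautonomous Pohozaev machinery of Section~\ref{sec:pohozaevinvariant} is not stated or proved in the entire-space setting.
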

	
	The following lemma provides an upper bound estimate for singular solutions to \eqref{eq:subcriticalsystem}.
	
	\begin{lemma}\label{lm:limituppperestiaamte}
		Let $R=\infty$, $s\in(1,2^{**}-1)$, and $\mathcal{U}$ be a nonnegative singular solution to \eqref{eq:subcriticalsystem}. Then,
		\begin{equation*}
		|\mathcal{U}(x)| \leqslant \left(\frac{s-1}{2n}\right)^{-\frac{1}{s-1}}|x|^{-\gamma(s)} \quad {\rm in}  \quad \mathbb{R}^{n} \setminus\{0\}.
		\end{equation*}
	\end{lemma}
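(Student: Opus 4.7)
The plan is to exploit two strong structural results already in hand: the radial symmetry of $|\mathcal{U}|$ established in Proposition~\ref{prop:asymptoticsymmetrylimit}, and the pointwise superharmonicity of every component $u_i$ proved in Lemma~\ref{lm:superharmonicity}. Since the differential inequality to be proved, rewritten as $|x|^4|\mathcal{U}(x)|^{s-1}\leqslant 2n/(s-1)$, is invariant under the natural rescaling $\mathcal{U}\mapsto\widehat{\mathcal{U}}_\lambda$ defined in \eqref{scalingfamily}, the result will follow once we prove a bound of the form $\sup_{|x|=1}|\mathcal{U}(x)|^{s-1}\leqslant 2n/(s-1)$ for any nonnegative singular solution in $\mathbb{R}^n\setminus\{0\}$. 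By the radial symmetry of $|\mathcal{U}|$ we may set $v(r):=|\mathcal{U}|(r)$ and reduce to a one-dimensional statement about $v$.

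First I would record the two auxiliary superharmonic functions attached to the problem: each $u_i\geqslant 0$ satisfies $-\Delta u_i\geqslant 0$ by Lemma~\ref{lm:superharmonicity}, and setting $w_i:=-\Delta u_i\geqslant 0$ one has $-\Delta w_i=-\Delta^2 u_i=-|\mathcal{U}|^{s-1}u_i\leqslant 0$, so $w_i$ is likewise nonnegative and superharmonic on $\mathbb{R}^n\setminus\{0\}$. This lets us pair the Newton potential representation
\[
u_i(x_0)=\avint_{\partial B_R(x_0)}u_i\,\ud\sigma+\frac{1}{n\omega_{n-1}}\int_0^R\!\!r^{1-n}\!\!\int_{B_r(x_0)}w_i(y)\,\ud y\,\ud r
\]
(valid on any ball $B_R(x_0)\subset\mathbb{R}^n\setminus\{0\}$) with the analogous identity for $w_i$ in terms of $|\mathcal{U}|^{s-1}u_i$, giving a clean double-integral control of $u_i(x_0)$ by $|\mathcal{U}|^{s-1}u_i$ on $B_R(x_0)$. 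Choosing the largest admissible ball, namely $R=|x_0|$, and summing the Cauchy--Schwarz'd version over $i\in I$ will convert that inequality into the scale-invariant statement $|x_0|^4|\mathcal{U}(x_0)|^{s-1}\leqslant C$ with a universal constant $C$.

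To sharpen $C$ to $2n/(s-1)$, I would fuse the above with the Kato-type identity
\[
|\mathcal{U}|\,\Delta|\mathcal{U}|=\mathcal{U}\cdot\Delta\mathcal{U}+\bigl(\textstyle\sum_i|\nabla u_i|^2-|\nabla|\mathcal{U}||^2\bigr),
\]
which, combined with $\mathcal{U}\cdot\Delta\mathcal{U}=-\sum_i u_i(-\Delta u_i)\leqslant 0$, lets us trade the vectorial problem for a scalar inequality on $v(r)$. Writing $t=-\ln r$ and passing to the cylindrical variable $V(t):=r^{\gamma(s)}v(r)$ as in \eqref{cyltransform}, the ODE that $V$ satisfies is a radial instance of \eqref{eq:subcriticalODEsystem}. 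A one-point test at a point $r_*$ where $\Phi(r):=r^4v(r)^{s-1}$ attains its supremum yields $V'(r_*/|x_*|)=0$ and a second-order inequality; feeding these stationarity conditions back into the equation, the constant $2n/(s-1)$ falls out from the coefficient algebra of \eqref{autonomouscoefficients}.

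The main obstacle is that the individual components $u_i$ are not in general radial, so the Kato inequality is the only bridge between the sharp radial behavior of $|\mathcal{U}|$ and the componentwise superharmonicity needed to control $\Delta u_i$; one has to keep track of the non-negative defect $\sum_i|\nabla u_i|^2-|\nabla|\mathcal{U}||^2$ carefully so that it does not degrade the constant. A secondary obstacle is ensuring that the supremum of $\Phi$ is actually attained, which will require an a priori decay at infinity coming from the mean-value representation above together with the superharmonicity of $w_i$, to rule out the possibility that the supremum is only approached along an escaping sequence.
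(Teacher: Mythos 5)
Your route is genuinely different from the paper's, and it has gaps that would stall it. The paper proves the bound by an explicit \emph{componentwise} barrier argument: for each $i\in I_+$, the superharmonicity of $u_i$ from Lemma~\ref{lm:superharmonicity} and the extended maximum principle give $\liminf_{x\to 0}u_i>0$; one then verifies by direct computation that $\widetilde\varphi_i:=u_i^{1-s}-\tfrac{s-1}{2n}|x|^2$ is subharmonic and bounded in $B_r^*$, and a second application of the extended maximum principle yields $0\leqslant\sup_{\partial B_r}u_i^{1-s}-\tfrac{s-1}{2n}r^2$, hence $\inf_{\partial B_r}u_i\leqslant(\tfrac{s-1}{2n})^{-1/(s-1)}r^{-\gamma(s)}$; the radial symmetry of Proposition~\ref{prop:asymptoticsymmetrylimit} then upgrades the infimum to a pointwise bound. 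In particular the constant $2n/(s-1)$ is produced by $\Delta(|x|^2)=2n$ in the explicit quadratic barrier; it has nothing to do with the cylindrical coefficients $K_j(n,s)$ of \eqref{autonomouscoefficients}, so the last step of your plan (``the constant falls out from the coefficient algebra'') is aimed at the wrong object.

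The decisive gap in your plan is the Kato step. The identity $|\mathcal{U}|\,\Delta|\mathcal{U}|=\langle\mathcal{U},\Delta\mathcal{U}\rangle+\bigl(\sum_i|\nabla u_i|^2-|\nabla|\mathcal{U}||^2\bigr)$ has a \emph{nonnegative} defect, so it yields only a \emph{lower} bound for $\Delta|\mathcal{U}|$. To push $|\mathcal{U}|$ down by a comparison function or a one-point test you need an \emph{upper} bound on $\Delta|\mathcal{U}|$ (a superharmonicity-type inequality for the modulus), and the defect blocks exactly that direction: it is not controlled by the equation, and even when all components are radial it is positive unless $(u_i)$ and $(\partial_r u_i)$ are parallel, which is precisely what one cannot assume. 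Relatedly, $V(t)=r^{\gamma(s)}|\mathcal{U}|(r)$ does \emph{not} satisfy a radial instance of \eqref{eq:subcriticalODEsystem} --- that system governs the cylindrical transforms of the individual components, not their modulus, so there is no clean ODE to plug stationarity conditions into. And your one-point test on $\Phi(r)=r^4 v(r)^{s-1}$ presupposes $\sup\Phi$ finite and attained; $v$ blows up at the origin at the very rate you are trying to bound, and the decay at infinity you propose to invoke is irrelevant near $0$. The paper avoids all three problems by never passing from $u_i$ to $|\mathcal{U}|$ until the barrier has already produced the bound, and by replacing any maximum-point argument with the extended maximum principle on $B_r^*$. (Minor sign slip: $-\Delta w_i=+\Delta^2 u_i=|\mathcal{U}|^{s-1}u_i\geqslant 0$, not $\leqslant 0$; your superharmonicity conclusion for $w_i$ is nonetheless correct.)
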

	
	\begin{proof}
		Let $i\in I_+$, that is, $u_i>0$ in $\mathbb{R}^n\setminus\{0\}$. By Lemma~\ref{lm:superharmonicity}, we know that $u_{i}$ is superharmonic in $\mathbb{R}^{n} \setminus\{0\}$, which, by using the extended maximum principle \cite[Theorem~1]{MR1814364}, gives us
		\begin{equation*}\label{limitlevel1}
		\liminf _{x \rightarrow 0} u_{i}(x)>0.
		\end{equation*}
		Considering $\varphi_i=u_{i}^{1-s}$, a direct computation, provides
		\begin{equation*}
		\Delta \varphi_i \geqslant \frac{s}{s-1} \frac{|\nabla \varphi_i|^{2}}{\varphi_i}+s-1 \quad \mbox{in} \quad \mathbb{R}^{n} \setminus\{0\}.
		\end{equation*}
		Thus, for any $r>0$, let us consider 
		the auxiliary function $\widetilde{\varphi}_i(x)=\varphi_i(x)-\frac{s-1}{2n}|x|^{2}$, which is
		subharmonic in $B^*_{r}$.
		Furthermore, \eqref{limitlevel1} implies that $\widetilde{\varphi}_i$ is bounded close to the origin, and thus, again, by the extended maximum principle, we find
		\begin{equation*}
		0 \leqslant\limsup_{x \rightarrow 0} \widetilde{\varphi}_i(x) \leqslant \sup_{\partial B_{r}} \widetilde{\varphi}_i=\sup_{\partial B_{r}} \varphi_i-\frac{s-1}{2n} r^{2},
		\end{equation*}
		which yields
		\begin{equation*}
		\inf_{\partial B_{r}} u_{i} \leqslant\left(\frac{s-1}{2 n}\right)^{-\frac{1}{s-1}} r^{-\gamma(s)}.
		\end{equation*}
		Finally, a direct application of Proposition~\ref{prop:asymptoticsymmetrylimit} finishes this proof.
	\end{proof}
	
	As a consequence of this uniform upper bound, we prove the compactness of the family $\{\widehat{\mathcal{U}}_{\lambda}\}_{\lambda>0}\subset C^{4,\zeta}_{\rm loc}(\mathbb{R}^n,\mathbb{R}^p)$, for some $\zeta\in(0,1)$, which provides the existence of both blow-up and shrink-down limits for the scaling family defined by \eqref{scalingfamily}.
	
	\begin{lemma}\label{lm:limitcompactness}
		Let $R=\infty$, $s\in(1,2^{**}-1)$, and $\mathcal{U}$ be a nonnegative singular solution to \eqref{eq:subcriticalsystem}. Then, $\{\widehat{\mathcal{U}}_{\lambda}\}_{\lambda>0}\subset C^{4,\zeta}_{\rm loc}(\mathbb{R}^n,\mathbb{R}^p)$ is uniformly bounded, for some $\zeta\in(0,1)$.
	\end{lemma}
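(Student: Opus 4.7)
The plan is to combine the scale invariance of System~\eqref{eq:subcriticalsystem} under the rescalings \eqref{scalingfamily} with the pointwise upper bound of Lemma~\ref{lm:limituppperestiaamte}, and then close via interior elliptic regularity applied to the bi-Laplacian. First I would observe that the Fowler exponent is chosen precisely so that $4=(s-1)\gamma(s)$, which makes the equation scale invariant: each $\widehat{\mathcal{U}}_\lambda$ satisfies $\Delta^2(\widehat{u}_i)_\lambda=|\widehat{\mathcal{U}}_\lambda|^{s-1}(\widehat{u}_i)_\lambda$ on $\mathbb{R}^n\setminus\{0\}$ for every $\lambda>0$. Moreover, the upper bound of Lemma~\ref{lm:limituppperestiaamte} is itself scale invariant, since
\[
|\widehat{\mathcal{U}}_\lambda(x)|=\lambda^{\gamma(s)}|\mathcal{U}(\lambda x)|\leqslant \left(\frac{s-1}{2n}\right)^{-\frac{1}{s-1}}|x|^{-\gamma(s)}\quad\text{for all}\ \lambda>0,\ x\in\mathbb{R}^n\setminus\{0\}.
\]
Hence the family $\{\widehat{\mathcal{U}}_\lambda\}_{\lambda>0}$ is uniformly bounded in $L^\infty_{\rm loc}(\mathbb{R}^n\setminus\{0\},\mathbb{R}^p)$.

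Next I would fix an arbitrary annulus $A=B_R\setminus\bar{B}_r$ and a slightly enlarged one $A'\Supset A$ with $0<r/2<r<R<2R<\infty$, and work on $A$ uniformly in $\lambda$. Because of the $L^\infty$ bound above, the right-hand side $|\widehat{\mathcal{U}}_\lambda|^{s-1}(\widehat{u}_i)_\lambda$ is uniformly bounded in $L^\infty(A')$. Interior $L^p$ estimates for $\Delta^2$ (Calder\'on--Zygmund) then give, for any $p\in(n,\infty)$,
\[
\|(\widehat{u}_i)_\lambda\|_{W^{4,p}(A)}\leqslant C\bigl(\|\Delta^2(\widehat{u}_i)_\lambda\|_{L^p(A')}+\|(\widehat{u}_i)_\lambda\|_{L^p(A')}\bigr)\leqslant C',
\]
uniformly in $\lambda$. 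Morrey's embedding $W^{4,p}\hookrightarrow C^{3,\alpha}$ with $\alpha=1-n/p\in(0,1)$ therefore yields uniform $C^{3,\alpha}(A)$ bounds.

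To upgrade from $C^{3,\alpha}$ to $C^{4,\zeta}$ I would bootstrap once more. The scalar map $t\mapsto t^{s-1}$ is H\"older of exponent $\min\{s-1,1\}$ on every bounded subset of $[0,\infty)$; composing with the $C^{3,\alpha}$-bounded functions $|\widehat{\mathcal{U}}_\lambda|$ and multiplying by the $C^{3,\alpha}$-bounded $(\widehat{u}_i)_\lambda$, the right-hand side is uniformly bounded in $C^{\zeta}(A')$ for $\zeta:=\min\{\alpha,\alpha(s-1)\}>0$. Interior Schauder estimates for $\Delta^2$ then produce uniform $C^{4,\zeta}(A)$ bounds, and exhausting $\mathbb{R}^n\setminus\{0\}$ by such annuli delivers the asserted $C^{4,\zeta}_{\rm loc}$-uniform boundedness.

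The main technical point I anticipate is exactly this weak H\"older regularity of the nonlinearity when $s\in(1,2)$: it prevents $\zeta$ from being as large as one might wish, but any positive $\zeta$ is enough to close the Schauder bootstrap, so this is a cosmetic rather than a serious obstacle. The only genuine ingredient one needs beyond standard elliptic theory is the uniform pointwise bound $|\widehat{\mathcal{U}}_\lambda(x)|\leqslant C|x|^{-\gamma(s)}$, which follows immediately from scale invariance and Lemma~\ref{lm:limituppperestiaamte}.
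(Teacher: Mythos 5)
Your proof follows the same route as the paper: invoke the scale-invariant pointwise bound from Lemma~\ref{lm:limituppperestiaamte} to get uniform local $L^\infty$ control away from the origin, then upgrade to $C^{4,\zeta}_{\rm loc}$ via interior elliptic estimates. The paper compresses the second step into the phrase ``standard elliptic estimates,'' whereas you spell out the $L^p\to W^{4,p}\to C^{3,\alpha}\to C^{4,\zeta}$ bootstrap, including the (correct) caveat that for $s\in(1,2)$ the nonlinearity is only $(s-1)$-H\"older, which limits but does not obstruct the final H\"older exponent.
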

	
	\begin{proof}
		If the origin is a removable singularity of $\widehat{\mathcal{U}}_{\lambda}$ for all $\lambda>0$, according to Theorem~\ref{Thm3.1:classification} (i), $\mathcal{U}$ is trivial and the conclusion follows.
		
		On the other hand, assuming that the origin is a removable singularity, Lemma~\ref{lm:limituppperestiaamte} provides that $\{\widehat{\mathcal{U}}_{\lambda}\}_{\lambda>0}$ is globally bounded in $\mathbb{R}^{n}\setminus\{0\}$. Thus, we know that $\{\widehat{\mathcal{U}}_{\lambda}\}_{\lambda>0}$ is uniformly bounded in each compact subset of $K\subset\mathbb{R}^{n} \setminus\{0\}$.
		Moreover, since for each $\lambda>0$, the scaling $\widehat{\mathcal{U}}_{\lambda}$ also satisfies \eqref{eq:subcriticalsystem} with $R=\infty$, it follows from standard elliptic estimates that $\{\widehat{\mathcal{U}}_{\lambda}\}_{\lambda>0}$ is uniformly bounded in $C^{4,\zeta}(K,\mathbb{R}^{p})$, for some $\zeta\in(0,1)$, which concludes the proof.
	\end{proof}
	
	Recall that $\mathcal{P}_{\rm sph}(r, \mathcal{U})$ is the Pohozaev functional introduced in \eqref{pohozaevsphericalautonomous}, which by Proposition~\ref{lm:monotonicityformula} is monotonically nonincreasing in $r>0$ when $s\in(1,2^{**}-1)$.
	
	\begin{lemma}\label{lm:limitlimitinglevels}
		Let $R=\infty$, $s\in(1,2^{**}-1)$, and $\mathcal{U}_{0}$ $($or $\mathcal{U}_{\infty}$$)$ be a blow-up $($or shrink-down$)$ solution under the scaling $\{\widehat{\mathcal{U}}_{\lambda}\}_{\lambda>0}$. 
		Then,
		$\mathcal{P}_{\rm sph}(r, \mathcal{U}_{0},s)\equiv\mathcal{P}_{\rm sph}(0, \mathcal{U},s)$ $($or $\mathcal{P}_{\rm sph}(r, \mathcal{U}_{\infty},s)\equiv\mathcal{P}_{\rm sph}(\infty, \mathcal{U},s)$$)$ is constant
		for all $r>0$. In particular, both $\mathcal{U}_{0}$ and $\mathcal{U}_{\infty}$ are homogeneous of degree
		$\gamma(s)$.
	\end{lemma}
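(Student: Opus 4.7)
The plan is to combine three ingredients already established in the paper: the scaling invariance \eqref{scaleinvariance} of the Pohozaev functional, the compactness of the rescaling family from Lemma~\ref{lm:limitcompactness}, and the homogeneity characterization from Lemma~\ref{lm:homogeneity}. First, using Lemma~\ref{lm:limitcompactness}, for any sequence $\lambda_k\to 0$ (resp. $\lambda_k\to\infty$) one extracts a subsequence such that $\widehat{\mathcal{U}}_{\lambda_k}\to\mathcal{U}_0$ (resp. $\mathcal{U}_\infty$) in $C^{4,\zeta}_{\rm loc}(\mathbb{R}^n\setminus\{0\},\mathbb{R}^p)$. Since the Pohozaev density on the sphere $\partial B_r$ involves only derivatives of $\mathcal{U}$ up to order three and the nonlinear term $|\mathcal{U}|^{s+1}$, the $C^{4,\zeta}_{\rm loc}$ convergence is sufficient to pass to the limit inside $\mathcal{P}_{\rm sph}(r,\cdot)$ for each fixed $r>0$.

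Second, I would invoke the scaling invariance \eqref{scaleinvariance}, which gives, for every fixed $r>0$,
\begin{equation*}
\mathcal{P}_{\rm sph}(r,\widehat{\mathcal{U}}_{\lambda_k})=\mathcal{P}_{\rm sph}(\lambda_k r,\mathcal{U}).
\end{equation*}
Passing to the limit $k\to\infty$, the left-hand side tends to $\mathcal{P}_{\rm sph}(r,\mathcal{U}_0)$ by the previous step, while the right-hand side tends to $\mathcal{P}_{\rm sph}(0,\mathcal{U})$ (resp.\ $\mathcal{P}_{\rm sph}(\infty,\mathcal{U})$) thanks to the monotonicity/existence of the limits guaranteed by Proposition~\ref{lm:monotonicityformula} together with Remark~\ref{rmk:limitrelation}. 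Since $r>0$ was arbitrary, this yields
\begin{equation*}
\mathcal{P}_{\rm sph}(r,\mathcal{U}_0)\equiv\mathcal{P}_{\rm sph}(0,\mathcal{U})\quad\text{and}\quad \mathcal{P}_{\rm sph}(r,\mathcal{U}_\infty)\equiv\mathcal{P}_{\rm sph}(\infty,\mathcal{U})\quad\text{for all }r>0,
\end{equation*}
proving the constancy claim. In particular the limiting values are independent of the chosen subsequence, so the full limits $\mathcal{U}_0$ and $\mathcal{U}_\infty$ are well-defined in the Pohozaev sense.

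Finally, with $r\mapsto \mathcal{P}_{\rm sph}(r,\mathcal{U}_0)$ constant on $(0,\infty)$, Lemma~\ref{lm:homogeneity} applied on arbitrary annuli $B_{r_2}\setminus\bar{B}_{r_1}$ implies that $\mathcal{U}_0(x)=|x|^{-\gamma(s)}\mathcal{U}_0(x/|x|)$, i.e.\ $\mathcal{U}_0$ is homogeneous of degree $-\gamma(s)$ (the statement's phrasing \textit{homogeneous of degree} $\gamma(s)$ refers to the Fowler scaling exponent). The identical argument applies to $\mathcal{U}_\infty$.

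The main subtlety is justifying that the $C^{4,\zeta}_{\rm loc}$ convergence truly commutes with the boundary integral defining $\mathcal{P}_{\rm sph}$; this is routine since Lemma~\ref{lm:limitcompactness} delivers uniform bounds on each compact annulus away from the origin, so dominated convergence handles each term (including the angular Laplace–Beltrami contributions appearing in $q(u_i,u_i)$). The second delicate point is ensuring that $\mathcal{P}_{\rm sph}(\infty,\mathcal{U})$ is a finite quantity, i.e.\ that the nonincreasing function $r\mapsto \mathcal{P}_{\rm sph}(r,\mathcal{U})$ is bounded below as $r\to\infty$; this follows from the universal upper estimate of Lemma~\ref{lm:limituppperestiaamte}, which controls each term of the integrand of \eqref{pohozaevsphericalautonomous} independently of $r$.
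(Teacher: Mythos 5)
Your proposal is correct and follows essentially the same route as the paper: extract a $C^{4,\zeta}_{\rm loc}$ subsequential limit via Lemma~\ref{lm:limitcompactness}, apply the scaling invariance \eqref{scaleinvariance}, pass to the limit using the existence of $\mathcal{P}_{\rm sph}(0,\mathcal{U})$ (resp.\ $\mathcal{P}_{\rm sph}(\infty,\mathcal{U})$) guaranteed by the monotonicity in Proposition~\ref{lm:monotonicityformula}, and conclude homogeneity from Lemma~\ref{lm:homogeneity}. Your two added remarks (dominated convergence on fixed annuli and the lower bound ensuring the shrink-down limit exists) are accurate refinements of points the paper leaves implicit.
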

	
	\begin{proof}
		Let $\{\lambda_{k}\}_{k\in\mathbb{N}}\subset(0,\infty)$  be a blow-up sequence such that $\lambda_{k}\rightarrow 0$, and  $\mathcal{U}_{0}\in{C^{4,\zeta}(\mathbb{R}^n\setminus\{0\})}$ be its blow-up limit, that is, $\widehat{\mathcal{U}}_{\lambda_k}\rightarrow\mathcal{U}_{0}$ in ${C_{\rm loc}^{4,\zeta}(\mathbb{R}^n\setminus\{0\})}$ as $k\rightarrow\infty$, for some $\zeta\in(0,1)$.
		Now, using Proposition~\ref{lm:monotonicityformula} and Lemma~\ref{lm:limitcompactness}, there exists the limiting level $\mathcal{P}_{\rm sph}(0, \mathcal{U},s)$.
		Moreover, due to the scaling invariance of the Pohozaev functional in \eqref{scaleinvariance}, for any $r>0$, it follows
		\begin{equation*}
		\mathcal{P}_{\rm sph}(r, \mathcal{U}_{0},s)=\lim_{k \rightarrow \infty}\mathcal{P}_{\rm sph}(r, \widehat{\mathcal{U}}_{\lambda_k},s)=\lim_{k \rightarrow \infty} \mathcal{P}_{\rm sph}(r\lambda_{k}, \mathcal{U},s)=\mathcal{P}_{\rm sph}(0,\mathcal{U},s),
		\end{equation*}
		which finishes the proof of the first assertion.
		Now, we can check that the homogeneity follows from Lemma~\ref{lm:homogeneity}.
		Finally, notice that the same argument can readily be employed, replacing the blow-up limit by the shrink-down limit, so we omit it here.
	\end{proof}
	
	\begin{lemma}\label{lm:blowupclassification}
		Let $R=\infty$, $s\in(1,2^{**}-1)$, and $\mathcal{U}$ be a nonnegative singular solution to \eqref{eq:subcriticalsystem}. Assume that $|\mathcal{U}|$ is homogeneous of degree $\gamma(s)>0$.\\
		\noindent{\rm (a)} If $s\in(1,2_{**}]$, then $|\mathcal{U}|\equiv0$.\\
		\noindent{\rm (b)} If $s\in(2_{**},2^{**}-1)$, then either $|\mathcal{U}|\equiv0$, or $|\mathcal{U}|\equiv K_0(n,s)^{\frac{1}{s-1}}|x|^{-\gamma(s)}$.
	\end{lemma}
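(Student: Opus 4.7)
My plan is to pass to angular variables and reduce the problem to a spectral analysis on $\mathbb{S}^{n-1}$. From Proposition~\ref{prop:asymptoticsymmetrylimit}, $|\mathcal{U}|$ is radially symmetric, which combined with the hypothesized homogeneity gives $|\mathcal{U}(x)| = c|x|^{-\gamma(s)}$ for some $c \geqslant 0$. A separation-of-variables argument (or, in the natural application, the stronger homogeneity of $\mathcal{U}$ itself from Lemma~\ref{lm:homogeneity}) then lets me write $u_i(x) = |x|^{-\gamma(s)} w_i(\theta)$ with $w_i \in C^{4,\zeta}(\mathbb{S}^{n-1})$ nonnegative and $\sum_i w_i^2 \equiv c^2$ on $\mathbb{S}^{n-1}$. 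Inserting this ansatz into $\Delta^2 u_i = c^{s-1}|x|^{-4} u_i$ and computing $\Delta^2(|x|^{-\gamma(s)}w_i)$ explicitly, each $w_i$ solves on the sphere the linear fourth-order equation
\begin{equation*}
\Delta_\theta^2 w_i + J_0(n,s)\,\Delta_\theta w_i + (K_0(n,s) - c^{s-1})\,w_i = 0,
\end{equation*}
with exactly the coefficients $K_0(n,s),J_0(n,s)$ appearing in \eqref{autonomouscoefficients}.

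The key dichotomy follows by integrating the preceding equation over the closed manifold $\mathbb{S}^{n-1}$: the $\Delta_\theta$ and $\Delta_\theta^2$ terms drop, leaving $(K_0(n,s) - c^{s-1})\int_{\mathbb{S}^{n-1}} w_i\,\ud\theta = 0$. Since $w_i \geqslant 0$, for each $i$ either $w_i \equiv 0$ or $c^{s-1} = K_0(n,s)$; if all $w_i$ vanish then $|\mathcal{U}| \equiv 0$, otherwise $c = K_0(n,s)^{1/(s-1)}$, which demands $K_0(n,s) > 0$.

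For part (a) with $s \in (1, 2_{**}]$, I split into two subranges. When $s \in [(n+2)/(n-2), 2_{**}]$, the factorization $K_0(n,s) = \gamma(s)(\gamma(s)+2)(\gamma(s)+2-n)(\gamma(s)+4-n)$ gives $K_0(n,s) \leqslant 0$ (the two relevant zeros being $\gamma(s) = n-2$ and $\gamma(s) = n-4$), and the dichotomy forces $c = 0$, i.e.\ $|\mathcal{U}| \equiv 0$. In the remaining subrange $s \in (1, (n+2)/(n-2))$, where $\gamma(s) > n-2$ and $K_0(n,s) > 0$, the integral identity alone is inconclusive; here I invoke the superharmonicity of Lemma~\ref{lm:superharmonicity}: from $-\Delta u_i = |x|^{-\gamma(s)-2}[\gamma(s)(n-2-\gamma(s))w_i - \Delta_\theta w_i] \geqslant 0$ together with $\gamma(s)(n-2-\gamma(s)) < 0$, I obtain $-\Delta_\theta w_i \geqslant \gamma(s)(\gamma(s)+2-n)\,w_i \geqslant 0$, and integration over $\mathbb{S}^{n-1}$ then forces $\int w_i\,\ud\theta = 0$, hence $w_i \equiv 0$.

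For part (b) with $s \in (2_{**}, 2^{**}-1)$, Remark~\ref{rmk:signoncoefficients} supplies $K_0(n,s) > 0$ and $J_0(n,s) < 0$. The nontrivial branch yields $c = K_0(n,s)^{1/(s-1)}$, and the PDE collapses to $\Delta_\theta^2 w_i + J_0(n,s)\,\Delta_\theta w_i = 0$. Decomposing $w_i$ in spherical harmonics $w_i = \sum_{k\geqslant 0} w_i^{(k)}$ with $-\Delta_\theta w_i^{(k)} = \lambda_k w_i^{(k)}$ and $\lambda_k = k(k+n-2)$, each mode satisfies $\lambda_k(\lambda_k - J_0(n,s))\,w_i^{(k)} = 0$; since $\lambda_k \geqslant 0$ and $-J_0(n,s) > 0$, only the $k = 0$ mode survives, so each $w_i$ is constant in $\theta$. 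The pointwise constraint $\sum w_i^2 = c^2$ together with $w_i \geqslant 0$ then forces $w_i = c\,\Lambda_i^*$ with $\Lambda^* \in \mathbb{S}^{p-1}_+$, whence $|\mathcal{U}(x)| = K_0(n,s)^{1/(s-1)}|x|^{-\gamma(s)}$. The main technical obstacle is the superharmonic step in subcase (a) for $s < (n+2)/(n-2)$, where the algebraic dichotomy fails and one must genuinely exploit the maximum-principle structure of the system; everything else reduces to a direct computation of $\Delta^2$ on homogeneous separated functions and standard spherical-harmonic bookkeeping.
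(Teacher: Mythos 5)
Your proposal is correct, and its external inputs (Proposition~\ref{prop:asymptoticsymmetrylimit} for the radial symmetry of $|\mathcal{U}|$, Lemma~\ref{lm:superharmonicity} for superharmonicity) are exactly the ones the paper's own proof invokes, but your route through the angular equation is genuinely different. The paper derives the angular system \eqref{angularsystem} and, in case (a), argues via a minimum-principle statement: $K_0(n,s)\leqslant0$ makes $\mathscr{L}_{\theta}(\mathcal{V})\leqslant 0$, the operator factors through two second order elliptic operators, and the absence of strict local minima on the compact sphere forces each $v_i$ to be constant; radial symmetry is only used in case (b). You instead use radial symmetry from the outset, so that $|\mathcal{V}|\equiv c$ is constant and the angular system becomes linear with constant coefficients; a single integration over $\mathbb{S}^{n-1}$ then gives the clean dichotomy $c=0$ or $c^{s-1}=K_0(n,s)$, and in case (b) a spherical-harmonics expansion with $J_0(n,s)<0$ pins each $w_i$ to a constant. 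This is more elementary and bypasses the somewhat delicate maximum-principle step. Moreover, your sign analysis $K_0(n,s)=\gamma(s)(\gamma(s)+2)(\gamma(s)+2-n)(\gamma(s)+4-n)$ is sharper than the paper's: $K_0\leqslant0$ only for $s\in[(n+2)/(n-2),2_{**}]$, whereas $K_0>0$ on $(1,(n+2)/(n-2))$, so the blanket assertion in the paper's proof that $K_0\leqslant0$ on $(1,2_{**}]$ is inaccurate, and your extra step there — superharmonicity forcing $-\Delta_{\theta}w_i\geqslant\gamma(s)(\gamma(s)+2-n)w_i\geqslant0$, hence $w_i\equiv0$ after integrating over the sphere — is precisely what is needed: without some use of $-\Delta u_i\geqslant0$ the conclusion would fail in that subrange, since $c|x|^{-\gamma(s)}$ with $c^{s-1}=K_0>0$ does solve the equation there.

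Two minor caveats. Writing $u_i=|x|^{-\gamma(s)}w_i(\theta)$ uses homogeneity of each component, not merely of $|\mathcal{U}|$ as the statement literally assumes; the paper's proof makes the same leap, and in the intended application Lemma~\ref{lm:homogeneity} does provide full vector homogeneity, so rely on that and drop the vague ``separation-of-variables'' alternative. Also, the spherical-harmonics step in case (b) yields more than the stated conclusion about $|\mathcal{U}|$ (it identifies the direction $\Lambda^*$ as well), which is harmless and in fact matches what Proposition~\ref{prop:limitlocalbehavior} ultimately needs.
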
 
	
	\begin{proof}
		Since $\mathcal{U}$ is homogeneous of degree $\gamma(s)$, the cylindrical transform $\mathcal{V}=\mathfrak{F}(\mathcal{U})$ given by \eqref{cyltransform} satisfies
		\begin{equation}\label{angularsystem}
		-\Delta_{\theta}^{2}v_i-J_{0}(n,s)\Delta_{\theta}v_i-K_0(n,s)v_i+|\mathcal{V}|^{s-1}v_i=0 \quad \mbox{on} \quad \mathbb{S}_t^{n-1} \quad \mbox{for} \quad i\in I,
		\end{equation}
		where $\Delta_{\theta}$ is the Laplace--Beltrami operator on $\mathbb{S}_t^{n-1}$.
		Now we divide the the proof into two cases:
		
		\noindent{\bf Case 1:} $s\in(1,2_{**}]$.
		
		\noindent Initially, since $K_0(n,s)\leqslant 0$ for all $s\in(1,2^{**}-1)$, we get 
		$\mathscr{L}_{\theta}(\mathcal{V})\leqslant 0$ on $\mathbb{S}_t^{n-1}$, where $\mathscr{L}_{\theta}(\mathcal{V}):=-\Delta_{\theta}^{2}v_i-J_{0}(n,s)\Delta_{\theta}v_i$.
		Next, observe that $\mathscr{L}_{\theta}$ is the composition of two elliptic operators, which, together with Lemma~\ref{lm:superharmonicity}, implies that $v_{i}$ does not attain any strict local minimum on $\mathbb{S}_t^{n-1}$. 
		Therefore, since $\mathbb{S}_t^{n-1}$ is a compact manifold, it follows that $v_{i}$ is constant for all $i\in I$, which yields $\mathcal{V}\equiv\Lambda$ for some $\Lambda\in\mathbb{S}_+^{p-1}$.
		Nevertheless, using that $K_0(n,s) \leqslant 0$, any nonnegative constant solution to \eqref{angularsystem} is trivial.
		By using the inverse of the cylindrical transformation, it holds that $\mathcal{U}$ is trivial on $\partial B_{1}$, which by the superharmonicity of each component, implies that $\mathcal{U}$ is trivial in the whole domain.
		This conclusion finishes the proof of the first case, and so part (a) of the lemma follows.
		
		\noindent{\bf Case 2:} $s\in(2_{**},2^{**}-1)$.
		
		\noindent Assume that $\mathcal{U}$ is a nontrivial limit solution in the punctured space. 
		Hence, since each component of $\mathcal{U}$ is nonnegative and superharmonic, it quickly follows that $|\mathcal{U}|>0$ in $\mathbb{R}^n\setminus\{0\}$. 
		By homogeneity, the origin is a non-removable singularity of $\mathcal{U}$. 
		Hence, by Proposition~\ref{prop:asymptoticsymmetrylimit}, $\mathcal{U}$ is radially symmetric; thus, $\mathcal{U}\equiv\Lambda$ is a nonnegative constant vector $\Lambda\in\mathbb{S}^{p-1}_{+}$.
		Moreover, by \eqref{angularsystem}, it holds
		\begin{equation*}
		|\Lambda|=K_0(n,s)^{\frac{1}{s-1}},
		\end{equation*}
		which, by using the homogeneity of $\mathcal{U}$ and Lemma~\ref{lm:homogeneity}, finishes the proof of the second case, and so (b) holds.
	\end{proof}
	
	At last, we can prove the main result of this part.
	
	\begin{proof}[Proof of Proposition~\ref{prop:limitlocalbehavior}]
		Let $\mathcal{U}_{0}$ and $\mathcal{U}_{\infty}$ be, respectively, a blow-up and a shrink-down limit of $\mathcal{U}$.
		According to Lemma~\ref{lm:limitinglevels} both $\mathcal{U}_{0}$ and $\mathcal{U}_{\infty}$ are homogeneous of degree $\gamma(s)$.
		In what follows, we divide the rest of the proof into two cases:
		
		\noindent{\bf Case 1:} $s\in(1,2_{**}]$. 
		
		\noindent Here, it follows from Lemma~\ref{lm:blowupclassification} (a) that both $\mathcal{U}_{0}$ and $\mathcal{U}_{\infty}$ are trivial, which, by Lemma~\ref{lm:limitlimitinglevels}, provides $\mathcal{P}_{\rm sph}(0,\mathcal{U})=\mathcal{P}_{\rm sph}(\infty, \mathcal{U})=0$. 
		In addition, using the monotonicity property of the Pohozaev functional, we find $\mathcal{P}_{\rm sph}(r, \mathcal{U})=0$ for all $r>0$. Hence, by Lemma \ref{lm:homogeneity}, $\mathcal{U}$ is homogeneous of degree $\gamma(s)$.
		Therefore, the proof of (a) of Proposition~\ref{prop:limitlocalbehavior} is now an immediate consequence of Lemma~\ref{lm:blowupclassification} (a).
		
		\noindent{\bf Case 2:} $s\in(2_{**},2^{**}-1)$.
		
		\noindent Initially, by Lemmas~\ref{lm:limitlimitinglevels} and \ref{lm:blowupclassification} (b), any blow-up $\mathcal{U}_{0}$ is either trivial or has the form \eqref{gidassprucklimitsolution}.
		If $\mathcal{U}_{0}$ is trivial, then clearly $\mathcal{P}_{\rm sph}\left(r, \mathcal{U}_{0},s\right)=0$ for all $r>0$, which combined with Lemma~\ref{lm:limitlimitinglevels} implies that $\mathcal{P}_{\rm sph}(0, \mathcal{U},s)=0$. 
		Otherwise, a simple computation shows  $\mathcal{P}_{\rm sph}\left(r, \mathcal{U}_{0},s\right)=-l^*(n,s)$ for all $r>0$.
		Therefore, using again Lemma~\ref{lm:limitlimitinglevels}, we have $\mathcal{P}_{\rm sph}(0, \mathcal{U},s)=-l^*(n,s)$. 
		Since the converse trivially follows, we obtain that $\mathcal{P}_{\rm sph}(0, \mathcal{U},s) \in$ $\{-l^*(n,s),0\}$.
		Moreover, $\mathcal{P}_{\rm sph}(0, \mathcal{U},s)=0$, if and only if, all the blow-ups are trivial, whereas $\mathcal{P}_{\rm sph}(0, \mathcal{U},s)=-l^*(n,s)$, if and only if, all the blow-ups are of the form \eqref{gidassprucklimitsolution}.
		In the case of shrink-down $\mathcal{U}_{\infty}$ solution, the strategy is similar, so we omit it.
		These conclusions finish the proof of Case 2, and therefore the proposition holds.
	\end{proof}
	
	\section{Local asymptotic behavior near the isolated singularity}\label{sec:proof2}
	In this section, we present the Proof of Theorem~\ref{Thm3.2:asymptotics}. 
	First, we show an asymptotic symmetry result, which permits us to migrate to an ODE setup. 
	Second, we prove some universal upper bound estimates, not depending on the superharmonic assumption.
	However, we should emphasize that in the rest of the argument, there is a significant change of behavior of radial solutions \eqref{eq:subcriticalODEsystem} for distinct values of the power $s\in(1,2^{**}-1]$. 
	This difference occurs due to the change of sign of the coefficients in the bi-Laplacian written in cylindrical coordinates.
	These signs control the Lyapunov stability of the solutions to linearized operator around a limit blow-up solution, and so the asymptotic behavior of the local solutions near the isolated singularity.
	
	We divide our argument into five subsections, where we prove, respectively, some a priori upper bound estimates, asymptotic radial symmetry of singular solutions to \eqref{eq:subcriticalODEsystem}, and its precise local behavior near the isolated singularity for the situations: $s\in(1,2_{**}]$, $s=2_{**}$, and $s\in(2_{**},2^{**}-1)$.
	We will invert the order of studying the cases $s=2_{**}$ and $s\in(2_{**},2^{**}-1)$ since in the former case a more involved analysis is needed.
	In what follows, we recall the notation for the Fowler rescaling exponent $\gamma(s)={4}{(s-1)^{-1}}$.
	
	\subsection{A priori upper bounds}\label{subsec:universalupperbounds}
	This subsection is devoted to provide a priori upper bounds for local ($R<\infty$) solutions to \eqref{eq:subcriticalsystem} with $s\in(1,2^{**}-1)$, which further provide interior gradient estimates and compactness of the scaling functions. 
	Our strategy relies on the classification of the limit solutions to \eqref{eq:subcriticalsystem} combined with a blow-up argument. 
	
	\begin{lemma}\label{lm:upperestimatenonsingular}
		Let $R=1$, $s\in(1,2^{**}-1)$, and $\mathcal{U}\in C^{4}\left(B_{1}, \mathbb{R}^{p}\right) \cap$ $C\left(\bar{B}_{1},\mathbb{R}^{p}\right)$ be a nonnegative non-singular solution to \eqref{eq:subcriticalsystem}. Then,
		there exists $C>0$, (depending only on $n$ and $p$) and $s>0$ such that
		\begin{equation}\label{upperestimatenonsingular}
		|\mathcal{U}(x)| \leqslant C(1-|x|)^{-\gamma(s)} \quad {\rm in} \quad B_{1}.
		\end{equation}
	\end{lemma}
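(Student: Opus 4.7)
The plan is to argue by contradiction, using a standard doubling and blow-up procedure to reduce the claim to the Liouville-type nonexistence result already established in Proposition~\ref{prop:nonexistencenonsingular}. Define the scaling-invariant quantity $M(x) := |\mathcal{U}(x)|^{(s-1)/4}$ and suppose no constant $C>0$ as in \eqref{upperestimatenonsingular} exists. Then one can extract a sequence of points $x_k \in B_1$ satisfying $M(x_k)(1-|x_k|) \to \infty$ as $k\to\infty$.

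Next, I would invoke the doubling lemma of Pol\'a\v cik--Quittner--Souplet applied to $M$ on $B_1$ with the complete metric space $(B_1, d)$ where $d(x,y)=|x-y|$ away from $\partial B_1$. This produces new points $y_k \in B_1$ with $M(y_k)(1-|y_k|) \to \infty$, $M(y_k) \geqslant M(x_k)$, and the crucial local control
\begin{equation*}
M(z) \leqslant 2 M(y_k) \quad \text{for all } z \text{ with } |z-y_k| \leqslant k\, M(y_k)^{-1}.
\end{equation*}
Setting $\lambda_k := M(y_k)^{-1} \to 0$ and performing the Fowler-type rescaling
\begin{equation*}
\mathcal{V}_k(y) := \lambda_k^{\gamma(s)} \mathcal{U}(y_k + \lambda_k y),
\end{equation*}
the scaling invariance of \eqref{eq:subcriticalsystem} yields that each $\mathcal{V}_k$ solves the same system on $B_{k}(0)$, with $|\mathcal{V}_k(0)|=1$ and $|\mathcal{V}_k| \leqslant 2^{4/(s-1)}$ throughout $B_k(0)$.

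From the uniform boundedness and the fact that the right-hand side $|\mathcal{V}_k|^{s-1}\mathcal{V}_k$ is then uniformly bounded in $L^\infty_{\rm loc}$, standard interior $L^p$ and Schauder estimates for $\Delta^2$ applied componentwise produce uniform $C^{4,\zeta}_{\rm loc}$ bounds on any fixed compact set of $\mathbb{R}^n$. Arzel\`a--Ascoli and a diagonal argument yield a subsequence converging in $C^4_{\rm loc}(\mathbb{R}^n,\mathbb{R}^p)$ to a nonnegative classical solution $\mathcal{V}_\infty$ to \eqref{eq:subcriticalsystem} on all of $\mathbb{R}^n$ (i.e.\ $R=\infty$ with the origin a removable singularity), satisfying the normalization $|\mathcal{V}_\infty(0)|=1$. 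By Proposition~\ref{prop:nonexistencenonsingular}, however, the only such solution is $\mathcal{V}_\infty\equiv 0$, which contradicts $|\mathcal{V}_\infty(0)|=1$ and proves \eqref{upperestimatenonsingular}.

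The only technical hurdle I anticipate is verifying that the doubling procedure can be run cleanly with respect to the boundary distance: the factor $1-|x|$ is the distance to $\partial B_1$, and one must either apply a version of the doubling lemma adapted to the complete metric space $B_1$ with the weighted pseudo-distance induced by $1-|\cdot|$, or alternatively work inside a nested family $B_{r_k}$ with $r_k\uparrow 1$ and iterate. Apart from this bookkeeping, the argument is routine given the already established Liouville theorem for non-singular blow-up limits.
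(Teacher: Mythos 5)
Your proposal follows essentially the same high-level route as the paper: contradiction, Fowler-type rescaling, compactness via elliptic estimates, and reduction to the Liouville theorem of Proposition~\ref{prop:nonexistencenonsingular}. The main difference is the mechanism for selecting good blow-up centers. The paper exploits directly that $\mathcal{U}_k\in C(\bar{B}_1,\mathbb{R}^p)$ and that the weighted function $(1-|x|)^{\gamma(s)}|\mathcal{U}_k(x)|$ vanishes on $\partial B_1$; hence its supremum $M_k$ is attained at an interior point $x_k$, and setting $r_k=\tfrac12(1-|x_k|)$ gives the local bound $|\mathcal{U}_k|\leqslant 2^{\gamma(s)}|\mathcal{U}_k(x_k)|$ on $B_{r_k}(x_k)$ automatically, with no auxiliary lemma needed. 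You instead invoke the Pol\'a\v{c}ik--Quittner--Souplet doubling lemma to get analogous control around the selected points $y_k$. Both devices work here and the rest of the argument (rescaling, uniform $C^{4,\zeta}_{\rm loc}$ bounds, diagonal extraction, normalization $|\mathcal{V}_\infty(0)|=1$, contradiction with the Liouville theorem) is essentially identical; the paper's direct maximization is more elementary and self-contained, while yours would generalize to settings where continuity up to the boundary is unavailable.

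There is, however, a genuine quantifier gap. The lemma asserts a constant $C$ depending only on $n$, $p$, and $s$ --- in particular uniform over all non-singular solutions --- but your contradiction hypothesis fixes a single solution $\mathcal{U}$ and extracts a sequence of points $x_k$ from it. That argument only yields a constant depending on that $\mathcal{U}$. To prove the stated uniform version you must, as the paper does, take a sequence of solutions $\mathcal{U}_k$ (together with points $x_k$) for which $\sup_{B_1}(1-|x|)^{\gamma(s)}|\mathcal{U}_k|\to\infty$, and run the blow-up jointly in $k$. The fix is a one-line change in the setup, but as written the proposal proves something slightly weaker than the lemma. Aside from this, the argument is sound.
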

	
	\begin{proof}
		Arguing by contradiction, suppose that there exist a blow-up sequence $\{x_{k}\}_{k\in\mathbb{N}}\subset B_1$ and $\{\mathcal{U}_{k}\}_{k\in\mathbb{N}} \subset C^{4}\left(B_{1},\mathbb{R}^{p}\right)\cap C\left(\bar{B}_{1},\mathbb{R}^{p}\right)$ solutions to \eqref{eq:subcriticalsystem} such that
		\begin{equation}\label{upper1}
		\quad M_{k}:=\sup _{|x| \leqslant 1}\left((1-|x|)^{\gamma(s)}\left|\mathcal{U}_{k}(x)\right|\right)=\left(1-\left|x_{k}\right|\right)^{\gamma(s)}\left|\mathcal{U}_{k}\left(x_{k}\right)\right| \rightarrow \infty \quad \mbox{as} \quad k\rightarrow\infty.
		\end{equation}
		In addition, since $\mathcal{U}$ is continuous up to the boundary $\partial B_{1}$, we get 
		\begin{equation*}
		(1-|x|)^{\gamma(s)}\left|\mathcal{U}_{k}(x)\right|=0 \quad \mbox{on} \quad \partial B_1.
		\end{equation*} 
		Thus, we have that $\{x_{k}\}_{k\in\mathbb{N}}\subset B_{1}$, which allows us to take
		\begin{equation}\label{upper2}
		r_{k}:=\frac{1}{2}\left(1-\left|x_{k}\right|\right)>0
		\end{equation}
		and
		\begin{equation*}
		\widetilde{\mathcal{U}}_{k}(x)=(1-|x|)^{\gamma(s)} \mathcal{U}_{k}(x) \quad \mbox{in} \quad B_{1}.
		\end{equation*}
		By \eqref{upper1}, it follows that $|\widetilde{\mathcal{U}}_{k}|$ achieves its supremum in $B_{1}$ at $x_{k}$; thus,  $|\widetilde{\mathcal{U}}_{k}(x)| \leqslant|\widetilde{\mathcal{U}}_{k}\left(x_{k}\right)|=M_{k}$ for any $x \in B_{1}$.
		Next, taking $x \in B_{r_{k}}\left(x_{k}\right) \subset$
		$B_{1}$, we obtain that $1-|x|>r_{k}=\frac{1}{2}\left(1-\left|x_{k}\right|\right)$, which combined with \eqref{upper1}, yields
		\begin{equation*}
		\left|\mathcal{U}_{k}(x)\right| \leq\left(\frac{1-\left|x_{k}\right|}{1-|x|}\right)^{\gamma(s)}\left|\mathcal{U}_{k}\left(x_{k}\right)\right| \leqslant 2^{\gamma(s)}\left|\mathcal{U}_{k}\left(x_{k}\right)\right| \quad \quad \mbox{in} \quad B_{r_{k}}\left(x_{k}\right).
		\end{equation*}
		On the other hand, by substituting \eqref{upper2} into \eqref{upper1}, we find
		\begin{equation}\label{upper4}
		\left|\mathcal{U}_{k}\left(x_{k}\right)\right|=\left(2 r_{k}\right)^{-\gamma(s)} M_{k}.
		\end{equation}
		We also have 
		\begin{equation*}
		\delta_{k}=\left|\mathcal{U}_{k}\left(x_{k}\right)\right|^{-\gamma(s)^{-1}}=2 r_{k} M_{k}^{-\gamma(s)^{-1}} \rightarrow 0,
		\end{equation*}
		and
		\begin{equation}\label{upper6}
		R_{k}=\frac{r_{k}}{\delta_{k}}=\frac{1}{2} M_{k}^{\gamma(s)^{-1}} \rightarrow \infty \quad \mbox{as} \quad k\rightarrow\infty.
		\end{equation}
		Besides, by defining
		\begin{equation*}
		\widehat{\mathcal{U}}_{k}(x)=\delta_{k}^{\gamma(s)}{\mathcal{U}}_{k}\left(\delta_{k} x+x_{k}\right) \quad \mbox{in} \quad B_{R_{k}},
		\end{equation*}
		since $B_{r_{k}}\left(x_{k}\right) \subset B_{1}$, it holds that $\widehat{\mathcal{U}}_{k}$ is well-defined in $B_{R_{k}}$.
		Furthermore, $\widehat{\mathcal{U}}_{k}$ is nonnegative and satisfies
		\begin{equation}\label{upper8}
		\Delta^2\widehat{\mathcal{U}}_{k}=|\widehat{\mathcal{U}}_{k}|^{s-1}\widehat{u}_i \quad \mbox{in} \quad  B_{R_{k}}.
		\end{equation}
		According to \eqref{upper1}, we obtain
		\begin{equation}\label{upper9}
		|\widehat{\mathcal{U}}_{k}(x)| \leqslant 2^{\gamma(s)} \quad \mbox{in} \quad B_{R_{k}}.
		\end{equation}
		Therefore, from \eqref{upper8}, \eqref{upper9} and \eqref{upper4}, we get
		\begin{equation}\label{upper10}
		|\widehat{\mathcal{U}}_{k}(0)|=1.
		\end{equation} 
		Using \eqref{upper8}, \eqref{upper9} and \eqref{upper6}, we can apply the elliptic regularity theory to conclude that $\{\widehat{\mathcal{U}}_{k}\}_{k\in\mathbb{N}}$ is bounded in $C_{\rm loc}^{4,\zeta}\left(\mathbb{R}^{n},\mathbb{R}^{p}\right)$, for some $\zeta\in(0,1)$, which by passing to a subsequence, provides that there exists ${\mathcal{U}}_{0}\in C_{\rm loc}^{4,\zeta}\left(\mathbb{R}^{n},\mathbb{R}^{p}\right)$ such that $\widehat{\mathcal{U}}_{k} \rightarrow \widehat{\mathcal{U}}_{0}$ in $C_{\rm loc}^{4,\zeta}\left(\mathbb{R}^{n},\mathbb{R}^{p}\right)$.
		Thus, from \eqref{upper8} we deduce that $\widehat{\mathcal{U}}_{0}$ is a nonnegative solution to
		\begin{equation}\label{upper11}
		\Delta^2\widehat{\mathcal{U}}_{0}=|\widehat{\mathcal{U}}_{0}|^{s-1}(\widehat{u}_0)_i \quad \mbox{in} \quad \mathbb{R}^{n}.
		\end{equation}
		In addition, \eqref{upper9} and \eqref{upper10} yields $|\widehat{\mathcal{U}}_{0}(x)|\leqslant 2^{\gamma(s)}$ in $\mathbb{R}^{n}$ and $|\widehat{\mathcal{U}}_{0}(x)|=1$.
		However, by Proposition~\ref{prop:nonexistencenonsingular}, there exists no smooth global solution to \eqref{upper11}, which is a contradiction. 
		Therefore, the lemma holds for some $C>0$ depending only on $n$ and $s$.
	\end{proof}
	
	\begin{lemma}\label{lm:universalestimates}
		Let $R=1$, $s\in(1,2^{**}-1)$, and $\mathcal{U}$ be a nonnegative singular solution to \eqref{eq:subcriticalsystem}. 
		Then, there exists $C>0$, depending only on $n$, $p$ and $s$, such that
		\begin{equation}\label{universalestimates}
		|\mathcal{U}(x)| \leqslant C|x|^{-\gamma(s)} \quad {\rm in} \quad B^*_{1/2}.
		\end{equation}
	\end{lemma}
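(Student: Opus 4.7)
The plan is to deduce \eqref{universalestimates} directly from the non-singular boundary estimate in Lemma~\ref{lm:upperestimatenonsingular} by exploiting the scaling invariance $u\mapsto\lambda^{\gamma(s)}u(\lambda\,\cdot\,)$ of \eqref{eq:subcriticalsystem}. The key observation is that, although $\mathcal{U}$ has an isolated singularity at the origin, on any Euclidean ball that stays away from $0$ the solution is entirely smooth; Lemma~\ref{lm:upperestimatenonsingular} applies to a suitable rescaling of $\mathcal{U}$ on such a ball, and reading the resulting estimate back through the rescaling produces precisely the Serrin-type growth $|x|^{-\gamma(s)}$ near the singular point.

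More concretely, fix $x\in B_{1/2}^{*}$ and set $r=|x|$. Every $z\in \bar{B}_{1}$ satisfies
\[
\tfrac{r}{2}\;\leqslant\;\Big|x+\tfrac{r}{2}z\Big|\;\leqslant\;\tfrac{3r}{2}\;<\;\tfrac{3}{4},
\]
so $\bar{B}_{r/2}(x)\subset B_{3/4}\setminus\{0\}\subset B_{1}^{*}$ and $\mathcal{U}$ is smooth on $\bar{B}_{r/2}(x)$. I would then define
\[
\mathcal{V}(z):=\Big(\tfrac{r}{2}\Big)^{\gamma(s)}\mathcal{U}\!\left(x+\tfrac{r}{2}z\right),\qquad z\in \bar{B}_{1}.
\]
Using the identity $\gamma(s)(s-1)=4$, a direct computation shows that $\mathcal{V}$ is a nonnegative non-singular classical solution to \eqref{eq:subcriticalsystem} on $B_{1}$ belonging to $C^{4}(B_{1},\mathbb{R}^{p})\cap C(\bar{B}_{1},\mathbb{R}^{p})$. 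Lemma~\ref{lm:upperestimatenonsingular} therefore gives $|\mathcal{V}(z)|\leqslant C(1-|z|)^{-\gamma(s)}$ on $B_{1}$, and evaluating at $z=0$ yields
\[
\Big(\tfrac{r}{2}\Big)^{\gamma(s)}|\mathcal{U}(x)|=|\mathcal{V}(0)|\leqslant C,
\]
whence $|\mathcal{U}(x)|\leqslant 2^{\gamma(s)}C\,|x|^{-\gamma(s)}$ with a constant depending only on $n$, $p$ and $s$. This is exactly \eqref{universalestimates}.

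I do not anticipate any substantive obstacle in this argument: the scaling invariance of $\Delta^{2}u_{i}=|\mathcal{U}|^{s-1}u_{i}$ is the same one already used to define the rescaled family in \eqref{scalingfamily}, and the inclusion $\bar{B}_{r/2}(x)\subset B_{1}^{*}$ is elementary. If instead one preferred to mimic the blow-up contradiction scheme used in Lemma~\ref{lm:upperestimatenonsingular}, the main point of care would be to select maximum points $y_{k}$ whose distance to the origin, measured in units of the blow-up scale, tends to infinity, so that in the rescaled limit the singularity at $0$ is pushed to infinity and a global non-singular solution is produced to which Proposition~\ref{prop:nonexistencenonsingular} applies. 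This is enforced automatically by working with the auxiliary function $\phi_{k}(x)=\min\{|x|,\tfrac{1}{2}-|x|\}^{\gamma(s)}|\mathcal{U}_{k}(x)|$, whose supremum cannot be attained near the origin or the outer sphere $\partial B_{1/2}$. Either route leads to the same conclusion, and the direct rescaling reduction above is the shorter one.
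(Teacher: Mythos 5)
Your argument is correct and is essentially identical to the paper's proof: the paper also fixes $x_0\in B^*_{1/2}$, sets $r=\tfrac12|x_0|$, rescales $\widetilde{\mathcal{U}}_r(x)=r^{\gamma(s)}\mathcal{U}(rx+x_0)$, applies Lemma~\ref{lm:upperestimatenonsingular} to this non-singular solution, and evaluates at the center to get $|\mathcal{U}(x_0)|\leqslant Cr^{-\gamma(s)}$. The only differences are notational (your $r/2$ versus the paper's $r=\tfrac12|x_0|$), so no further comment is needed.
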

	
	\begin{proof}
			Fixing $x_{0} \in B^*_{1/2}$, let us define $r=\frac{1}{2}|x_{0}|$.
		Then, since $\bar{B}_{r}\left(x_{0}\right) \subset B^*_{1}$, it is well-defined the rescaled $p$-map given by 
		\begin{equation*}
			\widetilde{\mathcal{U}}_r(x)=r^{\gamma(s)}\mathcal{U}\left(rx+x_{0}\right) \quad \mbox{in} \quad \bar{B}_{1}.
		\end{equation*}
		Since $\mathcal{U}$ is a nonnegative singular solution to \eqref{eq:subcriticalsystem}, we obtain that $\widetilde{\mathcal{U}}_r$ is a nonnegative non-singular solution to
		\begin{equation*}
			\Delta^2\widetilde{\mathcal{U}}_r=|\widetilde{\mathcal{U}}_r|^{s-1}\widetilde{\mathcal{U}}_r \quad \mbox{in} \quad B_1,
		\end{equation*}
		which is continuous up to the boundary.
		Therefore, we can apply Lemma~\ref{lm:upperestimatenonsingular} to $\widetilde{\mathcal{U}}$, which, by taking $x=0$ in \eqref{upperestimatenonsingular}, provides $|\widetilde{\mathcal{U}}(0)|\leqslant C$. 
		Hence, by rewriting in terms of $\mathcal{U}$, we get $\left|{\mathcal{U}}\left(x_{0}\right)\right| \leqslant C r^{-\gamma(s)}$.
		At last, since $x_{0} \in B^*_{1/2}$ is arbitrary and $r=\frac{1}{2}\left|x_{0}\right|$, the proof is finished.
	\end{proof}
	
	\begin{corollary}\label{cor:universalestimates}
		Let $R=1$, $s\in(1,2^{**}-1)$, and $\mathcal{U}$ be a nonnegative singular solution to \eqref{eq:subcriticalsystem}. 
		Then,
		there exists $C>0$, depending only on $n$, $p$ and $s$, such that
		\begin{equation*}
			\sum_{j=0}^{3}|x|^{\gamma(s)+j}\left|D^{(j)} \mathcal{U}(x)\right| \leqslant C \quad {\rm in} \quad B^*_{1/2}.
		\end{equation*}	
	\end{corollary}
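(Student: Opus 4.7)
The plan is to derive the derivative bounds by a standard rescaling-plus-interior-regularity argument, essentially bootstrapping on the $L^\infty$ bound already established in Lemma~\ref{lm:universalestimates}. Fix $x_0\in B_{1/2}^*$ and set $r:=|x_0|/2$, so that $B_r(x_0)\subset B_{3|x_0|/2}\setminus \overline{B}_{|x_0|/2}\subset B_1^*$. Consider the rescaled $p$-map
\begin{equation*}
\widetilde{\mathcal{U}}_r(x):=r^{\gamma(s)}\mathcal{U}(rx+x_0)\quad\text{for}\quad x\in B_1,
\end{equation*}
which, by the scaling invariance of \eqref{eq:subcriticalsystem}, is a nonnegative non-singular solution to $\Delta^2\widetilde{\mathcal{U}}_r=|\widetilde{\mathcal{U}}_r|^{s-1}\widetilde{\mathcal{U}}_r$ in $B_1$.

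Next, I would use Lemma~\ref{lm:universalestimates} to bound $\widetilde{\mathcal{U}}_r$ uniformly on $B_{1/2}$. Indeed, for $x\in B_{1/2}$ one has $|rx+x_0|\geqslant|x_0|-r/2\geqslant r$, so
\begin{equation*}
|\widetilde{\mathcal{U}}_r(x)|=r^{\gamma(s)}|\mathcal{U}(rx+x_0)|\leqslant r^{\gamma(s)}\cdot C|rx+x_0|^{-\gamma(s)}\leqslant C,
\end{equation*}
with $C$ depending only on $n,p,s$.

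With $\widetilde{\mathcal{U}}_r$ uniformly bounded and solving a semilinear fourth order elliptic equation with smooth nonlinearity, I would invoke standard interior $L^p$ and Schauder estimates for the bi-Laplacian, bootstrapped in the usual way: the right-hand side $|\widetilde{\mathcal{U}}_r|^{s-1}\widetilde{\mathcal{U}}_r$ lies in $L^\infty(B_{1/2})$, hence $\widetilde{\mathcal{U}}_r\in W^{4,q}(B_{1/4})$ for every $q<\infty$, and by Sobolev embedding and elliptic iteration one obtains $\|\widetilde{\mathcal{U}}_r\|_{C^{4,\zeta}(B_{1/8})}\leqslant C$ for some $\zeta\in(0,1)$ and $C=C(n,p,s)$. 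Evaluating at the origin and using $D^{(j)}\widetilde{\mathcal{U}}_r(0)=r^{\gamma(s)+j}D^{(j)}\mathcal{U}(x_0)$, we arrive at
\begin{equation*}
|x_0|^{\gamma(s)+j}|D^{(j)}\mathcal{U}(x_0)|\leqslant 2^{\gamma(s)+j}r^{\gamma(s)+j}|D^{(j)}\mathcal{U}(x_0)|=2^{\gamma(s)+j}|D^{(j)}\widetilde{\mathcal{U}}_r(0)|\leqslant C
\end{equation*}
for $j=0,1,2,3$, and summing over $j$ yields the claim.

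The one point that requires care, though not a genuine obstacle, is verifying that the interior Schauder bootstrap applies uniformly in $r$; this is immediate because the nonlinearity $F^s(\mathcal{U})=|\mathcal{U}|^{s-1}\mathcal{U}$ is a fixed (smooth on any bounded set) function, and the uniform $L^\infty$ bound on $\widetilde{\mathcal{U}}_r$ confines the argument inside a compact set on which all the relevant constants are uniform. Since $x_0\in B_{1/2}^*$ is arbitrary, the proof is complete.
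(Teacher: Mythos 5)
Your proposal is correct and follows essentially the same strategy as the paper's own proof: rescale $\mathcal{U}$ about an arbitrary $x_0\in B_{1/2}^*$ by the factor $r=|x_0|/2$, apply Lemma~\ref{lm:universalestimates} to deduce a uniform $L^\infty$ bound on the rescaled map $\widetilde{\mathcal{U}}_r$, bootstrap to a uniform $C^{4,\zeta}$ interior estimate via elliptic regularity, and rescale back. Your write-up is somewhat more explicit about the domain geometry and the rescaling of the derivatives, but there is no substantive difference from the paper's argument.
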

	
	\begin{proof}
		Fixing $x_{0} \in B^*_{1/2}$, let us define $r=\frac{1}{2}|x_{0}|$, and 
		\begin{equation*}
			\widetilde{\mathcal{U}}_r(x)=r^{\gamma(s)}\mathcal{U}\left(rx+x_{0}\right) \quad \mbox{in} \quad \bar{B}_{1}.
		\end{equation*}
		Then, $\widetilde{\mathcal{U}}_r$ is a nonnegative solution to \eqref{eq:subcriticalsystem} with $R=1$. 
		Hence, by the Lemma~\ref{lm:universalestimates}, it follows $\{|\widetilde{\mathcal{U}}_r|\}_{r>0}\subset C^{4,\zeta}(B_1)$, for some $\zeta\in(0,1)$, which, by standard elliptic estimates, gives us
		\begin{equation*}
			\sum_{j=0}^{3}|x|^{\gamma(s)+j}\left|D^{(j)} \widetilde{\mathcal{U}}_r(x)\right| \leqslant C.
		\end{equation*}	
		This result is proved by rescaling back to $\mathcal{U}$.
	\end{proof}
	
	The last lemma is a Harnack type inequality for local solutions to \eqref{eq:subcriticalsystem}.
	
	\begin{lemma}\label{lm:gradientestimates}
		Let $R=1$, $s\in(1,2^{**}-1)$, and $\mathcal{U}$ be a nonnegative singular solution to \eqref{eq:subcriticalsystem}.
		Then, there exists $c_1,c_2>0$ such that
		\begin{equation*}
		\sup_{B_{r}\setminus \bar{B}_{r / 2}} u_{i} \leqslant c_1\inf _{B_{r}\setminus\bar{B}_{r/2}} u_{i} \quad {\rm and} \quad \left|\nabla u_{i}(x)\right| \leqslant c_2\frac{u_{i}(x)}{|x|} \quad {\rm in} \quad B^*_{1/2} \quad {\rm for} \quad 0<r<1/2 \quad {\rm and} \quad i\in I.
		\end{equation*}
		Moreover, the constant $c_1>0$ depends only on $n$, $p$ and $s$.
	\end{lemma}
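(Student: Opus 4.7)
The plan is a standard scale-and-compactness argument, leveraging the Fowler-rescaling invariance of \eqref{eq:subcriticalsystem} together with the universal bounds of Lemma~\ref{lm:universalestimates} and Corollary~\ref{cor:universalestimates}. For the Harnack inequality, I first fix $r \in (0,1/2)$ and introduce the rescaled $p$-map $\widetilde{\mathcal{U}}_r(y) := r^{\gamma(s)}\mathcal{U}(ry)$, which is again a nonnegative singular solution to \eqref{eq:subcriticalsystem} on $B^*_{1/r}$. The annulus $B_r \setminus \bar{B}_{r/2}$ corresponds to the fixed annulus $A := B_1 \setminus \bar{B}_{1/2}$, and the sought estimate reduces to the $r$-uniform bound $\sup_A \widetilde{u}_{i,r} \leqslant c_1 \inf_A \widetilde{u}_{i,r}$. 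Corollary~\ref{cor:universalestimates} (applied scale-invariantly) furnishes a uniform $C^{4,\zeta}$-bound for $\widetilde{\mathcal{U}}_r$ on a slight enlargement $A' := B_{3/2} \setminus \bar{B}_{1/4}$, for some $\zeta \in (0,1)$.

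The Harnack bound is then proved by contradiction. If the ratio $\sup_A \widetilde{u}_{i,r_k}/\inf_A \widetilde{u}_{i,r_k}$ blew up along some sequence $r_k$, Arzelà--Ascoli extracts a $C^{4}$-limit $\widetilde{\mathcal{U}}_{\infty}$ on $\bar A$. If $\sup_A \widetilde{u}_{i,r_k}$ stays bounded below by some $\delta>0$, the limit $\widetilde{u}_{i,\infty}$ is nontrivial; being nonnegative and superharmonic (invoking the standing superharmonicity of singular solutions, cf.\ Remark~\ref{rmk:positiveness}), the second-order strong minimum principle applied to $-\Delta$ forces $\widetilde{u}_{i,\infty}>0$ throughout the connected annulus $A$, so $\inf_A \widetilde{u}_{i,\infty}>0$ and the ratio remains bounded, a contradiction. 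If instead $\sup_A \widetilde{u}_{i,r_k}\to 0$, renormalize $v_k := \widetilde{u}_{i,r_k}/\sup_{A'}\widetilde{u}_{i,r_k}$, which satisfies the linear equation $\Delta^2 v_k = |\widetilde{\mathcal{U}}_{r_k}|^{s-1} v_k$ with uniformly bounded coefficient and $\sup_{A'} v_k = 1$; interior Schauder estimates yield $C^{4,\zeta}$-compactness for $\{v_k\}$, and the same minimum principle applied to the limit gives $\inf_A v_\infty>0$, again contradicting the blow-up of the Harnack ratio.

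With the Harnack inequality in hand, the gradient estimate follows by one more rescaling. Given $x \in B^*_{1/2}$, the strong minimum principle allows us to assume $u_i>0$ on $B^*_{1/2}$ (otherwise $u_i\equiv 0$ and the estimate is trivial). Setting $\rho := |x|/2$, define $\widetilde{u}_i(y) := u_i(\rho y + x)/u_i(x)$ on $B_1$. Applying the Harnack inequality along concentric annuli centered at the origin, $\widetilde{u}_i$ is bounded above and below on $B_{1/2}$ by a constant depending only on $n,p,s$. Schauder estimates applied to the scale-invariant equation $\Delta^2 \widetilde{u}_i = |\widetilde{\mathcal{U}}|^{s-1}\widetilde{u}_i$, whose coefficient is uniformly controlled by Lemma~\ref{lm:universalestimates}, then yield $|\nabla \widetilde{u}_i(0)|\leqslant C$, which rescales back to $|\nabla u_i(x)|\leqslant (2C/|x|)\, u_i(x)$.

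The principal obstacle is the contradiction step in the Harnack argument: the fourth-order operator $\Delta^2$ lacks a direct strong maximum principle, so one must reduce to the component-wise second-order superharmonicity of $u_i$. The case distinction --- whether $\sup_A \widetilde{u}_{i,r_k}$ remains bounded below or collapses to zero --- is subtle, because the latter case requires a secondary normalization that only makes sense once elliptic regularity is invoked to ensure $C^{4,\zeta}$-compactness of the renormalized sequence. Once these two limiting procedures are executed and the strong minimum principle is applied, the rest of the proof is routine rescaling and Schauder theory.
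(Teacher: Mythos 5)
Your proof is correct but follows a genuinely different route from the paper's. The paper's proof is highly compressed: it observes that each $u_i$ solves $\Delta^2 u_i = V(x)u_i$ with $V=|\mathcal{U}|^{s-1}$ controlled via Lemma~\ref{lm:universalestimates}, and then simply cites the Harnack inequality of Caristi--Mitidieri \cite[Theorem~3.6]{MR2240050} for biharmonic equations with a potential, from which the gradient bound also follows. You instead prove the Harnack inequality from scratch by a scale-and-compactness argument, using the uniform $C^{4,\zeta}$-bounds of Corollary~\ref{cor:universalestimates}, Arzel\`a--Ascoli, and the reduction to the second-order strong minimum principle for $-\Delta u_i$; this is more self-contained and elementary. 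Two refinements are worth flagging. First, you invoke superharmonicity of the individual components, which is not among the stated hypotheses of this lemma; it is a standing assumption in Theorem~\ref{Thm3.2:asymptotics}, and indeed the cited Harnack inequality also implicitly requires a condition of this type, since the Harnack inequality fails for general nonnegative biharmonic functions (e.g.\ $u(x)=x_1^2$), so invoking it is legitimate but should be stated explicitly rather than hidden behind Remark~\ref{rmk:positiveness}, which only comments that superharmonicity must be proven or assumed. Second, to obtain $c_1$ depending only on $n$, $p$, $s$ as the lemma asserts, the contradiction sequence must run over solutions $\mathcal{U}_k$ as well as scales $r_k$; the universal nature of the bounds in Lemma~\ref{lm:universalestimates} and Corollary~\ref{cor:universalestimates} makes this work, but your argument as written fixes $\mathcal{U}$ and varies only $r_k$, which a priori gives only a solution-dependent constant. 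Both points are minor and easy to fix; the overall structure is sound.
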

	
	\begin{proof}
		After a scaling argument, we get that \eqref{universalestimates} holds in $B^*_{3/4}$.
		Then, we consider $\mathcal{U}$ satisfying that $-\Delta^2 u_{i}=V(x) u_{i}$ in $B_1^*$, where $V(x)=|u|^{s-1}$.
		Moreover, using Lemma~\ref{lm:universalestimates}, we know that there exists $C>0$ such that $0 \leqslant V(x) \leqslant C|x|^{-2}$ in $B^*_{3/4}$.
		Thus, the conclusion follows quickly from the Harnack inequality from \cite[Theorem~3.6]{MR2240050}.
	\end{proof}
	
	\begin{remark}
		The argument here could be simplified by using twice the doubling property from \cite{MR2350853}.
		Nevertheless, we have preferred to give a more direct proof for the readers' convenience.
	\end{remark}
	
	\subsection{Asymptotic radial symmetry}\label{subsec:asymptoticsymmetry}
	Here we prove the first part of Theorem~\ref{Thm3.2:asymptotics} asserting that solutions to \eqref{eq:subcriticalsystem} are radially symmetric about the origin. 
	This symmetry will later be used to convert the singular PDE into a non-singular ODE on the cylinder.
	We use an asymptotic moving spheres technique in the same spirit of \cite{MR4085120}.
	
	\begin{proposition}\label{prop:asymptoticsymmetry}
		Let $R<\infty$, $s\in(1,2^{**}-1)$, and $\mathcal{U}$ be a nonnegative superharmonic singular solution to \eqref{eq:subcriticalsystem}. 
		Then,
		\begin{equation*}
		|\mathcal{U}(x)|=(1+\mathcal{O}(|x|))|\overline{\mathcal{U}}(x)| \quad {\rm as} \quad x\rightarrow0,
		\end{equation*}
		where $|\overline{\mathcal{U}}|(r)=\avint_{\partial B_{1}} |\mathcal{U}|(r \theta)\ud\theta$ is the spherical average of $|\mathcal{U}|$.
	\end{proposition}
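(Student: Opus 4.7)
The plan is to adapt the integral moving spheres argument used in Proposition~\ref{prop:asymptoticsymmetrylimit} (for the global singular case) to the local setting, tracking the defect in the moving spheres invariance quantitatively as $x\to 0$. The underlying representation is already in place: by Proposition~\ref{lm:integralrepresentation}, for some $r_0>0$ each component satisfies
\begin{equation*}
u_i(x)=\int_{B_{r_0}}|x-y|^{4-n}|\mathcal{U}(y)|^{s-1}u_i(y)\,\mathrm{d}y+\psi_i(x),\qquad x\in B_{r_0},
\end{equation*}
with $\psi_i>0$ biharmonic and $\|\nabla\ln\psi_i\|_{C^0(B_{\widetilde r})}\leqslant C(\widetilde r)$. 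This is exactly the structure in which the positivity of the kernel $E(x,y,z,\mu)$ from \eqref{kernelkelvintransform} drives comparisons of $u_i$ with its fourth-order Kelvin transform $(u_i)_{x,\mu}$.

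The core of the argument is to define, for each $x\in B_{r_0/2}^{\ast}$ and each $i\in I$, the critical sliding radius
\begin{equation*}
\mu_i^{\ast}(x)=\sup\bigl\{\mu>0:(u_i)_{x,\rho}\leqslant u_i\ \text{in}\ B_{r_0}\setminus(B_\rho(x)\cup\{0\})\ \text{for every}\ 0<\rho<\mu\bigr\},
\end{equation*}
and set $\mu^{\ast}(x)=\inf_{i\in I}\mu_i^{\ast}(x)$. Exactly as in Lemma~\ref{movingpsheresineq}, the bounded logarithmic gradient of $\psi_i$ allows us to start the moving spheres procedure, so $\mu^{\ast}(x)>0$. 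The main task is to establish the quantitative asymptotic bound
\begin{equation*}
\mu^{\ast}(x)\geqslant|x|\bigl(1-C|x|\bigr)\qquad\text{as}\ x\to 0.
\end{equation*}
Granted this, picking any two antipodal points $z_1,z_2\in\partial B_r$ for small $r$ and applying the moving spheres inequality centered at the midpoint yields $u_i(z_1)\leqslant(1+C r)u_i(z_2)$ for every $i$. Summing over $i$ gives the oscillation control $\max_{|z|=r}|\mathcal U(z)|\leqslant(1+Cr)\min_{|z|=r}|\mathcal U(z)|$, which is equivalent to $|\mathcal U(x)|=(1+\mathcal O(|x|))|\overline{\mathcal U}(x)|$.

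The main obstacle is establishing the asymptotic lower bound for $\mu^{\ast}(x)$. In the global singular case of Proposition~\ref{prop:asymptoticsymmetrylimit} the unbounded domain produced an immediate contradiction at the non-removable singularity; here the finite domain introduces two error terms one must control: the smooth biharmonic piece $\psi_i$ (and the boundary contribution from $\partial B_{r_0}$), and the contributions of the integral over $B_{r_0}\setminus B_\mu(x)$ that are not balanced by the Kelvin reflection. To handle these, I will use the universal estimates of Lemma~\ref{lm:universalestimates} and Corollary~\ref{cor:universalestimates}, namely $|x|^{\gamma(s)+j}|D^{(j)}\mathcal U(x)|\leqslant C$ for $j=0,1,2,3$, together with the Harnack-type inequality in Lemma~\ref{lm:gradientestimates}, to show that the singular part of the Riesz-type integral dominates by a factor of order $|x|^{-4}$ the $\mathcal O(1)$ defect produced by $\psi_i$ and the exterior of $B_{r_0}$. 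Expanding $u_i(z)-(u_i)_{x,\mu}(z)$ around $\mu=|x|$ and exploiting the mean-value identity for $\psi_i$ (whose $\mathcal O(|x|)$ oscillation over spheres of radius $|x|$ is precisely what produces the factor $1+\mathcal O(|x|)$) then yields the required lower bound $\mu^{\ast}(x)\geqslant|x|(1-C|x|)$. The superharmonicity of each $u_i$ is used at two places: to guarantee applicability of the integral representation of Proposition~\ref{lm:integralrepresentation} and to ensure, via the maximum principle embedded in the kernel positivity, that the strict inequality $(u_i)_{x,\mu}<u_i$ propagates right up to the critical radius. Collecting these ingredients and iterating over the components delivers the proposition.
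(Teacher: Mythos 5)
The overall strategy — use the integral representation of Proposition~\ref{lm:integralrepresentation} to run moving spheres near the origin and then extract oscillation control on small spheres — is the same as the paper's. However, there are two substantive problems with the way you carry it out.

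First, your concluding geometric step does not work as stated. For antipodal points $z_1=re$, $z_2=-re$ on $\partial B_r$, the midpoint is the origin, which is the singularity and cannot serve as a moving-spheres center. A sphere sending $z_1$ to $z_2$ must be centered at some $z=te$ with $t>r$ and radius $\rho=\sqrt{t^2-r^2}$, and the Kelvin factor $\left(\tfrac{\rho}{|z_2-z|}\right)^{n-4}=\left(\tfrac{t-r}{t+r}\right)^{(n-4)/2}$ then gives $u_i(z_1)\leqslant\left(\tfrac{t+r}{t-r}\right)^{(n-4)/2}u_i(z_2)$. To make this $1+\mathcal{O}(r)$ you need $t$ fixed (small) while $r\to 0$, and you need $\rho=\sqrt{t^2-r^2}\leqslant\mu^*(te)$, i.e. precisely $\mu^*(z)\geqslant|z|$ — which is \emph{stronger} than the bound $\mu^*(x)\geqslant|x|(1-C|x|)$ you propose to prove. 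With your weaker bound one is forced to take $t\sim r^{2/3}$, giving only $1+\mathcal{O}(r^{1/3})$, not the claimed rate. Moreover, even a correct antipodal comparison only controls $u(re)$ against $u(-re)$; passing from that to the full oscillation bound $\max_{\partial B_r}|\mathcal U|\leqslant(1+Cr)\min_{\partial B_r}|\mathcal U|$ is not automatic and you do not explain it.

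Second, you skip the step that actually delivers the conclusion. The paper establishes the clean bound $\mu^*(z)\geqslant|z|$ for small $|z|$ (Claim~1, using the same argument as in Proposition~\ref{prop:asymptoticsymmetrylimit}), then converts it via the geometry of the unit inversion into a moving-plane inequality at infinity for the Kelvin transform $(u_i)_{0,1}$ (Claims~2–3), and finally invokes \cite[Theorem~6.1 and Corollary~6.2]{MR982351}, which is where both the full spherical oscillation control and the sharp $\mathcal{O}(|x|)$ rate come from. Your proposal omits the passage to the Kelvin-transformed picture and the appeal to the Caffarelli--Gidas--Spruck asymptotic symmetry machinery entirely, replacing them with a heuristic ``expanding $u_i(z)-(u_i)_{x,\mu}(z)$ around $\mu=|x|$ and exploiting the mean-value identity for $\psi_i$'' whose content is unclear. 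You should either prove $\mu^*(z)\geqslant|z|$ directly (following the $R=\infty$ argument, which uses the non-removable singularity and superharmonicity to rule out $\mu^*(z)<|z|$) and then carry out the reduction to the CGS asymptotics at infinity, or genuinely develop an alternative to the CGS step — but that is not done here.
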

	
	\begin{proof}
		Initially, if the origin is a removable singularity, then the conclusion is clear. Hence, we suppose that the origin is a non-removable singularity.
		Using the notation introduced in Section~\ref{sec:kelvintransform}, we divide the proof into some claims.
		
		\noindent{\bf Claim 1:} There exists small $0<\varepsilon\ll1$ such that for any $z \in B^*_{\varepsilon/2}$,  it holds
		\begin{equation}\label{asymptoticmovingplanes}
		\left(u_{i}\right)_{z, r}\leqslant u_{i} \quad {\rm in} \quad B_{1} \setminus\left(B_{r}(z)\cup\{0\}\right) \quad \mbox{for} \quad 0<r \leqslant|z| \quad \mbox{and} \quad i\in I.
		\end{equation}
		
		\noindent Indeed, the proof follows almost the same lines as the one in Lemma~\ref{prop:asymptoticsymmetrylimit}, so we omit it.
		
		In the next claim, we provide some estimates to be used later in the proof.
		
		\noindent{\bf Claim 2:} There exists $z\in B^*_{\varepsilon/2}$, $0<r<|z|$ and $\mu_*\gg1$ large such that
		\begin{equation}\label{estimatesradius}
		\frac{y_{\mu}}{|y_{\mu}|^{2}}-z=\left(\frac{r}{\left|y /|y|^{2}-z\right|}\right)^{2}\left(\frac{y}{|y|^{2}}-z\right) \quad {\rm and} \quad \frac{|y_{\mu}|}{|y|} \leqslant \frac{1}{r}\left|\frac{y}{|y|^{2}}-z\right| \quad {\rm for \ any} \quad \mu>\mu_*.
		\end{equation}
		Here $y_{\mu}=y+2(\mu-y\cdot{\bf e}){{\bf e}}$ is the reflection of $y$ about the hyperplane $\partial H_{\mu}({\bf e})$, where $H_{\mu}({\bf e})=\{x: \langle x,{\bf e}\rangle>\mu\}$ and ${\bf e}\in\mathbb{S}^{n-1}$. In other words, ${y_{\mu}}{|y_{\mu}|^{-2}}$ is the reflection point of ${y}{|y|^{-2}}$ about $\partial B_{r}(z)$. 
		
		\noindent As matter of fact,  choosing $r=|z|$, it involves an elementary computation, as follows
		\begin{equation*}
		z=\frac{y}{|y|^{2}} +\frac{\left|y_{\mu}\right|^{2}}{|y|^{2}-\left|y_{\mu}\right|^{2}}\left(\frac{y}{|y|^{2}} -\frac{y_{\mu}}{\left|y_{\mu}\right|^{2}} \right)=\frac{\left(y-y_{\mu}\right)}{|y|^{2}-\left|y_{\mu}\right|^{2}}.
		\end{equation*}
		
		Next, we establish a comparison involving the Kelvin transform of a component solution with itself.
		
		\noindent{\bf Claim 3:} For any $\mu>\frac{1}{\varepsilon}$ and ${\bf e}\in\partial B_{1}$, if $\langle x,{\bf e}\rangle>\mu$ and $|y_{\mu}|>1$ for each $i\in I$, it holds
		\begin{equation*}
		(u_{i})_{0,1}(y) \leqslant (u_{i})_{0,1}\left(y_{\mu}\right).
		\end{equation*}
		
		\noindent In fact, to prove the last inequality, let us note first that $y\in B_{1/\varepsilon}$, if and only if, ${y}{|y|^{-2}}\in B_{\varepsilon}$.
		Now given $y\in \mathbb{R}^{n}$ such that $\langle y,{\bf e}\rangle>\mu$, $|y_{\mu}|>1$ and $0<r<|z|<{\varepsilon}/{2}$ satisfying \eqref{estimatesradius}. Let us define $x={y}{|y|^{-2}}$ and $x_{z,r}={y_{\mu}}{|y_{\mu}|^{-2}}$.
		Then, since $\langle y,{\bf e}\rangle>\mu>{\varepsilon}^{-1}$ and $\left|y_{\mu}\right|>1$, we have $x \in B_{r}(z)$ and $x_{z,r}\in B_{1}\setminus B_{r}(z)$. Hence, using \eqref{asymptoticmovingplanes} and \eqref{estimatesradius}, we find that $(u_{i})_{0,1}(y)\leqslant (u_{i})_{0,1}(y_{\mu})$, which proves the claim.
		
		Ultimately, using Claim 3, we invoke \cite[Theorem~6.1 and Corollary~6.2]{MR982351} to find
		$C>0$, independent of $\varepsilon>0$, such that if $|y|\geqslant |x|+C\varepsilon^{-1}$, it follows $(u_{i})_{0,1}(y) \leqslant (u_{i})_{0,1}(x)$ for any $i\in I$.
		Therefore, since $(u_{i})_{0,1}$ is nonnegative and superharmonic, the last inequality implies
		\begin{equation*}
		u_{i}^{*}(|x|)=\left(1+\mathcal{O}\left(\frac{1}{R}\right)\right)\left(\inf _{\partial B_{R}} u_{i}^{*}\right) \quad \mbox{as} \quad R \rightarrow \infty \quad {\rm for} \quad i\in I,
		\end{equation*} 
		uniformly on $\partial B_{R}$, which in terms of $u_{i}$ implies the asymptotic radial symmetry, and
		the proof is concluded.
	\end{proof}
	
	\subsection{Serrin--Lions case}\label{subsec:serrin-lions}
	We prove Theorem~\ref{Thm3.2:asymptotics} (a). 
	The asymptotic analysis for this case is straightforward.
	We can reduce the problem to the scalar case ($p=1$) by considering the sum function $\mathcal{U}_{\Sigma}=u_1+\cdots+u_p$ and applying \cite[Section~4]{MR1436822}.
	However, we give a more direct proof.
	In the sequel, we aim to prove the following proposition.
	
	\begin{proposition}\label{prop:serrinlionscase}
		Let $R<\infty$, $s\in(1,2_{**})$, and $\mathcal{U}$ be a nonnegative superharmonic singular solution to \eqref{eq:subcriticalsystem}. 
		Then, there exist $C_1,C_2>0$ (depending on $\mathcal{U}$) such that $C_1|x|^{4-n}\leqslant|\mathcal{U}(x)|\leqslant C_2|x|^{4-n}$ for $0<|x|\ll1$, or equivalently, $|\mathcal{U}(x)|\simeq |x|^{4-n}$ as $x\rightarrow 0$.
	\end{proposition}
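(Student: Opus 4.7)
The plan is to establish both $|\mathcal{U}(x)|\leqslant C_{2}|x|^{4-n}$ and $|\mathcal{U}(x)|\geqslant C_{1}|x|^{4-n}$ near the origin. By the asymptotic radial symmetry of Proposition~\ref{prop:asymptoticsymmetry}, it suffices to control the spherical averages $\bar u_{i}:=\overline{u_{i}}$. Introducing the auxiliary $\bar\phi_{i}:=-\Delta\bar u_{i}\geqslant 0$, the fourth-order radial equation splits into the cascade
\begin{equation*}
-(r^{n-1}\bar u_{i}'(r))' = r^{n-1}\bar\phi_{i}(r),\qquad -(r^{n-1}\bar\phi_{i}'(r))' = r^{n-1}\bar g_{i}(r),
\end{equation*}
where $\bar g_{i}:=\overline{|\mathcal{U}|^{s-1}u_{i}}\geqslant 0$ satisfies $\int_{0}^{r_{0}}r^{n-1}\bar g_{i}(r)\,\ud r<\infty$ by Lemma~\ref{lm:integrability}. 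Both $\bar u_{i}$ and $\bar\phi_{i}$ are positive radial superharmonic functions on $(0,r_{0})$, so the monotone functions $A_{i}(r):=r^{n-1}\bar u_{i}'(r)$ and $B_{i}(r):=r^{n-1}\bar\phi_{i}'(r)$ are decreasing, with limits $L_{A,i},L_{B,i}\in\mathbb{R}$ at $r=0^{+}$ (the alternative $+\infty$ contradicts positivity of $\bar u_{i},\bar\phi_{i}$).

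For the upper bound, the identity $B_{i}(r)=L_{B,i}-\int_{0}^{r}s^{n-1}\bar g_{i}(s)\,\ud s$ combined with positivity of $\bar\phi_{i}$ and the $L^{1}$-integrability of $\bar g_{i}$ forces $L_{B,i}\leqslant 0$, and integrating $\bar\phi_{i}'(r)=B_{i}(r)/r^{n-1}$ then produces $\bar\phi_{i}(r)\leqslant Cr^{2-n}$. Analogously, positivity of $\bar u_{i}$ yields $L_{A,i}\leqslant 0$, and in the singular case non-removability forces $L_{A,i}=0$. Substituting $\bar\phi_{i}\leqslant Cr^{2-n}$ into $A_{i}(r)=-\int_{0}^{r}s^{n-1}\bar\phi_{i}(s)\,\ud s$ yields $|A_{i}(r)|\leqslant Cr^{2}/2$, and a second integration of $\bar u_{i}'(r)=A_{i}(r)/r^{n-1}$ produces $\bar u_{i}(r)\leqslant Cr^{4-n}$. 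The Harnack-type inequality of Lemma~\ref{lm:gradientestimates} promotes the average bound to the pointwise estimate $u_{i}(x)\leqslant C|x|^{4-n}$, whence $|\mathcal{U}(x)|\leqslant C|x|^{4-n}$.

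For the lower bound, fix an index $i_{0}$ for which the origin is a non-removable singularity of $u_{i_{0}}$, forcing $\bar u_{i_{0}}(r)\to\infty$. A case analysis on $L_{B,i_{0}}$ pins down $L_{B,i_{0}}<0$: the alternative $L_{B,i_{0}}=0$ would render $\bar\phi_{i_{0}}$ either bounded or only sub-$r^{2-n}$ singular, and propagation through the cascade with $L_{A,i_{0}}=0$ would then make $\bar u_{i_{0}}$ bounded (contradicting non-removability) or asymptotic to $r^{4-s(n-4)}$, which is singular but strictly weaker than $r^{4-n}$. Once $L_{B,i_{0}}<0$ is secured, monotonicity of $B_{i_{0}}$ gives $\bar\phi_{i_{0}}(r)\geqslant c\,r^{2-n}$ near $0$; inserting this into $A_{i_{0}}(r)=-\int_{0}^{r}s^{n-1}\bar\phi_{i_{0}}(s)\,\ud s$ produces $A_{i_{0}}(r)\leqslant -cr^{2}/2$, and integrating $\bar u_{i_{0}}'(r)=A_{i_{0}}(r)/r^{n-1}$ yields $\bar u_{i_{0}}(r)\geqslant c'r^{4-n}$. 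Harnack then delivers $u_{i_{0}}(x)\geqslant c'|x|^{4-n}$, so $|\mathcal{U}(x)|\geqslant u_{i_{0}}(x)\geqslant c'|x|^{4-n}$.

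The principal obstacle I anticipate is excluding the borderline candidate $\bar u_{i_{0}}\sim r^{4-s(n-4)}$ in the lower-bound case analysis: this weaker singularity is consistent with the radial cascade and with $L_{A,i_{0}}=L_{B,i_{0}}=0$, and its exclusion must rely on the subcritical condition $s<2_{**}$ together with the non-removability hypothesis applied at the level of the full nonlinear equation (not merely the spherical-average cascade), since the putative asymptotic fails to close the nonlinear self-consistency relation. Throughout, the universal bound $|\mathcal{U}(y)|\leqslant C|y|^{-\gamma(s)}$ of Lemma~\ref{lm:universalestimates} is far from sharp in this regime (as $\gamma(s)>n-4$ whenever $s<2_{**}$) and direct iteration of it through the integral representation has $\gamma(s)$ as a fixed point; the decisive inputs throughout are the $L^{1}$-integrability $|\mathcal{U}|^{s}\in L^{1}(B_{r_{0}})$ of Lemma~\ref{lm:integrability} and the Harnack inequality of Lemma~\ref{lm:gradientestimates}.
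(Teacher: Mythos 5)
Your proposal takes a genuinely different route from the paper — a direct ODE analysis of the spherical averages via the cascade $A_i(r)=r^{n-1}\bar u_i'(r)$, $B_i(r)=r^{n-1}\bar\phi_i'(r)$ — whereas the paper works through the Green--function integral representation (Proposition~\ref{lm:integralrepresentation}, Lemma~\ref{lm:upperestimateserrin}) for the upper bound and a bootstrap removability lemma (Lemma~\ref{lm:removabilityserrin}) plus maximum principle for the lower bound. Unfortunately the cascade approach, as written, has two genuine gaps.

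\emph{Gap in the upper bound.} The monotonicity of $A_i$ together with $\bar u_i>0$ only yields $L_{A,i}\leqslant 0$; the further assertion ``in the singular case non-removability forces $L_{A,i}=0$'' is false. If $L_{A,i}<0$, integrating $\bar u_i'(r)=A_i(r)r^{1-n}$ gives $\bar u_i(r)\simeq r^{2-n}$, which is a \emph{stronger}, and hence certainly non-removable, singularity that the cascade alone cannot rule out. One can exclude it from $\mathcal{U}\in L^{s}(B_1)$ (Lemma~\ref{lm:integrability}) only when $s\geqslant 2_{*}=n/(n-2)$, since $r^{2-n}\notin L^{s}_{\rm loc}$ exactly for $s\geqslant 2_{*}$; but the proposition covers the whole range $s\in(1,2_{**})$, and for $s\in(1,2_{*})$ the cascade, the $L^{s}$ bound, and Harnack are all self-consistent with $\bar u_i\simeq r^{2-n}$. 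Ruling out the $|x|^{2-n}$ profile is precisely what the paper's integral decomposition \eqref{lions2} in Lemma~\ref{lm:upperestimateserrin} accomplishes (a B\^ocher-type fact: the only admissible singular term for a nonnegative superharmonic biharmonic $p$-map, once $\phi_i=-\Delta u_i$ is forced to be locally integrable and itself superharmonic, is $\Lambda|x|^{4-n}$). Without this step, your upper bound $\bar u_i\leqslant C r^{4-n}$ is not established.

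\emph{Gap in the lower bound.} You correctly identify that, after passing to the borderline $L_{A,i_0}=L_{B,i_0}=0$, an intermediate singularity strictly between $O(1)$ and $r^{4-n}$ cannot be dismissed from the cascade alone, and you explicitly flag this as the ``principal obstacle.'' The one sentence offered — that the exclusion ``must rely on the subcritical condition $s<2_{**}$ together with the non-removability hypothesis applied at the level of the full nonlinear equation'' — is a description of the problem, not an argument. The paper resolves it with Lemma~\ref{lm:removabilityserrin}: from $|\mathcal{U}(x)|=o(|x|^{4-n})$ one gets $\Delta^2\mathcal{U}\in L^{q/s}$ for all $q<2_{**}$, and a bootstrap lands $\mathcal{U}\in C^{3,\zeta}$, so the singularity is removable. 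Once that lemma is in hand, non-removability immediately produces a sequence $r_k\to 0$ with $\sup_{\partial B_{r_k}}u_i\geqslant\rho r_k^{4-n}$, and Harnack plus the maximum principle finish. Your cascade would still need precisely this ingredient (or an equivalent elliptic regularity argument) to close the lower bound; as submitted it does not.

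Your final remark about the universal bound $|\mathcal{U}|\lesssim |x|^{-\gamma(s)}$ being a fixed point of the naive iteration is correct and is in fact the reason the paper first upgrades to $|\mathcal{U}(x)|=o(|x|^{-\gamma(s)})$ via $L^{s}$ integrability and Harnack before feeding into the Green representation. But that observation does not repair the two gaps above. The cascade perspective is attractive and could be made rigorous, but only after (i) excluding $L_{A,i}<0$ via the integral/B\^ocher decomposition, and (ii) importing a removability statement in the spirit of Lemma~\ref{lm:removabilityserrin} to pin down the lower bound.
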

	
	First, we prove an upper bound estimate based on a Green identity from Proposition~\ref{lm:integralrepresentation}.
	
	\begin{lemma}\label{lm:upperestimateserrin}
		Let $R<\infty$, $s\in(1,2_{**})$, and $\mathcal{U}$ be a nonnegative singular solution to \eqref{eq:subcriticalsystem}. 
		Then, there exists $C_2>0$, depending only on $|\mathcal{U}|$, such that
		$|\mathcal{U}(x)|\leqslant C_2|x|^{4-n}$ as $x\rightarrow 0$.
	\end{lemma}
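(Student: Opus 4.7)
The strategy combines the integral representation of Proposition~\ref{lm:integralrepresentation} with a two-step reduction to second-order estimates, since a direct estimate of the Riesz-type potential in Proposition~\ref{lm:integralrepresentation} using only the universal bound from Lemma~\ref{lm:universalestimates} produces a dominating term of order $|x|^{4-s\gamma(s)}$ which, due to $s\gamma(s)>n$ for $s\in(1,2_{**})$, is strictly more singular than the desired $|x|^{4-n}$ rate and merely reproduces the universal estimate after naive iteration.

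I would first set $v_i := -\Delta u_i$. The superharmonicity hypothesis gives $v_i \geq 0$ in $B_R^*$, and
\begin{equation*}
-\Delta v_i \;=\; \Delta^{2}u_i \;=\; |\mathcal{U}|^{s-1}u_i \;\in\; L^{1}(B_{1}),
\end{equation*}
by Lemma~\ref{lm:integrability}. A bookkeeping argument---using that $\Delta^{2}u_i \in L^{1}$ as a distribution on all of $B_1$ (Lemma~\ref{lm:integrability}) and that any Dirac-mass contribution in $-\Delta u_i$ would produce a second-derivative-of-Dirac term in $\Delta^{2}u_i$ incompatible with its $L^{1}$ integrability---shows that $-\Delta u_i = v_i$ distributionally on $B_1$ without any point-mass at the origin. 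Then the classical Br\'{e}zis--Lions theorem on isolated singularities of nonnegative superharmonic functions with $L^{1}$ Laplacian, combined with the Harnack-type inequality of Lemma~\ref{lm:gradientestimates} to upgrade the $L^{1}_{\mathrm{loc}}$ part of the decomposition to a pointwise estimate, yields $v_i(x) \leq C|x|^{2-n}$ for $x$ near the origin.

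Next, I would apply the Green representation for $-\Delta$ to $u_i$ in $B_{r_0}$,
\begin{equation*}
u_i(x) \;=\; \int_{B_{r_0}} G_{1}(x,y)\,v_i(y)\,dy \;+\; h_i(x),
\end{equation*}
with $h_i$ harmonic and bounded in $B_{r_0}$ (its boundary data on $\partial B_{r_0}$ being continuous). Combined with the just-established bound on $v_i$ and the Riesz composition identity
\begin{equation*}
\int_{B_{r_0}} |x-y|^{2-n}|y|^{2-n}\,dy \;\leq\; C|x|^{4-n},
\end{equation*}
which is valid for $n\geq 5$ since $(n-2)+(n-2) > n > n-4$, this yields $u_i(x) \leq C|x|^{4-n}$ near the origin. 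Summing over $i \in I$ then gives the target inequality $|\mathcal{U}(x)|\leq C_{2}|x|^{4-n}$.

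The main obstacle is the rigorous justification at each reduction step that no spurious $|x|^{2-n}$-type contribution survives in the harmonic remainder $h_i$, which would contaminate the final $|x|^{4-n}$ bound on $u_i$; this requires both the no-Dirac property outlined above and a quantitative control of the regular part in the Br\'{e}zis--Lions decomposition via the $L^{1}$ integrability of $u_i$ and $v_i$ on $B_1$ together with the Harnack estimate of Lemma~\ref{lm:gradientestimates}.
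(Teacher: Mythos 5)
Your argument is correct in spirit but takes a genuinely different route from the paper's. The paper proves this lemma by first upgrading the universal bound $|\mathcal{U}|\lesssim |x|^{-\gamma(s)}$ to a strictly weaker singularity $|\mathcal{U}|\leqslant |x|^{-q}$ on a small ball, for an exponent $q$ chosen so that $sq<n$ (possible precisely because $s<2_{**}$ gives $s(n-4)<n$), and then estimating the fourth-order Riesz potential $\int_{B_{r_q}}|x-y|^{4-n}|y|^{-sq}\,\ud y\lesssim |x|^{4-n}$ directly from the Green representation \eqref{integralsystem}. Your approach instead factors $\Delta^{2}=(-\Delta)\circ(-\Delta)$, treats $v_i=-\Delta u_i$ as an intermediate unknown, and replaces the improvement-of-exponent step with two second-order Newton-potential estimates and a Riesz composition; this is entirely self-contained and in fact bypasses the somewhat delicate blow-up step the paper invokes for the sharpened exponent. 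Two places in your outline should, however, be made explicit. First, the ``bookkeeping'' that kills a possible concentrated term $a_i\delta_0$ in $-\Delta u_i$ on $B_1$ is really a \emph{double} application of Br\'ezis--Lions: one needs both $-\Delta u_i=v_i+a_i\delta_0$ with $v_i\geqslant 0$, $v_i\in L^1_{\mathrm{loc}}(B_1)$, and $-\Delta v_i=f_i+b_i\delta_0$ with $b_i\geqslant 0$ (valid since $v_i$ is a nonnegative superharmonic function on $B_1^*$ with $L^1$ Laplacian), and then comparison with $\Delta^2u_i=f_i$ on $B_1$ from Lemma~\ref{lm:integrability} forces $a_i\Delta\delta_0=b_i\delta_0$, hence $a_i=b_i=0$ by linear independence of $\delta_0$ and $\Delta\delta_0$. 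Second, your phrase ``Harnack to upgrade the $L^1_{\mathrm{loc}}$ part to a pointwise estimate'' conceals where Lemma~\ref{lm:gradientestimates} actually enters: since that Harnack inequality concerns $u_i$ rather than $v_i$, what one must do is transfer it from the components $u_i$ to $|\mathcal{U}|$ and hence to $f_i=|\mathcal{U}|^{s-1}u_i$, obtaining $\sup_{A_{|x|}}f_i\lesssim |x|^{-n}\int_{A_{|x|}}f_i$ on dyadic annuli $A_{|x|}=\{|x|/2<|y|<2|x|\}$. This oscillation control is exactly what bounds the mid-field contribution of the Newton potential $G_1*f_i$ by $C|x|^{2-n}$, while the near- and far-field contributions are already $\lesssim |x|^{2-n}\|f_i\|_{L^1(B_1)}$ from $f_i\in L^1$ alone. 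With these two points filled in, your chain $v_i\lesssim |x|^{2-n}$, the composition estimate $\int_{B_{r_0}}|x-y|^{2-n}|y|^{2-n}\,\ud y\lesssim |x|^{4-n}$, and the bounded harmonic remainder deliver the lemma, giving a rather cleaner route than the paper's.
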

	
	\begin{proof}
		Initially, by Lemma~\ref{lm:integrability}, we have that $\mathcal{U}\in L^{s}\left(B_{1},\mathbb{R}^p\right)$. Moreover, since $s\in(1,2_{**})$ and $\mathcal{U}$ satisfies the Harnack inequality in Lemma~\ref{lm:gradientestimates}, it follows
		\begin{equation*}
		|\mathcal{U}(x)|=o\left(|x|^{-\gamma(s)}\right) \quad \mbox{as} \quad x\rightarrow 0,
		\end{equation*}
		which by $n-4<\gamma(s)$, implies that for any $n-4<q<\gamma(s)$, there exists $0<r_{q}<1$ depending only on $n$ $p$, $s$ and $q$ such that
		\begin{equation}\label{lions1}
		|\mathcal{U}(x)|<|x|^{-q} \quad \mbox{in} \quad B^*_{r_{q}},
		\end{equation}
		where in the last claim we have used a blow-up argument.
		Now taking $r_{q}>0$ as before, and using Lemma~\ref{lm:integrability} again, we get that $\Delta^2 \mathcal{U}=|\mathcal{U}|^{s-1} \mathcal{U} \in L^{1}\left(B_{1},\mathbb{R}^p\right)$. 
		Thus, using \eqref{integralsystem}, we decompose 
		\begin{equation}\label{lions2}
		\mathcal{U}(x)=\Lambda|x|^{4-n} -\int_{B_{r_q}}|x-y|^{4-n} \Delta^2\mathcal{U}(y) \ud y+\Psi(x) \quad \mbox{in} \quad B^*_{r_{q}},
		\end{equation}
		where $\Lambda\in\mathbb{R}^p$ has all nonnegative components and $\Psi\in C^{\infty}\left(B_{1},\mathbb{R}^p\right)$ is such that $\Delta^2\Psi=0$ in $B_{r_{q}}$. Nevertheless, using \eqref{lions1}, it is not hard to see from \eqref{eq:subcriticalsystem} that there exists $C_q>0$, depending only on $n$, $p$, $s$, and $q$ such that
		\begin{equation*}
		\left|\int_{B_{r_{q}}}|x-y|^{4-n}\Delta^2 \mathcal{U}(y) \ud y\right| \leqslant \int_{B_{r_{q}}}|x-y|^{4-n}|y|^{-sq} \ud y \leqslant C_{q}|x|^{4-n}.
		\end{equation*}
		Hence, fixing $n-4<q<\gamma(s)$ and choosing suitable $r_{q}>0$ and $C_{q}>0$ on the last inequality, the proof follows directly from \eqref{lions2}.
	\end{proof}
	
	Second, we give a sufficient condition to classify whether the origin is a removable singularity or non-removable singularity.
	
	\begin{lemma}\label{lm:removabilityserrin}
		Let $R<\infty$, $s\in(1,2_{**})$, and $\mathcal{U}$ be a nonnegative superharmonic singular solution to \eqref{eq:subcriticalsystem}. 
		Assume that
		\begin{equation}\label{serrinestimate}
		|\mathcal{U}(x)|=o\left(|x|^{4-n}\right) \quad \mbox{as} \quad x\rightarrow 0.
		\end{equation}
		Then, the origin is a removable singularity.
	\end{lemma}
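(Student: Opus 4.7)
The plan is to bootstrap the pointwise decay of $|\mathcal{U}|$ via the Green representation from Proposition~\ref{lm:integralrepresentation}, strictly improving the decay exponent at each iteration until $|\mathcal{U}|$ becomes uniformly bounded in a neighborhood of the origin; a standard elliptic regularity argument will then provide a $C^{4,\zeta}$ extension across $0$.

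To start, I would observe that $|\mathcal{U}(x)| = o(|x|^{4-n})$ together with $s < 2_{**} = n/(n-4)$ forces $s(n-4) < n$, hence $|\mathcal{U}|^{s}\in L^{1}(B_{1},\mathbb{R}^{p})$ by Lemma~\ref{lm:integrability}, and the representation
\[
u_{i}(x)=\int_{B_{r_{0}}}|x-y|^{4-n}\,|\mathcal{U}(y)|^{s-1}u_{i}(y)\,\mathrm{d}y+\psi_{i}(x)
\]
reduces the removability question to proving uniform boundedness as $x\to 0$ of the Riesz-type potential $\Phi_{i}(x)$ on the right, since $\psi_{i}$ is already smooth and biharmonic in $B_{r_{0}}$.

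The key scaling estimate is: whenever $|\mathcal{U}(x)|\leqslant C|x|^{-\beta}$ near the origin with $4<s\beta<n$, the dyadic split $\{|y|<|x|/2\}\cup\{|x|/2\leqslant|y|\leqslant 2|x|\}\cup\{|y|>2|x|\}$ combined with the rescaling $y=|x|z$ gives $\Phi_{i}(x)\leqslant C'|x|^{4-s\beta}+C''$, so the representation upgrades the bound to $|\mathcal{U}(x)|\leqslant\widetilde C|x|^{-(s\beta-4)}$. This defines the iteration $\beta_{k+1}=s\beta_{k}-4$, which I would initialize with $\beta_{0}=n-4$ using the $o$-decay. The hypothesis $s<2_{**}$ is exactly $\beta_{0}<\gamma(s)$, and the closed form $\beta_{k}-\gamma(s)=s^{k}(\beta_{0}-\gamma(s))$ gives $\beta_{k}\to -\infty$ geometrically; hence after finitely many steps $s\beta_{k}<4$, and the same scaling estimate delivers a uniform bound on $\Phi_{i}$. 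Any intermediate logarithmic borderline $s\beta_{k}=4$ is absorbed at the next iteration into an arbitrarily small power of $|x|^{-1}$.

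Once $|\mathcal{U}|\in L^{\infty}_{\mathrm{loc}}$ around $0$, the source $|\mathcal{U}|^{s-1}u_{i}$ is bounded, so the classical removable-singularity result for the biharmonic operator with bounded data, combined with interior $W^{4,q}$ and Schauder estimates, produces the $C^{4,\zeta}$ extension of each $u_{i}$ across the origin. The main technical hurdle will be tracking the Riesz-potential constants through the three-region split and the iteration, and verifying that the bounded extension solves the PDE in the distributional sense on the whole ball so that the removability theorem applies. The threshold $s=2_{**}$ is precisely where the iteration stalls ($n-4$ is the fixed point of $\beta\mapsto s\beta-4$), which is consistent with the emergence of the logarithmic asymptotics of Theorem~\ref{Thm3.2:asymptotics}(b) exactly at the Serrin exponent.
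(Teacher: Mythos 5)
Your proposal is correct, but it follows a genuinely different route from the paper. The paper's proof is a short $L^{q}$-elliptic bootstrap: the hypothesis $|\mathcal{U}(x)|=o(|x|^{4-n})$ gives $\mathcal{U}\in L^{q}(B_{1},\mathbb{R}^{p})$ for every $q<2_{**}$, hence $\Delta^{2}\mathcal{U}=|\mathcal{U}|^{s-1}\mathcal{U}\in L^{q/s}$ with $q/s>1$ precisely because $s<2_{**}$; Calder\'on--Zygmund estimates and a bootstrap then yield $\mathcal{U}\in W^{4,N}(B_{1},\mathbb{R}^{p})$ for all $N<\infty$, and the Morrey embedding gives $\mathcal{U}\in C^{3,\zeta}$, so the singularity is removable. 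You instead run a pointwise Riesz-potential bootstrap on the Green representation of Proposition~\ref{lm:integralrepresentation}: starting from $\beta_{0}=n-4$, the standard three-region estimate $\int_{B_{r_0}}|x-y|^{4-n}|y|^{-s\beta}\,\mathrm{d}y\lesssim |x|^{4-s\beta}$ (for $4<s\beta<n$) gives the recursion $\beta_{k+1}=s\beta_{k}-4$, and your observation that $s<2_{**}$ is equivalent to $\beta_{0}<\gamma(s)$, the fixed point, makes the exponents decrease geometrically until $|\mathcal{U}|$ is bounded; note also that $s\beta_{k}<s\beta_{0}<n$ along the whole iteration, so local integrability never fails, and the borderline $s\beta_{k}=4$ is harmless as you say. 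Your route is more hands-on and yields explicit intermediate decay rates, and it is consistent with how the paper itself uses \eqref{integralsystem} in this exponent range (Lemma~\ref{lm:upperestimateserrin}) and with the barrier argument in the Aviles case (Lemma~\ref{lm:removablesingularityaviles}); its cost is that it leans on the integral representation (hence on the superharmonicity hypothesis, which is anyway assumed) and on one point you rightly flag and must not skip: after boundedness you need the equation, or the representation, to hold across the origin in the distributional sense (e.g.\ via the cut-off argument of Lemma~\ref{lm:integrability} or a B\^ocher-type theorem for the biharmonic part) before invoking interior $W^{4,q}$ and Schauder estimates — a step the paper's $L^{q}$ argument also implicitly uses, and which is standard once $|\mathcal{U}|^{s}\in L^{1}$.
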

	
	\begin{proof}
		By \eqref{serrinestimate}, we get $\mathcal{U} \in L^{q}\left(B_{1},\mathbb{R}^{p}\right)$ for any $q\in[1,2_{**})$.
		Moreover, since $s\in(1,2_{**})$ and $|\Delta^2\mathcal{U}| \leqslant|\mathcal{U}|^{s}$, it follows $-\Delta^2 \mathcal{U} \in L^{q/s}\left(B_{1},\mathbb{R}^{p}\right)$ for any $q\in[1,2_{**})$. Whence, we can use standard elliptic theory for each component of $\mathcal{U}$, and a bootstrap argument to find $\mathcal{U}\in W^{4,N}\left(B_{1},\mathbb{R}^{p}\right)$ for any $N\in(1,\infty)$. In particular, it holds from the Morrey embedding that $\mathcal{U} \in C^{3, \zeta}\left(B_{1},\mathbb{R}^{p}\right)$ for any $\zeta\in(0,1)$. 
		Therefore, $\mathcal{U}$ must have a removable singularity at the origin.
	\end{proof}
	
	Now we are in a position to prove our main result of this part.
	
	\begin{proof}[Proof of Proposition~\ref{prop:serrinlionscase}]
		Suppose that $\mathcal{U}$ has a non-removable singularity at the origin. Using Lemma~\ref{lm:removabilityserrin}, we get that $\mathcal{U}$ does not satisfy \eqref{serrinestimate}, that is, there exists $\rho>0$, $i\in I$, and $\{r_{k}\}_{k\in\mathbb{N}}$ such that $r_{k}\rightarrow 0$ as $k\rightarrow\infty$ satisfying
		\begin{equation*}
		\sup_{\partial B_{r_{k}}}u_{i} \geqslant \rho r_{k}^{4-n}.
		\end{equation*}
		On the other hand, by the Harnack inequality in Lemma~\ref{lm:gradientestimates}, there exists $c_1>0$ satisfying
		\begin{equation*}
		\inf_{\partial B_{r_{k}}} u_{1} \geqslant c_{1} \rho r_{k}^{4-n},
		\end{equation*}
		where $c_{1}>0$ depends only on $n$, $p$ and $s$. 
		Taking $0<\rho\ll1$ smaller to ensure that there exists $c_2\rho \leqslant \inf_{\partial B_{1/2}}u_{i}$, it follows from using twice the maximum principle that
		\begin{equation*}
		u_{i}(x) \geqslant c_{2} \rho|x|^{4-n} \quad \mbox{in} \quad B^*_{1/2},
		\end{equation*}
		which proves the asymptotic lower bound estimate in this case, and together with Lemma~\ref{lm:upperestimateserrin}, the proof of the proposition is concluded. 
	\end{proof}
	
	\subsection{Gidas--Spruck case}\label{subsec:gidasspruck}
	The objective of this subsection is to prove Theorem~\ref{Thm3.2:asymptotics} (c).
	Our strategy is based on the monotonicity formula for the Pohozaev functional in cylindrical coordinates (see Proposition~\ref{lm:monotonicityformula}), which relies on \cite{MR4123335}.
	More precisely, we show that the local models near the origin are the limit blow-up solutions, whose limits are provided by its image under the action of the spherical Pohozaev functional.
	Finally, to prove the removability of the singularity theorem, we use a technique relying on the regularity lifting method in \cite{MR1338474}.
	
	\begin{proposition}\label{prop:gidasspruckcase}
		Let $R<\infty$, $s\in(2_{**},2^{**}-1)$, and $\mathcal{U}$ be a nonnegative singular solution to \eqref{eq:subcriticalsystem}. 
		Then, there exists $K_{0}(s,n)>0$ such that
		\begin{equation*}
		|\mathcal{U}(x)|=(1+o(1))K_0(n,s)^{\frac{1}{s-1}}|x|^{-\gamma(s)}.
		\end{equation*}
	\end{proposition}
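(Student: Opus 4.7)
The plan is to transfer the problem to the cylinder via the Emden--Fowler transformation, exploit compactness of translates together with the monotonicity of the Pohozaev functional to identify all subsequential limits at infinity, and then rule out the trivial limit using the non-removability hypothesis. Concretely, I first apply $\mathfrak{F}$ from \eqref{cyltransform} so that $\mathcal{V}=\mathfrak{F}(\mathcal{U})$ solves \eqref{eq:subcriticalcylindricalsystem} on $\mathcal{C}_T$, and the desired asymptotic as $x\to 0$ becomes $|\mathcal{V}(t,\theta)|\to K_0(n,s)^{1/(s-1)}$ as $t\to\infty$, uniformly in $\theta$. The universal upper bound in Lemma~\ref{lm:universalestimates} together with Corollary~\ref{cor:universalestimates} gives uniform $C^{4,\zeta}$ bounds for $\mathcal{V}$ on each slab $(T_0,\infty)\times\mathbb{S}^{n-1}$.

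With these bounds in place, take an arbitrary sequence $t_k\to\infty$ and consider the translates $\mathcal{V}_k(t,\theta)=\mathcal{V}(t+t_k,\theta)$. By Arzelà--Ascoli one extracts a subsequential limit $\mathcal{V}_\infty\in C^{4,\zeta}_{\rm loc}(\mathbb{R}\times\mathbb{S}^{n-1},\mathbb{R}^p)$ solving the same cylindrical system on the entire cylinder. By Proposition~\ref{lm:monotonicityformula} the limit $\mathcal{P}_{\rm cyl}(\infty,\mathcal{V})$ exists, hence the Pohozaev functional of each translate converges to the same value, so $\mathcal{P}_{\rm cyl}(t,\mathcal{V}_\infty)$ is constant in $t$. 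Plugging $\mathcal{V}^{(1)}=0$ into the monotonicity identity \eqref{eq:monotonicityformula}, combined with $K_3(n,s)<0$ and $J_1(n,s)>0$ (see Remark~\ref{rmk:signoncoefficients}), forces $\mathcal{V}_\infty$ to be $t$-independent; equivalently $\mathcal{U}_\infty=\mathfrak{F}^{-1}(\mathcal{V}_\infty)$ is homogeneous of degree $-\gamma(s)$ on $\mathbb{R}^n\setminus\{0\}$, as in Lemma~\ref{lm:homogeneity}. The classification in Lemma~\ref{lm:blowupclassification}(b) then yields the dichotomy $|\mathcal{V}_\infty|\equiv 0$ or $|\mathcal{V}_\infty|\equiv K_0(n,s)^{1/(s-1)}$.

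The delicate point is excluding the trivial alternative for every sequence. Suppose, for contradiction, that $|\mathcal{V}(t_k,\cdot)|\to 0$ along some $t_k\to\infty$; by the constancy of $\mathcal{P}_{\rm cyl}(\infty,\mathcal{V})$ and the corresponding limit along $\mathcal{V}_\infty$, this would force $\mathcal{P}_{\rm cyl}(\infty,\mathcal{V})=0$, so the same argument applied to every subsequence shows that in fact $|\mathcal{V}(t,\theta)|\to 0$ uniformly in $\theta$ as $t\to\infty$. Translating back to $\mathcal{U}$, this gives $|\mathcal{U}(x)|=o(|x|^{-\gamma(s)})$ as $x\to 0$. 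Since $s>2_{**}$, a Riesz-potential/bootstrap iteration in the spirit of Lemma~\ref{integrability} improves the integrability of $|\mathcal{U}|^{s-1}\mathcal{U}$ sufficiently so that the integral representation in Proposition~\ref{lm:integralrepresentation} yields $\mathcal{U}\in L^\infty_{\rm loc}(B_{r_0})$, and bootstrapping via $W^{4,N}$ estimates shows that each component extends continuously through the origin. This contradicts the assumption that the origin is a non-removable singularity.

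Therefore $|\mathcal{V}_\infty|\equiv K_0(n,s)^{1/(s-1)}$ for every subsequential limit, which means $|\mathcal{V}(t,\theta)|\to K_0(n,s)^{1/(s-1)}$ uniformly in $\theta$ as $t\to\infty$. Combined with the asymptotic radial symmetry $|\mathcal{U}(x)|=(1+\mathcal{O}(|x|))|\overline{\mathcal{U}}(|x|)|$ from Proposition~\ref{prop:asymptoticsymmetry} and the inversion $\mathfrak{F}^{-1}$, this gives the claimed
\[
|\mathcal{U}(x)|=(1+o(1))K_0(n,s)^{\frac{1}{s-1}}|x|^{-\gamma(s)} \quad \text{as} \quad x\to 0.
\]
I expect the removability/trivial-limit exclusion to be the most delicate step, since the bootstrap from $|\mathcal{U}|=o(|x|^{-\gamma(s)})$ to continuity at the origin requires careful handling of the fourth order integral representation together with the subcritical exponent condition $s>2_{**}$.
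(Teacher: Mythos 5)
Your overall architecture mirrors the paper's own two-step proof: pass to the cylinder, use the monotonicity of the Pohozaev functional together with compactness of translates to derive the dichotomy (trivial limit vs.\ the explicit constant limit), and then rule out the trivial alternative by showing that $|\mathcal{U}|=o(|x|^{-\gamma(s)})$ would force a removable singularity, contradicting the hypothesis. The translate sequence $\mathcal{V}(\cdot+t_k,\cdot)$ is of course equivalent to the rescaled family $\widehat{\mathcal{U}}_{\lambda_k}$ the paper uses in Lemma~\ref{lm:limitinglevesgidasspruck}, so this portion reproduces the paper's argument, modulo one small slip: the coefficient whose positivity you need in the monotonicity identity \eqref{eq:monotonicityformula} is $K_1(n,s)>0$, not $J_1(n,s)>0$ (which does not appear in the formula at all). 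The logic should also run in the direction ``constancy of $\mathcal{P}_{\rm cyl}$ forces $\mathcal{V}^{(1)}_\infty\equiv0$'', rather than ``plugging $\mathcal{V}^{(1)}=0$ into the identity.''

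The genuine gap is in the removability step. A ``Riesz-potential/bootstrap iteration in the spirit of Lemma~\ref{integrability}'' does not close when $s\in(2_{**},2^{**}-1)$. The power $-\gamma(s)$ is exactly self-reproducing under convolution with $|x|^{4-n}$ (since $4-s\gamma(s)=-\gamma(s)$), so a naive iteration never lowers the exponent; and for the integral representation to give local boundedness via the Riesz-potential embedding you would need $|\mathcal{U}|^{s-1}\in L^{n/4}$, i.e.\ $|\mathcal{U}|\in L^{n/\gamma(s)}$. But the pointwise $o$-estimate alone only gives $|\mathcal{U}|\in L^{q}$ for $q<n/\gamma(s)$ — the endpoint $q=n/\gamma(s)$ fails because $o(|x|^{-n})$ need not be integrable without a quantitative rate on the $o(1)$. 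This is precisely why the paper's Lemma~\ref{lm:removablesingularitygidasspruck} does not proceed this way: it first proves the endpoint integrability $\int_{B_{1/2}}|\mathcal{U}|^{n/\gamma(s)}<\infty$ via a specially constructed Pohozaev-type test function $\phi(|x|)=|x|^{\zeta(n,s)}$ with $\Delta^2\phi/\phi=A(n,s)|x|^{-4}>0$ (Claim~1), and only then invokes the regularity lifting method of \cite{MR1338474} with the split $V=V_M+\widetilde{V}_M$ to upgrade to $L^q$ for all $q$ and conclude via Morrey embedding. Alternatively, a careful barrier construction as in the paper's Lemma~\ref{lm:removablesingularityaviles} could work, but then the gain comes from the small prefactor being raised to the power $s$ at each step, not from improving the $L^q$ exponent — a distinct mechanism that your sketch neither flags nor carries out. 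As written, your proof of removability would not go through.
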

	
	Before we provide the proof of this proposition, we need to establish some auxiliary results. 
	First, we show that solution $\mathcal{U}$ to \eqref{eq:subcriticalsystem} satisfy some uniform bound in cylindrical coordinates $\mathcal{V}=\mathfrak{F}(\mathcal{U})$ (see \eqref{cyltransform}).
	
	\begin{lemma}\label{lm:uniformestimategidasspruck}
		Let $R<\infty$, $s\in(2_{**},2^{**}-1)$, and $\mathcal{U}$ be a nonnegative singular solution to \eqref{eq:subcriticalsystem}. 
		Then, there exists $C>0$ such that
		\begin{equation*}
		|\mathcal{V}(t,\theta)|+|\mathcal{V}^{(1)}(t,\theta)|+|\mathcal{V}^{(2)}(t,\theta)|+|\mathcal{V}^{(3)}(t,\theta)|+|\nabla_{\theta}\mathcal{V}(t,\theta)|+|\Delta_{\theta}\mathcal{V}(t,\theta)|\leqslant C \quad {\rm in} \quad \mathcal{C}_{-\ln2}.	\end{equation*}
	\end{lemma}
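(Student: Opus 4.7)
The plan is to reduce everything to the pointwise \emph{a priori} estimates for $\mathcal{U}$ already established in Subsection~\ref{subsec:universalupperbounds} and then unwind them through the cylindrical change of variables \eqref{cyltransform}. After a standard scaling we may assume $R=1$, so that Lemma~\ref{lm:universalestimates} and Corollary~\ref{cor:universalestimates} furnish a constant $C>0$ with
\begin{equation*}
\sum_{j=0}^{3} |x|^{\gamma(s)+j}\bigl|D^{(j)}\mathcal{U}(x)\bigr|\leqslant C \quad \text{in } B^*_{1/2},
\end{equation*}
while on $\{1/2\leqslant |x|<2\}$ the solution $\mathcal{U}$ is smooth and classical interior Schauder estimates provide uniform bounds on $\mathcal{U}$ and its derivatives up to order four. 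Combining these two regions yields a bound of the form $|x|^{\gamma(s)+j}|D^{(j)}\mathcal{U}(x)|\leqslant C$ valid in all of $B^*_2$.

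First I would write the cylindrical change of variables in its differential form. Using $r=e^{-t}$ one has $\partial_t r=-r$ and the angular part of the Euclidean gradient satisfies $\nabla_\theta\mathcal{V}=r^{\gamma(s)}\nabla_\sigma\mathcal{U}$ with $|\nabla_\sigma\mathcal{U}|\leqslant r|\nabla\mathcal{U}|$, and analogously $|\Delta_\theta\mathcal{V}|$ involves at most second order angular derivatives of $\mathcal{U}$ each carrying one power of $r$. An induction on $j$ (or a direct application of the Fa\`a di Bruno formula to $\mathcal{V}(t,\theta)=r^{\gamma(s)}\mathcal{U}(r,\sigma)$ with $r=e^{-t}$) then shows the schematic identity
\begin{equation*}
\mathcal{V}^{(j)}(t,\theta)=\sum_{k=0}^{j} c_{j,k}(s)\, r^{\gamma(s)+k}\,\partial_r^{(k)}\mathcal{U}(r,\sigma),
\end{equation*}
for universal constants $c_{j,k}(s)$ depending only on $n,s,j,k$, and analogous expressions for $\nabla_\theta\mathcal{V}$ and $\Delta_\theta\mathcal{V}$ in which every radial or angular derivative of $\mathcal{U}$ comes multiplied by exactly the right power of $r$ to saturate the universal estimate above.

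The key point is that each term on the right side is of the form $r^{\gamma(s)+k}|D^{(k)}\mathcal{U}|$ with $0\leqslant k\leqslant 3$, which by the \emph{a priori} estimate of the previous paragraph is bounded by the fixed constant $C$. Taking absolute values, summing, and noting that the corresponding range of $t$ is $(-\ln 2,\infty)$, i.e.\ $r\in(0,2)$, yields at once
\begin{equation*}
|\mathcal{V}|+|\mathcal{V}^{(1)}|+|\mathcal{V}^{(2)}|+|\mathcal{V}^{(3)}|+|\nabla_\theta\mathcal{V}|+|\Delta_\theta\mathcal{V}|\leqslant C \quad \text{in } \mathcal{C}_{-\ln 2},
\end{equation*}
which is the claim. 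There is no genuine obstacle: the only point to be careful about is the bookkeeping of the chain rule — verifying that, in every term appearing in $\mathcal{V}^{(j)}$, the factor of $r^{\gamma(s)+k}$ precisely matches the weight in the universal estimate of Corollary~\ref{cor:universalestimates}. This is a straightforward algebraic check once $r=e^{-t}$ is introduced. (If one wished, the bounds on $\nabla_\theta\mathcal{V}$ and $\Delta_\theta\mathcal{V}$ could alternatively be derived from interior Schauder estimates applied to \eqref{eq:subcriticalcylindricalsystem} on unit-length slabs of the cylinder, using the $L^\infty$ bound on $|\mathcal{V}|$ and the subcritical growth of the right-hand side.)
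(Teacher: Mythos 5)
Your proof uses exactly the same ingredients as the paper's: Lemma~\ref{lm:universalestimates} and Corollary~\ref{cor:universalestimates} to bound $\sum_{j=0}^{3}|x|^{\gamma(s)+j}|D^{(j)}\mathcal{U}|$, followed by the chain rule for the cylindrical transformation \eqref{cyltransform} to transfer these bounds to $\mathcal{V}$ and its derivatives. The only cosmetic difference is that the paper handles the transition from the region $B^*_{1/2}$ (where the universal estimate lives) to the full range of $t$ by "a direct rescaling" of the domain radius, whereas you patch the outer annulus with interior Schauder estimates — equivalent bookkeeping, though you should be careful that after normalizing to $R=1$ the annulus $\{1\leqslant|x|<2\}$ lies outside the solution's domain, so you should instead rescale the domain radius to, say, $4$ so the universal estimate itself covers $B^*_2$ and hence all of $\mathcal{C}_{-\ln 2}$.
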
 
	
	\begin{proof}
		First, by Lemma~\ref{lm:universalestimates}, we know that $\mathcal{V}$ is uniformly bounded. 
		Moreover, using Corollary~\ref{cor:universalestimates}, we get
		\begin{align*}
		|\mathcal{V}^{(1)}(t,\theta)|+|\nabla_{\theta} \mathcal{V}(t,\theta)|& \leqslant C \sum_{j=0}^{1}|x| ^{\gamma(s)}|D^{(j)} \mathcal{U}(x)| \leqslant C, \\
		|\mathcal{V}^{(2)}(t,\theta)|+|\Delta_{\theta} \mathcal{V}(t,\theta)| & \leqslant C \sum_{j=0}^{2}|x|^{\gamma(s)+j}|D^{(j)}\mathcal{U}(x)| \leqslant C, \\
		|\mathcal{V}^{(3)}(t,\theta)| & \leqslant C \sum_{j=0}^{3}|x|^{\gamma(s)+j}|D^{(j)} \mathcal{U}(x)| \leqslant C,
		\end{align*}
		for $0<|x|<1/2$, which by a direct rescaling proves the lemma.
	\end{proof}
	
	Now we use this rescaled family $\{\widehat{\mathcal{U}}_{\lambda}\}_{\lambda>0}\subset C^{4,\zeta}(B^*_1,\mathbb{R}^p)$, for some $\zeta\in(0,1)$, to obtain the blow-up limit for \eqref{eq:subcriticalsystem}. This allows us to study the limiting values for the Pohozaev functional, by using the classification results from Section~\ref{sec:blowlimitcase}.
	
	\begin{lemma}\label{lm:limitinglevesgidasspruck}
		Let $R<\infty$, $s\in(2_{**},2^{**}-1)$, and $\mathcal{U}$ be a nonnegative singular solution to \eqref{eq:subcriticalsystem} with $\mathcal{V}={\mathfrak{F}}(\mathcal{U})$ its cylindrical transform given by \eqref{newcylindrical}. Then, ${\mathcal{P}}_{\rm cyl}(\infty,\mathcal{V}) \in\left\{-l^*(n,s),0\right\}$, where $l^*(n,s)$ is defined by \eqref{limintingconstant}.
		Moreover, it follows\\
		\noindent{\rm (i)} ${\mathcal{P}}_{\rm cyl}(\infty,\mathcal{V})=0$ if, and only if,
		\begin{equation}\label{asymptoticgidasspruck}
		|\mathcal{U}(x)|=o\left(|x|^{-\gamma(s)}\right) \quad \mbox{as} \quad x \rightarrow 0.
		\end{equation}
		\noindent{\rm (ii)} ${\mathcal{P}}_{\rm cyl}(\infty,\mathcal{V})=-l^*(n,s)$ if, and only if,
		\begin{equation*}
		|\mathcal{U}(x)|=(1+o(1)K_0(n,s)^{\frac{1}{s-1}}|x|^{-\gamma(s)} \quad \mbox{as} \quad x \rightarrow 0,
		\end{equation*}
	\end{lemma}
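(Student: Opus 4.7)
The plan is to exploit the family of rescaled solutions $\widehat{\mathcal{U}}_\lambda(x)=\lambda^{\gamma(s)}\mathcal{U}(\lambda x)$ and the classification of its blow-up limits provided by Proposition~\ref{prop:limitlocalbehavior}, coupling it with the monotonicity and scaling invariance of the Pohozaev functional established in Proposition~\ref{lm:monotonicityformula} and \eqref{scaleinvariance}. First I note that by Proposition~\ref{lm:monotonicityformula} combined with Remark~\ref{rmk:limitrelation}, the limit $\mathcal{P}_{\rm sph}(0,\mathcal{U})=\lim_{r\to 0}\mathcal{P}_{\rm sph}(r,\mathcal{U})=\mathcal{P}_{\rm cyl}(\infty,\mathcal{V})$ (up to the fixed factor $\omega_{n-1}$) exists; by Lemma~\ref{lm:universalestimates} and Corollary~\ref{cor:universalestimates} the family $\{\widehat{\mathcal{U}}_\lambda\}$ is bounded in $C^{4,\zeta}_{\rm loc}(\mathbb{R}^n\setminus\{0\},\mathbb{R}^p)$ and thus, for any sequence $\lambda_k\to 0$, admits a subsequential blow-up limit $\mathcal{U}_0\in C^{4,\zeta}_{\rm loc}(\mathbb{R}^n\setminus\{0\},\mathbb{R}^p)$ solving \eqref{eq:subcriticalsystem} with $R=\infty$.

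Next, by the scaling invariance \eqref{scaleinvariance}, for each fixed $r>0$,
\begin{equation*}
\mathcal{P}_{\rm sph}(r,\mathcal{U}_0)=\lim_{k\to\infty}\mathcal{P}_{\rm sph}(r,\widehat{\mathcal{U}}_{\lambda_k})=\lim_{k\to\infty}\mathcal{P}_{\rm sph}(r\lambda_k,\mathcal{U})=\mathcal{P}_{\rm sph}(0,\mathcal{U}),
\end{equation*}
so $\mathcal{P}_{\rm sph}(\cdot,\mathcal{U}_0)$ is constant. By Lemma~\ref{lm:homogeneity}, $\mathcal{U}_0$ is homogeneous of degree $-\gamma(s)$, and hence Lemma~\ref{lm:blowupclassification}(b) forces either $\mathcal{U}_0\equiv 0$ or $\mathcal{U}_0=\Lambda^* K_0(n,s)^{1/(s-1)}|x|^{-\gamma(s)}$ for some $\Lambda^*\in\mathbb{S}^{p-1}_{+,*}$. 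A direct computation of $\mathcal{P}_{\rm sph}$ on these two homogeneous profiles (using \eqref{pohozaevsphericalautonomous} and the definition of $l^*(n,s)$ in \eqref{limintingconstant}) yields exactly the two values $0$ and $-l^*(n,s)$, which proves the dichotomy $\mathcal{P}_{\rm cyl}(\infty,\mathcal{V})\in\{-l^*(n,s),0\}$. Since the limiting value does not depend on the chosen sequence, every blow-up limit is of the same type.

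For the equivalences, I would argue as follows. If $|\mathcal{U}(x)|=o(|x|^{-\gamma(s)})$ as $x\to 0$, then every blow-up limit $\mathcal{U}_0$ satisfies $|\mathcal{U}_0|\equiv 0$, whence $\mathcal{P}_{\rm cyl}(\infty,\mathcal{V})=0$. Conversely, if $\mathcal{P}_{\rm cyl}(\infty,\mathcal{V})=0$, then all blow-up limits are trivial; otherwise there would exist $\varepsilon_0>0$ and $x_k\to 0$ with $|x_k|^{\gamma(s)}|\mathcal{U}(x_k)|\geqslant\varepsilon_0$, and extracting a blow-up along $\lambda_k=|x_k|$ gives a nontrivial limit $\mathcal{U}_0$ with Pohozaev invariant $-l^*(n,s)\neq 0$, contradicting the uniqueness of the limit value. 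An identical dichotomy handles the case $\mathcal{P}_{\rm cyl}(\infty,\mathcal{V})=-l^*(n,s)$: every blow-up equals $\Lambda^* K_0(n,s)^{1/(s-1)}|x|^{-\gamma(s)}$ and the asymptotic radial symmetry from Proposition~\ref{prop:asymptoticsymmetry} together with the uniqueness of the scalar profile $|\mathcal{U}_0|$ upgrades convergence to the full asymptotic $|\mathcal{U}(x)|=(1+o(1))K_0(n,s)^{1/(s-1)}|x|^{-\gamma(s)}$.

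The main obstacle will be the converse direction in (ii), namely extracting from $\mathcal{P}_{\rm cyl}(\infty,\mathcal{V})=-l^*(n,s)$ the sharp asymptotic with coefficient $K_0(n,s)^{1/(s-1)}$. The delicate point is that a priori different blow-up sequences could converge to different scalar multiples $\Lambda^*$ on the sphere $\mathbb{S}^{p-1}_{+,*}$, so one must show that $|\Lambda^*|=1$ is forced by the shared Pohozaev level, and that the convergence of $|\widehat{\mathcal{U}}_\lambda|$ to $K_0(n,s)^{1/(s-1)}|x|^{-\gamma(s)}$ is independent of the sequence; this is precisely where Proposition~\ref{prop:asymptoticsymmetry} and the Harnack inequality in Lemma~\ref{lm:gradientestimates} intervene to promote the subsequential convergence to a full limit as $x\to 0$.
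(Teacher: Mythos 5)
Your proposal is correct and follows essentially the same strategy as the paper: compactness of the rescaled family, scaling invariance of the Pohozaev functional forcing $\mathcal{P}_{\rm sph}(\cdot,\mathcal{U}_0)$ to be constant along any subsequential blow-up, classification of the resulting limit, and uniqueness of the limiting Pohozaev level to pass from subsequential to full convergence. The one place your route differs is in how the constant Pohozaev level is upgraded to the dichotomy $|\mathcal{U}_0|\in\{0,K_0(n,s)^{1/(s-1)}|x|^{-\gamma(s)}\}$: you compose Lemma~\ref{lm:homogeneity} (constant Pohozaev $\Rightarrow$ homogeneity of $\mathcal{U}_0$) with Lemma~\ref{lm:blowupclassification}(b), whereas the paper argues directly in cylindrical coordinates that $\partial_t\mathcal{P}_{\rm cyl}(t,\mathcal{V}_0)\equiv 0$ combined with the signs $K_3<0$, $K_1>0$ in the monotonicity formula \eqref{eq:monotonicityformula} forces $\mathcal{V}_0^{(1)}\equiv 0$, so $|\mathcal{V}_0|$ is a nonnegative root of the algebraic equation associated with \eqref{eq:subcriticalODEsystem}. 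The two arguments are logically equivalent (Lemma~\ref{lm:homogeneity} is itself proved via the Pohozaev identity), but your version is marginally cleaner in that it only invokes results already established in Section~\ref{sec:blowlimitcase} and sidesteps the paper's appeal to Theorem~\ref{thm:andrade-do2020} for radial symmetry of $\mathcal{U}_0$, which is stated there for the critical exponent $s=2^{**}-1$; the blow-up limit here lives in the subcritical range, so Proposition~\ref{prop:asymptoticsymmetrylimit} or the combination you use is the appropriate reference. One small remark on your final paragraph: the worry about different values of $|\Lambda^*|$ along different subsequences is vacuous, since $\Lambda^*\in\mathbb{S}^{p-1}_{+,*}$ has unit norm by definition; the scalar profile $|\mathcal{U}_0|$ is therefore already pinned down once the Pohozaev level is, and the full convergence $|\widehat{\mathcal{U}}_\lambda|\to K_0(n,s)^{1/(s-1)}|x|^{-\gamma(s)}$ in $C^0_{\rm loc}(\mathbb{R}^n\setminus\{0\})$ follows by the standard subsequence argument without any extra input from Proposition~\ref{prop:asymptoticsymmetry}.
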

	
	\begin{proof}
		Initially, by Lemma~\ref{lm:universalestimates}, for any $K\subset B_{1/2\lambda}$ compact subset, the family 
		$\{\widehat{\mathcal{U}}_{\lambda}\}_{\lambda>0}\subset C^{4,\zeta}(B^*_1,\mathbb{R}^p)$ is uniformly bounded, for some $\zeta\in(0,1)$.
		Then, by standard elliptic theory, there exists a nonnegative function $\mathcal{U}_{0} \in C^{4,\zeta}\left(\mathbb{R}^{n}\setminus\{0\},\mathbb{R}^p\right)$, such that, up to a subsequence, we have that $\|\widehat{\mathcal{U}}-\mathcal{U}_{0}\|_{C_{\rm loc}^{4,\zeta}(\mathbb{R}^{n} \setminus\{0\})}$ as $\lambda \rightarrow 0$, where $\mathcal{U}_{0}$ satisfies the blow-up limit system \eqref{eq:subcriticalsystem}.
		Moreover, by Lemma~\ref{lm:superharmonicity},  we know that $\mathcal{U}_0$ is superharmonic, that is, $-\Delta(u_0)_i\geqslant 0$ in $\mathbb{R}^{n} \setminus\{0\}$, which, by the maximum principle, yields that either $(u_0)_i \equiv 0$
		or $(u_0)_i>0$
		in $\mathbb{R}^{n} \backslash\{0\}$
		for all $i\in I$.
		Therefore, by Theorem~\ref{thm:andrade-do2020}, the blow-up limit $\mathcal{U}_{0}$ is radially symmetric about the origin.
		Furthermore, by the scaling invariance of the Pohozaev functional, we get
		\begin{equation}\label{limitinggs}
			\mathcal{P}_{\rm sph}(r,\mathcal{U}_{0},s)=\lim_{\lambda\rightarrow 0} \mathcal{P}_{\rm sph}(r, \widehat{\mathcal{U}}_{\lambda},s)=\lim_{\lambda \rightarrow0} \mathcal{P}_{\rm sph}(\lambda r,\mathcal{U}_0,s)=\mathcal{P}_{\rm sph}(0, \mathcal{U}_0,s).
		\end{equation}
		In addition, if $\mathcal{V}_0=\mathfrak{F}(\mathcal{U}_0)$, then it satisfies \eqref{eq:subcriticalcylindricalsystem}, which by \eqref{limitinggs}, yields that $\mathcal{P}_{\rm cyl}(t,\mathcal{V}_{0},s)=\mathcal{P}_{\rm sph}(r,\mathcal{U}_{0},s)$ is a constant.
		Consequently, by the monotonicity formula in Proposition~\ref{lm:monotonicityformula}, we get 
		\begin{equation*}
			\frac{\ud}{\ud t} \mathcal{P}_{\rm cyl}(t,\mathcal{V}_{0},s)=\left[K_{3}(n,s)|\mathcal{V}_0^{(2)}|^{2}-K_{1}(n,s)|\mathcal{V}_0^{(1)}|^{2}\right] \equiv 0.
		\end{equation*}
		Moreover, since $K_{3}(n,s)<0$ and $K_{1}(n,s)>0$, we find that $|\mathcal{V}^{(1)}_0|\equiv 0$ in $\mathbb{R}$, and so $|\mathcal{V}_0|$ is constant, which can be directly computed, namely either
		\begin{equation*}
			|\mathcal{V}_0|=0 \quad \mbox{or} \quad |\mathcal{V}_0|=K_{0}(n,s)^{\frac{1}{s-1}}.
		\end{equation*}
		Moreover, by \eqref{limitinggs} and Remark~\ref{relatinpohozevfinctionals}, it follows
		\begin{equation*}
			\mathcal{P}_{\rm cyl}(0, \mathcal{V}_0,s) \in\left\{-l^*(n,s) ,0\right\} \quad \mbox{and} \quad \mathcal{P}_{\rm sph}(0, \mathcal{U}_0,s)\in\left\{-\omega_{n-1}l^*(n,s) ,0\right\}.
		\end{equation*}
		
		Finally, if $\mathcal{P}_{\rm sph}(0, \mathcal{U}_0,s)=0$, then, by uniqueness of the limit $|\mathcal{U}_0|\equiv 0$. Whence, we conclude that $\|\widehat{\mathcal{U}}_{\lambda}\|_{C^{4,\zeta}(K,\mathbb{R}^p)} \rightarrow 0$ for any sequence of $\lambda\rightarrow0$, for some $\zeta\in(0,1)$, which straightforwardly provides \eqref{asymptoticgidasspruck}.
		Otherwise, we have
		\begin{equation*}
			|\mathcal{U}_0|\equiv K_{0}(n,s)^{\frac{1}{s-1}}|x|^{-\gamma(s)},
		\end{equation*}
		which proves (ii) of this lemma and finishes the proof of the lemma.
	\end{proof}
	
	Next, we use the last lemma to prove the removable singularity theorem. 
	Our proof is based on regularity lifting methods combined with the De Giorgi--Nash-Moser iteration technique.
	
	\begin{lemma}\label{lm:removablesingularitygidasspruck}
		Let $R<\infty$, $s\in(2_{**},2^{**}-1)$, and $\mathcal{U}$ be a nonnegative solution to \eqref{eq:subcriticalsystem}. 
		If
		\begin{equation*}
		|\mathcal{U}(x)|=o\left(|x|^{-\gamma(s)}\right) \quad \mbox{as} \quad x \rightarrow 0,
		\end{equation*}
		then the origin is a removable singularity.
	\end{lemma}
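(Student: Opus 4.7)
The strategy combines the integral representation for $u_i$ with a regularity lifting / Brezis--Kato bootstrap, exploiting that the decay hypothesis $|\mathcal{U}(x)|=o(|x|^{-\gamma(s)})$ forces the coefficient $V(x):=|\mathcal{U}(x)|^{s-1}$ to satisfy $V(x)=o(|x|^{-4})$, placing it strictly below the critical scale of the biharmonic operator. Since $s>2_{**}$ yields $\gamma(s)<n-4$, the decay already provides $|\mathcal{U}|\in L^1(B_1,\mathbb{R}^p)\cap L^s(B_1,\mathbb{R}^p)$, so Lemma~\ref{integrability} applies and delivers
\begin{equation*}
u_i(x)=\int_{B_1}G_2(x,y)V(y)u_i(y)\,\ud y+\psi_i(x),\quad x\in B_1^*,
\end{equation*}
with $\psi_i$ biharmonic and smooth at the origin.

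First I would localize to $B_{r_\varepsilon}$ on which $V(y)\leqslant\varepsilon|y|^{-4}$, decomposing the representation as $u_i=T_Vu_i+h_i$, where
\begin{equation*}
T_Vf(x):=C(n,2)\int_{B_{r_\varepsilon}}|x-y|^{4-n}V(y)f(y)\,\ud y,
\end{equation*}
and $h_i$ is uniformly bounded on $B_{r_\varepsilon/2}$: the tail $\int_{B_1\setminus B_{r_\varepsilon}}G_2(\cdot,y)V(y)u_i(y)\,\ud y$ is controlled by $|\mathcal{U}|^s\in L^1$ and the uniform lower bound $|x-y|\geqslant r_\varepsilon/2$, while the smooth correction $-A(x,y)V(y)u_i(y)$ in the decomposition \eqref{greenfunction} is handled analogously. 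A Stein--Weiss-type inequality with the power weight $|y|^{-4}$ then shows that $T_V$ is a contraction on $L^p(B_{r_\varepsilon})$ across an appropriate range of exponents, with operator norm at most $C_{n,p}\varepsilon$, hence arbitrarily small upon shrinking $r_\varepsilon$.

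Second, I would implement the Chen--Li regularity lifting: the contractivity of $T_V$ on a scale of $L^p$-spaces, combined with the boundedness of $h_i$, forces the unique solution of $u_i=T_Vu_i+h_i$ to inherit the integrability of $h_i$ in each such space; iterating from the initial $u_i\in L^s$ raises $u_i$ to $L^p_{\loc}(B_{r_\varepsilon/2})$ for every finite $p$. Once this is achieved, $|\mathcal{U}|^{s-1}u_i\in L^p_{\loc}$ for arbitrarily large $p$, and standard $\Delta^2$-elliptic regularity, followed by Sobolev embedding and Schauder theory, upgrades $u_i$ to $C^{4,\zeta}(B_{r_\varepsilon/4})$, which is precisely the asserted removability. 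The principal obstacle will be handling the critical Lebesgue exponent $n/4$: the weight $|y|^{-4}$ lies in weak $L^{n/4,\infty}$ but not in $L^{n/4}$, so to push the iteration through the critical value one must employ Lorentz-space refinements of the Stein--Weiss inequality or a Morrey-type splitting of the potential into a small $L^\infty$ piece and a small-measure piece. The restriction $s>2_{**}$ is essential throughout, both for the initial integrability $\mathcal{U}\in L^1\cap L^s$ and for guaranteeing that $V$ sits in a strictly subcritical regime in which a single choice of $\varepsilon$ suffices for the finitely many iterations required.
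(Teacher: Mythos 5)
Your proposal is correct in its overall architecture and reaches the same endpoint as the paper (integral representation, Chen--Li regularity lifting, then elliptic and Schauder bootstrap to $C^{4,\zeta}$), but the mechanism by which you obtain smallness of the potential term is genuinely different. The paper first proves an integrability statement (its Claim~1): testing \eqref{eq:subcriticalsystem} against the explicit power $\phi(|x|)=|x|^{\zeta(n,s)}$ with a cut-off, it shows $\int_{B_{1/2}}|\mathcal{U}|^{n/\gamma(s)}\,\ud x<\infty$, i.e.\ $V=|\mathcal{U}|^{s-1}\in L^{n/4}$; smallness is then extracted by truncating $V$ at a large height $M$ and applying the Hardy--Littlewood--Sobolev inequality, so that $\|V_M\|_{L^{n/4}}$ is small by absolute continuity of the integral. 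You instead exploit the pointwise consequence $V(y)\leqslant\varepsilon|y|^{-4}$ of the decay hypothesis and appeal to a weighted (Stein--Weiss) bound for $f\mapsto\int|x-y|^{4-n}|y|^{-4}f(y)\,\ud y$, which bypasses the paper's Claim~1 entirely. This route does close: the diagonal Stein--Weiss inequality with weight $|y|^{-4}$ and kernel $|x-y|^{4-n}$ holds on $L^{p}$ exactly for $p>n/(n-4)=2_{**}$, and both your starting exponent $s\in(2_{**},2^{**}-1)$ and any large target exponent $q$ lie in that range, so your concern about ``pushing the iteration through the critical value $n/4$'' and the proposed Lorentz-space refinements are unnecessary --- no critical exponent is crossed, and a single $\varepsilon$ handles the finitely many spaces needed. (If you did want to avoid weighted inequalities altogether, the paper's test-function computation is precisely the substitute, and it also yields the extra information $V\in L^{n/4}$.) Two small points to make explicit in a write-up: the decay hypothesis itself gives $\mathcal{U}\in L^{1}\cap L^{s}$ near the origin (since $s\gamma(s)<n$ for $s>2_{**}$), so the representation formula of Lemma~\ref{integrability} is legitimately available without invoking superharmonicity; and your bounded remainder $h_i$ should be checked to be smooth, not merely bounded, on the small ball, which follows since the outer integral, the $A(x,y)$ correction in \eqref{greenfunction}, and the boundary terms all have smooth kernels there.
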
 
	
	\begin{proof}
		The proof will be divided into some claims.
		
		\noindent{\bf Claim 1:} If
		$|\mathcal{U}(x)|=o\left(|x|^{-\gamma(s)}\right)$ as $x \rightarrow 0
		$, then
		\begin{equation}\label{condition}
		\int_{B_{1/2}}|\mathcal{U}|^{n\gamma(s)^{-1}}\ud x<\infty.	
		\end{equation}
		
		\noindent In fact, let us consider $\phi(|x|)=|x|^{\zeta(n,s)}$, where $\zeta(n,s)=-2\gamma(s)(2^{**}-1)(n-2_{**})(s-1)^{-1}$.
		Then, a direct computation, provides
		\begin{equation*}
		\Delta^{2}\phi=\zeta(n,s)(\zeta(n,s)-2)(\zeta(n,s)+n-2)(\zeta(n,s)+n-4)|x|^{\zeta(n,s)-4},
		\end{equation*}
		which, since $\zeta(n,s)+n-4=\gamma(s)>0$, it follows that
		\begin{equation*}
		A(n,s):=\zeta(n,s)(\zeta(n,s)-2)(\zeta(n,s)+n-2)(\zeta(n,s)+n-4)>0.
		\end{equation*}
		Thus, we can write
		\begin{equation}\label{gs4}
		\frac{\Delta^{2} \phi}{\phi}=\frac{A(n,s)}{|x|^{4}} \quad \mbox{in} \quad \mathbb{R}^{n}\setminus\{0\}.
		\end{equation}
		
		For any $0<\varepsilon \ll 1$, let us consider $\eta_{\varepsilon}\in C^{\infty}\left(\mathbb{R}^{n}\right)$ with $0\leqslant\eta_{\varepsilon}\leqslant1$ a cut-off function satisfying
		\begin{equation}\label{cutoff2}
		\eta_{\varepsilon}(x)=
		\begin{cases}
		0, & \mbox{for} \ \varepsilon\leqslant|x| \leqslant 1/2\\
		1, & \mbox{for} \ |x|\leqslant\varepsilon/2 \ \mbox{or} \ |x|\geqslant3/4,
		\end{cases}
		\end{equation}
		and $|D^{(j)} \eta_{\varepsilon}(x)| \leqslant C \varepsilon^{-j}$ for $j=0,1,2,3,4$.
		Defining $\xi_{\varepsilon}=\eta_{\varepsilon}\phi$, multiplying \eqref{eq:subcriticalsystem} by $\xi_{\varepsilon}$ and integrating by parts in $B_{1}$, we obtain
		\begin{equation}\label{gs1}
		\int_{B_{1}} \eta_{\varepsilon} u_i \phi\left(\frac{\Delta^{2} \phi}{\phi}-|\mathcal{U}|^{s-1}u_i\right)\ud x=-\int_{B_{1}} u_i \mathfrak{T}\left(\eta_{\varepsilon}, \phi\right)\ud x \quad \mbox{for all} \quad i\in I,
		\end{equation}
		where $\mathfrak{T}_\varepsilon:C^{\infty}_c(B_1)\rightarrow C^{\infty}_c(B_1)$ is defined by
		\begin{equation*}
		\mathfrak{T}_\varepsilon(\phi)=\mathfrak{T}\left(\eta_{\varepsilon}, \phi\right)=4 \nabla \eta_{\varepsilon}\nabla \Delta \phi+2 \Delta \eta_{\varepsilon} \Delta\phi+4 \Delta\eta_{\varepsilon}\Delta\phi+4\nabla\Delta\eta_{\varepsilon}\nabla\phi+\phi \Delta^{2}\eta_{\varepsilon}.
		\end{equation*}
		Using Lemma~\ref{lm:universalestimates} combined with the estimates on the cut-off function \eqref{cutoff2} and its derivatives, there exist $c_1,c_2>0$, independent of $\varepsilon$, satisfying the following estimates,
		\begin{equation*}
		\left|\int_{B_{1}} u_i \mathfrak{T}_\varepsilon(\phi)\ud x\right| \leqslant  c_{1}+c_{2} \varepsilon^{n} \varepsilon^{\zeta(n,s)-4} \varepsilon^{-\gamma(s)}<\infty,
		\end{equation*}
		which implies that the right-hand side of \eqref{gs1} is uniformly bounded.
		In addition, assumption \eqref{condition} yields that $|\mathcal{U}|^{s-1}(x)=o(1)|x|^{-4}$ as $x\rightarrow 0$, which together with \eqref{gs4} and \eqref{gs2} provides that there exists $C>0$ satisfying
		\begin{equation}\label{gs2}
		\int_{B_{1}}\eta_{\varepsilon} u_i|x|^{\zeta(n,s)-4}\ud x\leqslant C \quad \mbox{for all} \quad i\in I.
		\end{equation}
		Therefore, by Lemma~\ref{lm:universalestimates}, it holds
		\begin{align}\label{gs3}
		\int_{\{\varepsilon \leqslant|x| \leqslant {1}/{2}\}} |\mathcal{U}|^{n\gamma(s)^{-1}}\ud x &=\int_{\{\varepsilon \leqslant|x| \leqslant {1}/{2}\}} |\mathcal{U}||\mathcal{U}|^{n\gamma(s)^{-1}-1}\ud x\\\nonumber
		& \leqslant C\int_{\{\varepsilon \leqslant|x| \leqslant {1}/{2}\}} |\mathcal{U}||x|^{\zeta(n,s)-4}\ud x\\\nonumber
		&\leqslant C\int_{B_{1}} \eta_{\varepsilon} |\mathcal{U}||x|^{\zeta(n,s)-4}\ud x<\infty,
		\end{align}
		where the last inequality comes from \eqref{gs2}.
		Finally, passing to the limit as $\varepsilon \rightarrow 0$ in \eqref{gs3}, the proof of Claim 1 follows by applying the dominated convergence theorem.
		
		\noindent{\bf Claim 2:} If \eqref{condition}, holds then $\mathcal{U} \in L^{q}(B_1,\mathbb{R}^p)$ for all $q>2^{**}$.
		
		\noindent Indeed, by Proposition~\ref{lm:integralrepresentation}, there exist a Green function with homogeneous Dirichlet boundary conditions $G_2(x,y)$ and $\psi_i\in L^{\infty}_{\rm loc}(B_{1/2})$ with $\Delta\psi_i=0$ for all $i\in I$ such that
		\begin{equation*}
		u_i(x)=\int_{B_1}G_2(x,y)\Delta^2u_i\ud x+\psi_i(x) \quad \mbox{in} \quad B_{1/2}.
		\end{equation*}
		More precisely, $G_{2}(x, y)$ is a distributional solution to the Dirichlet problem
		\begin{equation*}
		\begin{cases}
		\Delta^{2} G_{2}(x,y)=\delta_x(y) & \mbox{in} \quad B_{1/2}\\
		G_{2}(x, y)=\partial_{\nu}G_{2}(x, y)=0 & \mbox{on} \quad \partial B_{1/2}.
		\end{cases}
		\end{equation*}
		and there exists positive constant $C_{n}>0$ such that
		\begin{equation*}
		0<G_{2}(x, y) \leqslant \Gamma_{2}(|x-y|):=C_{n}|x-y|^{4-n} \quad \mbox{for} \quad  x,y\in B_{1/2} \quad \mbox{and} \quad |x-y|>0,
		\end{equation*}
		where $\Gamma_2(x,y)=C_n|x-y|^{4-n}$ is the fundamental solution to $\Delta^2$ in $\mathbb{R}^n$.
		Recall that $\mathcal{U}=(u_1,\dots,u_p)$ satisfies
		\begin{equation}\label{potentialsystem}
		\Delta^2 u_{i}=V(x) u_{i} \quad \mbox{in} \quad B_1^*,
		\end{equation} 
		where $V(x)=|\mathcal{U}|^{s-1}$. 
		Moreover, using \eqref{condition}, we find that $V\in L^{n/4}(B_{1/2})$.
		
		Let us consider the $Z=C^{\infty}_c(B_{1/4})$, $X=L^{2^{**}}(B_{1/4})$ and $Y=L^{s}(B_{1/4})$ for $q>2^{**}$. 
		Hence, it is well-defined the following inverse operator
		\begin{equation*}
		(Tu)(x)=\int_{B_{1/4}}\Gamma_2(x,y)u(y)\ud y.
		\end{equation*}
		We also consider the operator $T_M:=\Gamma_2\ast V_M$, which applied in both sides of \eqref{potentialsystem}, provides $u_i=T_Mu_i+\widetilde{T}_Mu_i$, where 
		\begin{equation*}
		(T_M u_i)(x)=\int_{B_{1/4}}\Gamma_2(x,y)V_{M}(y)u_i(y)\ud y \quad \text{and} \quad (\widetilde{T}_M u_i)(x)=\int_{B_{1/4}}\Gamma_2(x,y)\widetilde{V}_M(y)u_i(y)\ud y.
		\end{equation*}
		Here, for $M>0$, we define  $\widetilde{V}_M(x)=V(x)-V_M(x)$, where
		\begin{equation*}
		V_M(x)=
		\begin{cases}
		V(x), \ {\rm if} \ |V(x)|\geqslant M,\\
		0,\ {\rm otherwise}.
		\end{cases}
		\end{equation*}
		
		Now we can run the regularity lifting method, which is divided into two steps.
		
		\noindent{\bf Step 1:} For $n/(n-4)<q<\infty$, there exists $M\gg1$ large such that $T_M:L^{q}(B_{1/4})\rightarrow L^{q}(B_{1/4})$ is a contraction.
		
		\noindent In fact, for any $q\in(n/(n-4),\infty)$, there exists $m\in (1,n/4)$ such that $q=nm/(n-4m)$. Then, by the Hardy--Littlewood--Sobolev and H\"{o}lder inequalities \cite{MR717827}, for any $u\in L^{q}(\mathbb{R}^n)$, we get
		\begin{equation*}
		\|T_Mu\|_{L^{q}(B_{1/4})}\leqslant\|\Gamma_2\ast V_{M}u\|_{L^{q}(B_{1/4})}\leqslant C\|V_M\|_{L^{{n}/{4}}(B_{1/4})}\|u\|_{L^{q}(B_{1/4})}.
		\end{equation*}
		Since $V_{M}\in L^{n/4}(B_{1/4})$ it is possible to choose a large $M\gg1$ satisfying $\|V_M\|_{L^{{n}/{4}}(B_{1/4})}<{1}/{2C}$. 
		Therefore, we arrive at $\|T_Mu\|_{L^{q}(B_{1/4})}\leqslant{1}/{2}\|u\|_{L^{q}(B_{1/4})}$, which yields that $T_M$ is a contraction.
		
		\noindent{\bf Step 2:} For any $n/(n-4)<q<\infty$, it follows that $\widetilde{T}_Mu_i\in L^{q}(B_{1/4})$ for all $i\in I$.
		
		\noindent Indeed, for any $n/(n-4)<q<\infty$, we pick $1<m<n/4$ satisfying $q=nm/(n-4m)$. 
		Since $\widetilde{V}_M$ is bounded, we get
		\begin{equation*}
		\|\widetilde{T}_M\|_{L^{q}(B_{1/4})}=\|\Gamma_2\ast\widetilde{V}_Mu_i\|_{L^{q}(B_{1/4})}\leqslant C\|\widetilde{V}_Mu_i\|_{L^{m}(B_{1/4})}\leqslant C\|u_i\|_{L^{m}(B_{1/4})}.
		\end{equation*}
		However, using \eqref{condition}, we have that  $u_i\in L^{q}(B_{1/4})$ for $q\in(1,n\gamma(s)^{-1})$.
		Besides, $q=(s-2)n\gamma(s)^{-1}$ when $m=n\gamma(s)^{-1}$. Thus, we obtain that $u_i\in L^{q}(B_{1/4})$ for
		\begin{equation*}
		\begin{cases}
		1<q<\infty,& {\rm if} \ s\geqslant2\\
		1<q\leqslant(2-s)^{-1}n\gamma(s)^{-1},& {\rm if} \ 1<s<2.
		\end{cases}
		\end{equation*}
		Now we can repeat the argument for $m=(s-2)n\gamma(s)^{-1}$ to get that $u_i\in L^{q}(B_{1/4})$ for
		\begin{equation*}
		\begin{cases}
		1<q<\infty,& {\rm if} \ s\geqslant2\\
		1<q\leqslant(2-s)^{-1}n\gamma(s)^{-1},& {\rm if} \ 1<s<2.
		\end{cases}
		\end{equation*}
		Therefore, by proceeding inductively as in \cite[Lemma~3.8]{MR4123335}, the proof of the claim follows.
		Ultimately, combining Steps 1 and 2, we can apply \cite[Theorem~3.3.1]{MR1338474} to show that $u_i \in L^{s}(B_{1/4})$ for all $s>2^{**}$ and $i\in I$. In particular, the proof of the claim is finished.
		
		Now, by the Morrey embedding theorem, it follows that $u_i\in C^{0,\zeta}(B_{1/4})$, for some $\zeta\in(0,1)$. Finally using Schauder estimates, one gets that $u_i\in C^{4,\zeta}(B_{1/4})$, which provides $\mathcal{U}\in C^{4,\zeta}(B_{1/4},\mathbb{R}^p)$. In particular, the singularity at the origin is removable, which concludes the proof of the lemma.
	\end{proof}

	\begin{proof}[Proof of Proposition~\ref{prop:gidasspruckcase}]
		Suppose that $\mathcal{U}$ has a non-removable singularity at the origin, then by Lemma~\ref{lm:removablesingularitygidasspruck}, $\mathcal{U}$ does not satisfy \eqref{asymptoticgidasspruck}. Therefore, the proof follows as a consequence of Lemma~\ref{lm:limitinglevesgidasspruck}.
	\end{proof}
	
	\subsection{Aviles case}\label{subsec:aviles}
	Finally, we prove Theorem~\ref{Thm3.2:asymptotics} (b).
	The asymptotic analysis for the lower critical exponent, $s=2_{**}$ exhibits its subtlety. First, since $\gamma(2_{**})=n-4$, one would expect the singular solutions to \eqref{eq:subcriticalsystem} to have the same behavior as the fundamental solution to the bi-Laplacian near the origin; thus, by classical results of J. Serrin, the isolated singularity would be removable.
	However, the results in \cite{MR875297} suggest that there exists a more refined asymptotic profile.
	In the lower critical case $s=2_{**}$ (resp. $s=2_*$) in Theorems~\ref{Thm3.2:asymptotics} and \ref{thm:lin-soranzo-yang-frank-konig-ratzkin} (resp. Theorem~\ref{thm:druet-hebey-vetois-ghergu-kim-shahgohlian-caju-doo-santos} and \ref{thm:serrin-lions-aviles-caffarelli-gidas-spruck}), a more accurate type of cylindrical transformation shall be considered.
	Here, we also recall that $\gamma(2^{**}-1)=\frac{n-4}{2}$.
	Our objective is to prove the proposition below
	
	\begin{proposition}\label{prop:avilescase}
		Let $R<\infty$, $s=2_{**}$, and $\mathcal{U}$ be a nonnegative superharmonic singular solution to \eqref{eq:subcriticalsystem}. 
		Then, 
		\begin{equation*}
		|\mathcal{U}(x)|=(1+o(1))\widehat{K}_{0}(n)^{\frac{n-4}{4}}|x|^{4-n}(-\ln|x|)^{\frac{n-4}{2}},
		\end{equation*}
		where $\widehat{K}_{0}(n):=\lim_{t\rightarrow\infty}t\widetilde{K}_0(n,t)=\frac{(n-4)(n-2)(n+4)}{2}.$
	\end{proposition}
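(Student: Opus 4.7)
The plan is to combine the asymptotic radial symmetry from Proposition~\ref{prop:asymptoticsymmetry} with the nonautonomous cylindrical transformation $\widetilde{\mathfrak{F}}$ defined in \eqref{nonautonomouscyltransform}, which translates the Aviles regime $s=2_{**}$ into a tractable nonautonomous ODE problem on the cylinder. First I would note that, thanks to Proposition~\ref{prop:asymptoticsymmetry}, it suffices to pin down the asymptotic behaviour of $|\overline{\mathcal U}(r)|$ as $r\to 0$; the $(1+\mathcal O(|x|))$ factor is already supplied. I then set $\mathcal W=\widetilde{\mathfrak{F}}(\mathcal U)$, so that $\mathcal W$ solves \eqref{eq:subcriticalcylindricalsystemlower}, and work with $\overline{\mathcal W}(t)=\widetilde{\mathfrak{F}}(\overline{\mathcal U})$ which satisfies the reduced nonautonomous ODE derived in the proof of Lemma~\ref{lm:signproperties}.

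The second step is the identification of the limit radius. Lemma~\ref{lm:signproperties} already provides the dichotomy
\begin{equation*}
\lim_{t\to\infty}|\mathcal W(t,\theta)|\in\bigl\{0,\;\widehat{K}_{0}(n)^{\frac{n-4}{4}}\bigr\},
\end{equation*}
together with the vanishing of all derivatives $|\mathcal W^{(j)}|$ for $j\ge 1$. I would rule out the trivial alternative by a removability argument of Brézis--Lions type adapted to the present vectorial setting: if $|\mathcal W|\to 0$, then $|\mathcal U(x)|=o(|x|^{4-n})$, so that the integrability supplied by Lemma~\ref{lm:integrability} together with Lemma~\ref{lm:removabilityserrin} (whose proof only uses the exponent being below $2_{**}$ for the $L^q$ bootstrap, and which extends to the borderline case via a standard Schauder/Morrey iteration as in Lemma~\ref{lm:removablesingularitygidasspruck}) forces the origin to be a removable singularity, contradicting the singularity hypothesis. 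Hence $|\mathcal W(t,\theta)|\to\widehat{K}_{0}(n)^{\frac{n-4}{4}}$ and, in particular, $|\overline{\mathcal W}(t)|\to\widehat{K}_{0}(n)^{\frac{n-4}{4}}$.

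Next I would upgrade this pointwise limit to the sharp $(1+o(1))$ form stated. The monotonicity of the nonautonomous Pohozaev functional $\widetilde{\mathcal P}_{\rm cyl}(t,\mathcal W)$ from Proposition~\ref{lm:lowermonotonicity}, together with the uniform bounds of Remark~\ref{rmk:estimates} and the decay of the angular error $\Xi_{\rm ang}(t,\mathcal W)$ from Lemma~\ref{lm:estimateangularparts}, guarantees the existence of $\widetilde{\mathcal P}_{\rm cyl}(\infty,\mathcal W)$. Using $|\mathcal W^{(j)}|=o(1)$ for $j\ge 1$, the limit value of $\widetilde{\mathcal P}_{\rm cyl}$ is determined solely by the nonlinear term $(2_{**}+1)^{-1}|\mathcal W|^{2_{**}+1}$ and the $\widetilde{K}_0$ weight, and matching these two expressions for the limit pins down $|\overline{\mathcal W}(t)|$ up to $o(1)$ as $t\to\infty$. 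Inverting $\widetilde{\mathfrak{F}}$ and using $r=e^{-t}$ then yields the stated asymptotic profile with the prefactor $\widehat{K}_{0}(n)^{\frac{n-4}{4}}$ and the correct $|x|^{4-n}(-\ln|x|)^{\frac{n-4}{2}}$ scale.

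The main technical obstacle is Step 3: showing that the convergence of $|\mathcal W|$ is genuinely $(1+o(1))$ and not merely up to subsequences, and that the precise power of $(-\ln|x|)$ comes out correctly. This requires a delicate use of the monotonicity in Proposition~\ref{lm:lowermonotonicity} together with the fact that the sign of $\partial_t\widetilde{\mathcal P}_{\rm cyl}$ is controlled by $\mathfrak{p}_0(n,t)|\mathcal W|^2$ for large $t$, which carries the exact $\widehat{K}_0$-dependence needed. A secondary difficulty is that the dimension-dependent sign of this derivative (cases $5\le n\le 7$ versus $n\ge 8$ in Proposition~\ref{lm:lowermonotonicity}) forces one to handle the two regimes separately when closing the convergence argument, but in both regimes the monotonicity is eventually strict and the limit exists, so the two cases ultimately yield the same conclusion.
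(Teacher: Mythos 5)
Your overall architecture matches the paper's: asymptotic radial symmetry (Proposition~\ref{prop:asymptoticsymmetry}), the nonautonomous cylindrical transform $\widetilde{\mathfrak{F}}$, identification of the limit radius via the dichotomy in Lemma~\ref{lm:signproperties}, ruling out the trivial branch by removability, and fixing the sharp constant via the nonautonomous Pohozaev functional. This is exactly the sequence of ideas packaged into the paper's Lemma~\ref{lm:limitinglevels} and Lemma~\ref{lm:removablesingularityaviles}, and the paper's proof of Proposition~\ref{prop:avilescase} is a two-line application of those two lemmas. So in broad strokes, you have the right strategy.

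However, the removability step as you wrote it has a genuine gap. You propose to deduce from $|\mathcal W|\to 0$ only the coarser estimate $|\mathcal U(x)|=o(|x|^{4-n})$, and then feed this into Lemma~\ref{lm:removabilityserrin} or the regularity-lifting scheme of Lemma~\ref{lm:removablesingularitygidasspruck}, claiming that the $L^q$ bootstrap ``extends to the borderline case via a standard Schauder/Morrey iteration.'' That claim is not correct at $s=2_{**}$. The bootstrap in Lemma~\ref{lm:removabilityserrin} starts from $\mathcal U\in L^{q_0}$ with $q_0<2_{**}$, maps this through $|\Delta^2\mathcal U|\leqslant|\mathcal U|^{s}$ and Calder\'on--Zygmund to $\mathcal U\in W^{4,q_0/s}$, and gains only when $q_0>n(s-1)/4$; for $s=2_{**}$ that threshold is exactly $2_{**}$, so starting from $q_0<2_{**}$ (or even $q_0=2_{**}$, which the log weight actually supplies) produces $\Delta^2\mathcal U\in L^1$, $\mathcal U\in W^{4,1}\hookrightarrow L^{2_{**}}$, and the iteration stalls with no gain. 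The paper circumvents precisely this obstruction: it discards neither the stronger little-$o$ bound $|\mathcal U(x)|=o\bigl(|x|^{4-n}(-\ln|x|)^{\frac{4-n}{4}}\bigr)$ that $|\mathcal W|\to 0$ actually delivers, nor the integral representation of Proposition~\ref{lm:integralrepresentation}, and runs a \emph{barrier} argument (Lemma~\ref{lm:removablesingularityaviles}) that is tailored to the lower critical exponent. You should replace your bootstrap claim by a direct appeal to Lemma~\ref{lm:removablesingularityaviles}, whose hypothesis is exactly $|\mathcal U(x)|=o\bigl(|x|^{4-n}(-\ln |x|)^{\frac{4-n}{4}}\bigr)$; with that swap, the rest of your outline closes along the lines of Lemma~\ref{lm:limitinglevels}, where uniqueness of the limit value $|\Lambda|$ follows from the fact that the right-hand side of \eqref{bestconstant} has finitely many nonnegative roots, handling your worry about subsequences. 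Finally, note the dimension-dependent sign of $\partial_t\widetilde{\mathcal P}_{\rm cyl}$ in Proposition~\ref{lm:lowermonotonicity} only affects the direction of monotonicity; as you observe, both regimes give eventual monotonicity and hence existence of the limit, so no separate argument is required beyond that observation.
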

	
	First, let us mention that the motivation to consider this transformation is the following asymptotic upper bound for singular solutions to \eqref{eq:subcriticalsystem}.
	
	\begin{lemma}\label{lm:sharpestimate}
		Let $R<\infty$, $s=2_{**}$, and $\mathcal{U}$ be a nonnegative solution to \eqref{eq:subcriticalsystem}.
		Then, then there exist $C_0(n)>0$ and $0<r_0<R$ such that
		\begin{equation*}
		|\overline{u}_{i}(x)|\leqslant C_0(n)|x|^{4-n}(-\ln|x|)^{\frac{4-n}{4}} \quad {\rm for} \quad 0<|x|<r_0 \quad {\rm and} \quad i\in I,
		\end{equation*}
		where $\overline{u}_{i}$ is the spherical average of $u_{i}$ over the sphere $\partial B_{R}$.
	\end{lemma}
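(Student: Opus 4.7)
The plan is to show that after the nonautonomous cylindrical transformation $\tilde{\mathfrak{F}}$ defined in \eqref{nonautonomouscyltransform}, the transformed spherical average $\overline{\mathcal{W}} = \tilde{\mathfrak{F}}(\overline{\mathcal{U}})$ is uniformly bounded for large $t$, which by \eqref{newcylindrical} is equivalent to the claimed estimate. To set this up, I would first take the spherical average of \eqref{eq:subcriticalsystem} componentwise and use Jensen's inequality applied to the convex map $\xi \mapsto |\xi|^{2_{**}-1}\xi$ (valid since $2_{**}>1$) together with the Cauchy--Schwarz inequality on the coupling to obtain the scalar differential inequality $\Delta^2 |\overline{\mathcal{U}}| \geqslant |\overline{\mathcal{U}}|^{2_{**}}$ on a punctured ball $B_{r_0}^*$. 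Moreover, the asymptotic symmetry from Proposition~\ref{prop:asymptoticsymmetry} ensures $|\mathcal{U}(x)| = (1+\mathcal{O}(|x|)) |\overline{\mathcal{U}}(x)|$, so it suffices to bound $|\overline{\mathcal{U}}|$.

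Second, I would construct an explicit supersolution in cylindrical coordinates. Under $\tilde{\mathfrak{F}}$, the constant function $\Phi(t) \equiv K$ formally solves the limit ODE $\widehat{K}_0(n) X = X^{2_{**}}$, which admits the unique positive root $K_* = \widehat{K}_0(n)^{(n-4)/4} = [(n-4)(n-2)(n+4)/2]^{(n-4)/4}$. For any $K > K_*$, the asymptotic expansions $t\widetilde{K}_0(n,t) = \widehat{K}_0(n) + \mathcal{O}(t^{-1})$ and the explicit forms of $\widetilde{K}_j(n,t), \widetilde{J}_j(n,t)$ in \eqref{nonautonomouscoeficcients} show that $\Phi$ is a supersolution to the nonautonomous ODE \eqref{nonautonomousodesystem} on $(T_0,\infty)$ for $T_0 \gg 1$, since the dominant gap $\widehat{K}_0(n) K - K^{2_{**}} < 0$ absorbs all remainder terms of order $\mathcal{O}(t^{-1})$. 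Pulling this back via $\tilde{\mathfrak{F}}^{-1}$ yields the radial supersolution $\Psi(x) = K |x|^{4-n}(-\ln|x|)^{(4-n)/4}$ to the scalar biharmonic inequality satisfied by $|\overline{\mathcal{U}}|$ in $B_{r_0}^*$.

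Third, because the bi-Laplacian does not admit a classical strong maximum principle, I would close the comparison via the integral representation of Proposition~\ref{lm:integralrepresentation}. Namely, choose $K$ sufficiently large that $\Psi$ and $-\Delta\Psi$ dominate $|\overline{\mathcal{U}}|$ and $|\Delta\overline{\mathcal{U}}|$ on $\partial B_{r_0}$ (using Lemma~\ref{lm:universalestimates} and the superharmonicity of each $u_i$), then iterate the Green function representation \eqref{integralsystem} on $B_{r_0}$ to propagate the inequality $|\overline{\mathcal{U}}| \leqslant \Psi$ into the punctured interior. The integrability from Lemmas~\ref{lm:integrability} and \ref{integrability} guarantees convergence of this iteration, and yields $C_0(n) = K$.

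The main obstacle will be the nonautonomous sign-changing behavior of the coefficients $\widetilde{K}_j(n,t)$ and $\widetilde{J}_j(n,t)$: unlike the autonomous case $s \in (2_{**}, 2^{**}-1)$, the signs of $\widehat{K}_1(n)$ and $\widehat{K}_3(n)$ depend on the dimension (compare Proposition~\ref{lm:lowermonotonicity}), which prevents a direct supersolution test. To handle this, I would track the decay rates of the correction terms $\mathcal{O}(t^{-1})$ carefully, absorbing those with unfavorable signs into the leading negative quantity $\widehat{K}_0(n) K - K^{2_{**}}$ by enlarging $K - K_* > 0$. The passage from the radial supersolution $\Psi$ to a genuine comparison bound on each component $\overline{u}_i$ then follows from the superharmonicity assumption combined with the Harnack inequality of Lemma~\ref{lm:gradientestimates}, which transfers the scalar bound on $|\overline{\mathcal{U}}|$ to each component at the cost of a dimensional constant.
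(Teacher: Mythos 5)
The paper's own proof of this lemma is a two-line reduction: take the spherical average of the $i$th component equation, observe that $\overline{u}_i$ satisfies the radial biharmonic Emden--Fowler inequality (since $|\mathcal{U}|^{s-1}u_i\geqslant u_i^{s}$ and $\overline{u_i^{s}}\geqslant\overline{u}_i^{s}$ by Jensen), and cite Soranzo's scalar result \cite[Theorem~5]{MR1436822}. Your route via an explicit barrier in the nonautonomous cylinder picture is genuinely different, but it has two gaps that I don't think can be patched as written.

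First, the sign of the supersolution condition in Step~2 is reversed. The Green-function iteration $u_{k+1}=\int G_2\,f(u_k)+\psi$ converges upward to the solution, so a comparison function $\Psi$ dominating $u$ must satisfy $\Delta^2\Psi\geqslant f(\Psi)$; in the cylinder this reads $\widetilde{\Delta}^2_{\rm cyl}\Phi\geqslant t^{-1}\Phi^{2_{**}}$, which for $\Phi\equiv K$ becomes asymptotically $\widehat{K}_0(n)K\geqslant K^{2_{**}}$, i.e.\ $K\leqslant K_*$. You argue from $\widehat{K}_0(n)K-K^{2_{**}}<0$ for $K>K_*$, which is exactly the \emph{sub}solution regime. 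A constant strictly \emph{below} $K_*$ is the only constant supersolution, and it cannot dominate the data near $\partial B_{r_0}$, so the comparison never gets started.

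Second, even granting a correct barrier, the Green-function iteration in Step~3 does not close with the a priori information you have. The only available bound before this lemma is Lemma~\ref{lm:universalestimates}, giving $|\mathcal{U}|\leqslant C|x|^{4-n}$, hence $V:=|\mathcal{U}|^{s-1}\leqslant C|x|^{-4}$. Feeding the candidate barrier $\Psi=K|x|^{4-n}(-\ln|x|)^{(4-n)/4}$ back into the kernel gives an integral $\int_{B_\rho}|x-y|^{4-n}|y|^{-4}|y|^{4-n}(-\ln|y|)^{(4-n)/4}\,\ud y$, whose radial part near $y=0$ behaves like $\int_0^\varepsilon r^{-1}(-\ln r)^{(4-n)/4}\,\ud r$, divergent for $5\leqslant n\leqslant8$. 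In the paper's barrier argument (Lemma~\ref{lm:removablesingularityaviles}) this is circumvented by the \emph{hypothesis} $|\mathcal{U}|=o(|x|^{4-n}(-\ln|x|)^{(4-n)/4})$, which supplies a small factor $\delta^{2_{**}-1}$ absorbing the kernel; here no such smallness is available, since the estimate you are trying to prove is precisely the improvement over the universal bound. A minor additional issue is the claimed scalar inequality $\Delta^2|\overline{\mathcal{U}}|\geqslant|\overline{\mathcal{U}}|^{2_{**}}$: Kato's inequality for the Laplacian does not iterate to $\Delta^2$, so passing from the vector equation to a single scalar inequality for $|\overline{\mathcal{U}}|$ is not justified. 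This one is harmless, though, because the statement of the lemma is componentwise — it is cleaner (and is what the paper does) to work with each $\overline{u}_i$ directly, using $u_i\leqslant|\mathcal{U}|$ and Jensen, rather than with the norm $|\overline{\mathcal{U}}|$.
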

	
	\begin{proof}
		Note that for each $i\in I$, we get that $\overline{u}_{i}$ satisfies, 
		\begin{equation*}
		\partial_r^{(4)}\overline{u}_{i}+\frac{2(n-1)}{r}\partial_r^{(3)}\overline{u}_{i}+\frac{(n-1)(n-3)}{r^2}\partial_r^{(2)}\overline{u}_{i}-\frac{(n-1)(n-3)}{r^3}\partial_r\overline{u}_{i}-\overline{u}_{i}^{2_{**}}=0,
		\end{equation*}
		for $0<r<R$. Whence the conclusion follows directly from \cite[Theorem~5]{MR1436822}.
	\end{proof}
	
	As in the autonomous case, we use the limiting energy levels $\widetilde{\mathcal{P}}_{\rm cyl}(\infty,\mathcal{W})$ to classify the local behavior near the isolated singularity. 
	
	\begin{lemma}\label{lm:limitinglevels}
		Let $R<\infty$, $s=2_{**}$, and $\mathcal{U}$ be a nonnegative solution to \eqref{eq:subcriticalsystem} with $\mathcal{W}=\widetilde{\mathfrak{F}}(\mathcal{V})$ its nonautonomous cylindrical transform given by \eqref{newcylindrical}. 
		Then, $\widetilde{\mathcal{P}}_{\rm cyl}(\infty,\mathcal{W}) \in\left\{-l^*(n), 0\right\}$, where
		\begin{equation*}
		l^*(n)=\frac{2^{\frac{n-8}{n-4}}(n-4)\left[(n-2)(n^2-16)\right]^{\frac{2(n-2)}{n-4}}+(n-2)^5(n^2-16)^4}{16(n-2)}.
		\end{equation*}
		Moreover, it follows\\
		\noindent{\rm (i)} $\widetilde{\mathcal{P}}_{\rm cyl}(\infty,\mathcal{W})=0$ if, and only if,
		\begin{equation}\label{asymptoticsaviles}
		|\mathcal{U}(x)|=o\left(|x|^{4-n}(-\ln |x|)^{\frac{4-n}{4}}\right) \quad \mbox{as} \quad x \rightarrow 0.
		\end{equation}
		\noindent{\rm (ii)} $\widetilde{\mathcal{P}}_{\rm cyl}(\infty,\mathcal{W})=l^*(n)$ if, and only if,
		\begin{equation*}
		|\mathcal{U}(x)|=(1+o(1))\widehat{K}_{0}(n)^{\frac{n-4}{4}}|x|^{4-n}(-\ln |x|)^{\frac{4-n}{4}} \quad \mbox{as} \quad x \rightarrow 0.
		\end{equation*}
	\end{lemma}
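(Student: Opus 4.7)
The plan is to combine the existence of $\widetilde{\mathcal{P}}_{\rm cyl}(\infty,\mathcal{W})$ from Proposition~\ref{lm:lowermonotonicity} with the dichotomy in Lemma~\ref{lm:signproperties} to evaluate the limit directly. First I would invoke Lemma~\ref{lm:signproperties} to obtain that either $|\mathcal{W}(t,\theta)| \to 0$ or $|\mathcal{W}(t,\theta)| \to \widehat{K}_0(n)^{(n-4)/4}$ as $t\to\infty$, together with $|\mathcal{W}^{(j)}(t,\theta)| \to 0$ for every $j\geq 1$. Combined with Proposition~\ref{prop:asymptoticsymmetry} and the angular estimate \eqref{angularestimates}, this ensures that $\widetilde{\mathcal{H}}_{\rm ang}(t,\theta,\mathcal{W})$ vanishes in the limit.

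Next I would carry out the delicate task of passing to the limit in the radial portions of the Hamiltonian $\widetilde{\mathcal{H}}$. The term $\widetilde{\mathcal{H}}_{{\rm rad},1}$ and parts of $\widetilde{\mathcal{H}}_{{\rm rad},2}$ carry explicit factors of $t$ multiplying derivative quantities. However, Lemma~\ref{lm:estimateangularparts} guarantees $\Xi_{\rm rad}(t,\mathcal{W}) \to 0$, and since the leading coefficient in $\Xi_{\rm rad}$ is $\mathfrak{p}_1(n,t) \sim -2(n-2)(n-4)\,t$, this forces $t|\mathcal{W}^{(1)}(t,\theta)|^2 \to 0$. Bootstrapping via the nonautonomous ODE \eqref{nonautonomousodesystem} and the uniform estimates in Remark~\ref{rmk:estimates} then yields analogous decay for $|\mathcal{W}^{(2)}|$ and $|\mathcal{W}^{(3)}|$. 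In the limit only the algebraic part of $\widetilde{\mathcal{H}}_{{\rm rad},2}$ survives, reducing (after using $\lim_{t\to\infty} t\widetilde{K}_0(n,t) = \widehat{K}_0(n)$) to
\begin{equation*}
-\tfrac{1}{2}\widehat{K}_0(n)|\mathcal{W}_\infty|^2 + (2_{**}+1)^{-1}|\mathcal{W}_\infty|^{2_{**}+1},
\end{equation*}
where $|\mathcal{W}_\infty| := \lim_{t\to\infty}|\mathcal{W}(t,\theta)|$. Integrating over $\mathbb{S}^{n-1}$ and substituting the two dichotomous values yields $\widetilde{\mathcal{P}}_{\rm cyl}(\infty,\mathcal{W}) \in \{-l^*(n), 0\}$, with the explicit form of $l^*(n)$ obtained by plugging in $\widehat{K}_0(n) = (n-2)(n^2-16)/2$.

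For the equivalences, I would simply unravel the definition $\mathcal{W} = \widetilde{\mathfrak{F}}(\mathcal{U})$ from \eqref{nonautonomouscyltransform}: $|\mathcal{W}(t,\theta)| \to 0$ is precisely \eqref{asymptoticsaviles}, while $|\mathcal{W}(t,\theta)| \to \widehat{K}_0(n)^{(n-4)/4}$ gives the sharp asymptotic in (ii). Both converse implications follow because Lemma~\ref{lm:signproperties} rules out any intermediate limit value, so the Pohozaev level uniquely determines which branch of the dichotomy is realized.

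The main obstacle will be rigorously establishing the $o(1/t)$ decay of $|\mathcal{W}^{(j)}|^2$ for $j = 2,3$, required to annihilate the $t$-weighted derivative terms in $\widetilde{\mathcal{H}}_{{\rm rad},1}$ and $\widetilde{\mathcal{H}}_{{\rm rad},2}$. While the pointwise vanishing from Lemma~\ref{lm:signproperties} and the critical rate on $|\mathcal{W}^{(1)}|$ extracted from Lemma~\ref{lm:estimateangularparts} are available, upgrading these to quantitative decay will likely require differentiating \eqref{eq:subcriticalcylindricalsystemlower}, applying interpolation inequalities on cylindrical slices, and exploiting the ODE asymptotic analysis in the spirit of \cite{MR4094467} and \cite{arXiv:2002.12491}.
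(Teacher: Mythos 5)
Your proposal follows the paper's overall strategy — kill the angular terms via \eqref{angularestimates}, kill the $t$-weighted radial derivative terms, reduce to the algebraic part of $\widetilde{\mathcal{H}}_{{\rm rad},2}$, and then use the two possible values of $|\mathcal{W}_\infty|$ to obtain the two Pohozaev levels before unwinding $\widetilde{\mathfrak{F}}$. One genuine structural difference: you get the dichotomy $|\mathcal{W}_\infty| \in \{0,\widehat{K}_0(n)^{(n-4)/4}\}$ by citing Lemma~\ref{lm:signproperties} directly, whereas the paper re-derives it inside the proof via a separate integral constraint obtained by pairing \eqref{eq:subcriticalcylindricalsystemlower} with $\mathcal{W}$ and integrating over $(t_0,\infty)\times\mathbb{S}^{n-1}$, noting that the finiteness of $\int_{t_0}^\infty t^{-1}\int(\widehat{K}_0(n)-|\mathcal{W}|^{2_{**}-1})|\mathcal{W}|^2$ forces the limit to be one of the two roots. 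Your shortcut is cleaner and perfectly legitimate since Lemma~\ref{lm:signproperties} is established earlier; the paper's version is self-contained but redundant.

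One caveat on your handling of the $t$-weighted terms: the inference from ``$\Xi_{\rm rad}(t,\mathcal{W})\to 0$ with dominant coefficient $\mathfrak{p}_1(n,t)\sim -2(n-2)(n-4)t$'' to ``$t|\mathcal{W}^{(1)}|^2\to 0$'' does not follow from the limit of the sum alone, since in principle two unbounded summands could cancel. What you actually want to cite is the term-by-term argument inside the proof of Lemma~\ref{lm:estimateangularparts} (its Step 4 treats precisely the $\mathfrak{p}_1$ term via L'H\^{o}pital and concludes $\widehat{I}_1 = 0$ on its own), not merely the statement of that lemma. Beyond that, the bootstrap to $t$-decay of $|\mathcal{W}^{(2)}|^2$ and $|\mathcal{W}^{(3)}|^2$ that you flag as ``the main obstacle'' is indeed needed to annihilate $\widetilde{\mathcal{H}}_{{\rm rad},1}$ and the $t\,|\mathcal{W}^{(2)}|^2$ contribution in $\widetilde{\mathcal{H}}_{{\rm rad},2}$; the paper asserts this reduction as a one-line consequence of combining \eqref{angularestimates}, Proposition~\ref{lm:lowermonotonicity}, and Lemma~\ref{lm:estimateangularparts} without spelling it out, so you are not missing anything that the paper makes explicit, and your proposed route (differentiate the ODE system, interpolate on slices) is the natural way to close it.
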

	
	\begin{proof}
		First, combining \eqref{angularestimates} with Proposition~\ref{lm:lowermonotonicity} and Lemma~\ref{lm:estimateangularparts}, we find
		\begin{equation*}
		\widetilde{\mathcal{P}}_{\rm cyl}(\infty, \mathcal{W})=\lim _{t \rightarrow \infty} \int_{\mathbb{S}_t^{n-1}}\left((2_{**}+1)^{-1}|\mathcal{W}|^{2_{**}+1}+\widehat{K}_{0}(n)|\mathcal{W}|^{2}\right)\ud\theta.
		\end{equation*}
		Furthermore, by \eqref{angularestimates}, we see that for any $\{t_k\}_{k\in\mathbb{N}}$ such that $t_{k} \rightarrow \infty$ as $k \rightarrow \infty$, it follows that $\{\mathcal{W}(t_{k}, \theta)\}_{k\in\mathbb{N}}$ converges to a limit, which is independent of $\theta \in \mathbb{S}_t^{n-1}$. Hence, up to subsequence, there exists $\Lambda\in\mathbb{R}^p$ such that $\mathcal{W}\left(t_{k}, \theta\right)\rightarrow \Lambda$ (uniformly on $\theta \in \mathbb{S}_t^{n-1}$ ), which gives us
		\begin{equation}\label{bestconstant}
		\widetilde{\mathcal{P}}_{\rm cyl}(\infty, \mathcal{W})=(2_{**}+1)^{-1}|\Lambda|^{2_{**}+1}+\widehat{K}_{0}(n)|\Lambda|^{2}.
		\end{equation}
		Thus, since the right-hand side of the last equation has at most three nonnegative roots, the limit $|\Lambda|$, under the uniform convergence of $|\mathcal{W}(t, \theta)|$ on $\mathbb{S}_t^{n-1}$ as $t \rightarrow \infty$, is unique.
		Finally, taking the inner product of \eqref{eq:subcriticalcylindricalsystemlower} with $\mathcal{W}$, integrating both sides over $\left(t_{0}, \infty\right) \times \mathbb{S}^{n-1}$, and using  \eqref{angularestimates},\eqref{radialestimates} and Lemma~\ref{lm:estimateangularparts}, it follows
		\begin{equation*}
		\left|\int_{t_{0}}^{\infty} \frac{1}{t} \int_{\mathbb{S}_t^{n-1}}\left(\widehat{K}_0(n)-|\mathcal{W}|^{2_{**}-1}\right) |\mathcal{W}|^{2}\ud \theta\ud t\right|<\infty.
		\end{equation*}
		Now since $\lim_{t \rightarrow \infty}|\mathcal{W}(t,\theta)|=|\Lambda|$ uniformly on $\mathbb{S}_t^{n-1}$, we get either $|\Lambda|=0$ or $|\Lambda|=\widehat{K}_0(n)^{\frac{n-4}{4}}$, which by substituting into \eqref{bestconstant}, implies that either $\widetilde{\mathcal{P}}_{\rm cyl}(\infty, \mathcal{W})=0$ if, and only if, $|\Lambda|=0$, or $\widetilde{\mathcal{P}}_{\rm cyl}(\infty, \mathcal{W})=l^*(n)$, otherwise.
		The proof trivially follows by applying the inverse $\widetilde{\mathfrak{F}}^{-1}$ of the nonautonomous cylindrical transform.
	\end{proof}
	
	Now we are left to show that, if (i) of Lemma~\ref{lm:limitinglevels} holds, then the singularity at the origin is removable.
	Here, we are based on the barriers construction in \cite{MR2055032} (see also \cite{arxiv:1901.01678}), which is available due to the integral representation \eqref{integralsystem}.
	
	\begin{lemma}\label{lm:removablesingularityaviles}
		Let $R<\infty$, $s=2_{**}$, and $\mathcal{U}$ be a nonnegative solution to \eqref{eq:subcriticalsystem}. 
		Suppose that
		\begin{equation*}
		|\mathcal{U}(x)|=o\left(|x|^{4-n}(-\ln |x|)^{\frac{4-n}{4}}\right) \quad \mbox{as} \quad x \rightarrow 0.
		\end{equation*}
		Then, the origin is a removable singularity.
	\end{lemma}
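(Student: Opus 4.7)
The plan is to mirror the regularity-lifting strategy of Lemma~\ref{lm:removablesingularitygidasspruck}, using the logarithmic factor in the hypothesis to squeeze the potential $V:=|\mathcal{U}|^{s-1}$ into the critical space $L^{n/4}$, from which a Hardy--Littlewood--Sobolev-based contraction mapping on small balls bootstraps $\mathcal{U}$ to arbitrary integrability, after which Morrey embedding followed by Schauder theory produces $C^{4,\zeta}$ regularity across the origin.

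The first task is to convert the pointwise smallness into integrability. Writing $|\mathcal{U}(x)|\leqslant\varepsilon(|x|)|x|^{4-n}(-\ln|x|)^{(4-n)/4}$ with $\varepsilon(r)\to 0$ as $r\to 0$ and raising to the power $2_{**}=n/(n-4)$, polar coordinates together with the substitution $\tau=-\ln r$ reduce $\int_{B_{r_0}}|\mathcal{U}|^{2_{**}}\,\ud x$ to a tail integral of $\tau^{-n/4}$, convergent because $n/4>1$ for $n\geqslant 5$. This places $\mathcal{U}\in L^{2_{**}}(B_{r_0},\mathbb{R}^p)$ and equivalently $V\in L^{n/4}(B_{r_0})$, and absolute continuity of the integral yields $\|V\|_{L^{n/4}(B_r)}\to 0$ as $r\to 0$. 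Since in particular $\mathcal{U}\in L^1(B_{r_0},\mathbb{R}^p)$, Proposition~\ref{lm:integralrepresentation} provides the representation
\begin{equation*}
u_i(x)=\int_{B_{r_0}}G_2(x,y)V(y)u_i(y)\,\ud y+\psi_i(x),
\end{equation*}
with $\psi_i$ smooth and $G_2(x,y)\leqslant C|x-y|^{4-n}$.

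I would then run the contraction bootstrap. Splitting the integral across $\partial B_{r_\delta}$ absorbs the outer contribution into a bounded remainder $\widetilde{\psi}_i$, and the operator $Tu:=\int_{B_{r_\delta}}G_2(\cdot,y)V(y)u(y)\,\ud y$ satisfies, by H\"older's inequality with exponent $n/4$ and the Hardy--Littlewood--Sobolev inequality, $\|Tu\|_{L^q(B_{r_\delta})}\leqslant C\|V\|_{L^{n/4}(B_{r_\delta})}\|u\|_{L^q(B_{r_\delta})}$ for every $q>2_{**}$ (the dual exponent $1/m=4/n+1/q$ satisfies $1<m<n/4$). Choosing $r_\delta$ small enough that this prefactor is below $1/2$ makes $T$ a strict contraction on $L^q(B_{r_\delta})$; once an initial Riesz--Sobolev step has pushed $u_i$ from the endpoint space $L^{2_{**}}$ into some $L^{q_0}$ with $q_0>2_{**}$ (crucially exploiting the compactness delivered by $\varepsilon\to 0$), inversion of $I-T$ against the bounded right-hand side $\widetilde{\psi}_i$ iterates to $u_i\in L^q(B_{r_\delta})$ for every $q<\infty$. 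Morrey embedding then yields $u_i\in C^{0,\zeta}$ for every $\zeta\in(0,1)$, and the standard bi-Laplace Schauder bootstrap lifts this to $u_i\in C^{4,\zeta}$ across the origin, which is precisely the removability statement.

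The hardest point is precisely the first crossing of the critical exponent: the Hardy--Littlewood--Sobolev map is of strong type for $q>2_{**}$ but only of weak type at the endpoint $q=2_{**}$, which is exactly where our initial integrability lives. The logarithmic factor in the hypothesis is what secures this passage — it is the sharp compactness ingredient that lets one bootstrap across the critical threshold, and it explains why the scaling \eqref{nonautonomouscyltransform} is tuned with the exact exponent $(4-n)/4$ on $-\ln|x|$; without it, $V$ would fail to be even in $L^{n/4}$ and the entire mechanism would collapse.
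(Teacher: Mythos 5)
Your preliminary reduction is correct: the pointwise hypothesis gives $\mathcal{U}\in L^{2_{**}}(B_{r_0},\mathbb{R}^p)$, hence $V=|\mathcal{U}|^{2_{**}-1}\in L^{n/4}(B_{r_0})$ with $\|V\|_{L^{n/4}(B_r)}\to0$ as $r\to0$, and the representation formula is available. The genuine gap is exactly the point you flag as ``the hardest'': the operator $Tu=\int G_2(\cdot,y)V(y)u(y)\,\ud y$ is bounded (and small-norm) on $L^{q}$ only for $q>2_{**}$, because only then is $Vu\in L^{m}$ with $m>1$; at $q=2_{**}$ one has $Vu\in L^{1}$ and the Riesz potential lands merely in weak-$L^{2_{**}}$, so neither your contraction nor the regularity lifting theorem of \cite{MR1338474} (which requires the contraction to act on a space in which $u$ is already known to lie) can be initialized. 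Moreover, the logarithmic factor does \emph{not} supply the needed initial step: for any $q>2_{**}$ the radial integral $\int_0 r^{(4-n)q+n-1}(-\ln r)^{\frac{(4-n)q}{4}}\,\ud r$ diverges, so a function such as $|x|^{4-n}(-\ln|x|)^{\frac{4-n}{4}-1}$ satisfies the hypothesis of Lemma~\ref{lm:removablesingularityaviles} yet belongs to no $L^{q_0}$ with $q_0>2_{**}$. Thus the claim that the log gain ``secures the passage'' across the endpoint is precisely the missing argument, and it is not cosmetic: this endpoint obstruction is why the case $s=2_{**}$ is handled separately from the Gidas--Spruck range, where $|\mathcal{U}|\in L^{n/\gamma(s)}$ with $n/\gamma(s)>2_{**}$ and the lifting machinery of Lemma~\ref{lm:removablesingularitygidasspruck} can start. (A Lorentz-space refinement of Hardy--Littlewood--Sobolev exploiting the extra logarithmic decay might conceivably rescue the scheme, but no such argument appears in your proposal.)

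For comparison, the paper avoids Lebesgue-space lifting altogether and argues pointwise in the integral equation \eqref{integralsystem}. Since $(-\ln|x|)^{\frac{4-n}{4}}\to0$, the hypothesis gives $u_i(x)\leqslant\delta|x|^{4-n}$ in $B^*_\rho$ with $\delta$ arbitrarily small; one then compares $u_i$ with the explicit barrier $\varsigma_i(x)=M|x|^{-\kappa}+\varepsilon|x|^{4-n-\kappa}$, $\kappa\in(0,n-4)$, using the kernel estimate $\int|x-y|^{4-n}|y|^{-4-\kappa}\,\ud y\leqslant C|x|^{-\kappa}$ and the smallness $C\delta^{2_{**}-1}<\tfrac12$; a sliding (infimum of scalings) argument yields $u_i\leqslant\varsigma_i$ near the origin, and letting $\varepsilon\to0$ gives $u_i\lesssim|x|^{-\kappa}$. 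This improved pointwise bound places $|\mathcal{U}|^{2_{**}-1}$ in $L^{s}$ for some $s>n/4$, after which standard elliptic regularity removes the singularity. If you wish to keep an $L^p$-based proof you must either supply the endpoint step explicitly or replace it by a pointwise comparison of this kind.
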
 
	
	\begin{proof}
		For any $i\in I$ and $\delta>0$, we choose $0<\rho\ll 1$ such that $u_i(x)\leqslant\delta|x|^{-\gamma(s)}$ in $B^*_{\rho}$.
		Fixing $\varepsilon>0$, $\kappa\in\left(0, \gamma(s)\right)$ and $M\gg1$ to be chosen later, we define
		\begin{equation*}
		\varsigma_i(x)=
		\begin{cases}
		{M|x|^{-\kappa}+\varepsilon|x|^{4-n-\kappa},} & \mbox{if} \ {0<|x|<\rho}\\ 
		{u_i(x)},& \mbox{if} \ {\rho<|x|<2}.
		\end{cases}
		\end{equation*}
		Notice that for every $0<\kappa<n-4$ and $0<|x|<2$, by a change a variables, there exists $C>0$ such that
		\begin{align*}
		\int_{\mathbb{R}^{n}}{|x-y|^{4-n}|y|^{-4-\kappa}}\ud y 
		&=|x|^{4-n}\int_{\mathbb{R}^{n}}{\left||x|^{-1}x-|x|^{-1}y\right|^{4-n}|y|^{\-\kappa-4}}\ud y&\\
		&=|x|^{-\kappa+4}\int_{\mathbb{R}^{n}} {\left||x|^{-1}x-z\right|^{4-n}|z|^{\-\kappa-4}}\ud z&\\ 
		&\leqslant C\left(\frac{1}{n-4-\kappa}+\frac{1}{\kappa}+1\right)|x|^{-\kappa},
		\end{align*}
		which, for $0<|x|<2$ and $0<\delta\ll1$, yields 
		\begin{align*}
		\int_{B_{\rho}}{u_i^{2_{**}-1}(y)\varsigma_i(y)}{|x-y|^{4-n}}\ud y
		&\leqslant \delta^{2_{**}-1} \int_{\mathbb{R}^{n}}{\varsigma_i(y)}{|x-y|^{n-4}|y|^{-4}}\ud y&\\
		&\leqslant C\delta^{2_{**}-1} \varsigma_i(x)&\\ 
		&<\frac{1}{2} \varsigma_i(x).&
		\end{align*}
		Moreover, for $0<|x|<\rho$ and $\bar{x}=\rho x|x|^{-1}$, we get
		\begin{align*}
		\int_{B_{2}\setminus B_{\rho}}{u_i^{2_{**}-1}(y)\varsigma_i(y)}{|x-y|^{4-n}}\ud y
		&=\int_{B_{2}\setminus B_{\rho}}\frac{|\bar{x}-y|^{n-4}}{|x-y|^{n-4}}\frac{u_i^{2_{**}}(y)}{|\bar{x}-y|^{n-4}}\ud y&\\
		&\leqslant 2^{n-4}\int_{B_{2}\setminus B_{\rho}}\frac{u_i^{2_{**}}(y)}{|\bar{x}-y|^{n-4}}\ud y&\\ 
		&\leqslant 2^{n-4} u_i(\bar{x})&\\
		&\leqslant 2^{n-4}\max_{\partial B_{\rho}} u_i.
		\end{align*}
		The last inequality implies that for $0<|x|<\tau$ and $M\geqslant\max_{\partial B_{\rho}}u_i$,
		\begin{equation*}
		\psi_i(x)+\int_{B_{2}}\frac{u_i^{2_{**}-1}(y)\varsigma_i(y)}{|x-y|^{4-n}}\ud y \leqslant \psi_i(x)+2^{n-4}\max_{\partial B_{\rho}}u_i+\frac{1}{2}\varsigma_i(x)<\varsigma_i(x).
		\end{equation*}
		
		In the next claim, we show that $\varsigma_i$ can be taken indeed as a barrier for any $u_i$.
		
		\noindent{\bf Claim 1:} For any $i\in I$, it holds that $u_i(x)\leqslant \varsigma_i(x)$ in $B^*_{\rho}$. 
		
		\noindent In fact, suppose by contradiction that the conclusion is not true. Then, since $u_i(x)\leqslant\delta|x|^{-\gamma}(s)$ in $B^*_{\rho}$, by the definition of $\varsigma_i$, there exists $\widetilde{\tau} \in(0, \rho)$, depending on $\varepsilon$, such that $\varsigma_i\geqslant u_i$ in $B^*_{{\rho}}$ and $\varsigma_i>u_i$ close to the boundary $\partial B_{\rho}$. Let us consider, 
		\begin{equation*}
		\bar{\tau}:=\inf\left\{\tau>1 : \tau\psi_{i}>u_i \ \mbox{in} \ B^*_{\rho}\right\}.
		\end{equation*}
		Then, we have that $\bar{\tau}\in(1,\infty)$ and there exists $\bar{x}\in B_{\rho} \setminus\bar{B}_{\widetilde{\tau}}$ such that $\bar{\tau}\varsigma_i(\bar{x})=u_i(\bar{x})$ and, for    $0<|x|<\tau$, it follows 
		\begin{equation*}
		\bar{\tau}\varsigma_i(x)\geqslant\int_{B_{2}}{u_i^{2_{**}-1}(y)\bar{\tau} \varsigma_i(y)}{|x-y|^{4-n}}\ud y+\bar{\tau}\psi_i(x)\geqslant\int_{B_{2}}{u_i^{2_{**}-1}(y)\bar{\tau}\varsigma_i(y)}{|x-y|^{4-n}}\ud y+\psi_i(x),
		\end{equation*}
		which gives us
		\begin{equation*}
		\bar{\tau}\varsigma_i(x)-u_i(x)\geqslant\int_{B_{2}}{u_i^{2_{**}-1}(y)(\bar{\tau} \varsigma_i(y)-u_i(y))}{|x-y|^{4-n}}\ud y.
		\end{equation*}
		Finally, by evaluating the last inequality at $\bar{x}\in B_{\rho}\setminus\bar{B}_{\widetilde{\tau}}$, we get a contradiction and the claim is proved.
		
		Consequently, we find
		$u_i(x)\leqslant\varsigma_i(x)\leqslant M|x|^{-\kappa}+\varepsilon|x|^{4-n-\kappa}$ in $B^*_{\rho}$,
		which yields that $u_i^{2_{**}-1}\in L^{s}(B^*_{\rho})$ for some $s>{n}/{4}$ and any $i\in I$. Therefore, standard elliptic regularity concludes the proof of the lemma.
	\end{proof}
	
	Ultimately, the proof of the main result in this section is merely a consequence of the last results.
	
	\begin{proof}[Proof of Proposition~\ref{prop:avilescase}]
		Suppose that $\mathcal{U}$ has a non-removable singularity at the origin, then by Lemma~\ref{lm:removablesingularityaviles}, $\mathcal{U}$ does not satisfy \eqref{asymptoticsaviles}. Therefore, the proof follows as a consequence of Lemma~\ref{lm:limitinglevels}.
	\end{proof}
	
	\appendix
	
	\section{The general Emden--Fowler change of coordinates}\label{app:computations}
	In this appendix, using the software Mathematica 12, we compute the coefficients of the bi-Laplacian written in cylindrical coordinates (see also \cite{MR4094467,MR4123335}). More generally, let us consider the following change of coordinates 
	\begin{equation}\label{eq:generalizedchangeofvariables}
	u(r)=\rho(r)v(t) \quad \mbox{with} \quad t=\psi(r), 
	\end{equation}
	where $\rho,\psi:\mathbb{R}\rightarrow\mathbb{R}$ are smooth functions, and $\psi$ is a smooth diffeomorphism. Here we adopt the notations $\psi_r={\ud\psi}/{\ud r}$, $\rho_r={\ud\rho}/{\ud r}$, $\psi^{(j)}_r={\ud^{j}\psi}/{\ud r^{j} }$, and $\rho^{(j)}_r={\ud^{j}\rho}/{\ud r^{j} }$ $\partial_{t}^{(j)}={\partial}^{j}/{\partial t}^{j}$ (resp. $\partial_{r}^{(j)}={\partial}^{j}/{\partial r}^{j}$) with the convention $\partial_t^{(0)}$ equals the identity operator on $C^{\infty}(\mathbb{R})$, and we omit $u,v$ when it is convenient.
	Now the idea is to express the operator $\partial_r^{(j)}$ for $j\in\mathbb{N} $ in terms of $\partial_t^{(\ell)}$ for $\ell=1,\dots, j$, that is,
	\begin{equation*}
	\partial_r^{(j)}=\sum_{\ell=0}^jc_{j\ell}(\rho,\psi)\partial_t^{(\ell)},
	\end{equation*}
	where $c_{j\ell}:\mathbb{R}^{2(\ell+1)+1}\rightarrow\mathbb{R}$ are the coefficient functions, depending on $\rho,\psi$ and all theirs derivative until $\ell$-th order. 
	Notice that $C=(c_{j\ell})_{j\ell}$ is a lower triangular matrix, {\it i.e.}, $c_{j\ell}\equiv0$ when $j<\ell$. In fact, a direct computation shows
	\begin{align}\label{eq:decompositionderivative}
	&\partial_r^{(0)}=\rho\partial_t^{(0)}&\\\nonumber
	&\partial_r^{(1)}=\rho_r\partial_t^{(0)}+\psi_r\rho\partial_t^{(1)}&\\\nonumber
	&\partial_r^{(2)}=\rho_r^{(2)}\partial_t^{(0)}+(2\psi_r\rho_r+ \psi_r^{(2)}\rho)\partial_t^{(1)}+ \psi_{r}^2\rho\partial_t^{(2)}&\\\nonumber
	&\partial_r^{(3)}=\rho_r^{(3)}\partial_t^{(0)}+(3\psi_r\rho_r^{(2)}+3\psi_r^{(2)}\rho_r+\psi_r^{(3)}\rho)\partial_t^{(1)}+(3\psi_r^2\rho_{r}+3\psi_r\psi_r^{(2)}\rho)\partial_t^{(2)}+\psi_{r}^3\rho \partial_t^{(3)}&\\\nonumber
	&\partial_r^{(4)}=\rho_r^{(4)}\partial_t^{(0)}+[4\psi_{r}\rho_r^{(3)}+6\psi_r^{(2)}\rho_r^{(2)}+4\psi_r^{(3)}\rho_r+\psi_r^{(4)}\rho]\partial_t^{(1)}\\\nonumber
	&\quad\quad+[6{\psi_{r}}^2\rho_{rr}+12\psi_r\psi_r^{(2)}\rho_{r}+(3{\psi_r^{(2)}}^2+4\psi_r\psi_r^{(3)})\rho]\partial_r^{(2)}+(4\psi_r^3\rho_{r}+6\psi_r^2\psi_r^{(2)}\rho)\partial_r^{(3)}+\psi_{r}^4\rho \partial_r^{(4)}.&
	\end{align}
	In particular, when $\psi(r)=-\ln r$, it follows 
	\begin{equation}\label{eq:derivativeslog}
	\psi_r(r)=-r^{-1}, \quad \psi^{(2)}_r(r)=r^{-2}, \quad \mbox{and} \quad \psi^{(3)}_r(r)=-2r^{-3},
	\end{equation}
	which turns \eqref{eq:generalizedchangeofvariables} into the classical logarithm cylindrical change of coordinates. 
	Also, we observe that the choice $\psi(r)=\ln r$ would lead to a change of sign.
	For the geometrical point of view, the contrary sign choice is more natural.
	
	In what follows, to distinct the second and fourth order cases, we fixed double indexes.
	Nevertheless, in the main text, we only consider one index, since we  always deals with the fourth order case, that is, $K_0,K_1,K_2,K_3,J_0,J_1,J_2,$ (or $\widetilde{K}_0(t),\widetilde{K}_1(t),\widetilde{K}_2(t),\widetilde{K}_3(t),\widetilde{J}_0(t),\widetilde{J}_1(t),\widetilde{J}_2(t)$ in the nonautonomous case). 
	The same holds for the Fowler rescaling exponent $\gamma_{m}(s):=\frac{m}{s-1}$ for $m\geqslant1$, where $m$ is dropped since we always consider $m=4$.
	
	\subsection{Second order case}
	Now remember the expression for the Laplacian in spherical (polar) coordinates,
	\begin{align*}
	\Delta_{\rm sph}&=r^{-2}N_{20}(n)\partial_r^{(0)}+r^{-1}N_{21}(n)\partial_r^{(1)}+N_{22}(n)\partial_r^{(2)}+r^{-2}M_{20}(n)\Delta_{\sigma},
	\end{align*}
	where the coefficients are given by
	\begin{align*}
	N_{20}(n)=0,\quad 
	N_{21}(n)=n-1, \quad 
	N_{22}(n)=1\quad \mbox{and} \quad M_{20}(n)=1.
	\end{align*}
	Using the change coordinates \eqref{eq:generalizedchangeofvariables}, we get
	\begin{align*}
	\Delta_{\rm cyl}&=K_{20}(\rho,\psi)\partial_t^{(0)}+K_{21}(\rho,\psi)\partial_t^{(1)}+ K_{22}(\rho,\psi)\partial_t^{(2)}+r^{-2}J_{20}\Delta_{\theta},
	\end{align*}
	where $K_{2\ell}(\rho,\psi)=\sum_{j=0}^2N_\ell(n)c_{j\ell}(\rho,\psi)$. More explicitly, we have
	\begin{align*}
	&K_{20}(\rho,\psi)=N_{22}(n)c_{20}(\rho,\psi)+N_{21}(n)c_{10}(\rho,\psi)+N_{20}(n)c_{00}(\rho,\psi)&\\
	&K_{21}(\rho,\psi)=N_{22}(n)c_{21}(\rho,\psi)+N_{21}(n)c_{11}(\rho,\psi)&\\
	&K_{22}(\rho,\psi)=N_{22}(n)c_{22}(\rho,\psi),&\\
	&J_{20}(\rho,\psi)=M_{20}(n),&
	\end{align*}
	where, by using \eqref{eq:decompositionderivative}, we can explicitly get the matrix of coefficients in the second order case,
	\begin{equation}\label{eq:secondordercoefficients}
	c_{j\ell}(\rho,\psi)=
	\left[{\begin{array}{ccc}
		\rho & 0 & 0  \\
		\rho & \psi_r\rho & 0 \\
		\rho_{rr} & 2\psi_r\rho_r +\psi_{rr}\rho  & \psi_{r}^2 \rho \\
		\end{array} } \right].
	\end{equation}
	The choice of $\rho(r)$ will depend on the natural scaling of the problem. Namely, for $s\in(1,2^{*}]$, we consider $\rho(r)=r^{-\gamma_2(s)}$, where $\gamma_2(s)=2/(s-1)$.
	Notice that $-\gamma_2(s)$ is the constant solution to te functional equation $\gamma_2(s)-2=(s-1)\gamma_2(s)$, which is obtained by a scaling analysis of \eqref{eq:secondorderode}.
	Whence,
	\begin{align*}
	&\rho_r(r)=-\gamma_2r^{-(\gamma_2+1)},\quad \mbox{and} \quad \rho^{(2)}_r(r)=\gamma_2(\gamma_2+1)r^{-(\gamma_2+2)}.&
	\end{align*}
	This turns the change of variables \eqref{eq:generalizedchangeofvariables} into the (second order) Emden-Fowler transformation from \cite{fowler,MR982351}, which, by substituting in \eqref{eq:secondordercoefficients}, provides \eqref{eq:secondorderode}, that is,
	\begin{align*}
	&K_{20}(n,s)={2}(s-1)^{-2}\left[n(s-1)-2s\right], \  K_{21}(n,s)=-{(s-1)^{-1}}\left[s(n-2)+n+2\right],\\
	&K_{22}(n,s)=1 \ \mbox{and} \ J_{20}(n,s)=1&
	\end{align*}
	Then, defining $K^*_{2j}:=K_{2j}(n,2^{*}-1)$, one finds
	\begin{equation*}
	K^*_{22}\equiv1, \quad K^*_{21}\equiv0, \quad 
	K^*_{20}=-\frac{(n-2)^2}{4}, \quad {\rm and} \quad J^*_{20}=1,
	\end{equation*}
	whereas, when $K_{2j,*}:=K_{2j}(n,2_{*})$, it holds
	\begin{equation*}
	K^*_{22}\equiv1, \quad K^*_{21}=n-2, \quad 
	K^*_{20}\equiv0, \quad {\rm and} \quad J^*_{20}=1,
	\end{equation*}
	Thus, since $K^*_{20}\equiv0$, when $s=2_{*}$, one needs to consider a more suitable change of coordinates. In this fashion, we take $\rho(r)=r^{2-n}\psi(r)^{\frac{2-n}{2}}$, then  
	\begin{align*}
	&\rho_r(r)=r^{-(2\gamma_2+1)}\psi^{-(\gamma_2+1)}\left(-\gamma_2\psi_r^{(1)}r-2\gamma_2\psi\right),\quad \mbox{and}&\\
	&\rho^{(2)}_r(r)=r^{-(2\gamma_2+2)}\psi^{-(\gamma_2+2)}\left\{\gamma_2(4\gamma_2+2)\psi^2+4\gamma_2\psi\psi_r^{(1)}r+\left[\gamma_2(\gamma_2+1){\psi_r^{(1)}}^2-\psi{\psi_r^{(2)}}\right]r^2\right\},&
	\end{align*}
	and, similarly to the other cases, leads to \eqref{eq:nonautonomoussecondorder}, that is,
	\begin{align*}
	\widetilde{K}_{20}(n,t)=\frac{n(n-2)}{4t^2}-\frac{(n-2)^2}{2t}, \ \widetilde{K}_{21}(n,t)=-\frac{(n-2)}{t}+(n-2), \
	\widetilde{K}_{22}(n,t)=1 \ \mbox{and} \ \widetilde{J}_{20}(n,t)=1,
	\end{align*}
	where each term was simplified by the factor $r^{-n}\psi(r)^{\frac{2-n}{2}}$.
	We should emphasize, that contrarily \eqref{eq:secondorderode}, this is a nonautonomous ODE; this makes the analysis harder.
	
	\subsection{Fourth order case}
	Analogously, we consider the bi-Laplacian in spherical (polar) coordinates,
	\begin{align*}
	\Delta^2_{\rm sph}&=r^{-4}N_{40}(n)\partial_r^{(0)}+r^{-3}N_{41}(n)\partial_r^{(1)} r^{-2}N_{42}(n)\partial_r^{(2)}+r^{-1}N_{43}(n)\partial_r^{(3)}+N_{44}(n)\partial^{(4)}_r&
	\\\nonumber
	&+r^{-4}M_{40}(n)\Delta_{\sigma}+r^{-3}M_{41}(n)\partial^{(1)}_r\Delta_{\sigma}+r^{-2}M_{42}(n)\partial^{(2)}_r\Delta_{\sigma}+r^{-4}M_{44}(n)\Delta_{\sigma}^2,&
	\end{align*}
	where the coefficients are given by
	\begin{align*}
	&N_{40}(n)=0,\quad 
	N_{41}(n)=2(n-1), \quad N_{42}(n)=(n-1)(n-3), \quad N_{43}(n)=-(n-1)(n-3), \quad
	N_{44}(n)=1,&\\
	&M_{40}(n)=-2(n-4),\quad 
	M_{41}(n)=2(n-3), \quad 
	M_{42}(n)=2, \quad \mbox{and} \quad
	M_{44}(n)=1.&
	\end{align*}
	Using the change coordinates \eqref{eq:generalizedchangeofvariables}, after a suitable rescaling, we get
	\begin{align*}
	\Delta^2_{\rm cyl}&=K_{40}(\rho,\psi)\partial_t^{(0)}+K_{41}(\rho,\psi)\partial_t^{(1)}+ K_{42}(\rho,\psi)\partial_t^{(2)}+K_{43}(\rho,\psi)\partial_t^{(3)}+K_{44}(\rho,\psi)\partial^{(4)}_t,&\\\nonumber
	&+J_{40}(\rho,\psi)\Delta_{\theta}+J_{41}(\rho,\psi)\partial^{(1)}_t\Delta_{\theta}+J_{42}(\rho,\psi)\partial^{(2)}_t\Delta_{\theta}+J_{44}(\rho,\psi)\Delta_{\theta}^2,&
	\end{align*}
	where $K_{4\ell}(\rho,\psi)=\sum_{j=0}^4N_\ell(n)c_{j\ell}(\rho,\psi)$. More explicitly, we find
	\begin{align*}
	&K_{40}(\rho,\psi)=N_{44}(n)c_{40}(\rho,\psi)+N_{43}(n)c_{30}(\rho,\psi)+N_{42}(n)c_{20}(\rho,\psi)+N_{41}(n)c_{10}(\rho,\psi)+N_{40}(n)c_{00}(\rho,\psi)&\\
	&K_{41}(\rho,\psi)=N_{44}(n)c_{41}(\rho,\psi)+N_{43}(n)c_{31}(\rho,\psi)+N_{42}(n)c_{21}(\rho,\psi)+N_{41}(n)c_{11}(\rho,\psi)&\\
	&K_{42}(\rho,\psi)=N_{44}(n)c_{42}(\rho,\psi)+N_{43}(n)c_{32}(\rho,\psi)+N_{42}(n)c_{22}(\rho,\psi)\\
	&K_{43}(\rho,\psi)=N_{44}(n)c_{43}(\rho,\psi)+N_{43}(n)c_{33}(\rho,\psi)&\\
	&K_{44}(\rho,\psi)=N_{44}(n)c_{44}(\rho,\psi),&\\
	&J_{40}(\rho,\psi)=M_{42}(n)c_{20}(\rho,\psi)+M_{41}(n)c_{10}(\rho,\psi)+M_{40}(n)c_{00}(\rho,\psi)&\\
	&J_{41}(\rho,\psi)=M_{42}(n)c_{21}(\rho,\psi)+M_{41}(n)c_{11}(\rho,\psi)&\\
	&J_{41}(\rho,\psi)=M_{42}(n)c_{22}(\rho,\psi)&\\
	&J_{44}(\rho,\psi)=M_{44}(n).&
	\end{align*}
	Hence, by following, \eqref{eq:decompositionderivative}, we find
	\begin{align}\label{eq:coefficients}
	&c_{00}=\rho&\\\nonumber
	&c_{10}=\rho_r, \quad c_{11}=\psi_r\rho &\\\nonumber
	&c_{20}=\rho_r^{(2)}, \quad c_{21}=2 \psi_r\rho_r+\psi_r^{(2)}\rho, \quad c_{22}=\psi_{r}^2\rho &\\\nonumber
	&c_{30}=\rho_r^{(3)}, \quad c_{31}=3\psi_r\rho_r^{(2)}+3\psi_r^{(2)}\rho_r+\psi_r^{(3)}\rho, \quad c_{32}=3\psi_r^2\rho_{r}+3\psi_r\psi_r^{(2)}\rho, \quad c_{33}=\psi_{r}^3\rho&\\\nonumber
	&c_{40}=\rho_r^{(4)}, \quad c_{41}=4\psi_{r}\rho_r^{(3)}+6\psi_r^{(2)}\rho_r^{(2)}+4\psi_r^{(3)}\rho_r+\psi_r^{(4)}\rho,&\\\nonumber
	&c_{42}=6\psi_r^2\rho_r^{(2)}+(3\psi_r^{(2)}+12\psi_r\psi_r^{(2)})\rho_{r}+(3\psi_r^2+4\psi_r\psi_r^{(3)})\rho, \quad c_{43}=4\psi_r^3\rho_{r}+6\psi_r^2\psi_r^{(2)}\rho,
	\quad c_{40}= \psi_{r}^4\rho.&
	\end{align}
	
	For the Bi-Laplacian, we consider a suitable scaling function given by $\rho(r)=r^{{-\gamma}_4(s)}$ with $\gamma_{4}(s)=4/(s-1)$. Hence, the change of variables \eqref{eq:generalizedchangeofvariables} becomes the (fourth order) Emden-Fowler transformation from Subsection~\ref{subsec:cylindrcialtransform}, that is,
	\begin{align}\label{eq:derivativescaling}
	&\rho_r(r)=-\gamma_4r^{-(\gamma_4+1)}, \quad \rho^{(2)}_r(r)=\gamma_4(\gamma_4+1)r^{-(\gamma_4+2)},&\\\nonumber
	&\rho^{(3)}_r(r)=-\gamma_4(\gamma_4+1)(\gamma_4+2)r^{-(\gamma_4+3)},\quad \mbox{and} \quad \rho^{(4)}_r(r)=\gamma_4(\gamma_4+1)(\gamma_4+2)(\gamma_4+3)r^{-(\gamma_4+4)}.&
	\end{align}
	Therefore, by substituting \eqref{eq:derivativescaling} and \eqref{eq:derivativeslog} into \eqref{eq:coefficients}, and simplifying by the common scaling factor $r^{\gamma_4-4}$, which is chosen so it satisfies  the functional equation $\gamma(s)-4=(s-1)\gamma(s)$, we arrive at 
	\begin{align*}
	&K_{40}(n,s)={8}(s-1)^{-4}\left[(n-2)(n-4)(s-1)^{3}+2\left(n^{2}-10 n+20\right)(s-1)^{2}
	-16(n-4)(s-1)+32\right],&\\\nonumber
	&K_{41}(n,s)=-{2}{(s-1)^{-3}}\left[(n-2)(n-4)(s-1)^{3}+4\left(n^{2}-10 n+20\right)(s-1)^{2}-48(n-4)(s-1)+128\right],&\\\nonumber
	&K_{42}(n,s)={(s-1)^{-2}}\left[\left(n^{2}-10 n+20\right)(s-1)^{2}-24(n-4)(s-1)+96\right],&\\\nonumber
	&K_{43}(n,s)={2}{(s-1)}^{-1}[(n-4)(s-1)-8],&\\\nonumber
	&K_{44}(n,s)=1,&\\\nonumber
	&J_{40}(n,s)={2}{(s-1)^{-2}}\left[(s+1)^2(s-1)^{2}-n(s-3)(s-1)\right],&\\\nonumber
	&J_{41}(n,s)={2}{(s-1)^{-1}}\left[(n-4)(s-1)+16\right].&,&\\\nonumber
	&J_{42}(n,s)=2,&\\\nonumber
	&J_{44}(n,s)=1.&
	\end{align*} 
	Now, considering $K^*_{4j}:=K_{4j}(n,2^{**}-1)$, one gets
	\begin{equation*}
	K^*_{41}\equiv K^*_{43}\equiv J^*_{41}\equiv0, \quad 
	K^*_{40}=\frac{n^2(n-4)^2}{16}, \quad K^*_{42}=-\frac{n^2-4n+8}{2}, \quad {\rm and} \quad J^*_{40}=-\frac{n(n-4)}{2},
	\end{equation*}
	and, for $K_{4j,*}:=K_{4j}(n,2_{**})$, it follows
	\begin{align*}
	&K_{40,*}\equiv0, \quad K_{41,*}=2(n-4)(n+2), \quad K_{42,*}=n^2-10n+20,&\\ 
	&K_{43,*}=J_{41,*}=2(n-4), \quad {\rm and} \quad J_{40,*}=-2(n-4).&
	\end{align*}  
	Again, the zeroth order coefficient vanishes thus, one needs to consider a more suitable change of coordinates given by the scaling factor $\rho(r)=r^{4-n}\psi(r)^{\frac{4-n}{4}}$, which as before provides the following nonautonomous coefficients,
	\begin{align*}
	&\nonumber\widetilde{K}_{40}(n,t)=\frac{(n-4)n(n+4)(n+8)}{256t^4}-\frac{(n-4)^2n(n+4)}{32t^3}+\frac{(n-4)n(n^2-10n+20)}{16t^2}+\frac{(n-4)(n-2)(n+4)}{t},&\\\nonumber &\widetilde{K}_{41}(n,t)=\frac{(n-4)n(n+4)}{16t^3}+\frac{3n(n-4)}{8t^2}+\frac{(n-4)(n^2-10n+20)}{2t}-2(n-4)(n-2),&\\\nonumber
	&\widetilde{K}_{42}(n,t)=\frac{3n(n-4)}{8t^2}-\frac{3(n-4)^2}{2t}+n^2-10n+20,&\\
	&\widetilde{K}_{43}(n,t)=\frac{n-4}{t}+2(n-4),&\\\nonumber
	&\widetilde{K}_{44}(n,t)=1,&\\\nonumber
	&\widetilde{J}_{40}(n,t)=\frac{n(n-4)}{8t^2}-\frac{(n-4)^2}{2t}-2(n-4),&\\\nonumber
	&\widetilde{J}_{41}(n,t)=-\frac{n-4}{t}+2(n-4),&\\\nonumber
	&\widetilde{J}_{42}(n,t)=2,&\\\nonumber
	&\widetilde{J}_{44}(n,t)=1.&
	\end{align*}
	where each term is simplified by the factor $r^{-n}\psi(r)^{\frac{4-n}{4}}$.
	
	\begin{remark}	
		It would also be interesting to run the same analysis for the higher order derivative case $m\geqslant6$.
		Indeed, computing the $m$th order matrices  $K_{j\ell}(n,s),J_{j\ell}(n,s)$ ($\widetilde{K}_{j\ell}(n,t)$,$\widetilde{J}_{j\ell}(n,t)$) and studying the sign of their components would allow us to provide the classification and the local behavior near the isolated singularity for the general (even order) polyharmonic system,
		\begin{equation}\label{higherordersubcriticalsystem}
		(-\Delta)^{k} u_{i}=c(n,s,k)|\mathcal{U}|^{s-1}u_{i} \quad {\rm in} \quad B_R^* \quad {\rm for} \quad i\in I.
		\end{equation}
		Here $s\in(1,2_k^{*}-1]$, where $2_k^*:=2n/(n-2k)$ is the (upper critical Sobolev exponent) with $n>2k$, $m:=2k$ and $c(n,s,k)$ is a normalizing constant with geometrical meaning. 
		In an upcoming work, we make a systematic study of this system.
	\end{remark}

	\section{Another moving spheres technique}\label{app:alternativamovingspheres}
	In \cite[Section~3.6]{arXiv:2002.12491}, we utilize a different moving spheres technique, which is based on \cite[Proposition~1.1]{MR2558186} and \cite[Theorem~1.3]{MR1611691}.
	Let us emphasize that these techniques could also be employed in this subcritical situation. 
	However, we choose to perform this sliding technique using its integral form, which works in a more general setting, namely, for higher order equations.
	
	\begin{proof}[Another proof for Lemma~\ref{lm:movingspheres}]
		Without loss of generality, we consider $z=0$; let us denote $\mu^*(0)=\mu^*$. By the definition of $\mu^*$, when $\mu^*<\infty$, it follows that for any $\mu\in(0,\mu^*]$ and $i\in I$, 
		\begin{equation}\label{eua}
		(u_i)_{\mu}\leqslant u_i \quad {\rm in} \quad \mathbb{R}^{n}\setminus B_{\mu}(0).
		\end{equation}
		Hence, there exist $i_0\in I$ and $(\mu_{k})_{k\in\mathbb{N}}$ in $(\mu^*,\infty)$ satisfying $\mu_{k}\rightarrow\mu^*$ and such that \eqref{eua} does not hold for $i=i_0$ and $\mu=\mu_{k}$. For $\mu>0$, let us define $\omega_{\mu}=(u_{i_0})-(u_{i_0})_{\mu}$. 
		
		\noindent {\bf Step 1:} $\omega_{\mu^*}$ is superharmonic. 
		
		\noindent In fact, using \eqref{eq:subcriticalsystem}and Lemma~\ref{prop:conformalinvariance}, we get
		\begin{equation*}
		\begin{cases}
		\Delta^{2}\omega_{\mu^*}(x)=c_{\mu^*}(x)\omega_{\mu^*}& {\rm in} \quad \mathbb{R}^n\setminus B_{\mu^*}(0)\\
		\Delta \omega_{\mu^*}(x)=\omega_{\mu^*}(x)=0& {\rm on} \quad \partial B_{\mu^*}(0),
		\end{cases}
		\end{equation*}
		where
		\begin{equation*}
		c_{\mu^*}=\frac{|{\mathcal{U}}|^{s-1}u_{i_0}-|\mathcal{U}_{\mu^*}|^{s-1}({u}_{i_0})_{\mu^*}}{{u}_{i_0}-({u}_{i_0})_{\mu^*}}>0 \quad {\rm in} \quad \mathbb{R}^n\setminus B_{\mu^*}(0).
		\end{equation*}
		Therefore, by Claim 1, we can use the strong maximum principle in \cite[Theorem~3.5]{MR1814364} to conclude
		\begin{equation*}
		\min_{\mathbb{R}^{n}\setminus B_{\mu^*}(0)}\omega_{\mu^*}=\min_{\partial B_{\mu}(0)}\omega_{\mu^*}.
		\end{equation*}
		
		\noindent {\bf Step 2:} $\omega_{\mu^*}\equiv0$.
		
		\noindent Supposing that $\omega_{\mu^*}$ is not equivalently zero in $\mathbb{R}^{n}\setminus B_{\mu}(0)$, by the Hopf Lemma \cite[Lemma~3.4]{MR1814364}, we have that ${\partial_\nu}\omega_{\mu^*}>0 $ in $\partial B_{\mu^*}(0)$.
		Moreover, by the continuity of $\nabla u_{i_0}$, one can find $r_0>\mu^*$ such that for any $\mu^*\in[\mu,r_0)$, we get
		\begin{equation}\label{spo}
		\omega_{\mu^*}>0 \quad {\rm in} \quad \bar{B}_{r_0}(0)\setminus B_{\mu}(0).
		\end{equation}
		On the other hand, one can prove that there exists $\varepsilon>0$ such that for any $\mu\in[\mu^*,\mu^*+\varepsilon)$ and $x\in\mathbb{R}^{n}\setminus B_{r_0}(0)$, it follows
		\begin{equation}\label{san}
		|\omega_{\mu^*}(x)-\omega_{\mu}(x)|=|(u_i)_{\mu}(x)-(u_{i_0})_{\mu^*}(x)|\leqslant\frac{1}{2}\left(\frac{r_0}{|x|}\right)^{n-4}\min_{\partial B_{r_0}(0)}\omega_{\mu^*}.
		\end{equation}
		Therefore, a combination of \eqref{spo}--\eqref{san} yields that $\omega_{\mu^*}\geqslant0$ in $\mathbb{R}^{n}\setminus B_{\mu}(0)$ for any $\mu\in[\mu^*,\mu^*+\varepsilon)$. This is a contradiction with the definition of $\mu^*$, thus $\omega_{\mu^*}\equiv0$ in $\mathbb{R}^{n}\setminus B_{\mu}(0)$. Moreover, let us define
		\begin{equation*}
		\omega_{\mu}(x)=-\left(\frac{\mu^*}{|x|}\right)^{n-4}\omega_{\mu^*}\left(\left(\frac{\mu^*}{|x|}\right)^2x\right).
		\end{equation*}
		Hence, it follows that $\omega_{\mu^*}\equiv0$ in $\mathbb{R}^{n}\setminus\{0\}$. Since $u_{i_0}$ cannot be identically zero without contradicting the definition of $\mu^*$, and using that nonnegative solutions are weakly positive, we find $u_{i_0}$ is nowhere vanishing. 
		Therefore, we find that $|\mathcal{U}_{\mu^*}|\equiv|\mathcal{U}|$ in $\mathbb{R}^{n}\setminus\{0\}$.
		
		By definition of $\mu^*(z)$, if $\bar{\mu}(z)=\infty$, we get that $(u_i)_{z,\mu}\leqslant u_i$ in $\mathbb{R}^n\setminus B_{\mu}(z)$, for any $\mu>0$ and $i\in I$. Moreover, assuming that $z=0$, by \eqref{eua}, we have 
		\begin{equation*}
		\mu^{n-4}\leqslant \liminf_{|x|\rightarrow\infty}|x|^{n-4}u_i(x),
		\end{equation*}
		which by passing to the limit as $\mu\rightarrow\infty$ provides that for any $i\in I$, either $u_i(0)=0$ or $|x|^{n-4}u_i(x)\rightarrow0$ as $|x|\rightarrow\infty$. Using that $u_i(0)=0$ for all $i\in I$, we conclude $u_i\equiv0$. Therefore, we may assume $|x|^{n-4}u_i(x)\rightarrow\infty$ as $|x|\rightarrow\infty$ for all $i\in I_{+}$.
		
		\noindent{\bf Step 3:} $\mu^*(x)=\infty$ for all $x\in\mathbb{R}^n$.
		
		\noindent Indeed, when $\mu^*(x)<\infty$ for some $y\in\mathbb{R}^n$, using (i), we obtain
		\begin{equation*}
		|z|^{n-4}|\mathcal{U}(z)|=|z|^{n-4}|\mathcal{U}_{x,\mu^*(x)}(z)|\rightarrow\mu^*(x)^{n-4}|\mathcal{U}(z)| \quad {\rm as} \quad |z|\rightarrow\infty,
		\end{equation*}
		which is a contradiction.
		Hence, the alternative proof of the lemma is finished.
	\end{proof}
	
	\begin{acknowledgement}
		This manuscript began to be written in the first-named author's visit to the Department of Mathematics at Princeton University, whose hospitality he gratefully acknowledges. 
	\end{acknowledgement}
	

\end{document}